\theoremstyle{theorem}
\newtheorem{theorem}{Theorem}[section]
\newtheorem{proposition}[theorem]{Proposition}
\newtheorem{lemma}[theorem]{Lemma}
\newtheorem{question}[theorem]{Question}
\newtheorem{corollary}[theorem]{Corollary}
\theoremstyle{definition}
\newtheorem{remark}[theorem]{Remark}
\newcommand{\Z}{\mathbb{Z}}
\newcommand{\C}{\mathcal{C}}
\newcommand{\R}{\mathbb{R}}
\newcommand{\CP}{\mathbb{CP}}
\newcommand{\Cc}{\mathbb{C}}
\newcommand{\D}{\mathcal D}
\newcommand{\Ss}{\mathcal S}
\newcommand{\wh}[1]{\widehat{#1}}
\newcommand{\A}{\alpha}
\newcommand{\n}{\beta}
\newcommand{\g}{\gamma}
\newcommand{\pd}{\partial}
\newcommand{\emp}{\emptyset}
\newcommand{\X}{\times}
\newcommand{\be}{\begin{enumerate}}
\newcommand{\ee}{\end{enumerate}}
\newcommand{\eps}{\epsilon}
\newcommand{\K}{\mathcal{K}}
\newcommand{\aaa}{\mathfrak{a}}
\newcommand{\bbb}{\mathfrak{b}}
\newcommand{\ccc}{\mathfrak{c}}
\newcommand{\ac}{\mathfrak{ac}}
\newcommand{\ca}{\mathfrak{ca}}
\newcommand{\ab}{\mathfrak{ab}}
\newcommand{\ba}{\mathfrak{ba}}
\newcommand{\bc}{\mathfrak{bc}}
\newcommand{\cb}{\mathfrak{ac}}
\newcommand{\T}{\mathcal T}
\newcommand{\la}{\langle}
\newcommand{\ra}{\rangle}
\newcommand{\abp}{\aaa'\bbb'}
\newcommand{\bcp}{\bbb'\ccc'}
\newcommand{\capp}{\ccc'\aaa'}
\newcommand{\V}{\mathcal{V}}
\newcommand{\abc}{(\aaa,\bbb,\ccc)}
\newcommand{\abcp}{(\aaa',\bbb',\ccc')}
\newcommand{\abcs}{(\aaa^*,\bbb^*,\ccc^*)}
\providecommand{\numberTblEq}[1]{\refstepcounter{tblEqCounter}\label{#1}\thetag{\thetblEqCounter}}
\def\@seccntformat#1{%
  \protect\textup{\protect\@secnumfont
    \ifnum\pdfstrcmp{subsection}{#1}=0 \bfseries\fi% subsection # in \bfseries
    \csname the#1\endcsname
    \protect\@secnumpunct
  }%
}  
\newtheorem*{rep@theorem}{\rep@title}
\newcommand{\newreptheorem}[2]{%
\newenvironment{rep#1}[1]{%
 \def\rep@title{#2 \ref{##1}}%
 \begin{rep@theorem}}%
 {\end{rep@theorem}}}
\begin{document}

\rhead{\thepage}
\lhead{\author}
\thispagestyle{empty}

%\tableofcontents
%\listoffigures

\raggedbottom
\pagenumbering{arabic}
\setcounter{section}{0}

%%%%%%%%%%%%%%%%%%%%%%%%%%%%%%%%%%%%%%%%%%%%%%%%%%%%%%%%
%%%%%%%%%%%%%%%%%%%%%%%%%%%%%%%%%%%%%%%%%%%%%%%%%%%%%%%%
%%%%%%%%%%%%%%%%%%%%%%%%%%%%%%%%%%%%%%%%%%%%%%%%%%%%%%%%

\title{Hexagonal lattice diagrams for complex curves in $\CP^2$}
%\date{\today}

\author{Alexander Zupan}
\address{Department of Mathematics, University of Nebraska-Lincoln, Lincoln, NE 68588}
\email{zupan@unl.edu}
\urladdr{http://www.math.unl.edu/azupan2}

\begin{abstract}
We demonstrate that the geometric, topological, and combinatorial complexities of certain surfaces in $\CP^2$ are closely related:  We prove that a positive genus surface $\K$ in $\CP^2$ that minimizes genus in its homology class is isotopic to a complex curve $\mathcal{C}_d$ if and only if $\K$ admits a hexagonal lattice diagram, a special type of shadow diagram in which arcs meet only at bridge points and tile the central surface of the standard trisection of $\CP^2$ by hexagons.  There are eight families of these diagrams, two of which represent surfaces in efficient bridge position.  Combined with a result of Lambert-Cole relating symplectic surfaces and bridge trisections, this allows us to provide a purely combinatorial reformulation of the symplectic isotopy problem in $\CP^2$.  Finally, we show that that the varieties $\V_d = \{[z_1:z_2:z_3] \in \CP^2 : z_1z_2^{d-1} + z_2z_3^{d-1} + z_3z_1^{d-1} = 0\}$ and $\V'_d = \{[z_1:z_2:z_3] \in \CP^2 : z_1^{d-1}z_2 + z_2^{d-1}z_3 + z_3^{d-1}z_1 = 0\}$ are in efficient bridge position with respect to the standard Stein trisection of $\CP^2$, and their shadow diagrams agree with the two families of efficient hexagonal lattice diagrams.  As a corollary, we prove that two infinite families of complex hypersurfaces in $\CP^3$ admit efficient Stein trisections, partially answering a question of Lambert-Cole and Meier.
\end{abstract}

\maketitle

\section{Introduction}

\newcounter{tblEqCounter}

Mathematics, at its core, involves the search for patterns and structure.  This paper centers on the search for structure in the setting of 4-manifold trisections and in the interactions of the theory of trisections with symplectic and complex geometry.  Mounting evidence reveals deep connections between these fields.  We demonstrate a strong association between the geometric and topological simplicity of complex curves in $\CP^2$ and the combinatorial simplicity of their \emph{hexagonal lattice diagrams}.

\begin{figure}[h!]
  \centering
  \includegraphics[width=.9\linewidth]{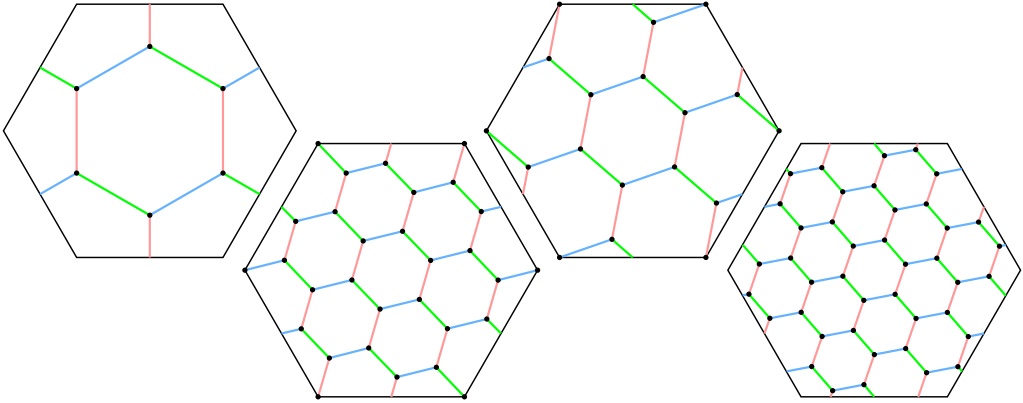}
    \caption{Efficient hexagonal lattice diagrams for $\C_3$ (top left), $\C_4$ (top right), $\C_5$ (bottom left), and $\C_6$ (bottom right)}
  \label{fig:alleff}
\end{figure}

The standard trisection of $\CP^2$ cuts it into three 4-balls that meet pairwise in solid tori and whose triple intersection is a torus, $\Sigma$.  With Jeffrey Meier, we proved that every surface $\K \subset \CP^2$ can be isotoped into bridge position, so that it meets the 4-balls in trivial disks and the solid tori in trivial arcs~\cite{MZB2}.  Moreover, the surface is determined by the arcs in the solid tori, which can be isotoped into the central surface $\Sigma$ to create a \emph{shadow diagram} $\abc$ for $\K$.  In the very special case that these shadows have disjoint interiors and tile $\Sigma$ by hexagons, we call such a shadow diagram a \emph{hexagonal lattice diagram}.  For examples, see Figure~\ref{fig:alleff}.  We prove the following, where $\C_d$ denotes the complex curve of degree $d$ in $\CP^2$:

\begin{theorem}\label{thm:main1}
Let $\K \subset \CP^2$ be a positive genus surface such that $g(\K)$ is minimal in its homology class.  Then $\K$ is isotopic to $\C_d$ if and only if $\K$ admits a hexagonal lattice diagram.
\end{theorem}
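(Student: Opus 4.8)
The plan is to prove the two implications by separate arguments. For the ``only if'' direction, assume $\K$ is isotopic to $\C_d$. Since the smooth complex curves of degree $d$ in $\CP^2$ form a connected family --- the complement of the discriminant in the projective space of degree-$d$ forms --- they are pairwise ambiently isotopic, so it is enough to produce one degree-$d$ curve admitting a hexagonal lattice diagram. I would take the variety $\V_d$, or equally $\V'_d$, both isotopic to $\C_d$: one puts $\V_d$ in bridge position with respect to the standard trisection by arranging its defining monomials so that $\V_d$ meets each of the three $4$-balls in a trivial disk system and each of the three solid tori in trivial arcs, pushes those arcs into $\Sigma$, and checks that the resulting shadows have disjoint interiors and tile $\Sigma$ by hexagons, the pattern of Figure~\ref{fig:alleff} persisting for all $d \ge 3$. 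This explicit computation, carried out in the later sections, is the technical heart of this direction; alternatively, one could feed the symplectic surface $\C_d$ through Lambert-Cole's bridge position construction and inspect the output shadow.

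For the ``if'' direction, suppose $\K$ admits a hexagonal lattice diagram $D = \abc$. The first step is a combinatorial classification. A tiling of $\Sigma \cong T^2$ by hexagons whose edges are arcs of the three systems is a trivalent graph carrying a proper $3$-edge-coloring by $\{\aaa, \bbb, \ccc\}$, and validity as a shadow diagram requires in addition that each pair of systems close up to a trivial configuration in the relevant solid tori; working modulo the finite symmetry group of the standard trisection (which permutes the three meridional slopes on $\Sigma$) and the shadow moves that realize ambient isotopy of $\K$, I expect the possibilities to collapse to eight families, each indexed by a degree $d \ge 3$ with the number of hexagons a quadratic function of $d$. The second step reconstructs the surface: the bridge trisection determined by $D$ has equal numbers $|\aaa| = |\bbb| = |\ccc|$ of arcs in the three solid tori, $b = 2|\aaa|$ bridge points, and patch counts $c_i$ equal to the numbers of circles in the three pairwise unions of shadow systems, so $\chi(\K) = c_1 + c_2 + c_3 - b/2 = -d^2 + 3d$, hence $g(\K) = \binom{d-1}{2}$, while reading how the shadows wind on $\Sigma$ gives $[\K] = d\,[\CP^1]$. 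Since $\binom{d-1}{2}$ is the minimal genus in class $d$ by the Thom conjecture (Kronheimer--Mrowka), this is consistent with the hypothesis on $\K$ and identifies the eligible hexagonal diagrams.

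The final step identifies $\K$ with $\C_d$. For the two efficient families, the classification of Step~1 matches $D$ with the shadow diagram of $\V_d$ or $\V'_d$, so $\K \simeq \C_d$. For each of the remaining six families, the disjoint-interior hexagonal structure exhibits a removable bridge --- a bridge arc cobounding a disk with an arc of $\Sigma$ --- so the bridge trisection is a perturbation of a smaller one, and I would show that undoing all such perturbations terminates at an efficient hexagonal lattice diagram, necessarily one of the first two families, whence again $\K \simeq \C_d$. I expect this last step to be the main obstacle: one must check that the deperturbation process stays within, or returns to, the class of hexagonal lattice diagrams, and that it cannot end at an efficient bridge trisection of a different surface in class $d$ --- that is, one needs enough rigidity in the hexagonal structure to pin down the isotopy class. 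This is precisely where the hypothesis that $g(\K)$ is minimal in its homology class does its work, allowing the combinatorial output of Step~2 to be promoted to a genuine identification with $\C_d$.
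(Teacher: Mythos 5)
Your forward direction and your first two steps of the reverse direction are broadly in line with the paper: the forward implication is quoted from Lambert-Cole--Meier (and is independently recoverable from Theorem~\ref{thm:main2}, which exhibits $\V_d$ and $\V'_d$ in efficient bridge position with diagrams $(A)_d$ and $(E)_d$), and the classification into eight families is indeed carried out by constraining the homology classes of $\ab$, $\bc$, $\ca$ (each must be an unlink in the relevant $S^3$, hence a $(p,q)$-torus link with $p$ or $q$ in $\{0,\pm1\}$) and then filtering the resulting $216$ cases through the self-intersection formula, the genus formula, and the Thom conjecture. Your Euler characteristic and degree computations match Lemmas~\ref{int} and~\ref{lem:genus}.

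The genuine gap is your third step for the six non-efficient families. You propose to locate a removable bridge and deperturb down to an efficient hexagonal lattice diagram, but this is exactly what the paper does \emph{not} know how to do: Section~\ref{sec:questions} poses ``which families of hexagonal lattice diagrams are related only by deperturbations?'' as an open question and only \emph{conjectures} that $(D)_d$ deperturbs to $(C)_d$ to $(B)_d$ to $(A)_d$. Moreover, in a hexagonal lattice diagram the shadow arcs meet only at bridge points and tile $\Sigma$ by hexagons, so no arc cobounds a bigon with another and there is no visible removable bridge at the diagrammatic level; nor is there any reason the intermediate stages of a deperturbation sequence would remain hexagonal. The paper's actual argument is entirely different: it proceeds by induction on degree within each family, realizing the degree-$d$ diagram as the smoothing of an overlapping pair (for example $(B)_2\cup(D)_{d-2}$ or $(A)_3\cup(D)_{d-3}$), showing via explicit clasp-resolution models that smoothing the crossings of the overlapping diagrams corresponds to resolving the transverse intersection points of the two underlying surfaces, and anchoring the induction with base cases that admit transverse orientations and hence are complex by Corollary~\ref{cor:17} (which leans on the resolved cases $d\le 17$ of the symplectic isotopy problem). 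Without either that inductive resolution argument or a proof of the deperturbation conjecture, your Step~3 does not close, and the rigidity you invoke at the end is asserted rather than established.
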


The forward direction was proved by Lambert-Cole and Meier in~\cite{LCM}.  For the reverse direction, we prove

\begin{theorem}\label{thm:support}
Suppose $\K \subset \CP^2$ is a positive genus surface such that $g(\K)$ is minimal in its homology class and such that $\K$ admits a hexagonal lattice diagram $\abc$.  Then this diagram falls into one of eight possible families.  Moreover, every diagram in each of these families corresponds to a surface isotopic to some complex curve $\C_d$.
\end{theorem}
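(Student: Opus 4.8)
The plan is to reduce Theorem~\ref{thm:support} to a finite combinatorial classification, to pin that classification down using the constraints imposed by genus-minimality, and then to reconstruct the surface from each surviving diagram and recognize it as a complex curve. First I would recast a hexagonal lattice diagram as a combinatorial object: its underlying arc system is a trivalent graph $G\subset\Sigma$ all of whose complementary regions are hexagons, carrying a proper $3$-edge-coloring by $\{\aaa,\bbb,\ccc\}$ forced by the shadow-diagram axioms (each bridge point is an endpoint of exactly one arc of each color). Passing to the universal cover, $G$ lifts to the $1$-skeleton of the standard honeycomb tiling of $\R^2$, so the diagram is equivalent to the data of (i) a finite-index sublattice $\Lambda$ of the translation lattice $L$ of the honeycomb, with an identification $\R^2/\Lambda\cong\Sigma$ compatible with the standard trisection, and (ii) a proper $3$-edge-coloring of the quotient $G=(\mathrm{honeycomb})/\Lambda$, all modulo the residual symmetries: translations, permutations of the three colors, and the involutions of $\Sigma$ that extend over the trisection of $\CP^2$. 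The color classes of such a coloring are perfect matchings of the honeycomb, and once the way the tiling wraps around $\Sigma$ is fixed only finitely many patterns can occur.

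Next I would feed in the topology. Since $\K$ has positive genus and minimizes genus in its homology class, the resolution of the Thom conjecture (Kronheimer--Mrowka) forces $[\K]=\pm d\,[\CP^1]$ with $d\ge 3$ and $g(\K)=\binom{d-1}{2}$, so $\chi(\K)=-d(d-3)$; as an unoriented submanifold $\K$ can only be isotopic to $\C_d$, so the sign of $d$ is irrelevant. Reconstructing the bridge trisection from $\abc$ in the sense of~\cite{MZB2}, the numbers of hexagons, of bridge points, and of patches on each side are forced by $\chi(\K)$, while the homology class $[\K]=d\,[\CP^1]$ pins down how $\Lambda$ sits inside $L$ and how the colors wrap around $\Sigma$. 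I expect this to cut the list from the previous step down to finitely many combinatorial types for each admissible $d$, and assembling these over all $d$ to produce exactly eight infinite families; two of them — the minimal honeycomb on $\Sigma$ in its two chiralities — realize the efficient bridge number, and the remaining six have strictly larger bridge number.

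It then remains to show that every diagram in each family presents a surface isotopic to $\C_d$. The mechanism is that within a fixed family any two diagrams are related by a short list of moves, each realized either by an ambient isotopy of $\CP^2$ carrying the standard trisection to itself up to isotopy (translations of $\Sigma$, the $S_3$-action permuting the three sectors, and color permutations) or by a bridge perturbation, which by~\cite{MZB2} leaves $\K$ unchanged. The two efficient families are identified with the curves $\V_d$ and $\V'_d$ by the explicit computation carried out later in this paper, and each of $\V_d$, $\V'_d$ is a smooth plane curve of degree $d$, hence isotopic to $\C_d$ because the discriminant has complex codimension one and so the space of smooth degree-$d$ curves is connected. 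The forward direction of Theorem~\ref{thm:main1}, due to Lambert-Cole and Meier~\cite{LCM}, places $\C_d$ in at least one family, and for each remaining family one reduces a chosen representative to an efficient diagram — equivalently, to an LCM diagram of $\C_d$ — by a sequence of the moves above. Combining these yields the theorem.

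The crux is this last step. A bridge trisection does not a priori determine its surface: the surface must be reconstructed and then recognized, and since the smooth isotopy conjecture for $\CP^2$ is unavailable, the recognition must be carried out by an explicit sequence of ambient isotopies and perturbations, tracked through the banded-unlink description of the trisection. A secondary difficulty is the combinatorial bookkeeping in the classification step: one must show the topological constraints leave exactly eight families, neither more nor fewer — in particular ruling out an unbounded tower of honeycomb refinements of a fixed curve.
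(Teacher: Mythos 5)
Your overall architecture (finite classification, then recognition of each surviving diagram as $\C_d$) matches the paper's, but both halves of your plan have genuine gaps where the paper's actual mechanisms are missing.

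For the classification, your honeycomb-cover/sublattice picture correctly encodes the condition that the arcs tile $\Sigma$ by hexagons, but it omits the constraint that actually makes the answer finite: each pairing $\ab$, $\bc$, $\ca$ must bound disks in the corresponding $\pd X_j\cong S^3$, hence must be an \emph{unlink}, hence a $(p,q)$-torus link with $p$ or $q$ in $\{0,\pm 1\}$ (Lemma~\ref{lem:possible}). That restriction, combined with a diagrammatic formula for $[\K]^2$ in terms of the $(p_j,q_j)$ and the bridge number (Lemma~\ref{int}), the genus formula, and the Thom conjecture, is what cuts $216$ homological cases down to $32$, falling into eight classes. You correctly flag ``ruling out an unbounded tower of honeycomb refinements'' as the danger, but you supply no mechanism for it; ``I expect this to cut the list down to finitely many types'' is exactly the step that needs the unlink condition and the self-intersection computation, neither of which appears in your outline.

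For the recognition step, your plan is to identify the two efficient families with $\V_d$ and $\V'_d$ and then reduce every other family to an efficient diagram by ambient isotopies and perturbations. But whether the non-efficient families $(B)_d,\dots,(D)_d$ (and $(F)_d,\dots,(H)_d$) are deperturbations of the efficient ones is posed as an \emph{open question} in Section~\ref{sec:questions} of the paper --- it is conjectured, not proved, even for the pairs $(A)_d$ vs.\ $(E)_d$ beyond small degree. The paper's actual argument is entirely different: it proceeds by induction on $d$ \emph{within} each family, overlapping a low-degree diagram with a diagram of degree $d-1$, $d-2$, or $d-3$ and showing that smoothing the overlapping shadow diagram realizes the resolution of the transverse intersection points of the two underlying complex curves (Lemmas~\ref{lem:dh}--\ref{lem:ae}); since $\C_d$ is obtained from $\C_j\cup\C_k$ by resolving intersections, the induction closes. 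The base cases are anchored by a tool absent from your proposal: the low-degree diagrams admit transverse orientations, so Lambert-Cole's Theorem~\ref{thm:PLC} together with the known resolution of the symplectic isotopy problem for $d\le 17$ (Corollary~\ref{cor:17}) identifies them with complex curves. Without that anchor, or the resolution-of-intersections mechanism, your recognition step reduces to an unsolved problem.
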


The condition that $\K$ has positive genus is explained in detail in Remark~\ref{rmk:positive}.

Recent work of Lambert-Cole~\cite{PLCsymp} and Lambert-Cole, Meier, and Starkston~\cite{LMS} has uncovered intriguing connections between trisection theory and symplectic topology in dimension four.  Theorem~\ref{thm:main1} is similar in spirit to the following result of Lambert-Cole:

\begin{theorem}\cite{PLCsymp}\label{thm:PLC}
Let $\K \subset \CP^2$ be a surface such that $g(\K)$ is minimal in its homology class.  Then $\K$ is isotopic to a symplectic surface if and only if $\K$ admits a transverse bridge position.
\end{theorem}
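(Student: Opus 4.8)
The plan is to establish the two implications separately, exploiting that the standard trisection of $\CP^2$ can be upgraded to a \emph{Weinstein trisection}: each sector $Z_i$ is symplectomorphic to a Weinstein $4$-ball, each pairwise piece $Z_i \cap Z_j$ is a solid torus whose boundary inherits a tight contact structure, and the central torus $\Sigma$ is standard. The genus-minimality hypothesis is what couples the two sides of the equivalence through adjunction: by the symplectic Thom conjecture every symplectic representative already minimizes genus in its homology class, so the hypothesis is exactly the condition under which the slice-Bennequin bounds in the three Weinstein pieces are forced to be sharp, and this sharpness is what will be matched against transversality.

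For the forward implication I would take a symplectic surface $\K$ and isotope it symplectically into bridge position adapted to the Weinstein sectors, so that $\K \cap Z_i$ is a system of properly embedded symplectic disks and $\K$ meets each solid torus $Z_i \cap Z_j$ in arcs. Near the contact-type boundary $\partial Z_i$ the symplectic form restricts positively to the contact distribution along any symplectic surface; consequently the boundaries of these disks are transverse links and the connecting arcs are transverse arcs with respect to the induced contact structures. Recording these transverse boundaries yields precisely a transverse bridge position for $\K$.

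For the reverse implication I would run the construction backwards. Given a transverse bridge position, the transverse braid bounding the disks in each $\partial Z_i$ caps off by a symplectic disk system inside the Weinstein ball $Z_i$, compatibly with its handle structure. Gluing the three disk systems along their shared transverse boundary arcs produces a closed surface $\K'$ that is symplectic away from the interfaces; a symplectic smoothing across the solid tori, followed by a Moser-type or neck-stretching argument, renders $\K'$ globally symplectic and isotopic to $\K$. The self-linking data of the transverse boundaries forces each of the three slice-Bennequin inequalities to be an equality, and these reassemble into the global adjunction equality, certifying that $\K'$ realizes the minimal genus in its class.

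The main obstacle I anticipate is the gluing step in the reverse direction: matching the three piecewise-symplectic disk systems across the solid-torus interfaces so that the resulting surface is \emph{genuinely} symplectic rather than merely piecewise symplectic requires precise control of the contact framings and the self-linking/Bennequin data along each transverse arc. It is exactly here that transversality — as opposed to bare bridge position — is indispensable, since only transverse boundaries admit the symplectic caps whose local self-linking contributions sum correctly. Verifying that these contributions add up to the adjunction equality, and that the smoothing across the contact interfaces can be carried out within the symplectic category, is the technical heart of the argument.
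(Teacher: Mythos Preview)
This theorem is not proved in the paper under review; it is quoted from Lambert-Cole's paper \cite{PLCsymp} and used as a black box (see the citation attached to the theorem statement and the surrounding discussion in Section~\ref{sec:tools}). There is therefore no ``paper's own proof'' to compare your attempt against.

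That said, your outline is a fair high-level summary of the strategy in \cite{PLCsymp}: the standard trisection of $\CP^2$ does carry a compatible Weinstein structure, and the forward direction does proceed by isotoping a symplectic surface so that it meets each piece in symplectic disks whose boundaries are transverse. The reverse direction is indeed the hard part, and you have correctly identified the crux: producing genuinely symplectic caps from transverse boundary data and gluing them coherently. What you have written, however, is explicitly a \emph{plan} rather than a proof---you say as much---and the substantive content lives precisely in the steps you flag as obstacles. In particular, the phrases ``a symplectic smoothing across the solid tori, followed by a Moser-type or neck-stretching argument'' and ``verifying that these contributions add up to the adjunction equality'' are placeholders for the real work; Lambert-Cole's argument requires a careful construction of symplectic disks in each Weinstein sector bounded by prescribed transverse links, together with explicit control of the contact geometry near $\Sigma$ to ensure the pieces patch. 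Your sketch does not supply any of this, so while it is pointed in the right direction it does not constitute a proof.
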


The \emph{symplectic isotopy problem} asks whether every symplectic surface is isotopic through symplectic surfaces to some complex curve $\C_d$.  This problem has been answered in the affirmative for surfaces of degree $d \leq 17$ by Gromov, Shevchishin, Sikorav, and Siebert-Tian~\cite{gromov, shev, sikorav, siebert-tian}.  By combining Theorems~\ref{thm:main1} and~\ref{thm:PLC}, we can obtain a reformulation of the symplectic isotopy problem in purely combinatorial terms.

\begin{question}[Combinatorial symplectic isotopy problem]
Suppose that $\K \subset \CP^2$ is a surface such that $g(\K)$ is minimal in its homology class and such that $\K$ is admits a transverse bridge position $\T$.  Is $\T$ equivalent to a hexagonal lattice diagram?
\end{question}

The most interesting families uncovered in Theorem~\ref{thm:support} are the families denoted $(A)_d$ and $(E)_d$, which can be seen to be regular tilings of the torus $\Sigma$~\cite{perimeter}, and which are the only families yielding \emph{efficient} bridge trisections, in which the corresponding surfaces are decomposed into three disks.  Diagrams $(E)_d$ for $d=3,4,5,6$ are shown in Figure~\ref{fig:alleff}.  In~\cite{LCM}, Lambert-Cole and Meier proved that every complex curve $\C_d$ is isotopic to a surface with an efficient bridge trisection.  We prove a stronger result, that these diagrams correspond to surfaces that are trisected \emph{as complex curves}.  Specifically,

\begin{theorem}\label{thm:main2}
For $d > 0$, let $\V_d$ and $\V_d'$ denote the varieties
\begin{eqnarray*}
\V_d &=& \{[z_1:z_2:z_3] \in \CP^2 : z_1z_2^{d-1} + z_2z_3^{d-1} + z_3z_1^{d-1} = 0\}, \\
\V'_d &=& \{[z_1:z_2:z_3] \in \CP^2 : z_1^{d-1}z_2 + z_2^{d-1}z_3 + z_3^{d-1}z_1= 0\}.
\end{eqnarray*}
Then $\V_d$ and $\V_d'$ are bridge trisected with respect to the standard (Stein) trisection of $\CP^2$, and these bridge trisections correspond to the hexagonal lattice diagrams $(A)_d$ and $(E)_d$, respectively.
\end{theorem}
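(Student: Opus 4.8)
The plan is to analyze each variety $\V_d$ and $\V'_d$ directly against the standard Stein trisection of $\CP^2$, whose pieces are the three regions $Z_i = \{[z_1:z_2:z_3] : |z_j| \leq |z_i|, j \neq i\}$, meeting pairwise along the solid tori $\{|z_i| = |z_j| \geq |z_k|\}$ and centrally along the Clifford torus $\Sigma = \{|z_1| = |z_2| = |z_3|\}$. First I would intersect each variety with the three bridge 4-balls $Z_i$: on $Z_i$ we may normalize $z_i = 1$ and solve the defining equation for one of the remaining coordinates, exhibiting the intersection $\V_d \cap Z_i$ as a graph over a disk, hence a collection of trivial disks (the number of which is dictated by the degrees of the monomials, and will total the expected count matching an efficient or near-efficient bridge trisection). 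The key computation is to track, on the central torus $\Sigma$ parametrized by $(\theta_1, \theta_2, \theta_3) \in T^2$ (with $\theta_1 + \theta_2 + \theta_3 = 0$, say), where the variety meets $\Sigma$: setting $|z_1| = |z_2| = |z_3| = 1$ and writing $z_j = e^{i\theta_j}$, the equation $z_1 z_2^{d-1} + z_2 z_3^{d-1} + z_3 z_1^{d-1} = 0$ becomes a balance-of-three-unit-vectors condition, whose solution set is a finite collection of points — the bridge points — lying on the hexagonal lattice.

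Second, for each pairwise solid torus $Y_{ij} = \{|z_i| = |z_j| \geq |z_k|\}$, I would compute $\V_d \cap Y_{ij}$ and verify it consists of trivial (boundary-parallel) arcs, each connecting two of the bridge points found above. This amounts to parametrizing $Y_{ij}$ (a solid torus with core $\{z_k = 0\}$) and checking that the slice of the variety is a product arc; the argument is essentially that the defining polynomial, restricted to $Y_{ij}$, has no critical behavior transverse to the meridian disks, which follows from the monomial structure. Then I would push these arcs into $\Sigma$ along the product structure to record the shadow $(\aaa, \bbb, \ccc)$, and compare the resulting picture to the explicit combinatorial descriptions of $(A)_d$ and $(E)_d$ given in the proof of Theorem~\ref{thm:support} (or wherever those families are catalogued). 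The comparison should reduce to matching bridge-point coordinates and arc endpoints, which is a bookkeeping check once the coordinates are in hand.

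Third, there are two preliminary points I would need to nail down: that the trisection in question really is the standard Stein trisection (citing the relevant construction, presumably from the Lambert-Cole--Meier or Lambert-Cole--Meier--Starkston framework) so that ``bridge trisected with respect to'' is meaningful, and that the varieties are smooth of the stated degree so that $g(\V_d)$ and $g(\V'_d)$ are the genus-minimizing values and Theorem~\ref{thm:main1} applies if needed. Smoothness of $\V_d$ and $\V'_d$ can be checked by computing the partial derivatives of the defining polynomial and verifying the common-zero locus is empty in $\CP^2$ for all $d$ (a short Jacobian computation for these particular sparse polynomials). One also notes the symmetry: $\V'_d$ is the image of $\V_d$ under the coordinate permutation or under the substitution exchanging the roles of the exponents, so once $\V_d$ is handled, $\V'_d$ follows by transporting the computation through that symmetry, and the two diagrams $(A)_d$ and $(E)_d$ are correspondingly related.

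The main obstacle I expect is the explicit identification of the bridge points and shadow arcs on $\Sigma$ — that is, showing the intersection with $\Sigma$ lands exactly on the hexagonal lattice and that the arcs in the solid tori are genuinely trivial and close up to the tiling pattern of $(A)_d$ or $(E)_d$, rather than some isotopic but combinatorially scrambled diagram. Getting the bridge points right requires solving the ``three unit complex numbers summing to zero'' condition with the appropriate monomial weights and extracting a clean lattice description; getting the arcs right requires a careful (though in principle routine) analysis of how the variety sits inside each solid torus and a vigilant choice of the isotopy pushing arcs to $\Sigma$ so that no unnecessary crossings are introduced. Once those two pieces are pinned down, the corollary about complex hypersurfaces in $\CP^3$ admitting efficient Stein trisections should follow by the standard branched-cover / spun construction relating curves in $\CP^2$ to surfaces obtained from them, applied to the efficient family $(E)_d$ (and $(A)_d$).
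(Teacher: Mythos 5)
Your overall strategy --- intersect $\V_d$ directly with the three 4-balls, the three solid tori, and the central torus of the standard trisection, then match the resulting shadow diagram against $(A)_d$ --- is exactly the paper's strategy, and your preliminary points (smoothness via the Jacobian, the coordinate symmetry carrying $\V_d$ to $\V'_d$ and $(A)_d$ to $(E)_d$, the ``three unit vectors summing to zero'' description of the bridge points on $\Sigma$) all appear there. The gap is in the step you treat as easiest: the claim that after normalizing $z_i=1$ you can ``solve the defining equation for one of the remaining coordinates, exhibiting $\V_d \cap Z_i$ as a graph over a disk.'' The affine equation $z_1z_2^{d-1}+z_2+z_1^{d-1}=0$ has degree $d-1$ in $z_2$ (and in $z_1$), so for $d\geq 3$ the projection to either coordinate disk is a $(d-1)$-fold branched cover, not a graph; the intersection cannot be read off as a union of graphs, and your hedge ``efficient or near-efficient'' shows the disk count is not actually pinned down. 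In the paper this is the hardest part of the argument: one shows $\V_d\cap X_1$ is a \emph{single} boundary-parallel disk by assembling it from $2(d^2-3d+3)$ embedded copies of an explicit planar region $R_d$ glued along boundary arcs, and then producing a singular foliation of $X_1$ by 3-spheres exhibiting the result as the cone on the boundary knot $\V_d\cap\pd X_1$ (Proposition~\ref{prop:cone}). Having exactly one disk per 4-ball is what forces the diagram to be the efficient family $(A)_d$.

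The solid-torus step is similarly underestimated. ``No critical behavior transverse to the meridian disks, which follows from the monomial structure'' is not an argument: the paper needs a change of variables reducing the equation on $H_1$ to $r^{d-1}e^{i\nu}+re^{i\omega}=-1$, a triangle-inequality and Law-of-Cosines analysis (Lemmas~\ref{arc1} and~\ref{arc2}) to show each component is a boundary-parallel arc, an explicit count of $d^2-3d+3$ arcs per solid torus (Proposition~\ref{shadow}), and a separate, genuinely delicate argument (Lemma~\ref{disjoint}) that the projected shadows of $\aaa$ and $\bbb$ are disjoint in their interiors --- without which $(\aaa,\bbb,\ccc)$ is not even a hexagonal lattice diagram. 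You correctly flag these identifications as the ``main obstacle,'' but the proposal supplies no mechanism for overcoming it, and the one concrete mechanism it does offer (the graph claim for the 4-ball pieces) fails for all $d\geq 3$.
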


Finally, we obtain new information about Stein trisections.  Stein trisections were introduced in~\cite{LCM}.  A \emph{Stein trisection} of a complex 4-manifold $X$ is a trisection $X = X_1 \cup X_2 \cup X_3$ such that each 4-dimensional handlebody $X_j$ is an analytic polyhedron (see~\cite{PLCstein} for further details and examples).  Lambert-Cole and Meier showed that the standard trisection of $\CP^2$ is Stein and asked whether every complex projective surface admits a Stein trisection.  As a corollary to Theorem~\ref{thm:main2}, we obtain a partial positive answer to this question.

\begin{corollary}\label{cor:stein}
The complex surfaces $\Ss_d$ and $\Ss'_d$ given by
\begin{eqnarray*}
\Ss_d &=& \{[z_1:z_2:z_3:z_4] \in \CP^3 : z_1z_2^{d-1} + z_2z_3^{d-1} + z_3z_1^{d-1} + z_4^d= 0\}, \\
\Ss'_d &=& \{[z_1:z_2:z_3:z_4] \in \CP^3 : z_1^{d-1}z_2 + z_2^{d-1}z_3 + z_3^{d-1}z_1 + z_4^d= 0\}
\end{eqnarray*}
admit efficient Stein trisections.
\end{corollary}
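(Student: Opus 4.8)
The plan is to derive Corollary~\ref{cor:stein} from Theorem~\ref{thm:main2} by a branched-covering argument. The key observation is that $\Ss_d$ is the double cover of $\CP^2$ branched over $\V_d$: projecting $[z_1:z_2:z_3:z_4] \mapsto [z_1:z_2:z_3]$ exhibits $\Ss_d$ as the locus $z_4^d = -(z_1z_2^{d-1}+z_2z_3^{d-1}+z_3z_1^{d-1})$, which is the $d$-fold cyclic cover of $\CP^2$ branched along the curve $\V_d$ (and similarly $\Ss_d'$ over $\V_d'$). First I would make precise the sense in which a bridge trisection of a surface in a trisected 4-manifold lifts to a trisection of the cyclic branched cover: given the standard trisection $\CP^2 = X_1 \cup X_2 \cup X_3$ and $\V_d$ in bridge position, the preimage $\wh{X}_j$ of each $X_j$ under the branched covering map is again a 4-dimensional 1-handlebody (this is exactly the content of the branched-cover construction of trisections, and it uses precisely that $\V_d$ meets each $X_j$ in trivial disks, each $X_i \cap X_j$ in trivial arcs, and $\Sigma$ in the shadow points). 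This gives a trisection of $\Ss_d$; the efficiency (each $\wh{X}_j$ built with a prescribed, minimal number of handles) follows because the bridge trisection of $\V_d$ is efficient, i.e. $\V_d$ is cut into three disks, so the branch locus contributes no extra genus to the pieces of the cover.

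Next I would establish that this trisection of $\Ss_d$ is Stein. The standard trisection of $\CP^2$ is Stein by Lambert-Cole--Meier~\cite{LCM}, meaning each $X_j$ is an analytic polyhedron in $\CP^2$; explicitly, $X_j = \{ |z_j| \geq |z_i| \text{ for all } i\}$ (up to the conventions in~\cite{LCM}), which is cut out in the affine chart by a finite system of polynomial inequalities. The preimage $\wh{X}_j \subset \Ss_d$ is then $\wh{X}_j = \Ss_d \cap \pi^{-1}(X_j)$, and since $\Ss_d$ is itself an algebraic subvariety of $\CP^3$ and $\pi^{-1}(X_j)$ is cut out by polynomial inequalities in $\CP^3$ pulled back from those defining $X_j$, the piece $\wh{X}_j$ is an analytic polyhedron in $\Ss_d$: it is the intersection of a finite sublevel set of plurisubharmonic functions with the complex submanifold $\Ss_d$. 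The only subtlety is checking that $\wh{X}_j$ is a genuine $4$-dimensional handlebody and not something degenerate --- this again comes from Theorem~\ref{thm:main2}, which tells us that $\V_d$ is in bridge position with respect to the Stein trisection and has the efficient hexagonal lattice diagram $(A)_d$ (respectively $(E)_d$ for $\V_d'$), so the branched cover construction applies cleanly.

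The main obstacle, and the step requiring the most care, is the interface between the purely topological branched-cover-of-trisections machinery and the complex-analytic (Stein) structure. Topologically, it is well known how to lift a bridge trisection of a surface to a trisection of a cyclic branched cover --- the disks, arcs, and shadow points upstairs and downstairs correspond, and one must verify that trivial disks lift to trivial disks and trivial tangles to trivial tangles, which holds here because the branch set sits in bridge position. The genuinely new assertion is that when the downstairs trisection is Stein \emph{and} the branch curve is an algebraic curve in bridge position with respect to it, the resulting upstairs trisection of the branched cover inherits the Stein property; this needs the observation that cyclic branched covers of analytic polyhedra along algebraic hypersurfaces are again analytic polyhedra, together with the fact that $\Ss_d$ is smooth (which it is for all $d$, since $z_4^d + f(z_1,z_2,z_3)$ with $f$ defining a smooth curve $\V_d$ has no singular points in $\CP^3$). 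I would verify smoothness of $\Ss_d$ and $\Ss'_d$ directly from the defining equations via the Jacobian criterion, then assemble the pieces: Theorem~\ref{thm:main2} $\Rightarrow$ efficient bridge trisection of $\V_d$, $\V_d'$ with respect to the Stein trisection of $\CP^2$ $\Rightarrow$ (branched cover) efficient trisection of $\Ss_d$, $\Ss_d'$ $\Rightarrow$ (analyticity of the pieces) efficient Stein trisection.
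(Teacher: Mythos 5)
Your proposal is correct and follows essentially the same route as the paper: both use the projection $\pi([z_1:z_2:z_3:z_4]) = [z_1:z_2:z_3]$ to exhibit $\Ss_d$ as a degree $d$ cyclic branched cover of $\CP^2$ with branch locus $\V_d$, lift the standard Stein trisection through the analytic map $\pi$ using the fact (Theorem~\ref{thm:main2}) that $\V_d$ is in bridge position, and deduce efficiency of the lifted trisection from the efficiency of the bridge position. (One minor slip: you first describe $\Ss_d$ as the \emph{double} cover before correctly identifying it as the $d$-fold cyclic cover, but this does not affect the argument.)
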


\subsection{Layout of the paper}
In Section~\ref{sec:prelim}, we introduce relevant definitions and other preliminary material.  At then end of this section, we classify hexagonal lattice diagrams with Theorem~\ref{thm:family}, whose proof is a tedious case-by-case analysis that has been relegated to an Appendix in Section~\ref{sec:appendix}.  In Section~\ref{sec:tools}, we discuss various tools and techniques that will be used to show that the diagrams in the families classified by Theorem~\ref{thm:family} represent surfaces isotopic to complex curves, which we prove in Section~\ref{sec:fam}, completing the proof of Theorem~\ref{thm:main1}.  In Section~\ref{sec:stein}, we analyze the intersection of the varieties $\V_d$ and $\V_d'$ with the standard trisection of $\CP^2$ in order to prove Theorem~\ref{thm:main2} and Corollary~\ref{cor:stein}.  Finally, in Section~\ref{sec:questions}, we include a few questions for further investigation.

\subsection{Acknowledgements}
We are very grateful to Peter Lambert-Cole for discussing this problem, explaining his work, and sharing his insights, in particular pointing out the proof of Corollary~\ref{cor:stein} using Theorem~\ref{thm:main2}.  We also thank Jeffrey Meier and Laura Starkston for interesting discussions related to this work.  Finally, we appreciate the hospitality of the Max Planck Institute for Mathematics in Bonn, Germany, which accommodated the author as a guest researcher during part of the completion of this work.  The author is supported by NSF grant DMS-2005518.

\section{Preliminaries and classifying hexagonal lattice diagrams}\label{sec:prelim}

We work in the smooth category, and all manifolds are oriented and connected, unless otherwise noted.  Recall that $\CP^n$ is the quotient of $(\Cc^{n+1})^*$ by the equivalence relation $\vec z \sim \vec z'$ if there is some $w \in \Cc^*$ such that $\vec z'=w \vec z$.  We let $[z_1:\ldots:z_{n+1}]$ denote the equivalence class of $\vec z = (z_1,\dots,z_{n+1}) \in (\Cc^{n+1})^*$.  The focus of this paper is a collection of surfaces, called complex curves, in $\CP^2$.  We define the \emph{degree $d$ complex curve} $\C_d \subset \CP^2$ to be the surface
\[ \C_d = \{[z_1:z_2:z_3] : z_1^d + z_2^d + z_3^d = 0\},\]
considered up to isotopy.  Later on, when we wish to consider a zero set as a rigid object (without allowing for isotopy), we will use the term \emph{variety}.

In the definitions that follow, we use the conventions set in~\cite{LCM,PLCsymp,PLCthom}:   The \emph{standard trisection} $\T$ of $\CP^2$, induced by the moment polytope, is the decomposition $\CP^2 = X_1 \cup X_2 \cup X_3$, where (with indices taken modulo 3)
\[ X_j = \{[z_1:z_2:z_3]: |z_j|,|z_{j+1}|\leq |z_{j+2}|\},\]
so that each $X_j$ is diffeomorphic to a 4-ball.  The pairwise intersections $H_j = X_j \cap X_{j-1}$ can be described as
\[ H_j = \{[z_1:z_2:z_3]: |z_j| \leq |z_{j+1}| = |z_{j+2}|\},\]
so that $H_j$ is a solid torus $S_1 \X D^2$.  Finally, the triple intersection $\Sigma = X_1 \cap X_2 \cap X_3$ is
\[ \Sigma = \{[z_1:z_2:z_3]: |z_1| = |z_2| = |z_3|\},\]
a torus in $\CP^2$.  In addition, the natural orientation of $\CP^2$ induces orientations of each $X_j$, and we orient the solid tori so that $\pd X_j = H_j \cup -H_{j+1}$.  A trisection is uniquely determined up to diffeomorphism by a trisection diagram, which in this case consists of three oriented curves $\A$, $\n$, and $\g$ in $\Sigma$, where the curves $\A$, $\n$, and $\g$ bound meridian disks in the solid tori $H_1$, $H_2$, and $H_3$, respectively.  Fixing $z_3 = 1$, we can see that
\[ \Sigma = \{[e^{i\theta}: e^{i\psi} : 1]: \theta,\psi \in [0,2\pi]\},\]
and thus as parameterized curves in $\Sigma$,
\begin{eqnarray*}
\A &=& \{[e^{i\theta}:1:1]:\theta\in [0,2\pi]\} \\
\n &=& \{[1:e^{i\psi}:1]:\psi\in [0,2\pi]\} \\
\g &=& \{[e^{-i\theta}:e^{-i\theta}:1]:\theta\in [0,2\pi]\}.
\end{eqnarray*}
By \emph{curve}, we mean a collection of pairwise disjoint (possibly multiple parallel copies) of simple closed curves in $\Sigma$, and since $\pi_1(\Sigma)$ and $H_1(\Sigma;\Z)$ coincide, we will blur the distinction between a curve in $\Sigma$, its homotopy class, and its homology class.  Letting $\langle \cdot, \cdot \rangle$ denote the intersection pairing on $H_1(\Sigma;\Z)$ (i.e. the algebraic intersection number of two curves in $\Sigma$), we have
\[ \la \A,\n \ra = \la \n,\g \ra = \la \g,\A \ra = 1.\]
and as elements of $H_1(\Sigma)$, the pair $(\A,\n)$ is a symplectic basis with $\g = -\A - \n$ (and the same holds for any cyclic permutation of $\A$, $\n$, and $\g$).  Note that if $(\mu,\lambda)$ is an symplectic basis for $H_1(\Sigma)$, then
\[ \la a\mu + b\lambda, c\mu + d\lambda \ra= ad - bc.\]
To emphasize the three-fold symmetry of this construction, we will draw the torus $\Sigma$ not in the usual rectangular way but rather as a hexagon with opposite sides identified.  We set the convention that the curves $\A$, $\n$, and $\g$ are drawn in red, blue, and green, respectively, as shown in Figure~\ref{fig:cp2} below.  These curves remain the same throughout the hexagonal figures in this paper, but they are usually suppressed.

\begin{figure}[h!]
	\centering
	\includegraphics[width=.3\textwidth]{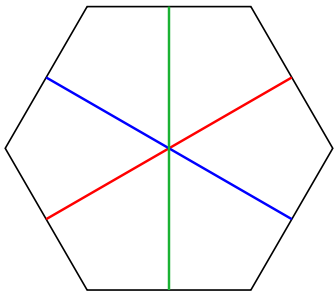}
%\put (-268,50) {\textcolor{red}{$a_g^0$}}
	\caption{Diagram for the standard trisection of $\CP^2$.  Opposite sides of the hexagon are identified to form the torus $\Sigma$.}
	\label{fig:cp2}
\end{figure}

Next, we turn our attention to surfaces in $\CP^2$.  Suppose that $\K \subset \CP^2$ is an embedded surface.  We say that $\K$ is in $(b;c_1,c_2,c_3)$-\emph{bridge position} with respect to the standard trisection if for each index $j$
\be
\item $\K \cap X_j$ is a collection of $c_j$ disks $\D_j$ that are isotopic rel boundary into $\pd X_j$, and
\item $\K \cap H_j$ is a collection of $b$ arcs $\tau_j$ that are isotopic rel boundary into $\pd H_j$.
\ee
It follows that $\K \cap \Sigma$ is a collection of $2b$ points.  With Meier, we proved that

\begin{theorem}\cite{MZB2}\label{thm:bridge}
Any surface $\K \subset \CP^2$ can be isotoped into some $(b;c_1,c_2,c_3)$-bridge position with respect to the standard trisection $\T$ of $\CP^2$.
\end{theorem}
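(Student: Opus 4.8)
The plan is to follow the general strategy for establishing existence of bridge trisections: put $\K$ into general position with respect to $\T$, and then repeatedly simplify the resulting decomposition by isotopies and \emph{stabilizations} until it attains the required trivial form, accepting whatever parameters $(b;c_1,c_2,c_3)$ emerge at the end. No control over these parameters is claimed, which is precisely what makes such a soft argument possible; economizing them is the difficulty addressed elsewhere in this paper.

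First I would perturb $\K$ so that it is transverse to $\Sigma$ and to each $H_j$ and lies in general position relative to $\T$. Then $\K \cap \Sigma$ is a finite collection of points, each $\K \cap H_j$ is a properly embedded $1$-manifold in the solid torus $H_j$ with boundary on $\Sigma$, and each $\K \cap X_j$ is a properly embedded surface in the $4$-ball $X_j$ whose boundary lies in $H_j \cup H_{j+1} \subset \pd X_j$. The objective is to isotope $\K$ until $\K \cap H_j$ consists only of $\pd$-parallel arcs and $\K \cap X_j$ consists only of $\pd$-parallel disks, for each $j$.

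Next I would carry out the simplification in three stages. (i) Eliminate the closed components of each $\K \cap H_j$: an inessential circle bounds a disk in $H_j$ and can be absorbed into a neighboring sheet (at the cost of raising $b$), while an essential circle can be pushed onto $\Sigma$ and traded for arcs; iterating makes each $\K \cap H_j$ a tangle of arcs. (ii) Make the arcs $\pd$-parallel: cutting $H_j$ along a meridian disk placed transverse to the arcs produces a $3$-ball in which arcs are automatically trivial, and the moves required to reach this position translate into isotopies of $\K$ together with elementary bridge stabilizations supported near $\Sigma$ that push strands across the central surface; after finitely many such moves every $\tau_j$ is a trivial tangle. (iii) Make the surface pieces trivial: choose a radial Morse function on $\K \cap X_j$ and cancel its index $1$ and index $2$ critical points. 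Since $X_j$ is a simply connected $4$-ball and the boundary data has been normalized, the standard finger-move and Whitney-move technology applies, with any residual obstruction absorbed by introducing extra index $0$ critical points (equivalently, extra disks and extra bridge points); this renders $\K \cap X_j$ a union of $\pd$-parallel disks.

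The main obstacle is that these simplifications must be performed in all three sectors at once: an isotopy or stabilization used to trivialize $\K$ inside $X_j$ disturbs the tangles $\tau_j$ and $\tau_{j+1}$ it shares with $X_{j-1}$ and $X_{j+1}$, and must not reintroduce complexity in the pieces already standardized there. The cleanest way to manage this bookkeeping is to phrase the whole argument in terms of a Morse $2$-function $\K \to \Delta$ subordinate to a trisection map $X \to \Delta$ of Gay--Kirby type, and to use the calculus of moves on such maps to globally normalize the fold locus; a more hands-on alternative is to confine all stabilizations to a collar of $\Sigma$, where a single such move is simultaneously compatible with all three pieces, interleaving them with sector-by-sector isotopies in a way that never increases total complexity in a previously finished sector. (One may also route the argument through a relative \emph{banded unlink} presentation of $\K$ adapted to $\T$, generalizing the known $S^4$ case.) Once $\K$ is simultaneously in trivial position in $X_1, X_2, X_3$ and in $H_1, H_2, H_3$, it is in $(b;c_1,c_2,c_3)$-bridge position for the resulting values of the parameters, and $\K \cap \Sigma$ is automatically $2b$ points, completing the proof.
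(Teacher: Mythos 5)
First, a point of comparison: the paper does not prove Theorem~\ref{thm:bridge} at all --- it is imported wholesale from~\cite{MZB2} --- so the only thing to measure your proposal against is the argument in that reference. Your outline does track the coarse strategy there: general position, normalization of the tangles $\tau_j$, normalization of the disk systems, with stabilization/perturbation supported near $\Sigma$ as the engine and a Morse-$2$-function (or banded unlink) formulation to coordinate the three sectors. So the shape is right.

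However, step (iii) contains a genuine gap. You cannot in general cancel the index~$1$ and index~$2$ critical points of $\K \cap X_j$ by an isotopy, even one that moves the boundary tangle: a properly embedded surface in a $4$-ball with trivial boundary data need not be a union of $\pd$-parallel disks (a nontrivially knotted disk in $B^4$ is the basic counterexample), and simple connectivity of $X_j$ together with ``finger-move and Whitney-move technology'' does not address this --- those tools remove intersection points of immersed or mutually intersecting surfaces, not the knotting of an embedded surface-with-boundary. The clause ``any residual obstruction absorbed by introducing extra index $0$ critical points'' is precisely the theorem to be proved, not an observation: the actual mechanism in~\cite{MZB2} is to arrange the critical points of $\K$ relative to the trisection so that each sector carries only the data of a trivial tangle capped off by bands, and then to prove --- this is a lemma with real content --- that after finitely many perturbations such a configuration yields a trivial disk system in each $X_j$ simultaneously. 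The cross-sector bookkeeping you flag is likewise real and is resolved there by exactly that perturbation calculus, not by an a priori complexity count. In short, the proposal is a correct road map, but the two load-bearing steps --- triviality of the disk systems and the simultaneous normalization --- are asserted rather than established.
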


In addition, we showed that $\K$ is uniquely determined by the $b$-strand trivial tangles $\tau_1$, $\tau_2$, and $\tau_3$.  A set of \emph{shadows} for $\tau_j$ is a collection $\aaa$ of pairwise disjoint embedded arcs in $\Sigma$ which are the image of $\tau_j$ under an isotopy pushing $\tau_j$ into $\Sigma = \pd H_j$, and a \emph{shadow diagram} is a triple $(\aaa,\bbb,\ccc)$ of sets of shadows determined by the tangles $\tau_1$, $\tau_2$, and $\tau_3$, respectively.  As a corollary of Theorem~\ref{thm:bridge}, every surface in $\CP^2$ can be encoded by a shadow diagram.

A special type of shadow diagram that will be the focus of this paper is a hexagonal lattice diagram.  We say that a shadow diagram $(\aaa,\bbb,\ccc)$ is a \emph{hexagonal lattice diagram} if the shadow arcs meet only at bridge points, and if the union $\aaa \cup \bbb \cup \ccc$ tiles the torus $\Sigma$ by hexagons so that every hexagon contains two opposite edges from each of the three sets $\aaa$, $\bbb$, and $\ccc$.  See Figure~\ref{fig:alleff} for examples.

\begin{remark}
Hexagonal tilings of the torus are interesting objects of study in their own right.  We are grateful to work in~\cite{perimeter} and~\cite{wedd} for helping us understand the nature of these tilings.
\end{remark}

Following~\cite{LCM}, we can see that given a hexagonal lattice diagram $(\aaa,\bbb,\ccc)$, each bridge point has a \emph{sign} of $\pm 1$ (not to be confused with an orientation, defined below), depending on whether a counterclockwise loop traverses arcs from $\aaa$, $\bbb$, $\ccc$ in order (sign $+1$) or vice versa (sign $-1$).  See Figure~\ref{fig:sign}.  It follows from the definition of a hexagonal lattice diagram that all bridge points will have the same sign, and thus we define the \emph{sign} $\eps = \pm 1$ of $(\aaa,\bbb,\ccc)$ to be the sign of any of its bridge points.

\begin{figure}[h!]
\begin{subfigure}{.45\textwidth}
  \centering
  \includegraphics[width=.3\linewidth]{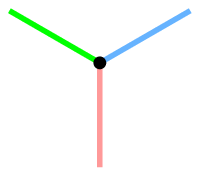}
  \label{fig:sgn+}
  \caption{$\eps = 1$}
\end{subfigure}
\begin{subfigure}{.45\textwidth}
  \centering
  \includegraphics[width=.3\linewidth]{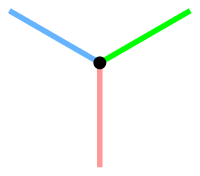}
  \label{fig:sgn-}
  \caption{$\eps = -1$}
\end{subfigure}
	\caption{The two possible signs of a bridge point}
\label{fig:sign}
\end{figure}

An \emph{orientation} of a hexagonal lattice diagram $(\aaa,\bbb,\ccc)$ is a choice of orientation of the bridge points so that vertices of every hexagon alternate between positive and negative.  Note that given a hexagonal lattice diagram, there are exactly two orientations, and an orientation of the bridge points induces an orientation of each arc in the diagram by requiring that arcs travel from negative points to positive points.  Note also that if $\K$ is an oriented surface in $\CP^2$ with a hexagonal lattice diagram $(\aaa,\bbb,\ccc)$, then $\K$ induces an orientation of $(\aaa,\bbb,\ccc)$ (see Lemma 2.1 of~\cite{MTZ} for a proof).  Since the arcs $\aaa$ and $\bbb$ meet only at bridge points, $\aaa \cup -\bbb$ is an oriented curve in $\Sigma$.  As shorthand, we let $\ab$ denote the curve
\[ \ab = \aaa \cup (-\bbb),\]
with similar notation $\ac$, $\ba$, $\bc$, $\ca$, and $\cb$ defined analogously.  By construction, a hexagonal lattice diagram determines $\ab$ and the homology classes from the other five pairings.  Note that in $H_1(\Sigma)$, these curves will be related by linear equations; for example,
\[ \ab + \bc = [\aaa \cup (-\bbb)] + [\bbb \cup (-\ccc)] = [\aaa \cup (-\ccc)] = \ac.\]

\begin{lemma}\label{lem:dist}
For a given hexagonal lattice diagram $(\aaa,\bbb,\ccc)$, we have $\la \ac,\bc \ra = \eps b$.  In particular, the homology classes $\ac$ and $\bc$ must be distinct.
\end{lemma}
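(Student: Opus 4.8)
The plan is to compute $\langle\ac,\bc\rangle$ directly as a signed count of transverse intersection points, exploiting the fact that these two curves share a large common subset. Recall from the discussion preceding the lemma that $\ac=\aaa\cup(-\ccc)$ and $\bc=\bbb\cup(-\ccc)$ are oriented closed curves in $\Sigma$: at each bridge point the red and green (respectively blue and green) arc-ends concatenate into a smooth branch because, for the chosen orientation, every incident arc runs from a negative to a positive bridge point. Since the shadows are pairwise disjoint embedded arcs and there are exactly $2b$ bridge points, each bridge point is an endpoint of exactly one arc from each of $\aaa$, $\bbb$, and $\ccc$; in particular every bridge point lies on a green arc, so as subsets of $\Sigma$ the curves $\ac$ and $\bc$ coincide along the green arcs $\ccc$ and are disjoint everywhere else.

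To make $\ac$ and $\bc$ transverse, I would perturb $\bc$ to a homologous curve $\bc'$ by replacing each green arc $\gamma_k$ (traversed in both curves as $-\gamma_k$) by a nearby parallel pushoff $-\gamma_k'$, pushed to a single side of $-\gamma_k$ chosen consistently for all $k$, the choice depending on $\eps$, and reconnected to the blue arcs by short arcs near the bridge points. Because red, blue, and green arcs meet one another only at bridge points, after this modification every point of $\ac\cap\bc'$ lies in a small neighborhood of a bridge point, and the computation reduces to a local model there. Near a bridge point $v$, three arc-ends meet, one of each color, in counterclockwise cyclic order $(\aaa,\bbb,\ccc)$ if $\eps=1$ and $(\aaa,\ccc,\bbb)$ if $\eps=-1$; moreover every incident arc terminates at $v$ if $v$ is positive and emanates from $v$ if $v$ is negative, so the picture near $v$ is determined by $\eps$ and the orientation-sign of $v$. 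In that picture $\ac$ is an arc through $v$ running along the red and green ends, cutting a punctured neighborhood of $v$ into two sectors, while $\bc'$ runs in from the blue end and bends to follow the pushed-off green end; whether $\bc'$ crosses $\ac$ is then read off by comparing which of the two sectors contains the blue end and which contains the pushoff.

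The hard part will be this bookkeeping, whose crux is the following asymmetry: the local configuration at the positive endpoint of $\gamma_k$ is the mirror image of the one at its negative endpoint (one is a source and the other a sink for the incident arcs), so that, for the side chosen above, the pushoff lands in the sector containing the blue end at exactly one of the two endpoints of $\gamma_k$. Hence each green arc $\gamma_k$ contributes exactly one transverse intersection point --- not zero, as the wrong choice of side would give, and not two --- and a check of the tangent directions against the orientation of $\Sigma$ shows this point has sign $\eps$. Summing over the $b$ green arcs yields $\langle\ac,\bc\rangle=\eps b$. The final assertion is then immediate: if $\ac=\bc$ in $H_1(\Sigma)$ then $\langle\ac,\bc\rangle=\langle\ac,\ac\rangle=0$, contradicting $\eps b\neq 0$ (since $b\geq 1$), so $\ac$ and $\bc$ are distinct. (Alternatively, one may first use the relation $\ac=\ab+\bc$ to reduce to computing $\langle\ab,\bc\rangle$, in which $\ab$ and $\bc$ share the blue arcs with opposite orientations; the ensuing local analysis has the same flavor and difficulty.)
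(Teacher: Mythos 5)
Your argument is correct and is essentially the paper's proof: one transverse intersection point of sign $\eps$ for each of the $b$ arcs of $\ccc$, giving $\la \ac,\bc\ra = \eps b \neq 0$, with distinctness then following from skew-symmetry of the intersection pairing. The only difference is that you write out explicitly the pushoff and local sector bookkeeping that the paper delegates to the picture in Figure~\ref{fig:smooth}.
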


\begin{proof}
Each arc in $\ccc$ gives rise to a point of intersection of the curves $\ab$ and $\bc$, and by the definition of hexagonal lattice diagram, each of these intersection points has the same sign, as shown in Figure~\ref{fig:smooth}.  Thus, $\la \ac, \bc \ra = \eps |\ccc| = \eps b$, so $\ac \neq \bc$.
\end{proof}

\begin{figure}[h!]
\begin{subfigure}{.48\textwidth}
  \centering
  \includegraphics[width=.6\linewidth]{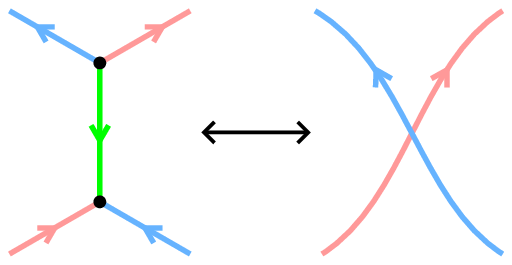}
  \label{fig:sm1}
  %\caption{Positively signed bridge points}
\end{subfigure}
\begin{subfigure}{.48\textwidth}
  \centering
  \includegraphics[width=.6\linewidth]{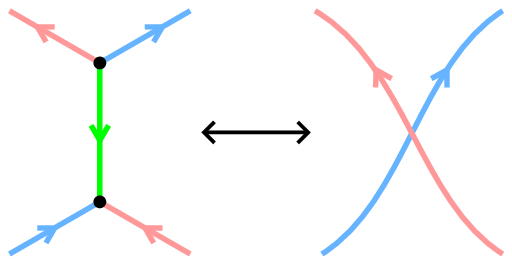}
  \label{fig:sm2}
  %\caption{Negatively signed bridge points}
\end{subfigure}
	\caption{A local picture of translating between a hexagonal lattice diagram with positively (left) and negatively (right) signed bridge points and the curves $\ac$ and $\bc$.}
\label{fig:smooth}
\end{figure}

We also have next lemma, which will allow us to understand hexagonal lattice diagrams by examining their constituent curves $\ab$ and $\bc$.

\begin{lemma}\label{lem:create}
A hexagonal lattice diagram $(\aaa,\bbb,\ccc)$ is determined by the homology classes $\ac$ and $\bc$.
\end{lemma}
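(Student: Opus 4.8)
The plan is to reconstruct the hexagonal lattice diagram $\abc$ from the two homology classes $\ac$ and $\bc$ in three stages: first recover the bridge number $b$, then recover the isotopy class of the curves $\ab$ and $\bc$ as honest (unoriented, unbased) multicurves in $\Sigma$, and finally recover the arcs $\aaa$, $\bbb$, $\ccc$ and hence the full diagram. For the first stage, note that by Lemma~\ref{lem:dist} we have $\la \ac, \bc\ra = \eps b$, so $b = |\la \ac, \bc\ra|$ is determined by the two classes, and the sign $\eps$ is determined as well. For the second stage, I would argue that since a hexagonal lattice diagram tiles $\Sigma$ by hexagons with two opposite edges from each of $\aaa$, $\bbb$, $\ccc$, the curve $\ab = \aaa \cup (-\bbb)$ is \emph{geodesic-like}: it is a disjoint union of parallel simple closed curves realizing its homology class with zero self-intersection, and similarly for $\bc$. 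On the torus, a primitive homology class is represented by a unique simple closed curve up to isotopy, and a non-primitive class $k\delta$ (with $\delta$ primitive) by $k$ parallel copies of that curve; hence $\ab$ and $\bc$ are each determined up to isotopy by their homology classes, which are in turn recovered from $\ac$ and $\bc$ via the linear relation $\ab = \ac - \bc$ noted before Lemma~\ref{lem:dist}.

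For the third stage — passing from the pair of multicurves $(\ab, \bc)$ back to the triple of shadow-arc systems — I would invoke the local model in Figure~\ref{fig:smooth}. The curves $\ab$ and $\bc$ intersect in exactly $b$ points, all of the same sign $\eps$, and near each such intersection point there is a unique way (dictated by $\eps$) to resolve the crossing into a bridge point with two $\aaa$-ends, two $\bbb$-ends, and two $\ccc$-ends: reading the four local strands, the $\ab$-strand splits as $\aaa \cup (-\bbb)$ and the $\bc$-strand splits as $\bbb \cup (-\ccc)$, and requiring the bridge point to have sign $\eps$ pins down which of the two local smoothings to use. Doing this simultaneously at all $b$ intersection points converts $\ab \cup \bc$ into a $4$-valent-at-bridge-points graph whose edges are labeled by $\aaa$, $\bbb$, $\ccc$; since the complementary regions of $\ab \cup \bc$ are the hexagons (each intersection point is a vertex of exactly the right hexagons), this graph is precisely the hexagonal lattice diagram, and $\aaa$, $\bbb$, $\ccc$ are read off as the three edge-color classes. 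Finally I would observe that an orientation of $\abc$, if desired, is one of the two induced by requiring vertices to alternate in sign, and this does not affect the underlying diagram; so the unoriented diagram is canonically recovered.

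I expect the main obstacle to be the third stage, specifically verifying that the local resolution can be carried out \emph{coherently} — that resolving every crossing of $\ab \cup \bc$ by the $\eps$-prescribed local move genuinely yields a hexagonal tiling with the required two-opposite-edges property at every hexagon, rather than some other $4$-valent configuration. This amounts to checking that the combinatorics forced locally by the sign $\eps$ propagate to the global tiling structure, and it is essentially the content of the translation illustrated in Figure~\ref{fig:smooth} run in reverse; one clean way to handle it is to note that the diagram we wish to reconstruct does restrict to exactly this picture near each bridge point (that is the forward direction, already established), so the reconstruction is the unique inverse of a well-defined forward operation. A secondary technical point is the appeal to uniqueness of simple closed curve representatives in a homology class on the torus and the identification of $\ab$ and $\bc$ with such representatives; this follows from the hexagonal-tiling hypothesis, which forces these curves to have no self-intersections and no inessential components, but I would state it carefully since it is what makes the homology classes sufficient data.
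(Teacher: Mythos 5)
Your proposal is correct and follows essentially the same route as the paper's proof: the homology classes pin down the curves as embedded multicurves up to isotopy on the torus, these intersect efficiently in $b$ points all of sign $\eps$, and each crossing is resolved into two bridge points joined by a $\ccc$ arc as in Figure~\ref{fig:smooth}. Your additional care about the coherence of the local resolutions and about ruling out inessential components is elaboration of the same argument (indeed more detail than the paper records), not a different approach.
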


\begin{proof}
Note that $\ac$ and $\bc$ determine unique curves up to homotopy, and assuming these curves intersect efficiently, all intersection points have the same sign.  It follows that each intersection point can be resolved by removing a neighborhood of the intersection point and inserting two bridge points and an arc in $\ccc$, as shown from right to left in Figure~\ref{fig:smooth}.  Carrying out this procedure at each intersection point yields the hexagonal lattice diagram $(\aaa,\bbb,\ccc)$.
\end{proof}

An example of the procedure described in Lemma~\ref{lem:create} is shown in Figure~\ref{fig:create}.

\begin{figure}[h!]
\begin{subfigure}{.48\textwidth}
  \centering
  \includegraphics[width=.6\linewidth]{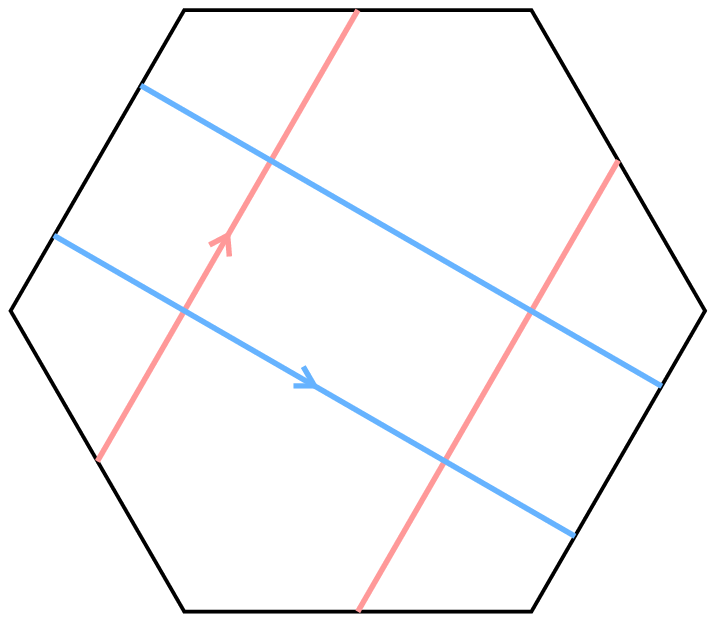}
  \label{fig:cr1}
  %\caption{Positively signed bridge points}
\end{subfigure}
\begin{subfigure}{.48\textwidth}
  \centering
  \includegraphics[width=.6\linewidth]{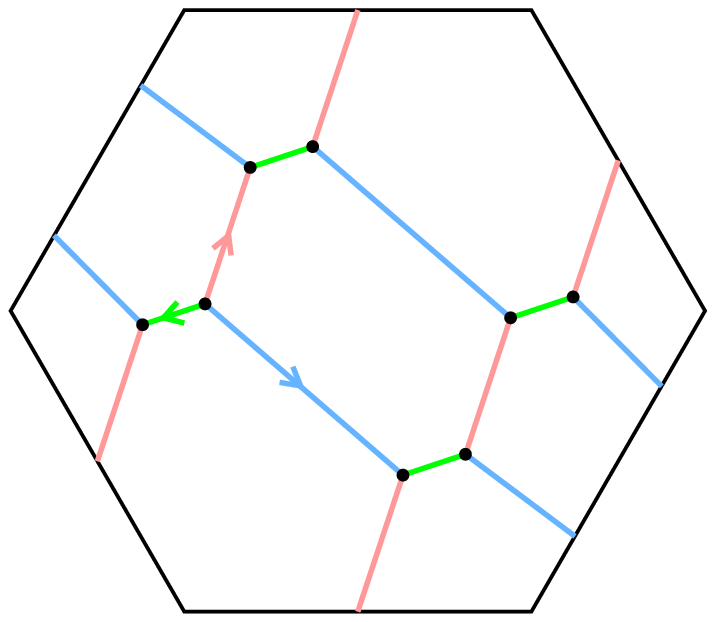}
  \label{fig:cr2}
  %\caption{Negatively signed bridge points}
\end{subfigure}
	\caption{Constructing a potential hexagonal lattice diagram from two curves $\ac$ and $\bc$.}
	\label{fig:create}
\end{figure}

Recall that $\A$, $\n$, and $\g$ are the curves bounding disks in the solid tori $H_1$, $H_2$, and $H_3$, respectively.  For the remainder of this section and in Section~\ref{sec:appendix}, we set the convention that
\[ \ab  = p_1[\A] + q_1[\n], \quad \bc = p_2[\n] + q_2[\g], \quad \text{ and } \ca = p_3[\g] + q_3[\A].\]
A caution to the reader:  Given two curves $\ab$ and $\bc$, it is not necessarily the case that the triple of arcs $(\aaa,\bbb,\ccc)$ constructed in Lemma~\ref{lem:create} will determine a hexagonal lattice diagram, since one requirement of a hexagonal lattice diagram is that the pairings $\ab$, $\bc$, and $\ca$ bound disks in the appropriate copies of $S^3 = \pd X_j$.  However, we know the following must be true:

\begin{lemma}\label{lem:possible}
Suppose that $(\aaa,\bbb,\ccc)$ is a hexagonal lattice diagram.  Then one of the following must hold:
\[ \ab = k\A, \quad \ab = k\n, \quad \la \ab,\A \ra = \pm 1,\,\, \,\, \text{or}\,\, \,\, \la \ab,\n \ra = \pm 1.\]
Similar statements hold for $\bc$ and $\ca$.
\end{lemma}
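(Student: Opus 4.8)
The plan is to reinterpret the statement as an unknotting/unlinking condition for a multicurve on a genus-one Heegaard torus. Recall from the discussion preceding the lemma that a defining feature of a hexagonal lattice diagram is that the curve $\ab=\aaa\cup(-\bbb)$ bounds a disjoint collection of embedded disks (equivalently, $\ab$ is an unlink) in the copy of $S^3$ that is $\pd X_1=H_1\cup(-H_2)$. This $S^3$ carries the standard genus-one Heegaard splitting with splitting surface $\Sigma$, where $\A$ bounds a disk in $H_1$ and $\n$ bounds a disk in $-H_2$. Because $|\aaa|=|\bbb|=|\ccc|=b$ while $\K\cap\Sigma$ consists of $2b$ points, each bridge point is the endpoint of exactly one arc from each of $\aaa,\bbb,\ccc$; deleting the $\ccc$-arcs therefore leaves a bivalent graph, so $\ab$ is an embedded multicurve in $\Sigma$. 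Writing $\ab=p_1\A+q_1\n$ and using $\la\A,\n\ra=1$, one computes $\la\ab,\A\ra=-q_1$ and $\la\ab,\n\ra=p_1$, so the four alternatives in the lemma together say exactly that $|p_1|\le 1$ or $|q_1|\le 1$.

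Next I would analyze $\ab$ as a curve on the Heegaard torus. Its essential components are mutually parallel simple closed curves of some primitive slope $p_0\A+q_0\n$; if there are $n$ of them then $(p_1,q_1)=(np_0,nq_0)$ and the essential part of $\ab$ is the $(np_0,nq_0)$–torus link. Since $\ab$ bounds disjoint disks in $S^3$, every sublink of $\ab$ is an unlink, so in particular this torus link is an unlink. I then invoke two classical facts about curves on the standard Heegaard torus of $S^3$: (i) for coprime $p,q$ the $(p,q)$–torus knot is unknotted if and only if $\min(|p|,|q|)\le 1$; and (ii) two parallel copies of a slope-$(p,q)$ curve on the Heegaard torus have linking number $\pm pq$ (the surface framing of $T(p,q)$ is its $pq$–framing). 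Now: if $n=0$ then $\ab$ is null-homologous, so $\ab=0\cdot\A$; if $n\ge 2$ then (ii) forces $p_0q_0=0$, hence $p_1=0$ or $q_1=0$, i.e. $\ab=k\n$ or $\ab=k\A$; and if $n=1$ then the single curve bounds a disk in $S^3$, so by (i) $\min(|p_1|,|q_1|)\le 1$, which is one of the four alternatives. The assertions for $\bc$ and $\ca$ follow identically, replacing $\pd X_1$ by $\pd X_2=H_2\cup(-H_3)$ and $\pd X_3=H_3\cup(-H_1)$, or simply by the cyclic symmetry of the standard trisection.

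The genuinely substantive point is fact (ii), together with getting conventions straight: one must check that ``$\ab=\aaa\cup(-\bbb)$ bounds disks in $\pd X_1$'' translates precisely into ``the multicurve $\ab\subset\Sigma$ is an unlink in the Heegaard $S^3$'' — this is where the orientation reversal on $\bbb$ and the identification of the correct copy of $S^3$ matter — and that the $\pm pq$ linking computation is compatible with the labeling $\ab=p_1\A+q_1\n$. By contrast, the trivalence of the bridge points, the classification of multicurves on a torus, and fact (i) are routine. One minor wrinkle to address explicitly is the degenerate case $n=0$, which the lemma covers only via the alternative $\ab=k\A$ with $k=0$; alternatively, one can first observe that in a bona fide hexagonal tiling each of $\ab,\bc,\ca$ is essential, eliminating this case.
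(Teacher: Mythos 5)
Your proposal is correct and follows essentially the same route as the paper: the paper's proof likewise observes that $\ab$ is a $(p_1,q_1)$-torus link in $S^3 = H_1 \cup H_2$ that must be an unlink, and asserts that this happens exactly when $p_1$, $q_1$, or one of the intersection numbers is $0$ or $\pm 1$. Your write-up simply fills in the torus-link analysis (the split into $n$ parallel primitive curves, the $p_0q_0$ linking number for $n\ge 2$, and the unknotting criterion for $n=1$) that the paper leaves implicit.
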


\begin{proof}
Note that $\ab$ is a $(p_1,q_1)$-torus link in $S^3 = H_1 \cup H_2$.  It follows that $\ab$ is an unlink if and only if $q_1 = 0$, $p_1=0$, $q_1 = \pm 1$, or $p_1 = \pm 1$.  These four possibilities translate to the four choices given in the statement of the lemma.
\end{proof}

We let $[\K]^2$ denote the algebraic self-intersection number of the surface $\K$.  Moreover, we have that $H_2(\CP^2;\Z) = \Z$ and can be generated by $[\C_1]$, so that $[\K] = d[\C_1]$ for some integer $d$, which is called the \emph{degree} of $\K$.  It follows that $[\K]^2 = (d[\C_1])^2 = d^2[\C_1]^2 = d^2$.  Using results from~\cite{PLCthom}, we establish the next formula to compute $[\K]^2$ using the data from a hexagonal lattice diagram:

\begin{lemma}\label{int}
Suppose $\K$ is represented by a hexagonal lattice diagram $(\aaa,\bbb,\ccc)$.  Then the self-intersection $[\K]^2$ is given by
\[ [\K]^2 = p_1q_1 + p_2q_2 + p_3q_3 + \eps b.\]
\end{lemma}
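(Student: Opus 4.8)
The plan is to compute $[\K]^2$ via a standard intersection-theoretic argument using the trisection structure, following the philosophy of \cite{PLCthom}. The idea is that $[\K]^2$ can be computed by taking a pushoff $\K'$ of $\K$ (a parallel copy determined by a section of the normal bundle) and counting the signed intersections $\K \cap \K'$. In bridge position, the most natural choice is to push $\K'$ off of $\K$ using a framing that is visible in each piece of the trisection, and then localize the count to the three solid tori $H_1, H_2, H_3$, since $\K$ and a suitable pushoff can be made disjoint inside the $4$-balls $X_j$. Inside each $H_j$, the arcs $\tau_j$ (or rather their shadows) and the pushed-off arcs $\tau_j'$ contribute intersection points, and the count in $H_j$ is governed by the relevant shadow data: by Lemma~\ref{lem:dist} the curves $\ac$ and $\bc$ meet $\eps b$ times, and more generally the framing data of each tangle gives a self-linking-type contribution.

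First I would set up notation carefully: fix the orientation of $(\aaa,\bbb,\ccc)$ induced by $\K$, and recall that $\ab = p_1[\A] + q_1[\n]$, $\bc = p_2[\n] + q_2[\g]$, $\ca = p_3[\g] + q_3[\A]$. Next I would identify the pushoff framing. The natural candidate is the \emph{surface framing} of $\K$ inside $\CP^2$; I would instead use a trisection-adapted pushoff, pushing $\K$ slightly so that in $X_j$ the disks become disjoint from the original (this uses that the disks are boundary-parallel), while in $H_j$ the arc $\tau_j$ and its pushoff $\tau_j'$ cobound a band in $\pd H_j$. The signed count of $\K \cap \K'$ then reduces to a sum over $j$ of an intersection number computed in $H_j \cup H_{j+1} = S^3$, where $\ab$ (and its permutations) appears as a torus link, together with a correction term coming from the bridge points themselves, which by the sign convention (Figure~\ref{fig:sign}) all contribute with sign $\eps$, producing the $+\eps b$ summand. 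The three torus-link contributions are exactly the self-pairings $p_jq_j$ (the self-linking of the $(p_j,q_j)$-torus link, appropriately normalized), giving $p_1q_1 + p_2q_2 + p_3q_3$.

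The key algebraic check is to verify that this localized count indeed equals $[\K]^2$ and not some shifted quantity, and to pin down the precise normalization so that the torus-link terms are $p_jq_j$ rather than, say, $p_jq_j$ plus a constant or $-p_jq_j$. I would do this by testing against known examples: the curve $\C_1$ (a line, with $[\C_1]^2 = 1$) should have a hexagonal lattice diagram with $b = 1$ and the obvious values of $p_j, q_j$, forcing the constant to vanish and the signs to be as claimed; similarly checking $\C_2$ or the efficient diagrams $(E)_d$ of Figure~\ref{fig:alleff} against $[\C_d]^2 = d^2$ pins everything down. The main obstacle I anticipate is the bookkeeping of orientations and signs across the three solid tori — making sure the orientation conventions on $\pd X_j = H_j \cup -H_{j+1}$, the induced orientation on the shadow arcs, and the sign $\eps$ of the bridge points all fit together coherently so that the contributions add rather than partially cancel. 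Getting the $\eps b$ term with the correct sign (as opposed to $-\eps b$) is the delicate point, and I would resolve it by the explicit local model in Figure~\ref{fig:smooth} combined with the normalization from the $\C_d$ examples.
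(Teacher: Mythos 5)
Your overall strategy --- compute $[\K]^2$ as a signed count of $\K \cap \K'$ for a trisection-adapted pushoff $\K'$, with three torus-link self-pairings contributing $p_1q_1+p_2q_2+p_3q_3$ and the bridge points contributing $\eps b$ --- is essentially the ``alternative proof'' that the paper itself records for Lemma~\ref{int}. (The paper's primary proof is different and purely algebraic: it quotes Propositions 2.3 and 2.5 of \cite{PLCthom}, which give $p_2 = d-q_1$, $q_2=d-p_3$, $q_3=d-p_1$ and $d^2 = d(p_1+p_2+p_3)-p_1p_2-p_2p_3-p_3p_1+\eps b$, and then rearranges; no geometric pushoff bookkeeping is needed.)

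However, your localization is backwards, and as written it would destroy the very terms you need. You assert that ``$\K$ and a suitable pushoff can be made disjoint inside the $4$-balls $X_j$'' and that the intersection points therefore live in the solid tori $H_j$. Neither claim is correct. The disks $\D_1$ are bounded by the $(p_1,q_1)$-torus link $\ab$ in $\pd X_1$, and the $\Sigma$-framed pushoff of that link has total linking number $p_1q_1$ with it; since the algebraic intersection number of two properly embedded surfaces in a $4$-ball equals the linking number of their boundary links, $\D_1$ and its pushoff must meet algebraically $p_1q_1$ times in $\mathrm{int}(X_1)$ and cannot be made disjoint unless $p_1q_1=0$. That forced intersection in the interior of each $X_j$ is precisely the source of the $p_jq_j$ terms --- your later attribution of these terms to ``an intersection number computed in $H_j\cup H_{j+1}=S^3$'' quietly contradicts the disjointness claim. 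Moreover, two surfaces meeting transversely in a $4$-manifold meet in isolated points, which generically avoid the $3$-dimensional spine $H_1\cup H_2\cup H_3$ entirely; the only contribution correctly attributed to the central part of the trisection is the $b$ points of sign $\eps$ seen on $\Sigma$ at the bridge points (the local model of Figure~\ref{fig:smooth}), not a count of arc intersections inside the $H_j$. Finally, ``pinning down the normalization'' by checking $\C_1$ or the $(E)_d$ diagrams is a sanity check, not a proof: until the localization is stated correctly there is no general formula with undetermined constants to calibrate. With the $p_jq_j$ contributions relocated to $\mathrm{int}(X_j)$ and the $\eps b$ term to $\Sigma$, your argument becomes the paper's alternative proof.
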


\begin{proof}
By Proposition 2.3 of~\cite{PLCthom}, we have $p_2 = d-q_1$, $q_2 = d - p_3$, and $q_3 = d-p_1$.  Then, by Proposition 2.5 of~\cite{PLCthom},
\begin{eqnarray*}
d^2 &=& d(p_1+p_2+p_3) - p_1p_2 - p_2p_3 - p_3p_1+\eps b \\
&=& (p_2+q_1)(p_1) + (p_3+q_2)(p_2) + (p_1+q_3)(p_3) - p_1p_2 - p_2p_3 - p_3p_1 + \eps b \\
&=& p_1q_1 + p_2q_2 + p_3q_3 + \eps b.
\end{eqnarray*}
Note that each of the terms $w_j(\Ss(\K))$ appearing in~\cite{PLCthom} is zero, since the curves $\ab$, $\bc$, and $\ca$ contain no crossings, and in addition, the $\frac{1}{2}\sum_{v} \eps_{\Ss}(v)$ term from~\cite{PLCthom} is equal to $\eps b$, since all bridge points in a hexagonal lattice diagram have the same sign.

An alternative proof worth noting here uses the fact that $\ab$ is a $(p_1,q_1)$-torus knot in $\pd X_1$, and thus if $\K'$ is a pushoff of $\K$, the trivial disks $\D_1$ bounded by $\ab$ will intersect their pushoff a total of $p_1q_1$ times in the interior of $X_1$.  Similarly, the disks $\D_2$ intersect a pushoff $p_2q_2$ times in the interior of $X_2$, and the disks $\D_3$ intersect a pushoff $p_3q_3$ times in the interior of $X_3$.  The remaining intersection points of $\K$ and $\K'$ can be seen in the surface $\Sigma$; there are precisely $b$ of them and they have sign $\eps$.
\end{proof}

The next computation is given in~\cite{MZB2}.

\begin{lemma}\label{lem:genus}
Suppose $\K$ is represented by a hexagonal lattice diagram $(\aaa,\bbb,\ccc)$.  Then the genus of $\K$ is given by
\[ g(\K) = \frac{b - c_1 - c_2 - c_3}{2} + 1.\]
\end{lemma}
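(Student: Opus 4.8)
The plan is to compute the Euler characteristic of $\K$ directly from its decomposition into pieces supplied by the bridge position, and then convert this into a genus formula via $\chi(\K) = 2 - 2g(\K)$ (using that $\K$ is connected). Recall that a surface $\K$ with a hexagonal lattice diagram $(\aaa,\bbb,\ccc)$ sits in $(b;c_1,c_2,c_3)$-bridge position with respect to the standard trisection, so that $\K$ is assembled from the three collections of trivial disks $\D_1, \D_2, \D_3$ (with $|\D_j| = c_j$) glued along the three collections of trivial arcs $\tau_1, \tau_2, \tau_3$ (each with $b$ arcs), which in turn meet along the $2b$ bridge points in $\Sigma$.

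First I would set up the CW-type count. Take the $2b$ bridge points as vertices, the arcs of $\tau_1 \cup \tau_2 \cup \tau_3$ as edges, and the disks of $\D_1 \cup \D_2 \cup \D_3$ as faces. There are $V = 2b$ vertices and $E = 3b$ edges (one collection of $b$ arcs from each of the three solid tori). For the faces, each disk in $\D_j$ is a single $2$-cell, so there are $F = c_1 + c_2 + c_3$ faces. This gives
\[ \chi(\K) = V - E + F = 2b - 3b + (c_1 + c_2 + c_3) = -b + c_1 + c_2 + c_3. \]
Setting this equal to $2 - 2g(\K)$ and solving yields exactly
\[ g(\K) = \frac{b - c_1 - c_2 - c_3}{2} + 1. \]

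The main thing to be careful about — and I expect this to be the only real obstacle — is verifying that the bridge-position decomposition genuinely gives a valid CW structure (or at least a cellulation whose Euler characteristic agrees with $\chi(\K)$). One must check that each trivial disk in $\D_j$ really does contribute a single open $2$-cell whose boundary is a union of shadow arcs and bridge points, and that the arcs of the $\tau_j$ are embedded with endpoints exactly at the bridge points and disjoint interiors; both follow from the definition of bridge position together with the hexagonal lattice hypothesis (which guarantees the arcs meet only at bridge points). It is worth noting that the formula is purely a function of $b$ and the $c_j$ and does not depend on the hexagonal structure per se; indeed this is the general genus formula for a surface in bridge position, and the cited reference~\cite{MZB2} establishes it in that generality, so here one may simply invoke it, or reprove it by the Euler characteristic count above.
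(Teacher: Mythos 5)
Your proof is correct. The paper does not prove this lemma itself but simply cites~\cite{MZB2}, and the Euler characteristic count you give ($V=2b$ bridge points, $E=3b$ arcs, $F=c_1+c_2+c_3$ disks, together with $\chi(\K)=2-2g(\K)$ for the connected surface $\K$) is precisely the standard argument from that reference, valid for any bridge position and not just hexagonal lattice diagrams, as you correctly observe.
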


Finally, we will use the Thom Conjecture, which was first proved by Kronheimer-Mrowka~\cite{KMThom} (with a reproof using bridge trisections by Lambert-Cole in~\cite{PLCthom}).

\begin{theorem}\label{thm:thom}
The degree $d$ surface $\K$ is genus-minimizing in its homology class if and only if
\[ g(\K) = \frac{(d-1)(d-2)}{2}.\]
\end{theorem}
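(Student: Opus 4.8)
The theorem follows formally from two facts about the Fermat curve $\C_d = \{[z_1:z_2:z_3] : z_1^d + z_2^d + z_3^d = 0\}$ (with $d > 0$): (i) it is a smooth complex plane curve of degree $d$, so the classical genus-degree (adjunction) formula $2g(\C_d) - 2 = \C_d \cdot (\C_d + K_{\CP^2}) = d(d-3)$ yields $g(\C_d) = \frac{(d-1)(d-2)}{2}$; and (ii) $\C_d$ is genus-minimizing in its homology class $d[\C_1]$ (equivalently, every smooth surface of degree $d$ has genus at least $\frac{(d-1)(d-2)}{2}$). Granting these, the plan is short. For the ``if'' direction: if $\K$ has degree $d$ and $g(\K) = \frac{(d-1)(d-2)}{2}$, then $[\K] = d[\C_1] = [\C_d]$ and $g(\K) = g(\C_d)$, so $\K$ realizes the minimal genus of its class and is therefore itself genus-minimizing. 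For the ``only if'' direction: the class $d[\C_1]$ already contains the genus-minimizer $\C_d$, so any genus-minimizer $\K$ in this class must satisfy $g(\K) = g(\C_d) = \frac{(d-1)(d-2)}{2}$.

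Fact (i) is routine: the Fermat polynomial has no critical points on $\CP^2$, and the adjunction formula for smooth plane curves is classical. Fact (ii) is the substantive input --- it is precisely the content of the Thom Conjecture --- and I would invoke it rather than reprove it. Its original proof (Kronheimer-Mrowka~\cite{KMThom}) is gauge-theoretic: one seeks an adjunction-type lower bound $2g(\K) - 2 \geq d^2 - 3d$ of Seiberg-Witten flavour, but since $b^+(\CP^2) = 1$ the general adjunction inequality for basic classes does not apply directly --- which is exactly what makes this case delicate --- so a more intricate argument is needed. Lambert-Cole~\cite{PLCthom} gives a reproof better adapted to the present setting: starting from a bridge trisection of $(\CP^2, \K)$, for which $g(\K) = \frac{b - c_1 - c_2 - c_3}{2} + 1$ by Lemma~\ref{lem:genus}, he extracts the required bound on $b - c_1 - c_2 - c_3$ from adjunction and slice-Bennequin estimates for the constituent tangles, organized via Heegaard Floer correction terms of the three-manifolds occurring in the trisection of the complement. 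One can also deduce (ii) as a special case of Ozsv\'ath-Szab\'o's symplectic Thom conjecture, since $\C_d$ is a symplectic submanifold of the K\"ahler manifold $\CP^2$.

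The step I expect to be the main obstacle is Fact (ii): the genus lower bound is a deep theorem with no elementary proof and, crucially, is inaccessible to the hexagonal-lattice machinery developed elsewhere in this paper, since it must hold for arbitrary smooth surfaces rather than only those admitting special diagrams. Everything else --- the reduction above together with the genus-degree computation of Fact (i) --- is formal.
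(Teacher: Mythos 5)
Your proposal is correct and matches the paper's treatment: the paper offers no proof of this statement, simply citing Kronheimer--Mrowka (and Lambert-Cole's bridge-trisection reproof) for the Thom Conjecture, which is exactly the deep input you isolate as Fact (ii), with the rest being the routine adjunction computation for the Fermat curve. Your explicit reduction of the ``if and only if'' to the genus lower bound plus the realization $g(\C_d)=\frac{(d-1)(d-2)}{2}$ is the intended (and only reasonable) argument.
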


Putting these ingredients together, we can classify all possible hexagonal lattice diagrams for positive genus surfaces that minimize genus in their homology class.

\begin{theorem}\label{thm:family}
Suppose that $(\aaa,\bbb,\ccc)$ is a hexagonal lattice diagram for a surface $\K$ that minimizes genus in its homology class.  Then, either the degree of $\K$ is 0, $\pm1$, or $\pm 2$, or up to reversing the orientation of $\K$ and a cyclic permutation of the collections of arcs $(\aaa,\bbb,\ccc)$, the hexagonal lattice diagram is in one of the following eight classes: \vspace{.2cm} \\
\begin{tabular}{llll}
(A) & $\ab = \A+(d-1)\n$, & $\bc = \n+(d-1)\g$, & $\ca = \g+(d-1)\A$ \\
(B) & $\ab = (d-1)\n$, & $\bc = \n + (d-1)\g$, & $\ca = \g + d\A$ \\
(C) & $\ab = d\n$, & $\bc = (d-1)\g$, & $\ca = \g+d\A$ \\
(D) & $\ab = d\n$, & $\bc = d\g$, & $\ca = d\A$ \\
(E) & $\ab = (d-1)\A + \n$, & $\bc = (d-1)\n+\g$, & $\ca = (d-1)\g + \A$ \\
(F) & $\ab = (d-1)\A$, & $\bc = d\n +\g$, & $\ca = (d-1)\g + \A$ \\
(G) & $\ab = d\A $, & $\bc = d\n+\g$, & $\ca = (d-1)\g$ \\
(H) & $\ab = d\A$, & $\bc = d\n$, & $\ca = d\g$ \\
\end{tabular}

\end{theorem}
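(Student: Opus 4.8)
The plan is to convert the hypotheses into a short system of Diophantine constraints on the three pairs $(p_j,q_j)$ and the degree $d$, and then run a finite case analysis. Writing $\ab = p_1\A + q_1\n$, $\bc = p_2\n + q_2\g$, $\ca = p_3\g + q_3\A$ as in the text, Proposition 2.3 of~\cite{PLCthom} (already used in the proof of Lemma~\ref{int}) gives $p_2 = d - q_1$, $q_2 = d - p_3$, $q_3 = d - p_1$, so the homological content of the diagram is recorded by $(p_1,p_2,p_3)$ together with $d$ and the sign $\eps$. The first substantive step is to compute the $c_j$: since the shadow arcs of a hexagonal lattice diagram meet only at bridge points, which are trivalent vertices of the tiling, each pairing curve $\ab$, $\bc$, $\ca$ is an \emph{embedded} multicurve in $\Sigma$, and the alternating orientation induced by $\K$ (Lemma 2.1 of~\cite{MTZ}) makes its components coherently oriented; hence $c_1 = \gcd(p_1,q_1)$, $c_2 = \gcd(p_2,q_2)$, $c_3 = \gcd(p_3,q_3)$, and in particular $c_j \geq 1$.

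Next I would assemble the counting formulas. Lemma~\ref{lem:genus} together with the Thom Conjecture (Theorem~\ref{thm:thom}) gives $b = c_1 + c_2 + c_3 + d^2 - 3d$, and feeding this into Lemma~\ref{int} yields the master identity
\[ 3d = \sum_{j}\bigl(p_jq_j + c_j\bigr)\ \ (\eps = 1), \qquad 2d^2 - 3d = \sum_{j}\bigl(p_jq_j - c_j\bigr)\ \ (\eps = -1), \]
where, after using the linear relations, $\sum_j p_jq_j = d(p_1+p_2+p_3) - (p_1p_2+p_2p_3+p_3p_1)$ and each $c_j$ is the gcd above. Then I would impose Lemma~\ref{lem:possible}, which forces, for each of the three pairings, that $p_j \in \{-1,0,1\}$ or $q_j \in \{-1,0,1\}$; via the linear relations this means each index $j$ satisfies either $p_j \in \{-1,0,1\}$ or a value such as $d - p_{j+1}$ lies in $\{-1,0,1\}$, leaving only finitely many combinations once we also exploit the three-fold cyclic symmetry (permuting $z_1,z_2,z_3$, hence $\A,\n,\g$ and $(p_1,q_1),(p_2,q_2),(p_3,q_3)$ cyclically) and the symmetry $\K \mapsto -\K$ (which sends $(p_j,q_j,d) \mapsto (-p_j,-q_j,-d)$). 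In each surviving combination the gcd terms simplify, the master identity becomes a polynomial equation in $d$ and the remaining free $p_j$, and its integer solutions either force $d \in \{0,\pm 1,\pm 2\}$ or land on exactly one of the tuples (A)--(H).

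The main obstacle is the case analysis itself: it is essentially $6^3$ combinations of the alternatives from Lemma~\ref{lem:possible}, cut down by the order-$6$ symmetry group, and this is why it is relegated to the appendix. The delicate points there are keeping the symmetry reductions mutually consistent, tracking the sign $\eps$ (some families occur only for one value of $\eps$, and the two master identities behave quite differently), and handling the borderline subcases in which several of the $p_j,q_j$ are simultaneously small, where the polynomial equation degenerates. A secondary subtlety that should be settled before the case analysis is the identity $c_j = \gcd(p_j,q_j)$: one must rule out a pairing curve having a pair of parallel, oppositely oriented components, which would otherwise make $c_j$ strictly larger than $\gcd(p_j,q_j)$ and loosen the master identity; this follows from the coherent-orientation property of the $\K$-induced alternating orientation. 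Finally, note that the present theorem only classifies which tuples $(p_j,q_j,d,\eps)$ can occur — that every diagram in each of the eight families is actually realized by some complex curve $\C_d$ is the content of Theorem~\ref{thm:support}, which is established later.
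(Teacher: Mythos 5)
Your proposal is correct and takes essentially the same route as the paper: the paper likewise reduces to the $6^3$ cases supplied by Lemma~\ref{lem:possible} and, in an appendix, eliminates cases by combining the self-intersection formula of Lemma~\ref{int} with the genus formula of Lemma~\ref{lem:genus} and the Thom conjecture, using the same identifications $p_2 = d-q_1$, $q_2 = d-p_3$, $q_3 = d-p_1$ and $c_j = \gcd(p_j,q_j)$ that you make explicit. Your consolidation of the two counting formulas into a single ``master identity'' in each sign branch is only a cosmetic repackaging of the computation the paper carries out case by case.
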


\begin{proof}
Suppose that $(\aaa,\bbb,\ccc)$ is a hexagonal lattice diagram for a some surface $\K$.  By Lemma~\ref{lem:possible}, there are six cases to check for each of $\ab$, $\bc$, and $\ca$; combined, these yield 216 potential cases to check.  Using Lemmas~\ref{int} and~\ref{lem:genus}, we complete a case-by-case analysis, which is tedious and not particularly enlightening.  As such, it has been relegated to the Appendix, completed as Proposition~\ref{prop:work} in Section~\ref{sec:appendix}.  After eliminating those cases in which $\K$ has small degree or in which the degree and genus of $\K$ do not satisfy the equation in Theorem~\ref{thm:thom}, there are 32 cases remaining (as shown in Table~\ref{table1}).  Of these 32 cases, 24 of them are contained in classes (B), (C), (F), and (G) above, each of which covers 6 cases related by cyclic permutations and reversing orientation.  The remaining 8 cases are contained in the classes (A), (D), (E), and (H), which are symmetric under cyclic permutations, and each of which covers 2 cases related by reversing orientation.
\end{proof}

We will let $(A)_d$ denote the degree $d$ diagram from family (A), and we notate diagrams from other families similarly.

\begin{remark}\label{rmk:positive}
Unfortunately, we include the ``positive genus" qualification above because spheres of degree one and two do not fit nicely into this scheme.  For example, consider the hexagonal lattice diagram shown in Figure~\ref{fig:spin}.  The result of any number of positive or negative Dehn twists of the arc in $\bbb$ about the gray annulus (a neighborhood of $\n$ in $\Sigma$) yields a hexagonal lattice diagram, and it can be shown that all of these diagrams are slide equivalent (defined in Section~\ref{sec:questions}).  While it is likely that Theorem~\ref{thm:main1} holds for these surfaces as well, the cases are too numerous to pleasantly exhaust.  Moreover, Theorem~\ref{thm:PLC} and Corollary~\ref{cor:17}, described in detail in the next section, imply that any hexagonal lattice diagram for a 2-sphere $\K$ with $d=1,2$ which satisfies the additional condition of admitting a transverse orientation is isotopic to $\C_1$ or $\C_2$, and this condition is straightforward to verify in practice.
\end{remark}

\begin{figure}[h!]
	\centering
	\includegraphics[width=.3\textwidth]{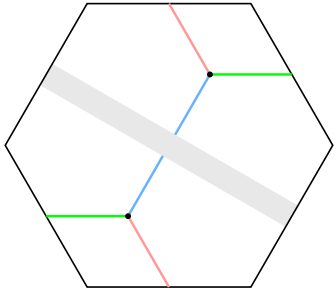}
%\put (-268,50) {\textcolor{red}{$a_g^0$}}
	\caption{A family of hexagonal lattice diagrams for 2-spheres can be obtained by Dehn twisting the single arc in $\bbb$ about the curve $\n$}
	\label{fig:spin}
\end{figure}

\section{Tools for construction and classification}\label{sec:tools}

In this section, we develop the tools needed to prove that the surfaces $\K$ resulting from the eight families of hexagonal lattice diagrams included in Theorem~\ref{thm:family} are isotopic to complex curves $\C_d$.  Our general strategy is to overlap two hexagonal lattice diagrams and to show that the diagram obtained by smoothing all intersection points describes the surface obtained by resolving all intersections of the corresponding surfaces.  First, we invoke work of Lambert-Cole to help understand cases with small degrees.

\subsection{Transverse bridge trisections and symplectic surfaces}

In~\cite{PLCsymp}, Lambert-Cole defined transverse bridge position to study symplectic surfaces in $\CP^2$ (with respect to the standard symplectic form on $\CP^2$, the Fubini-Study K\"ahler form $\omega_{FS}$).  We include the definition of a transverse bridge position here:  The standard trisection $\CP^2 = X_1 \cup X_2 \cup X_3$ induces three different foliations of the central surface $\Sigma$, one whose leaves are parallel copies of $\A$ curves, one whose leaves are $\n$ curves, and one whose leaves are $\g$ curves.  These foliations can be given a transverse orientation, as shown in Figure~\ref{fig:transverse}.

\begin{figure}[h!]
	\centering
	\includegraphics[width=.3\textwidth]{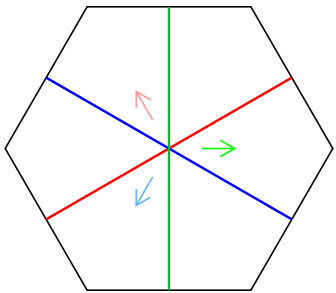}
%\put (-268,50) {\textcolor{red}{$a_g^0$}}
	\caption{Transverse orientations of the three foliations of $\Sigma$}
	\label{fig:transverse}
\end{figure}

A bridge trisection $\T$ for a surface $\K$ with shadow diagram $(\aaa,\bbb,\ccc)$ is said to be a \emph{transverse bridge position} if the arcs of $\aaa$ are transverse to the $\A$-foliation of $\Sigma$, the arcs of $\bbb$ are transverse to the $\n$-foliation of $\Sigma$, the arcs of $\ccc$ are transverse to the $\g$-foliation, the orientations of all arcs agree with their respective foliations, and each bridge point has sign $\eps = +1$.  Lambert-Cole proved Theorem~\ref{thm:PLC}, that for a surface $\K \subset \CP^2$ that minimizes genus in its homology class, $\K$ is isotopic to a symplectic surface if and only if $\K$ admits a transverse bridge position.

As mentioned in the introduction, the symplectic isotopy problem is known to be true for surfaces of degree $d \leq 17$, and thus as a corollary to the previous theorem, we have

\begin{corollary}\label{cor:17}
Suppose that $\K \subset \CP^2$ is genus-minimizing in its homology class, the degree $d$ of $\K$ satisfies $d \leq 17$, and $\K$ admits a transverse bridge position.  Then $\K$ is isotopic to a complex surface $\C_d$.  In particular, this is true if $\K$ admits a transversely oriented hexagonal lattice diagram and $d \leq 17$.
\end{corollary}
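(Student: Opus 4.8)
The plan is to deduce Corollary~\ref{cor:17} directly from Theorem~\ref{thm:PLC} together with the known resolution of the symplectic isotopy problem in low degree. First I would observe that since $\K$ is genus-minimizing in its homology class and admits a transverse bridge position, Theorem~\ref{thm:PLC} applies verbatim: $\K$ is isotopic to a symplectic surface $\K'$ (symplectic with respect to $\omega_{FS}$). Next, the homology class of $\K'$ equals that of $\K$, which is $d[\C_1]$; in particular the symplectic degree of $\K'$ is $d \leq 17$. The cited theorems of Gromov, Shevchishin, Sikorav, and Siebert--Tian~\cite{gromov, shev, sikorav, siebert-tian} assert that every symplectic surface in $\CP^2$ of degree $d \leq 17$ is symplectically isotopic to the complex curve $\C_d$; in particular it is smoothly isotopic to $\C_d$. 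Composing the two isotopies (smooth isotopy $\K \rightsquigarrow \K'$, then the isotopy $\K' \rightsquigarrow \C_d$ underlying the symplectic isotopy) yields a smooth isotopy $\K \rightsquigarrow \C_d$, which is the claim.

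For the final sentence, I would invoke the structural facts established earlier in Section~\ref{sec:prelim}. If $\K$ admits a transversely oriented hexagonal lattice diagram $(\aaa,\bbb,\ccc)$, then by definition of the transverse orientation the arcs of $\aaa$, $\bbb$, $\ccc$ are positively transverse to the $\A$-, $\n$-, $\g$-foliations of $\Sigma$ respectively, and the induced orientation forces all bridge points to have sign $\eps = +1$ (this is exactly the sign condition appearing in the definition of transverse bridge position, compatible with the observation following Figure~\ref{fig:sign} that all bridge points in a hexagonal lattice diagram share a common sign). Hence the bridge trisection underlying the diagram is a transverse bridge position, and the preceding paragraph applies. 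One should also note that the genus-minimizing hypothesis is in force by assumption, so Theorem~\ref{thm:thom} pins down $g(\K) = \frac{(d-1)(d-2)}{2}$ and there is no ambiguity about which curve $\C_d$ is the target.

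The only genuine subtlety — and the thing I would want to state carefully rather than gloss — is the precise bookkeeping of what "transversely oriented hexagonal lattice diagram" buys us: one must check that the combinatorial data of such a diagram literally matches, clause by clause, the list of conditions in Lambert-Cole's definition of transverse bridge position (arc-foliation transversality, agreement of arc orientations with the foliation orientations, and $\eps = +1$), so that Theorem~\ref{thm:PLC} can be applied as a black box. This is not deep, but it is the load-bearing step, since everything else is a formal concatenation of isotopies and a citation of the degree-$\leq 17$ symplectic isotopy results. No new computation is required.
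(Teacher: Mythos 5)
Your proposal is correct and follows exactly the paper's (essentially one-line) argument: the corollary is stated as an immediate consequence of Theorem~\ref{thm:PLC} combined with the known resolution of the symplectic isotopy problem for degree $d \leq 17$, with the final sentence following because a transversely oriented hexagonal lattice diagram is by definition a transverse bridge position. Your added care about matching the clauses of Lambert-Cole's definition is reasonable but not something the paper spells out either.
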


\subsection{Smoothing and resolutions}

Suppose $\abc$ and $\abcp$ are two shadow diagrams considered on the same surface $\Sigma$ with distinct bridge points and such that arcs intersect transversely, with $\aaa \cap \aaa' = \bbb \cap \bbb' = \ccc \cap \ccc' = \emp$, so that the only intersecting shadows correspond to differently colored arcs in our diagrams.  We call the pair $\abc \cup \abcp$ \emph{overlapping shadow diagrams}.  See Figure~\ref{fig:overlap} for examples.

\begin{figure}[h!]
\begin{subfigure}{.48\textwidth}
  \centering
  \includegraphics[width=.6\linewidth]{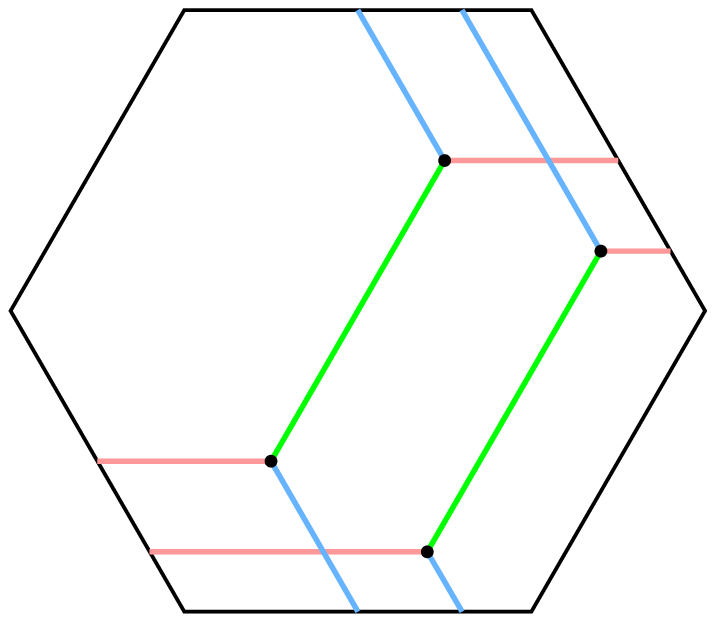}
  \caption{Two copies of $(D)_1$}
  \label{fig:over1}
  %\caption{Positively signed bridge points}
\end{subfigure}
\begin{subfigure}{.48\textwidth}
  \centering
  \includegraphics[width=.6\linewidth]{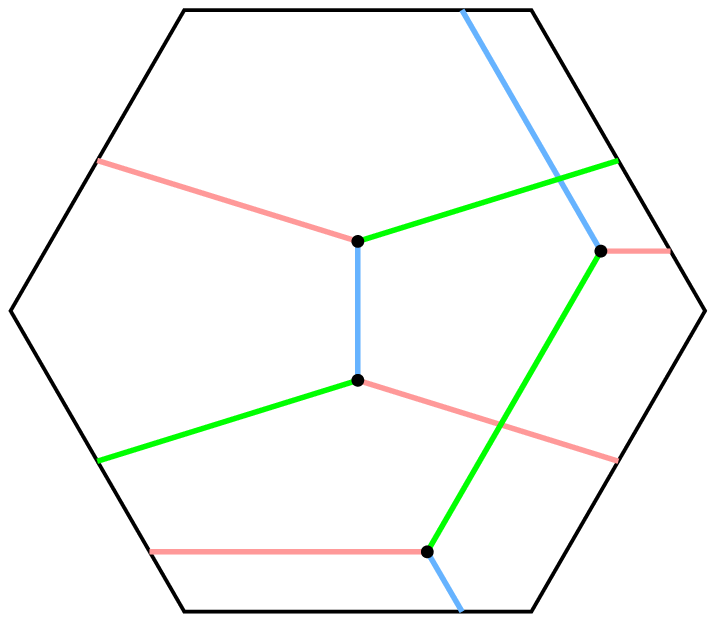}
  \caption{$(A)_2$ and $(D)_1$}
  \label{fig:over2}
  %\caption{Negatively signed bridge points}
\end{subfigure}
	\caption{Examples of overlapping shadow diagrams}
	\label{fig:overlap}
\end{figure}

Given overlapping shadow diagrams $\abc \cup \abcp$, suppose that two arcs, say $a \in \aaa$ and $b' \in \bbb'$ meet in a point $x$ in their interiors.  The \emph{smoothing} of $\abc \cup \abcp$ at $x$ replaces $a \cup b'$ with the arcs obtained by performing the oriented smoothing of $a \cup b'$ in a neighborhood of $x$, as shown from right to left in Figure~\ref{fig:smooth}.  If $\abcs$ is the triple obtained by smoothing each intersection point of $\abc \cup \abcp$, we say that $\abcs$ is the \emph{smoothing} of $\abc \cup \abcp$.

\begin{remark}
A priori, we have no guarantee that the smoothing $\abcs$ of a given overlapping shadow diagram $\abc \cup \abcp$ is itself a shadow diagram, since $\aaa^* \cup (-\bbb^*)$ may or may not determine an unlink in $\pd X_1$, for instance.  In our particular cases, however, we will show that the smoothings in question do indeed yield additional hexagonal lattice diagrams.
\end{remark}

Overlapping shadow diagrams give rise to a pair of embedded surfaces in $\CP^2$ that intersect transversely in some number of points.  To resolve these intersections, we will take the viewpoint presented in Section 2.1 of~\cite{GS}:  If $S$ and $S'$ are two oriented surfaces in a 4-manifold $X$ that intersect transversely in a point $y$, then a small 4-ball neighborhood $N$ of $y$ has the property that $\pd N$ intersects $S \cup S'$ in a Hopf link.  Additionally, the surface $S^*$ obtained by removing $(S \cup S') \cap N$ from $S \cup S'$ and replacing it with an annulus in $\pd N$ bounded by $(S \cup S') \cap \pd N$ is called the \emph{resolution} of $S \cup S'$ at the point $y$.  Paying heed to orientations, there is a unique such resolution up to isotopy.  In this context, the surface $\C_d$ can be obtained by taking $d$ parallel copies of $\C_1$, such that pairs meet generically in a single point, and resolving the $d(d-1)/2$ intersections to obtain the smooth surface $\C_d$.  (Alternatively, $\C_d$ can be obtained by taking generic copies of $\C_j$ and $\C_k$ that meet in $jk$ points and resolving intersections, where $d = j+k$.)

We can also understand resolution from a 3-dimensional perspective:  Suppose that two disks $D$ and $D'$ in $S^3$ have a clasp intersection, shown at left in Figure~\ref{fig:clasp}.  Viewing $S^3$ as $\pd B^4$ and pushing $D$ and $D'$ slightly into $\text{int}(B^4)$, we can turn the clasp intersection into a transverse intersection point, whose resolution yields an annulus $A$ such that $A$ can be isotoped back into $S^3$, as shown at right in Figure~\ref{fig:clasp}.

\begin{figure}[h!]
	\centering
	\includegraphics[width=.4\textwidth]{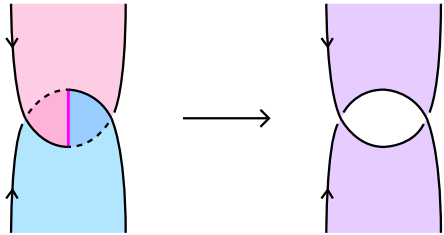}
%\put (-268,50) {\textcolor{red}{$a_g^0$}}
	\caption{Resolving a clasp intersection of two disks to obtain an annulus}
	\label{fig:clasp}
\end{figure}

\section{The eight families determine complex curves}\label{sec:fam}

In this section, we prove that the surfaces $\K$ resulting from the eight families of hexagonal lattice diagrams included in Theorem~\ref{thm:family} are isotopic to complex curves $\C_d$.  Recall that $(A)_d$ denotes the degree $d$ diagram from family (A), and we notate diagrams from other families similarly.  Additionally, all diagrams in each of the eight families correspond to surfaces that minimize genus in their homology classes.  Our general strategy is to proceed inductively, showing that each hexagonal lattice diagram is the smoothing $\abcs$ of two particular diagrams of smaller degree.  Since a hexagonal lattice diagram $(\aaa,\bbb,\ccc)$ is determined uniquely by the homology classes of $\ab$ and $\bc$, and smoothing intersections preserves these homology classes, it is straightforward to verify that certain smoothings are in the desired classes.

\subsection{The families $(D)_d$ and $(H)_d$}

\begin{lemma}\label{lem:dh}
Each diagram $(D)_d$ and $(H)_d$ corresponds to a surface isotopic to a complex curve $\C_d$.
\end{lemma}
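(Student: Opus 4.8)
The plan is to induct on the degree $d$, running parallel inductions for the two families and using at each step the correspondence from Section~\ref{sec:tools}: the smoothing of a pair of overlapping hexagonal lattice diagrams is a diagram for the surface obtained by resolving the transverse intersection points of the two surfaces they represent.

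For the base case $d=1$, the diagrams $(D)_1$ (with $\ab=\n$, $\bc=\g$, $\ca=\A$) and $(H)_1$ (with $\ab=\A$, $\bc=\n$, $\ca=\g$) are $(1;1,1,1)$-bridge trisections of a degree-one, hence genus-zero, surface, so each represents a surface isotopic to the line $\C_1$. One may argue this directly from the uniqueness of the minimal bridge trisection of such a surface, or---both diagrams have sign $\eps=+1$ and admit a transverse orientation---by Corollary~\ref{cor:17} (since $1\le 17$), as anticipated in Remark~\ref{rmk:positive}.

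For the inductive step, suppose $(D)_{d-1}$ represents $\C_{d-1}$. I would place a copy of $(D)_1$ in $\Sigma$ relative to $(D)_{d-1}$ so that like-colored arcs are disjoint while differently colored arcs meet transversely and away from every bridge point, making the two into overlapping shadow diagrams; these must meet in exactly $2(d-1)$ points, the count forced by the bridge numbers $b((D)_{d-1})=(d-1)^2$ and $b((D)_d)=d^2$ (the case $d=2$ is an instance of the overlapping diagrams in Figure~\ref{fig:overlap}). Form the smoothing $\abcs$. Since smoothing adds the homology classes of the relevant curves, $\ab^*=(d-1)\n+\n=d\n$, $\bc^*=d\g$, and $\ca^*=d\A$, each of which bounds an unlink in the corresponding $\pd X_j=S^3$ by Lemma~\ref{lem:possible}; a local check at each smoothed crossing (Figure~\ref{fig:smooth}) shows $\abcs$ is in fact a hexagonal lattice diagram, hence equal to $(D)_d$ by Lemma~\ref{lem:create}. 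On the other hand, the surface carried by $\abcs$ is the resolution of $\C_{d-1}$ and $\C_1$ at their $\C_{d-1}\cdot\C_1=d-1$ intersection points, which is exactly $\C_d$---obtained, as in Section~\ref{sec:tools}, by smoothing the $d-1$ nodes of a generic union $\C_{d-1}\cup\C_1$. The argument for $(H)_d$ is the same, starting from $(H)_{d-1}$ and a copy of $(H)_1$, whose pairing classes sum to $d\A$, $d\n$, $d\g$. The two inductions are not interchangeable: $(H)_d$ is obtained from $(D)_d$ only by a cyclic relabeling of the three families of arcs, an operation realized by no diffeomorphism of $\CP^2$ preserving the standard trisection.

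The main obstacle is the geometry of the inductive step. One must actually exhibit the copy of $(D)_1$ (resp.\ $(H)_1$) in a position forming overlapping shadow diagrams with the correct $2(d-1)$ crossings, and then verify at each crossing that the local smoothing of Figure~\ref{fig:smooth} is compatible with the surrounding hexagonal tiling---so that $\abcs$ is a genuine hexagonal lattice diagram and not merely a triple of arcs (the caution following Lemma~\ref{lem:create}). This is routine but delicate, and is presumably carried out with explicit pictures; once it is in place, the identification of the smoothed surface with $\C_d$ follows at once from the resolution picture of Section~\ref{sec:tools}.
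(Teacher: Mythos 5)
Your overall architecture matches the paper's: induct on $d$, handle $(D)_1$ and $(H)_1$ via Corollary~\ref{cor:17}, overlap $(D)_1$ with $(D)_{d-1}$, smooth, and confirm via homology classes that the smoothing is $(D)_d$. But there is a genuine gap at the heart of the inductive step. You invoke, as if it were established in Section~\ref{sec:tools}, ``the correspondence \dots\ the smoothing of a pair of overlapping hexagonal lattice diagrams is a diagram for the surface obtained by resolving the transverse intersection points of the two surfaces they represent,'' and you later assert that once $\abcs$ is confirmed to be a hexagonal lattice diagram, ``the identification of the smoothed surface with $\C_d$ follows at once from the resolution picture.'' No such correspondence is proved in Section~\ref{sec:tools}: that section only \emph{defines} the two operations (smoothing of diagrams, resolution of surfaces) separately, and the paper states explicitly that showing the smoothed diagram describes the resolved surface is the strategy to be carried out in each family. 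Knowing that $\abcs$ equals $(D)_d$ as a diagram does not by itself identify the surface it determines with the resolution of $\K\cup\K'$.

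Establishing that identification is precisely what occupies most of the paper's proof: one locates the $d-1$ intersection points of $\K$ and $\K'$ as arising from Hopf links $C\cup C_i'$ in $\pd X_1$ (the remaining patches bound disjoint meridian disks and contribute nothing), realizes each intersection as a clasp of the disks $D$ and $D_i'$, resolves the clasp to an annulus $A$, and then performs an explicit perturbation-style isotopy---leveling two rectangles of $A$ into $\Sigma$ and pushing them into the interiors of $X_2\cup X_3$---to see that the shadow diagram of the resolved surface is exactly the smoothing at the two crossings of $C$ and $C_i'$. You do flag that something must be checked here, but you locate the difficulty in verifying the hexagonal tiling of $\abcs$ (the easy part, read off from the figure) rather than in the diagram-smoothing/surface-resolution correspondence, which is the real mathematical content and cannot be dismissed as following ``at once.'' (A minor aside: your parenthetical about $(D)_d$ versus $(H)_d$ is off---cyclic relabeling of the arcs \emph{is} realized by the trisection-preserving diffeomorphism $[z_1:z_2:z_3]\mapsto[z_2:z_3:z_1]$, and each of $(D)$ and $(H)$ is in fact symmetric under cyclic permutation; the two families are instead related by reversing orientation.)
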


\begin{proof}
First, consider the diagram $(D)_1$, shown at left in Figure~\ref{fig:D}.  Note that $(D)_1$ can be transversely oriented, and thus by Corollary~\ref{cor:17}, its corresponding surface $\K$ is isotopic to $\C_1$.  Now, suppose by way of induction that the diagram $(D)_{d-1}$ can be obtained by taking $d-1$ parallel copies of $(D)_1$ and resolving intersections, and that the corresponding surface $\K'$ is isotopic to $\C_{d-1}$.  Consider the overlapping shadow diagrams $\abc$ and $\abcp$, where $\abc$ is the diagram $(D)_1$ and $\abcp$ is the diagram $(D)_{d-1}$, both transversely oriented and positioned as shown at center in Figure~\ref{fig:D}.  The smoothing $\abcs$ of $\abc \cup \abcp$ is shown at right in Figure~\ref{fig:D}.  We will prove that $\abcs$ is the diagram $(D)_d$ and corresponds the the surface obtained by resolving the $d-1$ intersections of $\K$ and $\K'$.

\begin{figure}[h!]
\begin{subfigure}{.32\textwidth}
  \centering
  \includegraphics[width=.9\linewidth]{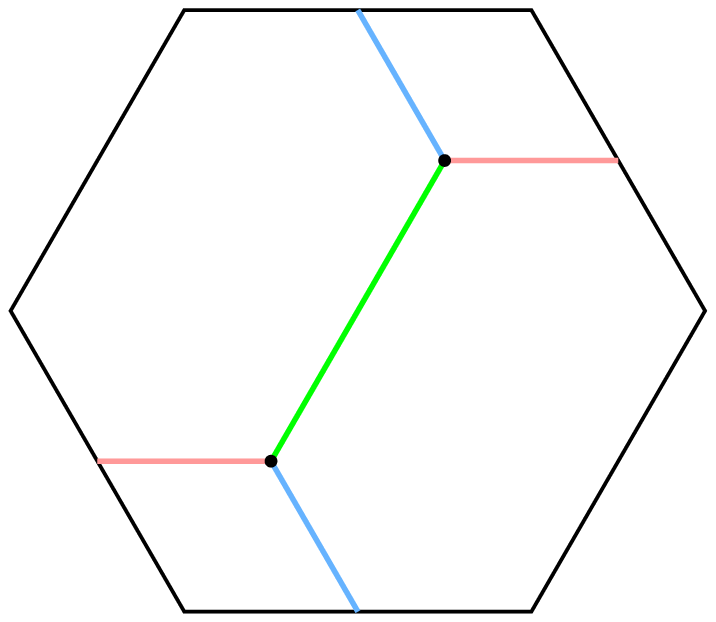}
  \label{fig:D1}
  \caption{$(D)_1$}
\end{subfigure}
\begin{subfigure}{.32\textwidth}
  \centering
  \includegraphics[width=.9\linewidth]{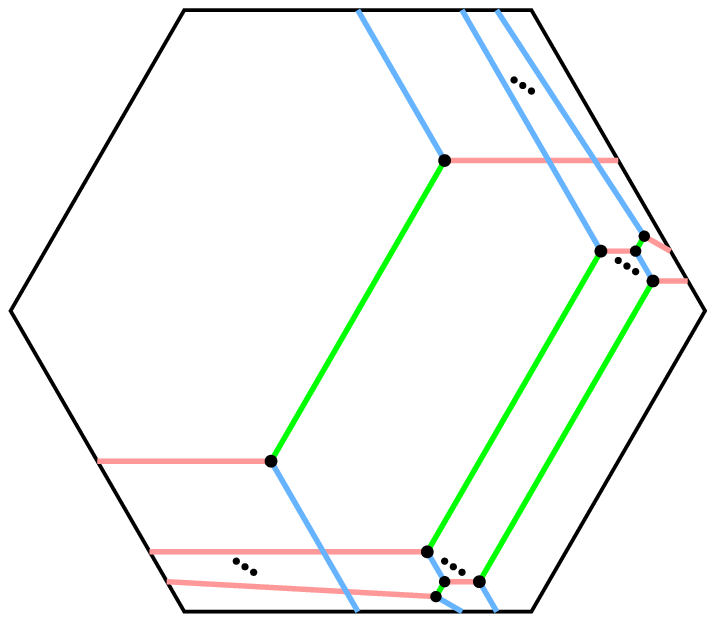}
  \label{fig:D2}
  \caption{$(D)_1 \cup (D)_{d-1}$}
\end{subfigure}
\begin{subfigure}{.32\textwidth}
  \centering
  \includegraphics[width=.9\linewidth]{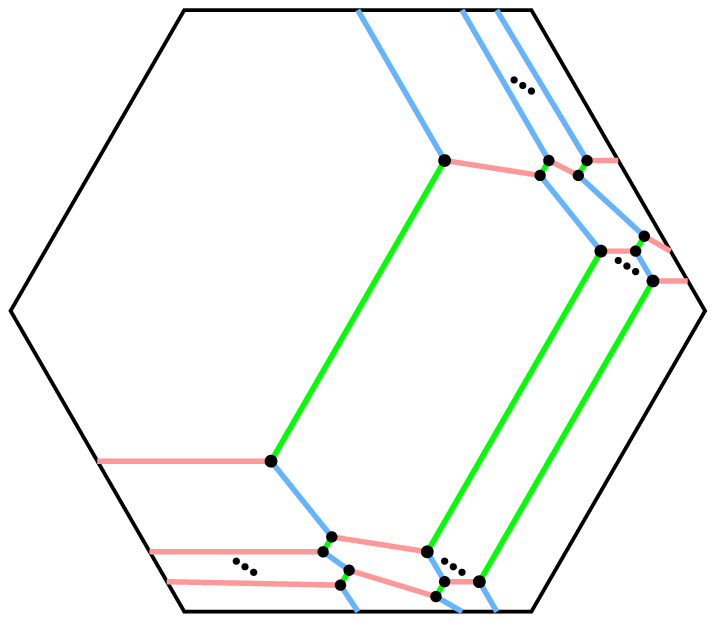}
  \label{fig:D3}
  \caption{Smoothing of $(D)_1 \cup (D)_{d-1}$}
\end{subfigure}
	\caption{}
	\label{fig:D}
\end{figure}

First, observe that $\abcs$ is indeed a hexagonal lattice diagram, and that as elements of $H_1(\Sigma)$,
\[\aaa^*\bbb^* = \ab + \abp = d\n \qquad \text{ and } \bbb^*\ccc^* = \bc + \bcp = d\g,\]
confirming that $(\aaa^*,\bbb^*,\ccc^*)$ is a hexagonal lattice diagram that agrees with $(D)_d$.  Now, observe that $\ab$ contains a single curve $C$, while $\abp$ contains $d-1$ parallel curves $C'_1,\dots,C'_{d-1}$, numbered in order so that highest red arc of $\aaa'$ at center in Figure~\ref{fig:D} belongs to $C'_1$, and so that $C \cup C'_1$ cuts out a bigon from $\Sigma$ that meets no other curves $C_i'$.  We see that curves of $\bc$ and $\bcp$ bound a collection of disjoint meridian disks of $H_3$, and curves of $\ca$ and $\capp$ bound a collection of disjoint meridian disks of $H_1$.  If we push arcs of $\aaa$ and $\aaa'$ that cross arcs of $\bbb'$ and $\bbb$ slightly into $H_1$, then for each $i$, the link $C \cup C_i'$ is a Hopf link in $\pd X_1$, and the corresponding patches $D$ and $D_i'$ can be chosen to meet transversely in a single point in $X_1$.  The sum total of these $d-1$ intersections account for the $d-1$ generic intersections of $\K$ and $\K'$.

Let $a \in \aaa$, $b \in \bbb$, $a' \in \aaa'$, and $b' \in \bbb'$ denote the arcs of $C$ and $C_1'$ corresponding to the two intersections of $C$ and $C_1'$ in $\Sigma$.  Note that $[C] = [C_1'] = [\n]$, and thus both curves bound meridian disks of $H_2$.  Pushing $a$ and $a'$ slightly into $H_1$ near the intersection points, we can isotope $D$ and $D_1'$ into $\pd X_1$ so that they intersect in a clasp as shown at far left in Figure~\ref{fig:clasp1}.  Let $A$ denote the annulus in $\pd X_1$ obtained by resolving the clasp intersection of $D$ and $D_1'$.  We embed $A$ in $\pd X_1 = H_1 \cup H_2$ so that $A \cap H_2$ is two disks connected by a pair of half-twisted bands $B$ and $B'$ in $H_1$ along $a \cup b'$ and $b \cup a'$.  Moreover, we can isotope two small rectangles $R \subset B$ and $R' \subset B'$ into the surface $\Sigma$, as shown at center left in Figure~\ref{fig:clasp1}.

\begin{figure}[h!]
\begin{subfigure}{.24\textwidth}
  \centering
  \includegraphics[width=.95\linewidth]{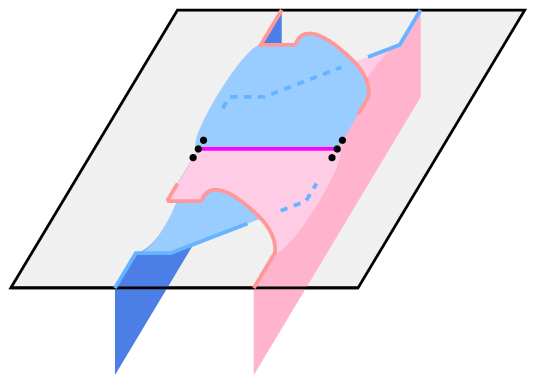}
  \label{fig:C1A}
\end{subfigure}
\begin{subfigure}{.24\textwidth}
  \centering
  \includegraphics[width=.95\linewidth]{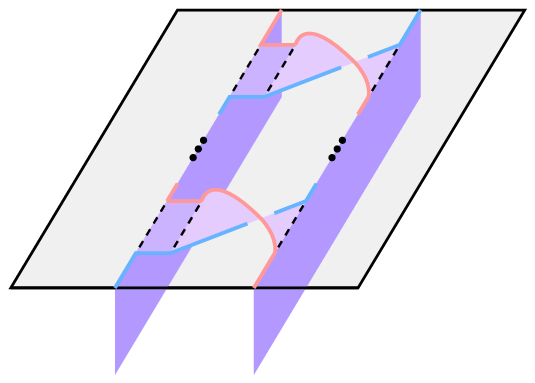}
  \label{fig:C1B}
\end{subfigure}
\begin{subfigure}{.24\textwidth}
  \centering
  \includegraphics[width=.95\linewidth]{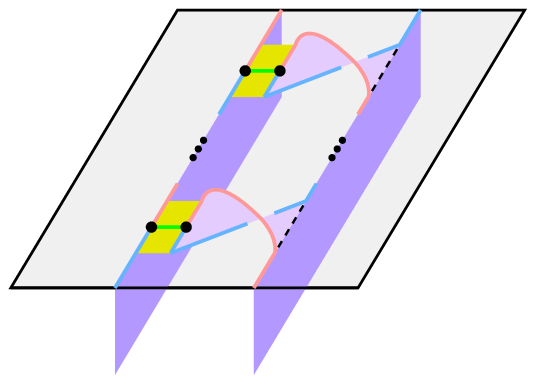}
    \label{fig:C1C}
\end{subfigure}
\begin{subfigure}{.24\textwidth}
  \centering
  \includegraphics[width=.95\linewidth]{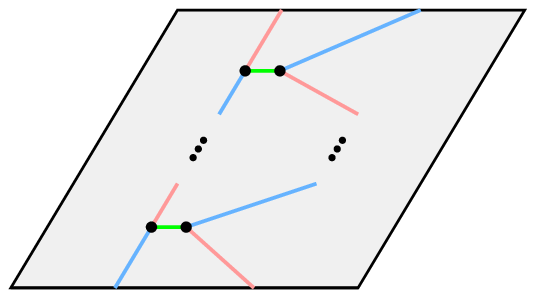}
    \label{fig:C1D}
\end{subfigure}
	\caption{At far left, the disks $D$ and $D_1'$ meet in a clasp.  At center left, the annulus $A$ consists of two disks in $H_2$ connected by half-twisted bands $B$ and $B'$ in $H_1$.  At center right, the rectangles $R$ and $R'$ are pushed into $X_2 \cup X_3$.  At far right, the induced shadows agree with smoothing the crossings.}
	\label{fig:clasp1}
\end{figure}

Now, isotope the rectangles $R$ and $R'$ away from $\pd X_1$ and into $\text{int}(X_2 \cup X_3)$.  Locally, this process is identical to that of bridge trisection perturbation, which is shown in Figures 23 and 24 of~\cite{MZB1}.  This isotopy excises the rectangles $R$ and $R'$ from $A$, attaching half of each rectangle to a patch along $\bc$, $\bcp$, $\ca$, or $\capp$, so that the two disk components of $A \setminus (R \cup R')$ remain in $X_1$, splitting $a$, $a'$, $b$, and $b'$, and adding two shadow new arcs, one across the middle of each rectangle, to $\ccc^*$ as shown at center right in Figure~\ref{fig:clasp1}.  Finally, after isotoping all arcs back into $\Sigma$ so that they meet efficiently, we see that at the level of shadow diagrams, the end result of this process is that the two intersection points of $C$ and $C_1'$ have been smoothed, as shown at far right in Figure~\ref{fig:clasp1}.

As a result of this modification, there is a curve $C^*$ obtained by smoothing $C$ and $C_1'$ that meets each of $C_2',\dots,C_{d-1}'$ in two points.  With $C^*$ taking the place of $C$, we can repeat the above argument with $C^*$ and $C_2'$, continuing in this manner until $d-1$ points of $\K \cap \K'$ have been resolved, and the resulting diagram $\abcs$, obtained by smoothing the $2(d-1)$ intersection points of $\abc \cup \abcp$, represents the embedded surface $\K^*$, obtained by smoothing the $d-1$ intersections of $\K$ and $\K'$

A similar proof follows for the diagrams $(H)_d$, starting with $(H)_1$ as shown at left in Figure~\ref{fig:H} and proceeding analogously, with overlapping diagram $(H)_1 \cup (H)_{d-1}$ shown at center and the smoothing $(H)_d$ shown at right in the same figure.
\end{proof}

\begin{figure}[h!]
\begin{subfigure}{.32\textwidth}
  \centering
  \includegraphics[width=.9\linewidth]{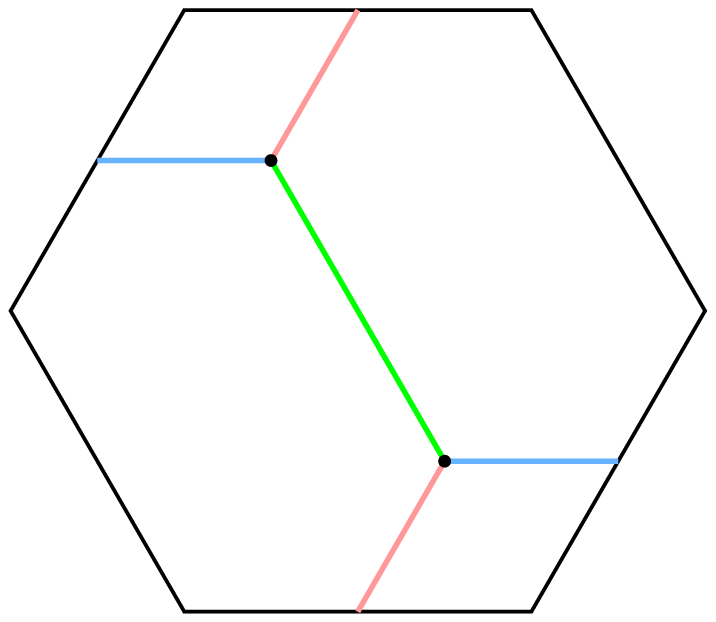}
  \label{fig:H1}
  \caption{$(H)_1$}
\end{subfigure}
\begin{subfigure}{.32\textwidth}
  \centering
  \includegraphics[width=.9\linewidth]{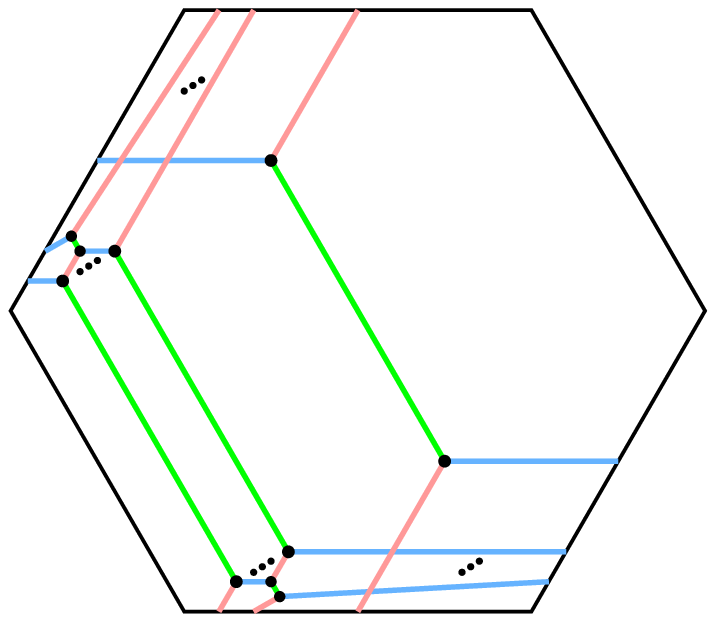}
  \label{fig:H2}
  \caption{$(H)_1 \cup (H)_{d-1}$}
\end{subfigure}
\begin{subfigure}{.32\textwidth}
  \centering
  \includegraphics[width=.9\linewidth]{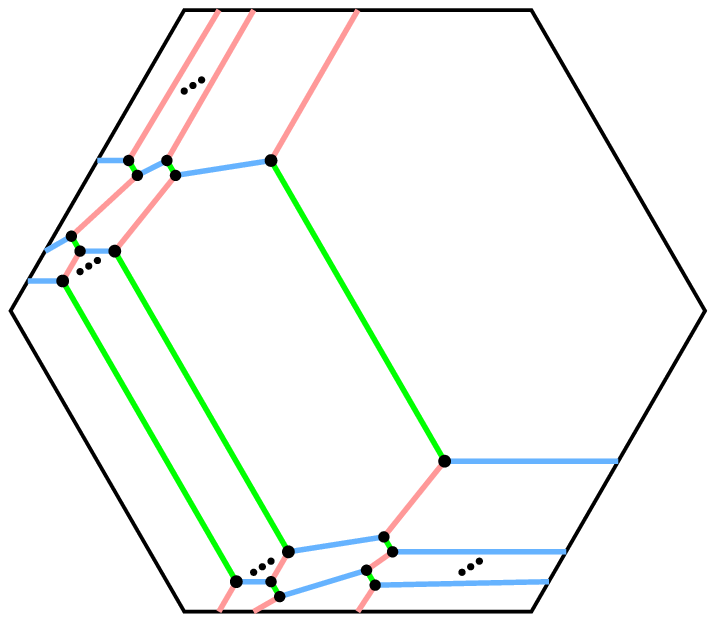}
  \label{fig:H3}
  \caption{Smoothing of $(H)_1 \cup (H)_{d-1}$}
\end{subfigure}
	\caption{}
	\label{fig:H}
\end{figure}

\subsection{The families $(B)_d$ and $(F)_d$}

\begin{lemma}\label{lem:bf}
Each diagram $(B)_d$ and $(F)_d$ corresponds to a surface isotopic to a complex curve $\C_d$.
\end{lemma}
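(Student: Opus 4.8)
The plan is to mirror the inductive argument used in Lemma~\ref{lem:dh}, but with a more intricate base case and inductive step, since neither $(B)_d$ nor $(F)_d$ is built simply by resolving $d$ parallel copies of a degree-one curve. First I would settle the base case: the diagram $(B)_2$ has $\ab = \n$, $\bc = \n + \g$, $\ca = \g + 2\A$, and $(F)_2$ has $\ab = \A$, $\bc = 2\n + \g$, $\ca = \g + \A$; in each case the diagram has small degree and can be transversely oriented (after possibly reversing orientation, so that all bridge points have sign $+1$), so Corollary~\ref{cor:17} applies and the corresponding surface is isotopic to $\C_2$. (One should double-check via Lemma~\ref{int} that these diagrams really have degree $2$, and via Lemma~\ref{lem:genus} that they are genus-minimizing, so the hypotheses of Corollary~\ref{cor:17} are met.)

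Next I would run the induction. The key identity to exploit is the additivity of the curves $\ab$ and $\bc$ under smoothing, together with Lemma~\ref{lem:create}: a hexagonal lattice diagram is determined by the homology classes $\ab$ and $\bc$, and these classes add when we smooth overlapping diagrams. So for $(B)_d$ I would look for a decomposition $(B)_d = (X)_{j} * (Y)_{k}$ (smoothing of overlapping diagrams) with $j + k = d$ such that $\ab_{(B)_d} = \ab_{(X)_j} + \ab_{(X')_k}$ and similarly for $\bc$. Writing $\ab_{(B)_d} = (d-1)\n$, $\bc_{(B)_d} = \n + (d-1)\g$, the natural choice is to overlap $(B)_{d-1}$ with a copy of $(D)_1$ (which has $\ab = \n$, $\bc = \g$): then $\ab$ becomes $(d-2)\n + \n = (d-1)\n$ and $\bc$ becomes $(\n + (d-2)\g) + \g = \n + (d-1)\g$, matching $(B)_d$. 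Having verified the homology bookkeeping, I would position $(B)_{d-1}$ and $(D)_1$ as overlapping shadow diagrams (both transversely oriented, sign $+1$, arcs meeting transversely away from bridge points) as in a figure analogous to Figure~\ref{fig:D}, confirm directly from the picture that the smoothing is again a hexagonal lattice diagram (it automatically has the right homology classes, hence equals $(B)_d$ by Lemma~\ref{lem:create}), and then invoke the local clasp-resolution analysis of Lemma~\ref{lem:dh} (Figure~\ref{fig:clasp1}) verbatim: each pair of intersection points in $\Sigma$ corresponds to a transverse intersection point of the two surfaces in $X_1$, resolving the clasp produces an annulus built from two disks in $H_2$ joined by half-twisted bands, the bands are pushed off into $X_2 \cup X_3$, and the net effect on shadows is exactly the smoothing. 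Thus the smoothed diagram $(B)_d$ represents the resolution of $\C_{d-1}$ and $\C_1$ meeting in the appropriate number of points, which is $\C_d$. The same scheme handles $(F)_d$: $\ab_{(F)_d} = (d-1)\A$, $\bc_{(F)_d} = d\n + \g$, $\ca_{(F)_d} = (d-1)\g + \A$, which decomposes as $(F)_{d-1} * (H)_1$ since $(H)_1$ has $\ab = \A$, $\bc = \n + \g$ — wait, $(H)_1$ has $\bc = \n$, so more care is needed; one wants $\bc$ to jump from $(d-1)\n + \g$ to $d\n + \g$, i.e.\ to add $\n$, and $\ab$ to jump from $(d-2)\A$ to $(d-1)\A$, i.e.\ to add $\A$; the appropriate degree-one building block is the one with $\ab = \A$, $\bc = \n$, $\ca = \g$, which is precisely $(H)_1$. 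So $(F)_d = (F)_{d-1} * (H)_1$ and the argument proceeds identically.

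The main obstacle I anticipate is not the homology bookkeeping (which is routine linear algebra in $H_1(\Sigma)$) but rather verifying that the overlapping diagrams can actually be \emph{positioned} so that (i) the arcs intersect transversely with the correct sign pattern, (ii) the smoothing is genuinely a hexagonal lattice diagram — i.e.\ the arcs still tile $\Sigma$ by hexagons with the correct combinatorics around each vertex — and (iii) the local clasp-to-annulus picture of Figure~\ref{fig:clasp1} applies at every intersection point without global obstructions (e.g.\ the annuli for different intersection points are disjoint, and the half-twisted bands can be pushed into $X_2 \cup X_3$ coherently). In Lemma~\ref{lem:dh} this worked because the curve $C$ in $\ab$ was a single curve meeting the $d-1$ parallel curves $C_i'$ in a controlled way; for $(B)_d$ and $(F)_d$ the curve $C$ coming from the $(D)_1$ or $(H)_1$ factor is again a single curve, so I expect the same inductive resolution (smooth $C$ with $C_1'$, let the result play the role of $C$, repeat) to go through, but the precise figures must be drawn carefully to see that no curve of the smoothed diagram becomes homologically trivial or develops inefficient intersections. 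I would present the figures for $(B)_d$ and $(F)_d$ analogous to Figures~\ref{fig:D} and~\ref{fig:H}, and then state that the local analysis is identical to that in the proof of Lemma~\ref{lem:dh}, so the details need not be repeated.
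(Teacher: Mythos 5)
Your base case matches the paper's, and your idea of decomposing via overlapping diagrams with additive homology classes is the right framework. But your inductive step has a genuine gap, and it is precisely at the point you wave off as "verbatim." You assert that the local clasp-resolution analysis of Lemma~\ref{lem:dh} (Figure~\ref{fig:clasp1}) applies to every intersection point, i.e.\ that "each \emph{pair} of intersection points in $\Sigma$ corresponds to a transverse intersection point of the two surfaces." That cannot be the whole story for the $(B)$ and $(F)$ families: in your decomposition $(B)_d = (B)_{d-1} * (D)_1$ the curves $\bc = \n + (d-2)\g$ and $\bcp = \g$ satisfy $\la \bc, \bcp \ra = 1$, so they cross an \emph{odd} number of times in $\Sigma$, and at least one crossing is not part of a bigon (the same happens for $\ca$ and $\capp$). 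A single transverse crossing of two curves in $\Sigma$ is resolved by a different local move than the bigon/clasp of Lemma~\ref{lem:dh}: the annulus replacing the clasp meets $\Sigma$ in two arcs in its interior, only \emph{one} rectangle is leveled into $\Sigma$ and pushed into the complementary sectors, and only \emph{one} new shadow arc is created (versus two arcs of $\ccc^*$ in the bigon case). Working out this second local move (Figures~\ref{fig:clasp2} and~\ref{fig:T2} in the paper) is the main new content of the proof of this lemma, and your proposal omits it entirely.

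Separately, your decomposition differs from the paper's and makes the bookkeeping harder. The paper overlaps the \emph{fixed} degree-two diagram $(B)_2$ with $(D)_{d-2}$ (resp.\ $(F)_2$ with $(H)_{d-2}$): the $\ab$-curves of the two factors are then disjoint parallel copies of $\n$ bounding disjoint meridian disks, so $X_1$ contributes no intersections, and all $2(d-2) = \deg(\C_2)\deg(\C_{d-2})$ intersections are of the single-crossing type, split evenly as $d-2$ in $X_2$ and $d-2$ in $X_3$. With your $(B)_{d-1} * (D)_1$ the total must be $d-1$, of which only one each can come from the single crossings in $X_2$ and $X_3$; the remaining $d-3$ must be manufactured as bigon-type clasps between the single $\n$-curve of $(D)_1$ and exactly $d-3$ of the $d-2$ parallel $\n$-curves of $(B)_{d-1}$ in $X_1$ -- a positioning you never specify and which would need to be justified (if the curves were positioned disjointly there, or if any intersection came out negatively signed, the resolution would not produce $\C_d$). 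So even granting the missing local move, your induction requires a nontrivial additional verification of where the $d-1$ intersection points live and that they are all positive, which the paper's choice of decomposition sidesteps.
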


\begin{proof}
Note that the first occurring diagram in this family is $(B)_2$, since $(d-1)\n$ must be nonzero for the construction to make sense.  A depiction of $(B)_2$ is shown at left in Figure~\ref{fig:B}.  Note that $(B)_2$ admits a transverse orientation, and thus by Corollary~\ref{cor:17}, its corresponding surface $\K$ is isotopic to $\C_2$.  Consider the overlapping shadow diagrams $\abc \cup \abcp$ as shown at center in Figure~\ref{fig:B}, where $\abc$ is the diagram $(B)_2$ and $\abcp$ is the transversely oriented diagram $(D)_{d-2}$, representing the surface $\K'$, which is isotopic to $\C_{d-2}$ by Lemma~\ref{lem:dh}.  The smoothing $\abcs$ is shown at right in Figure~\ref{fig:B}.  We will prove that $\abcs$ is the diagram $(B)_d$ and corresponds to the surface obtained by resolving the $2(d-2)$ intersections of $\K$ and $\K'$.

\begin{figure}[h!]
\begin{subfigure}{.32\textwidth}
  \centering
  \includegraphics[width=.9\linewidth]{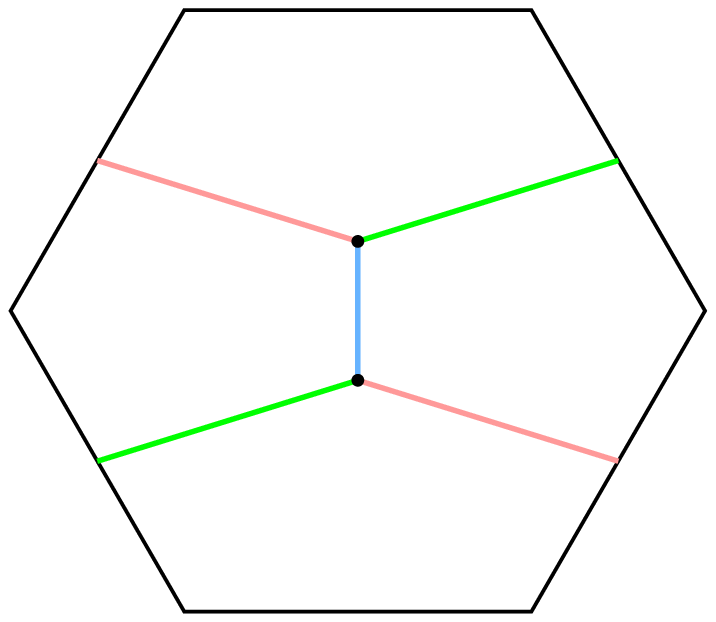}
  \label{fig:B1}
  \caption{$(B)_2$}
\end{subfigure}
\begin{subfigure}{.32\textwidth}
  \centering
  \includegraphics[width=.9\linewidth]{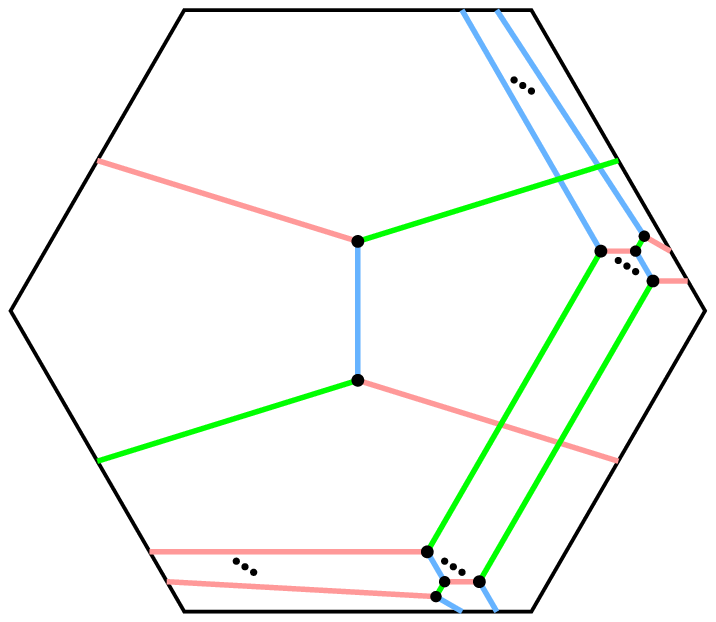}
  \label{fig:B2}
  \caption{$(B)_2 \cup (D)_{d-2}$}
\end{subfigure}
\begin{subfigure}{.32\textwidth}
  \centering
  \includegraphics[width=.9\linewidth]{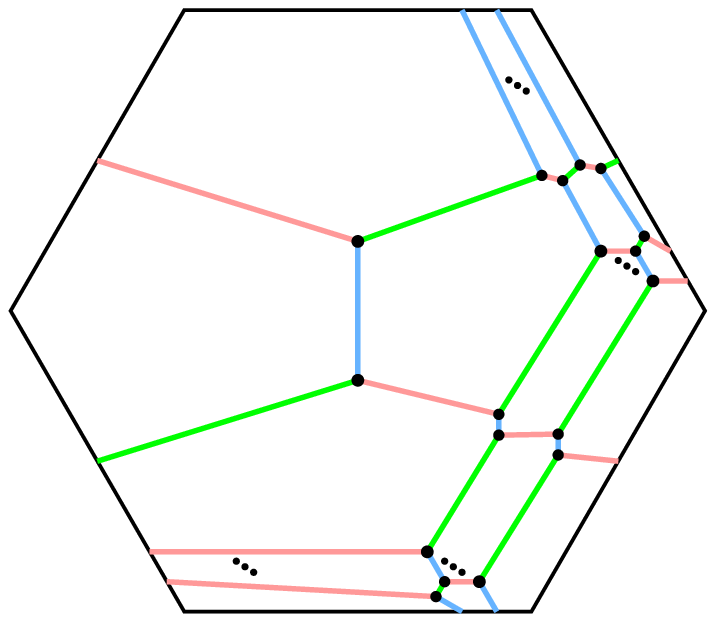}
  \label{fig:B3}
  \caption{Smoothing of $(B)_2 \cup (D)_{d-2}$}
\end{subfigure}
	\caption{}
	\label{fig:B}
\end{figure}

Observe that $\abcs$ is indeed a hexagonal lattice diagram, and that as elements of $H_1(\Sigma)$,
\[\aaa^*\bbb^* = \ab + \abp = (d-1)\n \qquad \text{ and } \bbb^*\ccc^* = \bc + \bcp = \n+(d-1)\g,\]
confirming that $\abcs$ is a hexagonal lattice diagram that agrees with $(B)_d$.  Curves of $\ab$ and $\abp$ bound a collection of disjoint meridian disks of $H_2$, so these disks contribute no point of intersection to $\K \cap \K'$.  Now, $\bc$ contains a single curve $C$, while $\bcp$ contains $d-2$ parallel curves $C'_1,\dots,C'_{d-2}$, numbered in order so that leftmost blue arc of $\bbb'$ at center in Figure~\ref{fig:B} belongs to $C'_1$.  In $\pd X_2 = H_2 \cup H_3$, each curve $C'_i$ bounds a meridian disk $D_i'$ of $H_3$, whereas the curve $C$ bounds a disk $D$ obtained by connecting a meridian disk of $H_2$ to a meridian of $H_3$ with a quarter-twisted band.  If we push the arc $c \in \ccc$ that crosses arcs of $\bbb'$ slightly into $H_3$ near the crossing, we see that for for each $i$, the link $C \cup C_i'$ is a Hopf link in $\pd X_2$, and so the corresponding disks $D$ and $D_i'$ meet once in $X_2$, the total of which account for $d-2$ of the intersections of $\K$ and $\K'$.

Let $b' \in \bbb'$ denote the arc of $C_1'$ that intersects $c$ in $\Sigma$, and let $A$ denote the annulus in $\pd X_2$ obtained by resolving the clasp intersection of $D$ and $D_1'$, where this clasp intersection is shown at left in Figure~\ref{fig:clasp2}.  We isotope $A$ so that near the intersection point of $b'$ and $c$, we have that $A$ meets $\Sigma$ in two arcs connecting $b'$ and $c$, as shown at center in Figure~\ref{fig:clasp2}.  Next, isotope $A$ so that a small rectangular neighborhood $R$ of one of these arcs is leveled into $\Sigma$, as shown at right Figure~\ref{fig:clasp2}.

\begin{figure}[h!]
\begin{subfigure}{.24\textwidth}
  \centering
  \includegraphics[width=.95\linewidth]{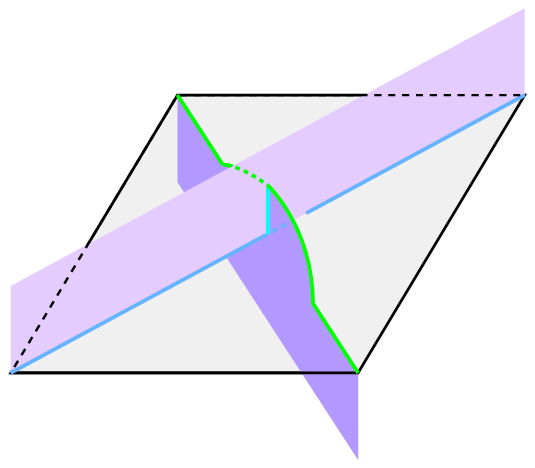}
  \label{fig:C2A}
\end{subfigure}
\begin{subfigure}{.24\textwidth}
  \centering
  \includegraphics[width=.95\linewidth]{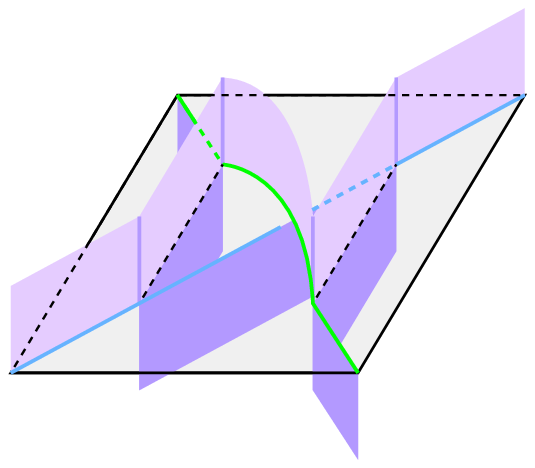}
  \label{fig:C2B}
\end{subfigure}
\begin{subfigure}{.24\textwidth}
  \centering
  \includegraphics[width=.95\linewidth]{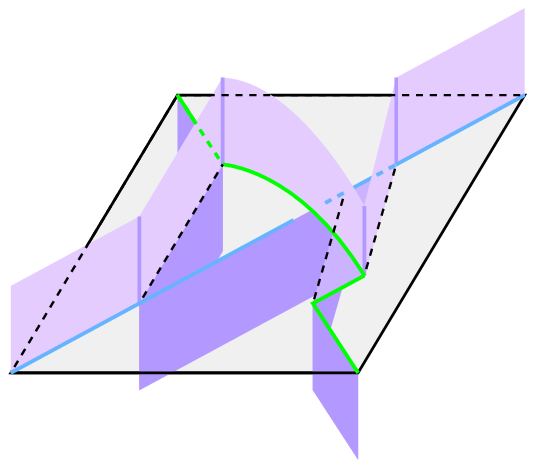}
    \label{fig:C2C}
\end{subfigure}
	\caption{At left, the disks $D$ and $D_1'$ meet in a clasp.  At center, the annulus $A$ meets $\Sigma$ in two arcs in its interior.  At right, the rectangle $R \subset A$ is leveled into $\Sigma$.}
	\label{fig:clasp2}
\end{figure}

As in the proof of Lemma~\ref{lem:dh}, we push $R$ out of $\pd X_2$ and into $\text{int}(X_1 \cup X_3)$, where it is absorbed by patches of $\ca$ and $\abp$, and so that the disk $A \setminus R$ remains in $\pd X_2$.  This process also splits the arcs $b'$ and $c$ and adds a new shadow arc to $\aaa^*$, as shown at left in Figure~\ref{fig:T2}.  At right in the same figure, these arcs have been isotoped to remove inessential intersections, and the end result of this process is that the intersection point of $C$ and $C_1'$ has been smoothed.

\begin{figure}[h!]
\begin{subfigure}{.24\textwidth}
  \centering
  \includegraphics[width=.95\linewidth]{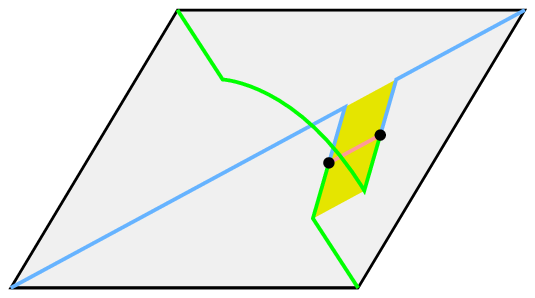}
    \label{fig:T2C}
\end{subfigure}
\begin{subfigure}{.24\textwidth}
  \centering
  \includegraphics[width=.95\linewidth]{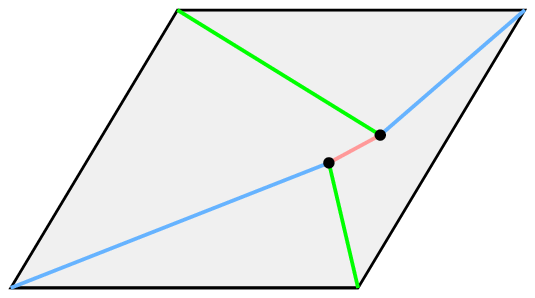}
    \label{fig:T2D}
\end{subfigure}
	\caption{At left, we see the shadows induced by pushing $R$ out of $\pd X_2$, and right, the final position of the induced shadows agrees with the smoothing of the crossing.}
	\label{fig:T2}
\end{figure}

The newly created curve $C^*$ has homology $C + C_1' = \n + 2\g$, so that it bounds a disk $D^*$ obtained by connecting a meridian for $H_2$ to two copies of a meridian for $H_3$ with quarter-twisted bands.  With $C^*$ taking the place of $C$, we can repeat the above argument with $C^*$ and $C_2'$, continuing in this manner until $d-2$ points of $\K \cap \K'$ have been resolved.  Note that the resolutions and smoothing are local modifications, leaving other components of the bridge trisection unaffected.  Finally, the curve $\ca$ meets each of $d-2$ curves $\capp$ in a single point, where the union of $\ca$ and any curve of $\capp$ is a Hopf link, and corresponding disks meet once in $X_3$.  Thus, the above argument can be repeated in $X_3$ a total of $d-2$ times to resolve the remaining $d-2$ intersection points of $\K$ and $\K'$ while simultaneously smoothing the remaining $d-2$ intersections.  We conclude that the resulting diagram $\abcs$, obtained by smoothing the $2(d-2)$ crossings of $\abc \cup \abcp$, represents the embedded surface $\K^*$, obtained by resolving the $2(d-2)$ intersections of $\K$ and $\K'$.

A similar proof follows for the diagrams $(F)_d$, starting with $(F)_2$ as shown at left in Figure~\ref{fig:F} and proceeding analogously, with overlapping diagram $(F)_2 \cup (H)_{d-2}$ shown at center and the smoothing $(F)_d$ shown at right in the same figure.
\end{proof}

\begin{figure}[h!]
\begin{subfigure}{.32\textwidth}
  \centering
  \includegraphics[width=.9\linewidth]{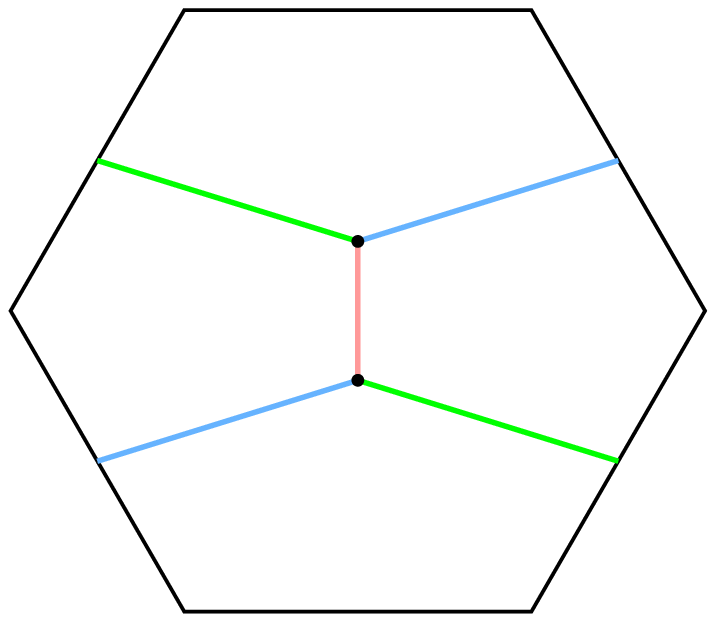}
  \label{fig:F1}
  \caption{$(F)_2$}
\end{subfigure}
\begin{subfigure}{.32\textwidth}
  \centering
  \includegraphics[width=.9\linewidth]{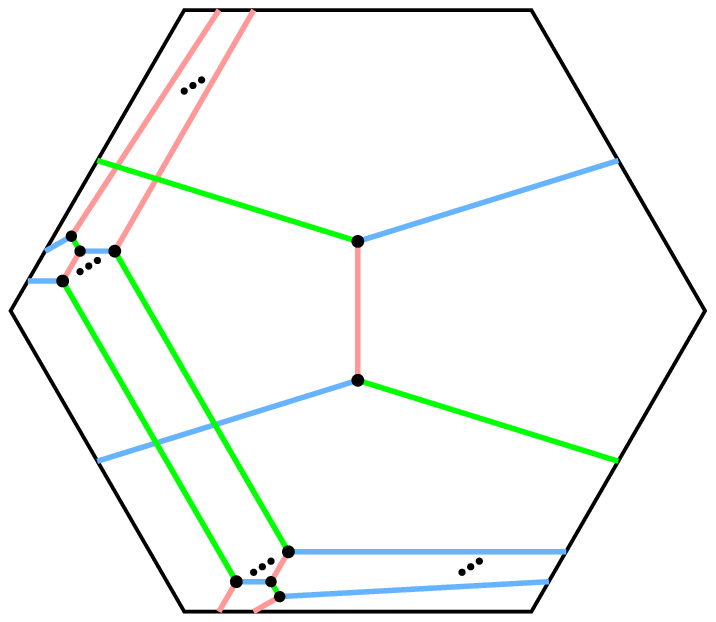}
  \label{fig:F2}
  \caption{$(F)_2 \cup (H)_{d-2}$}
\end{subfigure}
\begin{subfigure}{.32\textwidth}
  \centering
  \includegraphics[width=.9\linewidth]{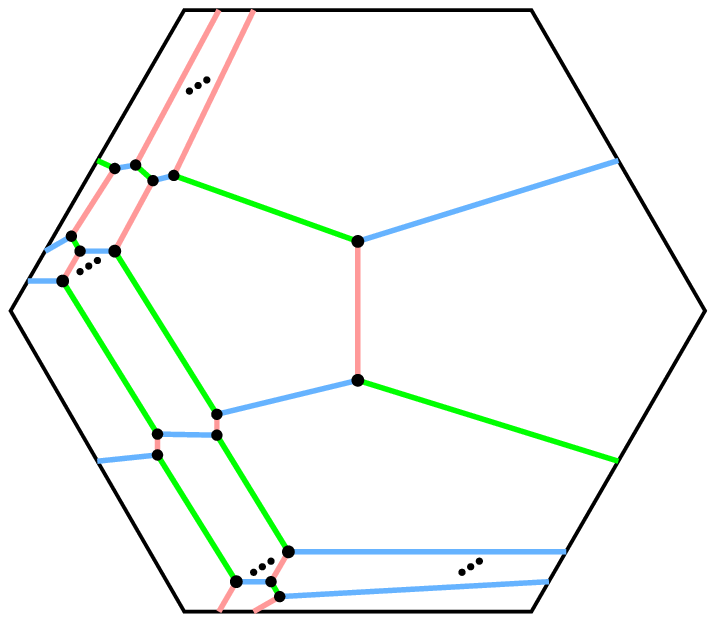}
  \label{fig:F3}
  \caption{Smoothing of $(F)_2 \cup (H)_{d-2}$}
\end{subfigure}
	\caption{}
	\label{fig:F}
\end{figure}

\subsection{The families $(C)_d$ and $(G)_d$}

\begin{lemma}\label{lem:cg}
Each diagram $(C)_d$ and $(G)_d$ corresponds to a surface isotopic to a complex curve $\C_d$.
\end{lemma}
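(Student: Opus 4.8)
The plan is to handle $(C)_d$ and $(G)_d$ in parallel, as mirror images of one another, following the template of Lemmas~\ref{lem:dh} and~\ref{lem:bf}: produce a small base case whose surface is pinned down by Corollary~\ref{cor:17}, and then realize the general diagram as the smoothing of an overlapping pair in which one factor is the fixed base diagram and the other is a diagram of type $(D)$ (resp.\ $(H)$) already known to represent a complex curve.

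First I would observe that $(C)_2$ and $(G)_2$ are the smallest members of their respective families, since $(d-1)\g$ must be nonzero, and that each admits a transverse orientation; by Corollary~\ref{cor:17}, each corresponds to a surface isotopic to $\C_2$. For the general step, fix $d > 2$ and consider the overlapping shadow diagrams $\abc \cup \abcp$, where $\abc$ is $(C)_2$ and $\abcp$ is the transversely oriented diagram $(D)_{d-2}$, whose surface $\K'$ is isotopic to $\C_{d-2}$ by Lemma~\ref{lem:dh}; these should be positioned with distinct bridge points and transversely intersecting arcs, and arranged (introducing bigons where necessary, exactly as in the $(D)$ case of Lemma~\ref{lem:dh}) so that the number of geometric crossings equals the algebraic intersection number $[\C_2]\cdot[\C_{d-2}] = 2(d-2)$. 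One then checks that the smoothing $\abcs$ of $\abc \cup \abcp$ is again a hexagonal lattice diagram, and computes in $H_1(\Sigma)$ that
\[ \aaa^*\bbb^* = \ab + \abp = 2\n + (d-2)\n = d\n \qquad\text{and}\qquad \bbb^*\ccc^* = \bc + \bcp = \g + (d-2)\g = (d-1)\g; \]
since a hexagonal lattice diagram is determined by these two homology classes (Lemma~\ref{lem:create}), the smoothing is precisely $(C)_d$. The analogous set-up with the overlap $(G)_2 \cup (H)_{d-2}$ produces $(G)_d$.

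The remaining and most delicate point — the one I expect to be the main obstacle — is to show that this combinatorial smoothing corresponds, on the level of surfaces, to \emph{resolving} the $2(d-2)$ intersection points of the surfaces $\K$ and $\K'$ represented by $(C)_2$ and $(D)_{d-2}$, which are isotopic to $\C_2$ and $\C_{d-2}$, so that $(C)_d$ represents the resolution of $\C_2 \cup \C_{d-2}$, namely $\C_d$. Here I would argue exactly as in Lemmas~\ref{lem:dh} and~\ref{lem:bf}. The pairing curves $\ab$ and $\abp$ are parallel multiples of $\n$, so they bound disjoint meridian disks and contribute no intersections in $X_1$; likewise $\bc$ and $\bcp$ are parallel multiples of $\g$ and contribute none in $X_2$; the intersections are therefore concentrated among the $\ca$-type curves in $\pd X_3$ (together with the bigon crossings on $\Sigma$). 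For each such crossing, the relevant pairing curves form a Hopf link in the appropriate copy of $S^3 = \pd X_3$, so the corresponding patches meet once there in a clasp; resolving the clasp produces an annulus, two small rectangles of which can be leveled into $\Sigma$ and then pushed out into $\Int(X_1 \cup X_2)$ in the manner of a bridge perturbation (cf.\ Figures~\ref{fig:clasp1} and~\ref{fig:clasp2}), and the induced shadows agree with the smoothing of the crossings. Carrying out these resolutions one at a time — using that each modification is local and leaves the remaining components of the bridge trisection undisturbed — shows that $(C)_d$ represents the resolution of $\C_2 \cup \C_{d-2}$, namely $\C_d$. The points genuinely requiring verification are that the overlap can be positioned so that the $\ca$-type pairing curves form honest Hopf links (rather than more complicated links) in $\pd X_3$, that the bigons account for exactly the geometric crossings not visible algebraically, and that the shadows produced by pushing the rectangles out match the combinatorial smoothing — all of which is the same technical content already carried out for the families $(D)$, $(H)$, $(B)$, and $(F)$. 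Finally, $(G)_d$ follows from the mirror-image argument applied to the overlap $(G)_2 \cup (H)_{d-2}$, with $\pd X_2$ playing the role of $\pd X_3$.
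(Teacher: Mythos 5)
Your overall strategy is exactly the paper's: pin down $(C)_2$ and $(G)_2$ via transverse orientations and Corollary~\ref{cor:17}, then realize $(C)_d$ as the smoothing of the overlap $(C)_2 \cup (D)_{d-2}$ (and $(G)_d$ from $(G)_2 \cup (H)_{d-2}$), checking the homology classes and matching combinatorial smoothings with resolutions of surface intersections via clasps. The base cases, the choice of overlapping diagrams, and the homology computation all agree with the paper.

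There is, however, a concrete error in your crossing count that would derail the argument if followed literally. You propose to arrange the overlap so that the number of geometric crossings equals $[\C_2]\cdot[\C_{d-2}] = 2(d-2)$. This is impossible if the smoothing is to be $(C)_d$: by Lemma~\ref{lem:dist} the bridge numbers are $b((C)_d) = d(d-1)$, $b((C)_2)=2$, and $b((D)_{d-2})=(d-2)^2$, and since each smoothed crossing adds one to the bridge number, the overlap must have exactly $d(d-1) - 2 - (d-2)^2 = 3(d-2)$ crossings. Geometrically, the single curve $C$ of $\ca$ has class $2\A + \g$ while each of the $d-2$ curves $C_i'$ of $\capp$ has class $\A$, so $\la C, C_i'\ra = 1$; in the position of Figure~\ref{fig:C} they meet in \emph{three} points $x^i_1,x^i_2,x^i_3$, with $x^i_1,x^i_2$ forming a bigon. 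The three numbers you conflate are genuinely different: algebraic intersection of the curves is $d-2$, geometric crossings are $3(d-2)$, and surface intersections are $2(d-2)$. Consequently the resolution argument is not uniformly the "two-rectangle" move you describe: each bigon pair is resolved as in Lemma~\ref{lem:dh} (one transverse intersection of patches resolved while \emph{two} crossings are smoothed), and each remaining essential crossing $x^i_3$ is resolved as in Lemma~\ref{lem:bf} (one intersection, one crossing, a single rectangle). Together these give the required $2(d-2)$ surface resolutions and $3(d-2)$ smoothings. The fix is entirely within your framework — the $(C)/(G)$ case is precisely a hybrid of the two earlier techniques rather than a repeat of the $(B)/(F)$ pattern — but as written the claimed arrangement with $2(d-2)$ crossings cannot smooth to $(C)_d$.
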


\begin{proof}
As in the proof of Lemma~\ref{lem:bf}, the first occurring curve in the family $(C)_d$ is $(C)_2$.  The shadow diagram $(C)_2$ is shown at left in Figure~\ref{fig:C}, and it admits a transverse orientation, so that the surface $\K$ it represents is isotopic to $\C_2$ by Corollary~\ref{cor:17}.  Now, for any $d>2$, consider the overlapping shadow diagram $\abc \cup \abcp$ shown at center in Figure~\ref{fig:C}, where $\abc$ is the diagram $(C)_2$ and $\abcp$ is the transversely oriented diagram $(D)_{d-2}$ representing the surface $\K'$, which is isotopic to $\C_{d-2}$ by Lemma~\ref{lem:dh}.  The smoothing $\abcs$ is shown at right in Figure~\ref{fig:C}.  We will prove that $\abcs$ is the diagram $(C)_d$ and corresponds to the surface obtained by resolving the $2(d-2)$ intersections of $\K$ and $\K'$.

\begin{figure}[h!]
\begin{subfigure}{.32\textwidth}
  \centering
  \includegraphics[width=.9\linewidth]{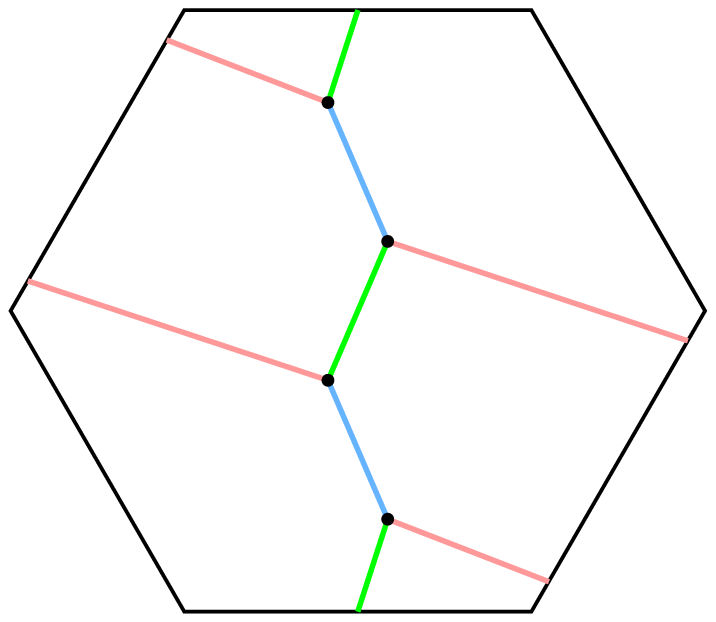}
  \label{fig:C1}
  \caption{$(C)_2$}
\end{subfigure}
\begin{subfigure}{.32\textwidth}
  \centering
  \includegraphics[width=.9\linewidth]{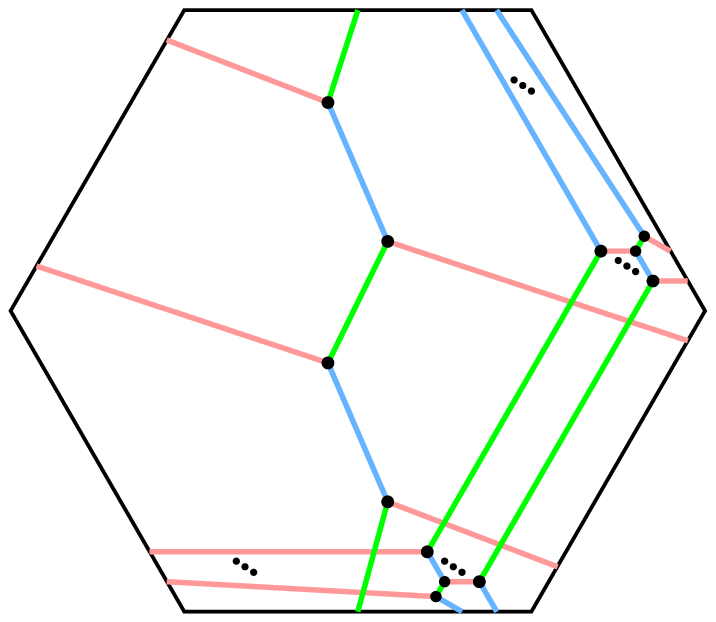}
  \caption{$(C)_2 \cup (D)_{d-2}$}
  \label{fig:C2}
\end{subfigure}
\begin{subfigure}{.32\textwidth}
  \centering
  \includegraphics[width=.9\linewidth]{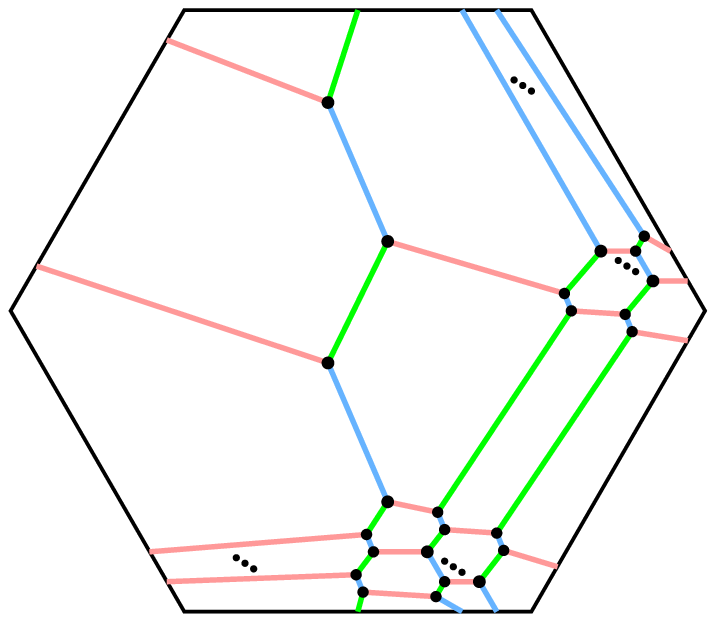}
  \label{fig:C3}
  \caption{Smoothing of $(C)_2 \cup (D)_{d-2}$}
\end{subfigure}
	\caption{}
	\label{fig:C}
\end{figure}

Observe that $\abcs$ is indeed a hexagonal lattice diagram, and that as elements of $H_1(\Sigma)$,
\[\aaa^*\bbb^* = \ab + \abp = d\n \qquad \text{ and } \bbb^*\ccc^* = \bc + \bcp = (d-1)\g,\]
confirming that $\abcs$ is a hexagonal lattice diagram that agrees with $(C)_d$.  Curves of $\ab$ and $\abp$ bound a collection of disjoint meridian disks of $H_2$, and curves of $\bc$ and $\bcp$ bound a collection of disjoint meridian disks of $H_3$, so these disks contribute no intersection points to $\K \cap \K'$.  Now, $\ca$ contains a single curve $C$, while $\capp$ contains $d-2$ parallel curves $C'_1,\dots,C'_{d-2}$, numbered in order so that topmost red arc of $\aaa'$ at bottom center in Figure~\ref{fig:C} belongs to $C'_1$.  Suppose $C$ bounds a disk $D$ in $X_3$ and for each $i$, the curve $C_i'$ bounds a disk $D_i'$ in $X_3$.  In homology, $C = 2\A + \g$, and $C' = \A$, so that we can push $D$ into $\pd X_3$ so that it is obtained by connecting two meridians of $H_1$ to a meridian of $H_3$ with two quarter-twisted bands, while $D_i'$ is isotopic to a meridian of $H_3$.

Let $C \cap C_i' = \{x^i_1,x^i_2,x^i_3\}$, where $x^i_1$ and $x^i_2$ are vertices of a bigon as in the proof of Lemma~\ref{lem:dh}.  We can arrange the disks $D$ and $D_i'$ so that a collar of $C$ and $C_1'$ is contained in $H_3$, and thus $D$ and $D_1'$ have a clasp intersection near $x^1_1$ and $x^1_2$ as in the proof of Lemma~\ref{lem:dh}.  Following the local modifications described in that proof and shown in Figure~\ref{fig:clasp1}, we can resolve this intersection, which at the level of shadow diagrams has the effect of smoothing the crossings $x^1_1$ and $x^1_2$.  One of the resulting curves, $C^*$, intersects $C_2'$ at points $x^2_1$ and $x^2_2$, and we can repeat the process a total of $d-2$ times to resolve $d-2$ intersections and smooth $2(d-2)$ crossings of $\abc \cup \abcp$.

For the remaining $d-2$ crossings, observe that the smoothed $\ca$ and $\capp$ curves contain one curve, call it $\wh C$, with homology $\wh C = 2\A + \g$, and $d-2$ curves, call these $\wh C_i$, with homology $\wh C_i = \A$, where $\wh C$ meets each $\wh C_i$ in a single point $x^i_3$.  By pushing the arc of $\wh C$ containing $x^i_3$ slightly into $H_1$, the corresponding disks bounded by $\wh C$ and $\wh C_i$ in $\pd X_3$ meet in a clasp, locally identical that appearing at left in Figure~\ref{fig:clasp2}.  Following the proof of Lemma~\ref{lem:bf}, we can resolve the $d-2$ corresponding intersections as shown in Figures~\ref{fig:clasp2} and~\ref{fig:T2}, which at the level of shadow diagrams has the effect of smoothing the crossings $x^i_3$.  We conclude that the resulting diagram $\abcs$, obtained by smoothing the $3(d-2)$ crossings of $\abc \cup \abcp$, represents the embedded surface $\K^*$, obtained by resolving the $2(d-2)$ intersections of $\K$ and $\K'$.

A similar proof follows for the diagrams $(G)_d$, starting with $(G)_2$ as shown at left in Figure~\ref{fig:G} and proceeding analogously, with overlapping diagram $(G)_2 \cup (H)_{d-2}$ shown at center and the smoothing $(G)_d$ shown at right in the same figure.
\end{proof}

\begin{figure}[h!]
\begin{subfigure}{.32\textwidth}
  \centering
  \includegraphics[width=.9\linewidth]{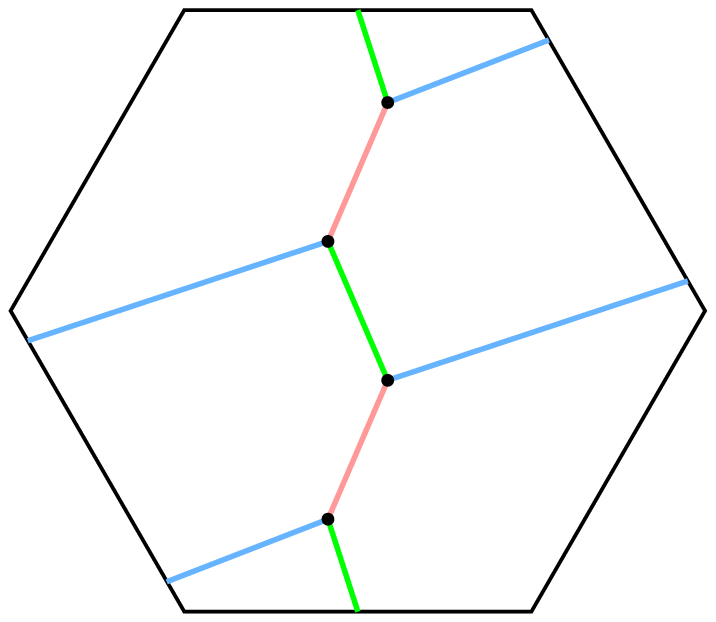}
  \label{fig:G1}
  \caption{$(G)_2$}
\end{subfigure}
\begin{subfigure}{.32\textwidth}
  \centering
  \includegraphics[width=.9\linewidth]{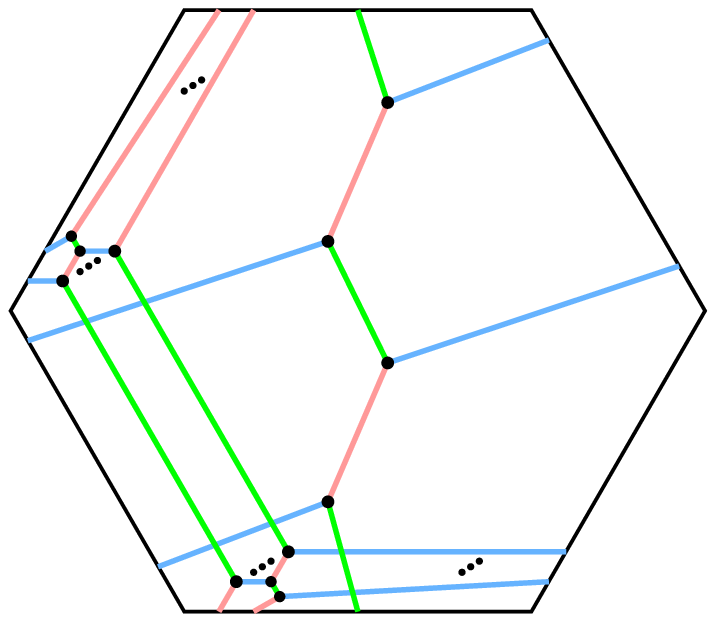}
  \label{fig:G2}
  \caption{$(G)_2 \cup (H)_{d-2}$}
\end{subfigure}
\begin{subfigure}{.32\textwidth}
  \centering
  \includegraphics[width=.9\linewidth]{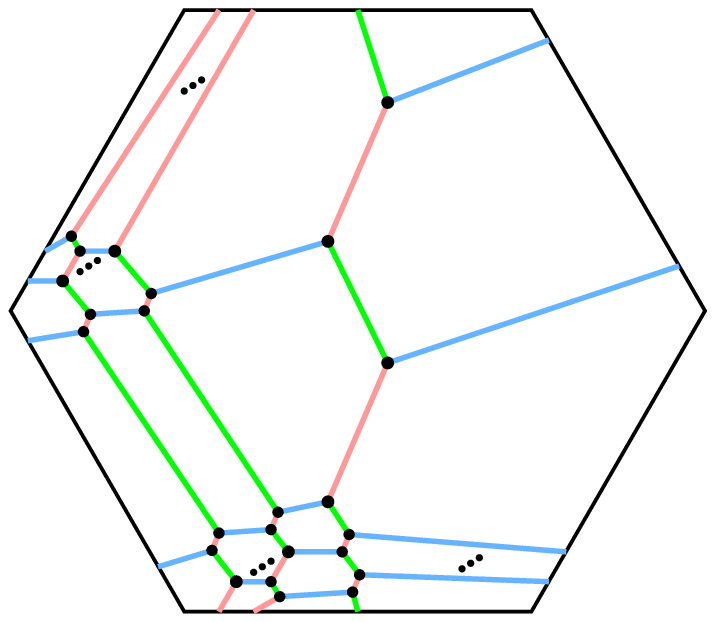}
  \label{fig:G3}
  \caption{Smoothing of $(G)_2 \cup (H)_{d-2}$}
\end{subfigure}
	\caption{}
	\label{fig:G}
\end{figure}

\subsection{The families $(A)_d$ and $(E)_d$}

\begin{lemma}\label{lem:ae}
Each diagram $(A)_d$ and $(E)_d$ corresponds to a surface isotopic to a complex curve $\C_d$.
\end{lemma}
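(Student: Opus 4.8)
The plan is to follow the pattern of Lemmas~\ref{lem:dh}, \ref{lem:bf}, and~\ref{lem:cg}: realize each of $(A)_d$ and $(E)_d$ as the smoothing $\abcs$ of an overlapping shadow diagram $\abc \cup \abcp$ whose two pieces are hexagonal lattice diagrams already known to represent complex curves, and then show that passing from the overlap to its smoothing corresponds, at the level of surfaces in $\CP^2$, to resolving all of the transverse intersection points of the two curves. Since the resolution of $\C_j \cup \C_{d-j}$ at its $j(d-j)$ intersection points is $\C_d$, this identifies the surface of $\abcs$ with $\C_d$.

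First I would handle the small degrees: the diagrams $(A)_1$, $(A)_2$, $(E)_1$, and $(E)_2$ all admit a transverse orientation and have degree at most $17$, so by Corollary~\ref{cor:17} their surfaces are isotopic to $\C_1$ and $\C_2$. For $d \geq 3$ I would take $\abc$ to be the diagram $(A)_2$, representing $\C_2$, and $\abcp$ to be the diagram $(D)_{d-2}$ of Lemma~\ref{lem:dh}, representing $\C_{d-2}$, overlapped and positioned in the spirit of Figures~\ref{fig:B} and~\ref{fig:C}; for $(E)_d$ one instead overlaps $(E)_2$ with $(H)_{d-2}$. Because
\[ \aaa^*\bbb^* = \ab + \abp = \A + (d-1)\n \qquad \text{ and } \qquad \bbb^*\ccc^* = \bc + \bcp = \n + (d-1)\g \]
in $H_1(\Sigma)$ (respectively $(d-1)\A + \n$ and $(d-1)\n + \g$ in the mirror case), and a hexagonal lattice diagram is determined by these two homology classes by Lemma~\ref{lem:create}, once one checks that the smoothed arcs can be isotoped to meet efficiently — so that $\abcs$ is a genuine hexagonal lattice diagram, not merely a family of arcs realizing the right classes — it follows that $\abcs = (A)_d$ (resp. $(E)_d$).

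The bulk of the argument is then to prove that $\abcs$ represents the surface obtained by resolving the $2(d-2)$ intersection points of $\C_2$ and $\C_{d-2}$. As in the earlier proofs, one separates the crossings of $\abc \cup \abcp$ into those occurring in complementary bigons, resolved by the ``Type~I'' clasp-to-annulus move of Figure~\ref{fig:clasp1} (as in Lemma~\ref{lem:dh}), and those occurring singly, resolved by the ``Type~II'' move of Figures~\ref{fig:clasp2} and~\ref{fig:T2} (as in Lemma~\ref{lem:bf}). One then isotopes the constituent curves so that, in each relevant $X_j$, the trivial disks bounded by curves of $\abc$ and $\abcp$ meet in clasps, and carries out the clasp resolutions one at a time; each resolution smooths the corresponding crossing(s) of $\abc \cup \abcp$ while leaving the rest of the bridge trisection untouched, so after all $2(d-2)$ intersections have been resolved the resulting diagram is $\abcs$ and the resulting surface is $\C_d$.

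The main obstacle, and the reason this case is more involved than Lemmas~\ref{lem:dh}--\ref{lem:cg}, is that in $(A)_2$ \emph{all three} of the pairings $\ab$, $\bc$, $\ca$ are nontrivial $(1,1)$-torus knots in the respective copies of $S^3 = \pd X_j$ rather than meridians of the solid tori $H_j$, so the trivial disks these curves bound are twisted, and clasp intersections of both types occur simultaneously in all three pieces $X_1$, $X_2$, and $X_3$ (in each of Lemmas~\ref{lem:dh}--\ref{lem:cg} at least one pairing was meridional and contributed no intersection points). One must therefore run the clasp-resolution argument in all three $X_j$ at once, verifying that the sequential resolutions can be performed without the geometric representatives of the various curves obstructing one another and that the bookkeeping of crossings, disks, and intersection points stays consistent with Lemma~\ref{lem:genus} and Theorem~\ref{thm:thom}. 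Confirming at each stage that the partially-smoothed diagram remains an \emph{efficiently embedded} hexagonal lattice diagram, and not merely one with the right homology classes, is the other point that will require care.
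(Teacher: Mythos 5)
Your overall strategy (handle small degrees via Corollary~\ref{cor:17}, then induct by overlapping a fixed low-degree diagram with $(D)_{k}$ or $(H)_{k}$, smoothing crossings, and matching smoothings with resolutions) is the paper's strategy, but your choice of decomposition does not work, and the obstacle you flag at the end is not a bookkeeping nuisance --- it is fatal to that choice. You propose $(A)_d$ as the smoothing of $(A)_2\cup(D)_{d-2}$. Count the crossings: for $(A)_2$ we have $\ab=\A+\n$, $\bc=\n+\g$, $\ca=\g+\A$, while for $(D)_{d-2}$ we have $\abp=(d-2)\n$, $\bcp=(d-2)\g$, $\capp=(d-2)\A$. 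Each of the three pairings contributes $\la \A+\n,(d-2)\n\ra=d-2$ crossings (and likewise for the other two sectors), all of the same sign and with geometric intersection equal to algebraic intersection, so the overlapping diagram has $3(d-2)$ crossings and \emph{no bigons}. Every one of these is a single transverse crossing whose associated pair of trivial disks cobounds a Hopf link in the relevant $\pd X_j$ and hence must intersect; so smoothing all $3(d-2)$ crossings corresponds to resolving $3(d-2)$ intersection points of the two surfaces. But $\C_2\cdot\C_{d-2}=2(d-2)$, and resolving $3(d-2)$ points of surfaces of genera $0$ and $(d-3)(d-4)/2$ yields genus $(d+1)(d-2)/2$, not $(d-1)(d-2)/2=g\bigl((A)_d\bigr)$. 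In Lemmas~\ref{lem:bf} and~\ref{lem:cg} the surplus of crossings over intersection points is absorbed because some crossings come in bigon pairs resolved by the two-crossings-per-resolution move of Figure~\ref{fig:clasp1}; here there are no bigons, so nothing absorbs the discrepancy. The same count defeats $(E)_2\cup(H)_{d-2}$.

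The paper's proof instead uses the base $(A)_3$ (shown transversely orientable, hence isotopic to $\C_3$ by Corollary~\ref{cor:17}) and overlaps $(A)_3\cup(D)_{d-3}$: there $\ab=\A+2\n$ meets each curve of $\abp=(d-3)\n$ exactly once, giving $3(d-3)$ single crossings in total, which exactly matches $\C_3\cdot\C_{d-3}=3(d-3)$, so every crossing is handled by one clasp resolution of the type in Figures~\ref{fig:clasp2} and~\ref{fig:T2} and the genus count closes. So to repair your argument you need to (i) add the base case $d=3$ (which your proposal omits entirely) and (ii) replace the step $d-2\mapsto d$ by the step $d-3\mapsto d$ with base diagram $(A)_3$ (resp.\ $(E)_3$ overlapped with $(H)_{d-3}$). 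Your treatment of $d=1,2$ agrees with the paper.
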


\begin{proof}
Departing from the previous cases, we separate the cases $d=1,2$ from the cases $d \geq 3$.  Note that the diagram $(A)_1$ coincides with $(H)_1$ (pictured at left in Figure~\ref{fig:H}) and $(E)_1$ coincides with $(D)_1$ (pictured at left in Figure~\ref{fig:D}).  In addition, diagrams $(A)_2$ and $(E)_2$ are identical, and this diagram is pictured in Figure~\ref{fig:verBF2}.  The statement for the cases $d=1,2$ follows immediately from the observation that these diagrams admit transverse orientations and an application of Corollary~\ref{cor:17}.

\begin{figure}[h!]
	\centering
	\includegraphics[width=.3\textwidth]{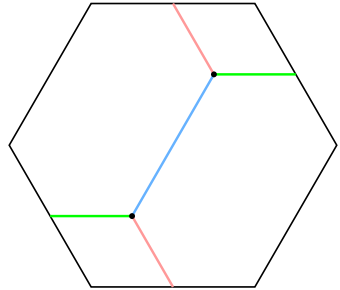}
	\caption{The diagram $(A)_2 = (E)_2$}
	\label{fig:verBF2}
\end{figure}

Turning our attention to the cases $d \geq 3$, the diagram $(A)_3$ is shown at left in Figure~\ref{fig:A}, where $(A)_3$ admits a transverse orientation, so this its corresponding surface $\K$ is isotopic to $\C_3$ by Corollary~\ref{cor:17}.  Now, for any $d>3$, consider the overlapping shadow diagram $\abc \cup \abcp$ shown at center in Figure~\ref{fig:C}, where $\abc$ is the diagram $(A)_3$ and $\abcp$ is the transversely oriented diagram $(D)_{d-3}$ representing the surface $\K'$, which is isotopic to $\C_{d-3}$ by Lemma~\ref{lem:dh}.  The smoothing $\abcs$ is shown at right in Figure~\ref{fig:A}.  We will prove that $\abcs$ is the diagram $(A)_d$ and corresponds to the surface obtained by resolving the $3(d-3)$ intersections of $\K$ and $\K'$.

\begin{figure}[h!]
\begin{subfigure}{.32\textwidth}
  \centering
  \includegraphics[width=.9\linewidth]{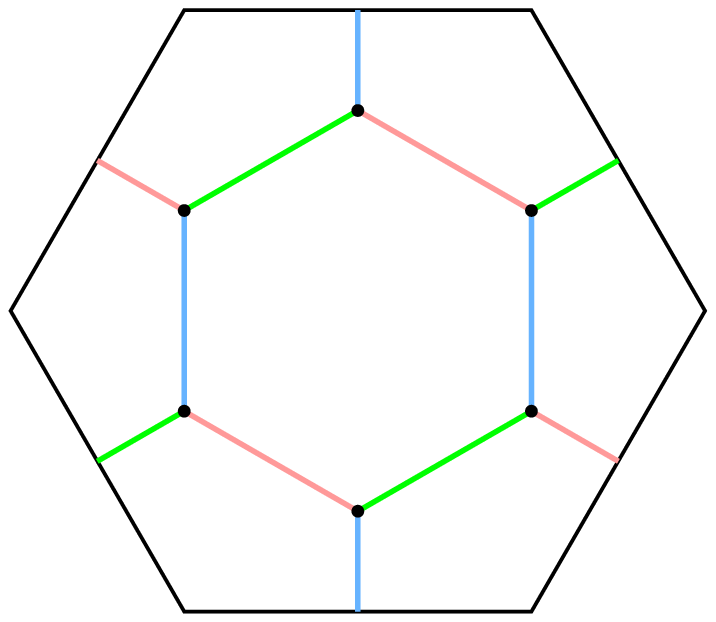}
  \label{fig:A1}
  \caption{$(A)_3$}
\end{subfigure}
\begin{subfigure}{.32\textwidth}
  \centering
  \includegraphics[width=.9\linewidth]{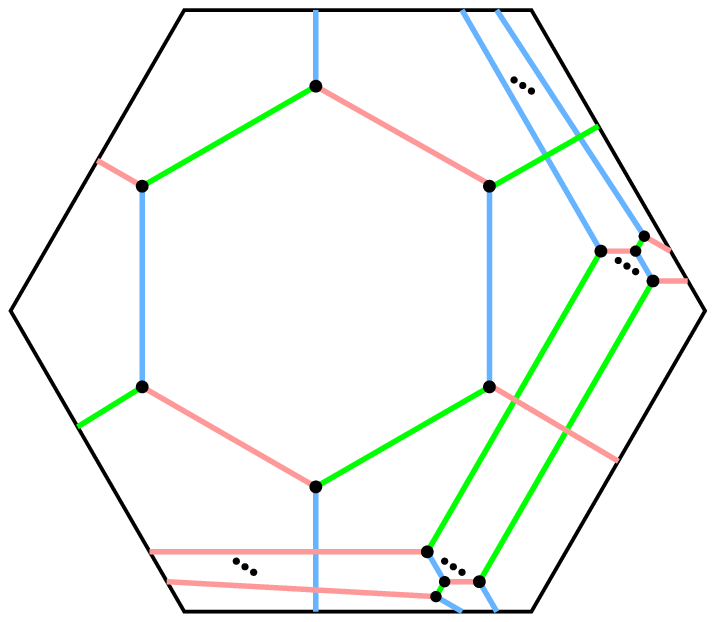}
  \caption{$(A)_3 \cup (D)_{d-3}$}
  \label{fig:A2}
\end{subfigure}
\begin{subfigure}{.32\textwidth}
  \centering
  \includegraphics[width=.9\linewidth]{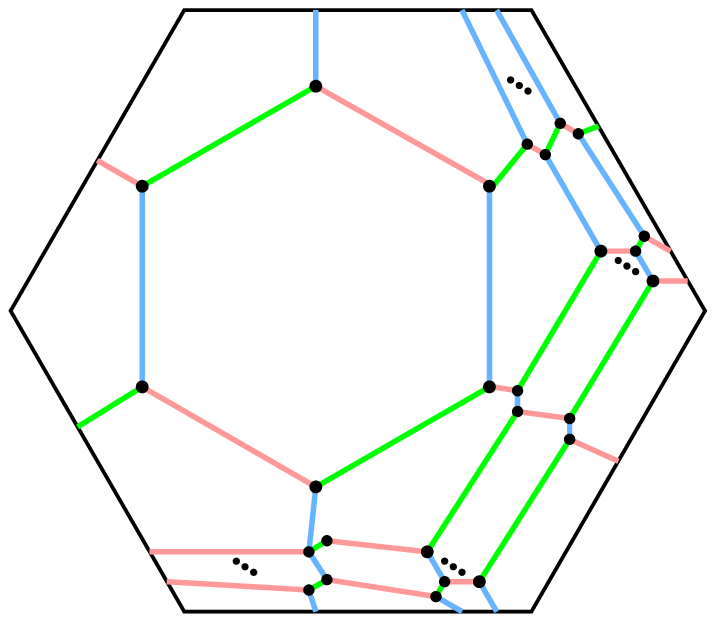}
  \label{fig:A3}
  \caption{Smoothing of $(A)_3 \cup (D)_{d-3}$}
\end{subfigure}
	\caption{}
	\label{fig:A}
\end{figure}

Observe that $\abcs$ is indeed a hexagonal lattice diagram, and that as elements of $H_1(\Sigma)$,
\[\aaa^*\bbb^* = \ab + \abp = \A + (d-1)\n \qquad \text{ and } \bbb^*\ccc^* = \bc + \bcp = \n + (d-1)\g,\]
confirming that $\abcs$ is a hexagonal lattice diagram that agrees with $(A)_d$.  Let $C$ be the curve in $\ab$ and let $C_1',\dots,C_{d-3}'$ be the $d-3$ curves of $\capp$.  In homology, $C = \A + 2\n$ and $C_i' = \n$, so that $C$ bounds a disk $D$ obtained by attaching a meridian of $H_1$ to two meridians of $H_2$ with quarter-twisted bands, and $C_i'$ bounds a meridian disk $D_i'$ of $H_2$.  Let $x^i_1$ denote the intersection point of $C \cap C_i'$.  Pushing the arc of $\aaa'$ slightly into $H_1$ near $x^i_1$, we see that $D$ and $D_i'$ have a clasp intersection locally equivalent to that pictured at left in Figure~\ref{fig:clasp2}.  By the process described in Lemma~\ref{lem:bf}, we can resolve the intersection of $D$ and $D_i'$, in the process smoothing the overlapping shadow diagram at the points $x^i_1$.

Note that this modification changes $\bc$, $\ca$, $\bcp$, and $\capp$ only by a slight isotopy.  The argument is symmetric, and so we can repeat this process using $\bc$ and $\bcp$ to resolve an additional $d-3$ intersections of $\K$ and $\K'$, smoothing $d-3$ more crossings of $\abc \cup \abcp$, and we can repeat once more using $\ca$ and $\capp$ to resolve the final $d-3$ intersections of $\K \cap \K'$ and smooth the final $d-3$ crossings of $\abc \cup \abcp$.  We conclude that the resulting diagram $\abcs$, obtained by smoothing the $3(d-3)$ crossings of $\abc \cup \abcp$, represents the embedded surface $\K^*$, obtained by resolving the $3(d-3)$ intersections of $\K$ and $\K'$.

A similar proof follows for the diagrams $(E)_d$, starting with $(E)_3$ as shown at left in Figure~\ref{fig:E} and proceeding analogously, with overlapping diagram $(E)_3 \cup (H)_{d-3}$ shown at center and the smoothing $(E)_d$ shown at right in the same figure.
\end{proof}

\begin{figure}[h!]
\begin{subfigure}{.32\textwidth}
  \centering
  \includegraphics[width=.9\linewidth]{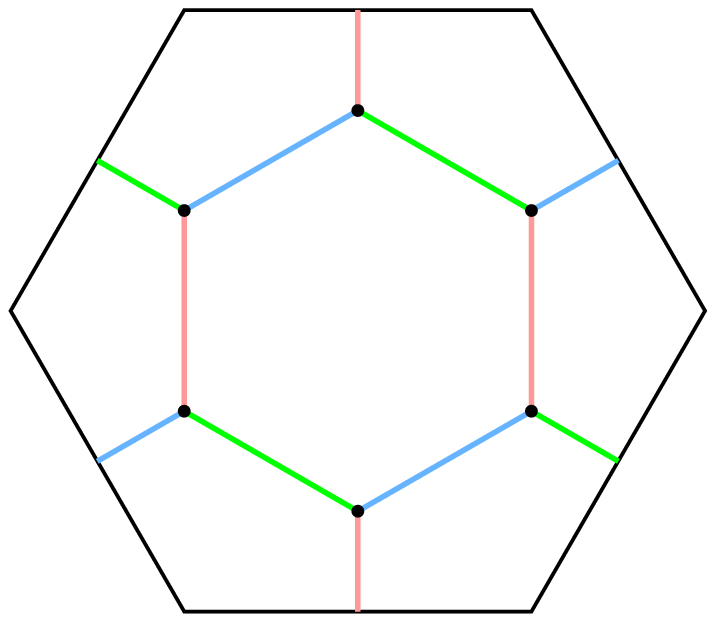}
  \label{fig:E1}
  \caption{$(E)_3$}
\end{subfigure}
\begin{subfigure}{.32\textwidth}
  \centering
  \includegraphics[width=.9\linewidth]{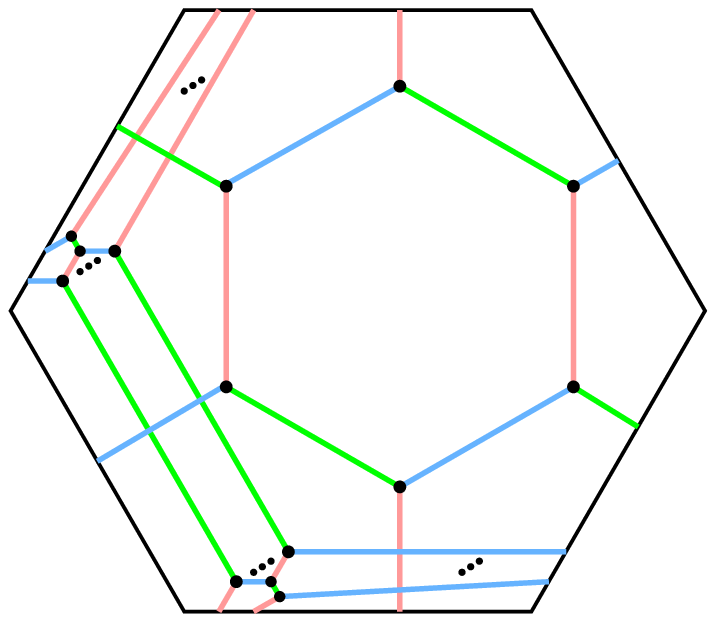}
  \label{fig:E2}
  \caption{$(E)_3 \cup (H)_{d-3}$}
\end{subfigure}
\begin{subfigure}{.32\textwidth}
  \centering
  \includegraphics[width=.9\linewidth]{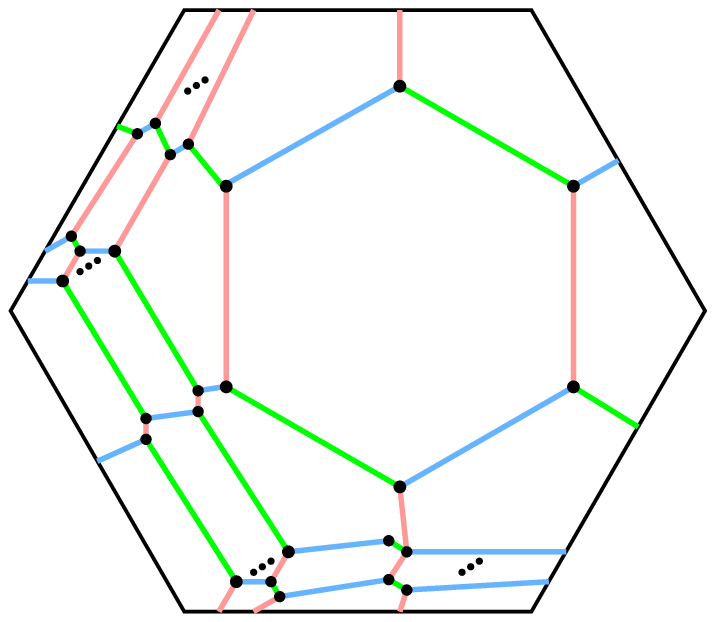}
  \label{fig:E3}
  \caption{Smoothing of $(E)_3 \cup (H)_{d-3}$}
\end{subfigure}
	\caption{}
	\label{fig:E}
\end{figure}

Using these lemmas, we have all of the ingredients we need to swiftly prove Theorem~\ref{thm:support}, that a positive genus genus-minimizing surface $\K \subset \CP^2$ with a hexagonal lattice diagram is isotopic to a complex curve $\C_d$.

\begin{proof}[Proof of Theorem~\ref{thm:support}]
Suppose that $\K$ has positive genus, minimizes genus in its homology class, and admits a hexagonal lattice diagram $(\aaa,\bbb,\ccc)$.  Then the degree $d$ of $\K$ satisfies $d \geq 3$, so by Theorem~\ref{thm:family}, the hexagonal lattice diagram must fall into one of the eight families $(A)_d$ through $(H)_d$.  By Lemma~\ref{lem:dh}, \ref{lem:bf}, \ref{lem:cg}, or~\ref{lem:ae}, it follows that $\K$ is isotopic to a complex curve $\C_d$.
\end{proof}

\section{Stein trisections of complex hypersurfaces in $\CP^3$}\label{sec:stein}

In this section, we prove that there are complex curves, determined as the zero sets of homogeneous polynomials in $\CP^2$, that intersect the standard trisection of $\CP^2$ as bridge trisected surfaces (without needing any isotopy).  Moreover, these bridge trisections happen to be isotopic to the bridge trisections determined by the hexagonal lattice diagrams $(A)_d$ and $(E)_d$.  In the process, we obtain new information about the trisections of complex hypersurfaces obtained as branched covers of $\CP^2$ over these complex curves.  Recall that $\CP^3$ is the set of equivalence classes $[z_1:z_2:z_3:z_4]$ of nonzero vectors in $(\Cc^4)^*$.  A \emph{degree $d$ hypersurface $\Ss_d$} is defined to be the 4-manifold in $\CP^3$ obtained by taking the zero set of any homogeneous degree $d$ polynomial in the variables $z_1,z_2,z_3,z_4$.  Up to diffeomorphism, these manifolds depend only of the degree $d$ of the polynomial.  Additionally, recall that a Stein trisection of a complex 4-manifold $X$ is a trisection $X = X_1 \cup X_2 \cup X_3$ such that each 4-dimensional handlebody $X_j$ is an analytic polyhedron (see~\cite{PLCstein} for further details and examples).  Lambert-Cole and Meier showed that the standard trisection of $\CP^2$ is Stein and asked the following:

\begin{question}\cite{LCM}\label{q:lcm}
Does every complex projective surface admit a Stein trisection?
\end{question}

They also proved that each $\Ss_d$ admits an \emph{efficient trisection}, a trisection $\Ss_d = X_1 \cup X_2 \cup X_3$ in which each $X_j$ is a 4-ball.  The work in this section yields a partial answer to Question~\ref{q:lcm}, obtained by lifting a bridge trisection of a complex curve in $\CP^2$.  For $d \geq 1$, recall that we defined that the varieties $\V_d$ and $\V_d'$ are defined as
\begin{eqnarray*}
\V_d &=& \{[z_1:z_2:z_3] \in \CP^2 : z_1z_2^{d-1} + z_2z_3^{d-1} + z_3z_1^{d-1} = 0\}, \\
\V'_d &=& \{[z_1:z_2:z_3] \in \CP^2 : z_1^{d-1}z_2 + z_2^{d-1}z_3 + z_3^{d-1}z_1= 0\},
\end{eqnarray*}
where both are smoothly isotopic to $\C_d$ (see Proposition 4.5 of~\cite{LCM}, for example).  We say that a variety $\V$ is in \emph{bridge position} with respect to the standard trisection of $\CP^2$ if for each $j$, we have that $\V \cap X_j$ is a collection of trivial disks and $\V \cap H_j$ is a collection of trivial arcs.  In~\cite{LCM}, the authors demonstrate that the complex curve $\V_1$ is in bridge position; however, as a variety, the curve $\C_d$ (defined to be the zero set of $z_1^d + z_2^d + z_3^d$) is \emph{not} in bridge position.  A bridge position is called \emph{efficient} if for each $j$, we have that $\V \cap X_j$ is a single disk.  The main result in this section is proof of Theorem~\ref{thm:main2}, which asserts that $\V_d$ and its counterpart $\V_d'$ are in bridge position, and shadow diagrams for these bridge trisections agree with $(A)_d$ and $(E)_d$, respectively.

We can use Theorem~\ref{thm:main2} to prove Corollary~\ref{cor:stein}, which involves the complex projective surfaces $\Ss_d$ and $\Ss_d'$ determined by

\begin{eqnarray*}
\Ss_d &=& \{[z_1:z_2:z_3:z_4] \in \CP^3 : z_1z_2^{d-1} + z_2z_3^{d-1} + z_3z_1^{d-1} + z_4^d= 0\}, \\
\Ss'_d &=& \{[z_1:z_2:z_3:z_4] \in \CP^3 : z_1^{d-1}z_2 + z_2^{d-1}z_3 + z_3^{d-1}z_1 + z_4^d= 0\}
\end{eqnarray*}

We are grateful to Peter Lambert-Cole for directing us to his work on Stein trisections, and for pointing the proof of the next corollary.

\begin{proof}[Proof of Corollary~\ref{cor:stein}]
The projection $\pi:\CP^3 \rightarrow \CP^2$ given by $\pi([z_1:z_2:z_3:z_4]) = [z_1:z_2:z_3]$ restricts to a degree $d$ branched covering map from $\Ss_d$ onto $\CP^2$, with branch locus equal to $\V_d$.  Since $\V_d$ is in bridge position with respect to the standard trisection $\CP^2 = X_1 \cup X_2 \cup X_3$, and since the map $\pi$ is analytic, the standard trisection lifts to a Stein trisection $\Ss_d = \pi^{-1}(X_1) \cup \pi^{-1}(X_2) \cup \pi^{-1}(X_3)$.  Moreover, since the bridge position of $\V_d$ is efficient, each $\pi^{-1}(X_j)$ is a 4-ball; hence $\Ss_d$ admits an efficient Stein trisection.  A similar proof holds using $\V'_d$ and $\Ss'_d$.
\end{proof}

\subsection{Technical lemmas}

In this subsection, we develop several technical lemmas to be employed in our proof of Theorem~\ref{thm:main2}.  We will consider $S^1$ to be the complex unit circle and $D^2$ to be the complex unit disk, so that $D^2 \X S^1$ is the subset of $\Cc^2$ given by $D^2 \X S^1 = \{(re^{i\nu},e^{i\omega}): 0 \leq r \leq 1\}$.  These lemmas will help us analyze the intersection of $\V_d$ with the standard trisection of $\CP^2$.

\begin{lemma}\label{arc1}
The set of points $\{(re^{i\nu},e^{i\omega}) : re^{i \nu} + re^{i\omega} = -1\} \subset D^2 \X S^1$ is a properly embedded arc that is isotopic, fixing its endpoints, to the straight-line arc $\{(e^{is},e^{i(2\pi-s)}) : s \in [2\pi/3,4\pi/3]\} \subset \pd(D^2 \X S^1)$.
\end{lemma}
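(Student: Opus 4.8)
The plan is to solve the defining equation explicitly and then push the resulting arc radially out to the boundary torus. Write the first coordinate of a point of $D^2\times S^1$ as $z_1 = re^{i\nu}$ (so $r = |z_1|$) and the second as $z_2 = e^{i\omega}$, so the constraint $re^{i\nu}+re^{i\omega}=-1$ becomes $r(e^{i\nu}+e^{i\omega}) = -1$. Since the right side is nonzero, $r>0$, and then $e^{i\nu}+e^{i\omega} = -1/r$ is forced to be a \emph{negative real number}. Comparing imaginary parts gives $\sin\nu+\sin\omega = 0$, i.e.\ $2\sin\tfrac{\nu+\omega}{2}\cos\tfrac{\nu-\omega}{2} = 0$, and comparing real parts gives $\cos\nu+\cos\omega = -1/r \le -1$. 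If $\cos\tfrac{\nu-\omega}{2}=0$ then $\omega\equiv\nu+\pi$ and $\cos\nu+\cos\omega=0$, contradicting $\le -1$; so that branch is empty. Hence $\sin\tfrac{\nu+\omega}{2}=0$, i.e.\ $\omega\equiv-\nu\pmod{2\pi}$, and then $\cos\nu+\cos\omega = 2\cos\nu = -1/r$. Thus the set in the lemma is exactly the image of
\[ \sigma\colon [2\pi/3,\,4\pi/3] \longrightarrow D^2\times S^1, \qquad \sigma(\nu) = \Bigl(-\tfrac{1}{2\cos\nu}\,e^{i\nu},\ e^{-i\nu}\Bigr), \]
where the interval for $\nu$ is precisely the set of angles with $0 < -\tfrac{1}{2\cos\nu}\le 1$, equivalently $\cos\nu\le -\tfrac12$.

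Next I would verify that $\sigma$ is a properly embedded arc with the stated endpoints. Its second coordinate $\nu\mapsto e^{-i\nu}$ is injective on an interval of length $2\pi/3 < 2\pi$ and has nowhere-vanishing derivative, so $\sigma$ is an injective immersion of a compact manifold, hence an embedding. The radius $r(\nu) = -\tfrac{1}{2\cos\nu}$ lies in $[\tfrac12,1]$ throughout and equals $1$ exactly at $\nu\in\{2\pi/3,4\pi/3\}$, so $\sigma$ meets $\partial(D^2\times S^1) = S^1\times S^1$ only at the two endpoints $\sigma(2\pi/3) = (e^{2\pi i/3},e^{-2\pi i/3})$ and $\sigma(4\pi/3) = (e^{4\pi i/3},e^{-4\pi i/3})$; these are exactly the endpoints of $\delta := \{(e^{is},e^{i(2\pi-s)}) : s\in[2\pi/3,4\pi/3]\}$, since $e^{i(2\pi-s)} = e^{-is}$. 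A one-line computation, $\tfrac{d}{d\nu}\bigl(-\tfrac{1}{2\cos\nu}\bigr) = -\tfrac{\sin\nu}{2\cos^2\nu}\neq 0$ at $\nu\in\{2\pi/3,4\pi/3\}$, shows $\sigma$ is transverse to the boundary there, confirming properness.

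To produce the isotopy, I would interpolate linearly in the radial coordinate: for $t\in[0,1]$ set $\sigma_t(\nu) = \bigl(r_t(\nu)\,e^{i\nu},\, e^{-i\nu}\bigr)$ with $r_t(\nu) = (1-t)\bigl(-\tfrac{1}{2\cos\nu}\bigr) + t$, $\nu\in[2\pi/3,4\pi/3]$. Since $r_t(\nu)$ is a convex combination of $-\tfrac{1}{2\cos\nu}\in[\tfrac12,1]$ and $1$, it stays in $[\tfrac12,1]\subset[0,1]$, so $\sigma_t$ maps into $D^2\times S^1$; at $\nu\in\{2\pi/3,4\pi/3\}$ one has $r_t\equiv 1$ for all $t$, so the endpoints are fixed; and each $\sigma_t$ is an embedding by the argument above (the second coordinate is unchanged). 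As $\sigma_0=\sigma$ and $\sigma_1$ parametrizes $\delta$, the family $\{\sigma_t\}$ is an isotopy of embeddings rel endpoints. Equivalently, $\{\sigma_t(\nu)\}$ sweeps out an embedded disk $\Delta\subset D^2\times S^1$ with $\partial\Delta = \sigma([2\pi/3,4\pi/3])\cup\delta$ and $\Delta\cap\partial(D^2\times S^1) = \delta$, so the standard push of $\sigma$ across $\Delta$ (isotopy extension) gives the required ambient isotopy fixing the endpoints.

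The only slightly delicate point — the ``main obstacle,'' though it is not a serious one — is the case analysis of the first step: one must check that no solution branch is dropped (in particular that $\cos\tfrac{\nu-\omega}{2}=0$ contributes nothing) and that the inequality $r\le 1$ cuts the angle down to exactly $[2\pi/3,4\pi/3]$, so that the solution set really is a single embedded arc rather than a larger or disconnected set. Once the explicit parametrization $\sigma$ is in hand, the properness check and the radial interpolation are routine.
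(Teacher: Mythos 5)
Your proposal is correct and follows essentially the same route as the paper: reduce the equation to $\omega=-\nu$ and $r=-\tfrac{1}{2\cos\nu}$ with $\nu\in[2\pi/3,4\pi/3]$, then interpolate linearly in the radial coordinate (the paper's isotopy $r_t=t-\tfrac{1-t}{2\cos s}$ is exactly your convex combination). Your explicit elimination of the branch $\omega\equiv\nu+\pi$ and the embedding/properness checks are slightly more careful than the paper's, but the argument is the same.
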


\begin{proof}
First, note that for any solution $(re^{i\nu},e^{i\omega})$ to $re^{i\nu} + re^{i\omega} = -1$, we have that $r \neq 0$.  Equating real parts yields $r \cos \nu + r \cos \omega = -1$, and equating imaginary parts yields $r \sin \nu + r \sin \omega = 0$.  Since $0<r\leq 1$, both $\cos \nu$ and $\cos \omega$ must be negative, so that $\omega, \nu \in [\pi/2,3\pi/2]$.  In addition, $\sin \nu = -\sin \omega$ implies $\omega = 2\pi - \nu$, and thus $\cos \omega = \cos \nu$.  It follows that $r \cos \nu + r \cos \omega = 2 r \cos \nu = -1$; hence, $r \cos \nu = -\frac{1}{2}$.  This equation will have a solution for $r \in (0,1]$ if and only if $\omega \in [2\pi /3,4\pi/3]$, in which case $r = -\frac{1}{2 \cos \nu}$.  Thus, for $s \in [2\pi/3,4\pi/3]$, the solutions can be parametrized by $(-\frac{1}{2 \cos s} e^{is},e^{i(2\pi-s)})$.  Finally, for any $t \in [0,1]$, we see that $\{((t-\frac{1-t}{2 \cos s})e^{is},e^{i(2\pi-s)}): s\in [2\pi/3,4\pi/3]\}$ is a path in $D^2 \X S^1$ connecting the points $(e^{2\pi i/3},e^{4\pi i/3})$ and $(e^{4\pi i/3},e^{2\pi i/3})$, yielding an isotopy from the solution set (the case $t=0$) to the arc claimed in the statement of the lemma (the case $t=1$).
\end{proof}

\begin{lemma}\label{arc2}
For any $t \geq 1$, the set of points $\{(re^{i\nu},e^{i\omega}) : r^te^{i\nu} + re^{i\omega} = -1\} \subset D^2 \X S^1$ is a properly embedded arc that is isotopic, fixing its endpoints, to the arc $\{(re^{i \nu},e^{i\omega}) : re^{i\nu} + re^{i\omega} = -1\} \subset D^2 \X S^1$.
\end{lemma}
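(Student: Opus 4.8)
The plan is to deform the exponent continuously from $t$ down to $1$ and then invoke Lemma~\ref{arc1}. For each $\tau \in [1,t]$ set
\[ S_\tau = \{(re^{i\nu},e^{i\omega}) \in D^2 \X S^1 : r^\tau e^{i\nu} + re^{i\omega} = -1\}, \]
so that $S_t$ is the set in the statement and $S_1$ is precisely the arc appearing in Lemma~\ref{arc1}.  The goal is to show that each $S_\tau$ is a properly embedded arc with the two endpoints $(e^{2\pi i/3},e^{4\pi i/3})$ and $(e^{4\pi i/3},e^{2\pi i/3})$ — which are independent of $\tau$, since $r^\tau = r$ when $r = 1$ — and that the family $\{S_\tau\}$ varies continuously in $\tau$; the isotopy extension theorem then yields the desired ambient isotopy fixing the endpoints.

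To analyze $S_\tau$, note first that a solution forces $r \neq 0$, hence $r \in (0,1]$.  Fixing $r$, a solution amounts to writing $-1$ as a sum of two complex numbers of moduli $r^\tau$ and $r$, which is possible exactly when $r^\tau + r \geq 1$ (the other two triangle inequalities hold automatically because $r^\tau \leq r \leq 1$).  Let $r_\tau \in (0,1)$ be the unique root of $r^\tau + r = 1$; it exists and is smooth in $\tau$ because $r \mapsto r^\tau + r$ is strictly increasing from $0$ to $2$.  For $r \in [r_\tau,1]$, comparing moduli in $r^\tau e^{i\nu} = -1 - re^{i\omega}$ gives $\cos\omega = p_\tau(r)$, where $p_\tau(r) = \tfrac12 r^{2\tau-1} - \tfrac1{2r} - \tfrac r2$; I would check that $p_\tau$ is strictly increasing on $(0,1]$, with $p_\tau(r_\tau) = -1$ and $p_\tau(1) = -\tfrac12$, so that $p_\tau(r) \in (-1,-\tfrac12]$ for $r \in (r_\tau,1]$.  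Hence each such $r$ gives exactly two solutions, $(\nu,\omega)$ and $(2\pi-\nu,2\pi-\omega)$, with $\nu$ recovered from $e^{i\nu} = (-1-re^{i\omega})/r^\tau$ (whose numerator never vanishes on $S_\tau$), while $r = r_\tau$ gives the single solution $(\nu,\omega) = (\pi,\pi)$.  This presents $S_\tau$ as two arcs parametrized by $r \in [r_\tau,1]$, joined at the point $(-r_\tau,-1)$, hence as an arc; its only points on $\pd(D^2 \X S^1)$ are its two endpoints at $r = 1$, so it is properly embedded.

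Finally I would reparametrize $S_\tau$ by $s \in [-1,1]$, using $r = r_\tau + |s|(1-r_\tau)$ on the two branches together with $\omega = \arccos p_\tau(r)$ for $s \geq 0$, $\omega = 2\pi - \arccos p_\tau(r)$ for $s \leq 0$, and $\nu$ as above.  Continuity of $r_\tau$ and of $p_\tau$ in $\tau$ makes the resulting map $[1,t] \X [-1,1] \to D^2 \X S^1$ jointly continuous; it restricts to an embedding onto $S_\tau$ for each fixed $\tau$ and is constantly equal to the two endpoints at $s = \pm 1$.  This is an isotopy of properly embedded arcs from $S_1$ to $S_t$ fixing endpoints, which upgrades to an ambient isotopy by the isotopy extension theorem, proving the lemma.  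I expect the only genuinely delicate point to be the monotonicity of $p_\tau$ — equivalently, that along each branch $\omega$ is a monotone function of $|w_1|$, so that the two branches of $S_\tau$ meet only at the tip — and this is precisely where the hypothesis $t \geq 1$ enters, through $r^{2\tau-2} > 0$ and $\tfrac1{r^2} - 1 \geq 0$ on $(0,1]$ in the formula for $p_\tau'$.
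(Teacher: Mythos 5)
Your argument is correct and follows essentially the same route as the paper: for each exponent you present the solution set as two branches parametrized by $r\in[r_\tau,1]$ (the paper gets the angles from the Law of Cosines on the triangle with sides $1$, $r$, $r^\tau$, which is the same computation as your modulus comparison $\cos\omega=p_\tau(r)$), glued at the single tip $(-r_\tau,-1)$, with endpoints at $r=1$ independent of $\tau$, and the isotopy comes from joint continuity of this parametrization in $(\tau,r)$ via continuity of the root $r_\tau$. The only cosmetic difference is your explicit monotonicity check for $p_\tau$ in place of the paper's observation that $\nu=\pi$ iff $\omega=\pi$ iff $r=r_0$.
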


\begin{proof}
Suppose $t \geq 1$.  If $(re^{i\nu},e^{i\omega})$ satisfies $r^te^{i\nu} + re^{i\omega} = -1$, then as above, equating real parts yields $r^t \cos \nu + r \cos \omega = -1$, and equating imaginary parts yields $r^t \sin \nu + r \sin \omega = 0$.  Once again, since $0 < r \leq 1$, it follows that $\nu,\omega \in [\frac{\pi}{2},\frac{3\pi}{2}]$.  Suppose first that $\nu \in [\frac{\pi}{2},\pi]$.  Then $\omega \in [\pi,\frac{3\pi}{2}]$, and there there is a triangle with side lengths 1, $r$, and $r^t$ and angles $\pi - \nu$, $\omega - \pi$, and $\nu-\omega + \pi$, as shown at left in Figure~\ref{fig:triangle}.  Such a triangle exists if and only if $r+r^t \geq 1$, $1+r \geq r^t$ and $1 + r^t \geq r$.  The second and third inequalities are always true since $r \in (0,1]$.  For the first inequality, let $f:\R_{\geq 0} \rightarrow \R$ be given by $f(r) = r^t + r$.  Then $f(0) = 0$, $f(1) = 2$, and $\frac{df}{dr} = 1+tr^{t-1} > 0$ for all $r \in \R_{\geq 0}$.  By the Intermediate Value Theorem, there is a unique $r_0 \in (0,1)$ such that $r_0^t + r_0 = 1$, and thus a solution $(r^te^{i\nu},e^{i\omega})$ exists if and only if $r \in [r_0,1]$.  Moreover, by the Implicit Function Theorem, $r_0 = r_0(t)$ is continuous as a function of $t$.

\begin{figure}[h!]
\begin{subfigure}{.24\textwidth}
  \centering
  \includegraphics[width=.95\linewidth]{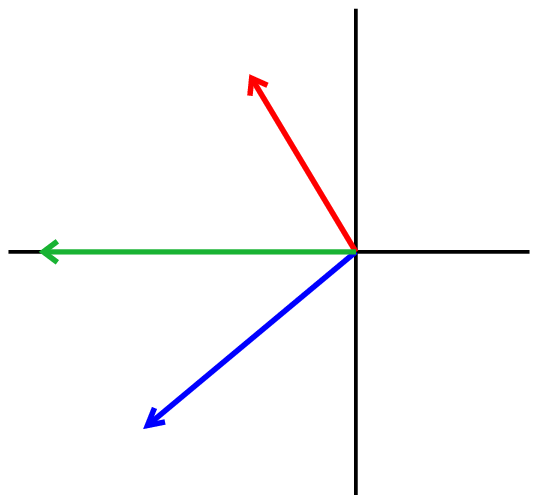}
    \put (-91,52) {\textcolor{ForestGreen}{$-1$}}
    \put (-61,82) {\textcolor{red}{$r^te^{i\nu}$}}
    \put (-61,14) {\textcolor{blue}{$re^{i\omega}$}}

  \label{fig:triA}
\end{subfigure}
\begin{subfigure}{.24\textwidth}
  \centering
  \includegraphics[width=.95\linewidth]{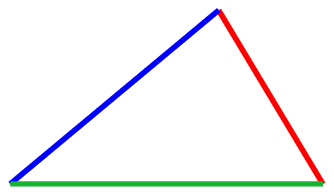}
      \put (-51,24) {\textcolor{ForestGreen}{$1$}}
    \put (-25,52) {\textcolor{red}{$r^t$}}
    \put (-65,56) {\textcolor{blue}{$r$}}
    \put (-44,38) {\tiny{$\pi-\nu$}}
    \put (-72,38) {\tiny{$\omega-\pi$}}
  \label{fig:triB}
\end{subfigure}
\begin{subfigure}{.24\textwidth}
  \centering
  \includegraphics[width=.95\linewidth]{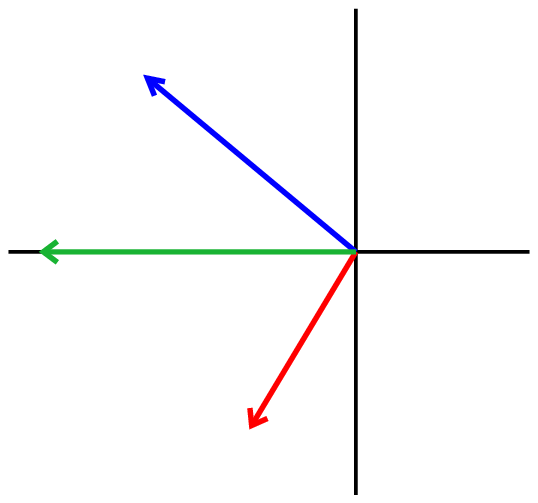}
      \put (-91,52) {\textcolor{ForestGreen}{$-1$}}
    \put (-73,17) {\textcolor{red}{$r^te^{i\nu}$}}
    \put (-59,70) {\textcolor{blue}{$re^{i\omega}$}}

    \label{fig:triC}
\end{subfigure}
\begin{subfigure}{.24\textwidth}
  \centering
  \includegraphics[width=.95\linewidth]{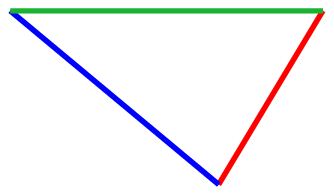}
        \put (-51,68) {\textcolor{ForestGreen}{$1$}}
    \put (-27,46) {\textcolor{red}{$r^t$}}
    \put (-68,45) {\textcolor{blue}{$r$}}
    \put (-44,61) {\tiny{$\nu-\pi$}}
    \put (-72,61) {\tiny{$\pi-\omega$}}
    \label{fig:triD}
\end{subfigure}
	\caption{At left, the case $\nu \in [\pi/2,2\pi]$ and corresponding triangle.  At right, the case $\nu \in [\pi,3\pi/2]$ and corresponding triangle}
	\label{fig:triangle}
\end{figure}

Now, applying the Law of Cosines to the triangle in Figure~\ref{fig:triangle} allows us to write $\nu$ and $\omega$ as continuous functions of $r$ and $t$, and the function $p^*_t:[r_0,1] \rightarrow D^2 \X S^1$ that maps $r$ to $(re^{\nu i},e^{\omega i})$ is injective.  Moreover, $\nu = \pi$ if and only if $\omega = \pi$, and these conditions are true if and only if $r + r^t = 1$ and $r = r_0$.  Thus, $p^*_t([r_0,1])$ is an embedded arc in $D^2 \X S^1$, and by inspection, the endpoints of this arc are $(-r_0,-1)$ and $(e^{2\pi i/3},e^{4\pi i/3})$.

On the other hand, suppose that $\nu \in [\pi,\frac{3\pi}{2}]$.  In this case, there is a triangle with side lengths 1, $r$, and $r^t$ and angles $\nu - \pi$, $\pi - \omega$, and $\omega - \nu + \pi$, as shown at right in Figure~\ref{fig:triangle}.   Applying the Law of Cosines again yields $\nu$ and $\omega$ as continuous functions of $r$ and $t$.  As above, the function $p^{**}_t:[r_0,1] \rightarrow D^2 \X S^1$ that maps $r$ to $(re^{i\nu},e^{i\omega})$ is injective, and $\nu = \pi$ if and only if $r = r_0$.  Thus, $p^{**}_t([r_0,1])$ is an embedded arc in $D^2 \X S^1$ with endpoints $(-r_0,-1)$ and $(e^{4\pi i/3},e^{2 \pi i/3})$.  Now, we paste these two arcs together to get an embedding $p_t:[0,1] \rightarrow D^2 \X S^1$, defined formally as
\[ p_t(s) = \begin{cases}
p^*_t((2r_0-2)s+1) & \text{if } s \leq \frac{1}{2} \\
p^{**}_t((2-2r_0)s+2r_0-1) & \text{if } s \geq \frac{1}{2}
\end{cases}.\]
We verify that $p_t(0) = p^*_t(1) = (e^{2\pi i/3},e^{4\pi i/3})$, $p_t(1) = p^{**}_t(1) = (e^{4\pi i/3},e^{2\pi i/3})$, and $p_t(\frac{1}{2}) = p^*_t(r_0) = p^{**}_t(r_0) = (-r_0,-1)$, so that $p_t([0,1])$ is an embedded arc.  Moreover, $P:[0,1] \X [0,1] \rightarrow D^2 \X S^1$ given by $P(s,t) = p_t(s)$ is continuous (since $r_0$ is continuous as a function of $t$), and thus the arc $p_t([0,1])$ is isotopic rel boundary to the arc $p_1([0,1])$; that is, the set of points $\{(re^{i\nu},e^{i\omega}) : r^te^{i\nu} + re^{i\omega} = -1\}$ as claimed.
\end{proof}

\subsection{Determining $\V_d \cap H_i$}\label{sub:arcs}

Now, we consider the intersection of $\V_d$ with the solid torus $H_1 = \{[z_1:z_2:z_3]: |z_1| \leq |z_2| = |z_3|\}$.  Using the affine chart that maps $z_3$ to 1, we see that
\[ \V_d \cap H_1 = \{(z_1,z_2) \in D^2 \X S^1 : z_1z_2^{d-1} + z_2 + z_1^{d-1} = 0\}.\]
We understand this intersection in the next lemma.

\begin{proposition}\label{shadow}
$\V_d \cap H_1$ is a collection of boundary parallel arcs, which are isotopic via an isotopy fixing their endpoints to straight line arcs in $\Sigma$ from $ [e^{i\theta^-_j}:e^{i\psi^-_j}:1]$ to $[e^{i\theta^+_j},e^{i\psi^+_j}:1]$, where
\setcounter{tblEqCounter}{\theequation}
\begin{center}
\begin{tabular}{cccccccccc}
$\theta^-_j$ &$=$& $\dfrac{\frac{4\pi}{3}d + 2\pi(j-1)}{d^2-3d+3}$ & & & & $\theta^+_j$ &$=$& $\dfrac{\frac{2\pi}{3}d + 2\pi j}{d^2-3d+3}$ & \numberTblEq{eq1} \\
$\psi^-_j$&$=$  & $\dfrac{\frac{2\pi}{3}d + 2\pi(d-1)j - 2\pi}{d^2-3d+3}$ & & & & $\psi^+_j$&$=$  & $\dfrac{\frac{4\pi}{3}d + 2\pi(d-1)j - 2\pi}{d^2-3d+3}$ &  \numberTblEq{eq2} \\
\end{tabular}
\end{center}
for $1 \leq j \leq d^2-3d+3$.
\end{proposition}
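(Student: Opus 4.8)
The plan is to realize $\V_d \cap H_1$ as the preimage of the properly embedded arc produced in Lemma~\ref{arc2} under an explicit finite-sheeted self-covering of the solid torus $H_1$, and then to pull the boundary-parallel structure and the endpoint data back through this covering. The case $d = 1$ is immediate ($\V_1 \cap H_1 = \{(z_1,z_2) \in D^2 \X S^1 : z_1 + z_2 = -1\}$, a boundary-parallel arc with the claimed endpoints, since $d^2-3d+3 = 1$), so assume $d \geq 2$. Working in the chart $z_3 = 1$, we have $H_1 = \{(z_1,z_2) : |z_1| \leq |z_2| = 1\} = D^2 \X S^1$ and $\pd H_1 = \{|z_1| = |z_2| = 1\} = \Sigma$, and on $\V_d \cap H_1$ one has $z_1 \neq 0$ (otherwise $z_2 = 0$), so the defining equation is equivalent---dividing by $z_2$---to $z_1 z_2^{d-2} + z_1^{d-1} z_2^{-1} = -1$. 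I would then introduce the map $G$ on $(D^2 \setminus \{0\}) \X S^1$ given by $G(z_1,z_2) = (w_1,w_2)$ with $w_1 = |z_1|^{2-d} z_1^{d-1} z_2^{-1}$ and $w_2 = |z_1|^{-1} z_1 z_2^{d-2}$, so that $|w_1| = |z_1|$, $\arg w_1 = (d-1)\arg z_1 - \arg z_2$, and $\arg w_2 = \arg z_1 + (d-2)\arg z_2$. This $G$ fixes the radial $D^2$-coordinate and acts on the torus of angular coordinates by the integer matrix $M = \begin{pmatrix} d-1 & -1 \\ 1 & d-2 \end{pmatrix}$, with $\det M = d^2 - 3d + 3 > 0$; hence it is a proper local diffeomorphism, i.e.\ a $(d^2-3d+3)$-sheeted covering of $(D^2 \setminus \{0\}) \X S^1$ onto itself. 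A direct substitution shows $z_1 z_2^{d-2} + z_1^{d-1} z_2^{-1} = -1$ holds exactly when $G(z_1,z_2) = (re^{i\nu}, e^{i\omega})$ satisfies $r^{d-1} e^{i\nu} + r e^{i\omega} = -1$; that is, $\V_d \cap H_1 = G^{-1}(A)$, where $A$ is the arc of Lemma~\ref{arc2} with $t = d-1$.

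Since $A$ is a contractible subset of the base of $G$, the preimage $\V_d \cap H_1 = G^{-1}(A)$ is a disjoint union of $d^2 - 3d + 3$ arcs, each mapped homeomorphically onto $A$. By Lemmas~\ref{arc1} and~\ref{arc2}, $A$ is isotopic, fixing endpoints, to the boundary straight-line arc $L = \{(e^{is}, e^{i(2\pi - s)}) : s \in [2\pi/3, 4\pi/3]\} \subset \pd H_1 = \Sigma$; I would verify that this isotopy stays inside $(D^2 \setminus \{0\}) \X S^1$---its radial coordinates are bounded below by a positive constant, lying in $[1/2,1]$ in the proof of Lemma~\ref{arc1} and in $\bigcup_t [r_0(t),1]$ (with $r_0(t)$ continuous and positive on the relevant compact $t$-interval) in the proof of Lemma~\ref{arc2}---so that it lifts through the covering $G$. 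The lifted isotopy carries each component of $G^{-1}(A)$, fixing endpoints, to a component of $G^{-1}(L)$. Since $G$ restricts on $\pd H_1 = \Sigma$ to the linear self-covering of the torus induced by $M$, under which a straight-line arc pulls back to a disjoint union of straight-line arcs, each component of $G^{-1}(L)$ is a straight-line arc in $\Sigma$, and thus each component of $\V_d \cap H_1$ is boundary-parallel, isotopic rel endpoints to such an arc.

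To identify the endpoints, note that a component of $\V_d \cap H_1$ shares its endpoints with the corresponding component of $G^{-1}(L)$, so these lie in $G^{-1}$ of the two endpoints $(e^{2\pi i/3}, e^{4\pi i/3})$ and $(e^{4\pi i/3}, e^{2\pi i/3})$ of $L$. In the angular coordinates $(\theta,\psi)$ on $\Sigma$, I would solve $M(\theta,\psi)^\top \equiv (2\pi/3, 4\pi/3)^\top$ and $M(\theta,\psi)^\top \equiv (4\pi/3, 2\pi/3)^\top \pmod{2\pi\Z^2}$ using $M^{-1} = \tfrac{1}{d^2-3d+3}\begin{pmatrix} d-2 & 1 \\ -1 & d-1 \end{pmatrix}$; the solution set of each is a coset of the order-$(d^2-3d+3)$ cyclic subgroup generated by $\tfrac{2\pi}{d^2-3d+3}(1, d-1)$ (one checks $M \cdot \tfrac{1}{d^2-3d+3}(1,d-1)^\top = (0,1)^\top$), and reading off a complete set of representatives yields precisely the claimed formulas for $\theta^\pm_j$ and $\psi^\pm_j$, $1 \leq j \leq d^2-3d+3$. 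Which positive and negative endpoints lie on a single arc is dictated by lifting the segment $L$, which runs in the direction $(1,-1)$ and hence lifts to segments in the direction $M^{-1}(1,-1)^\top$; one verifies that the stated formulas satisfy $(\theta^-_j, \psi^-_j) - (\theta^+_j, \psi^+_j) = \tfrac{2\pi}{3} M^{-1}(1,-1)^\top$, which pins down the pairing and the indexing.

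The step I expect to be the main obstacle is the careful justification underpinning the covering argument---confirming that $G$ is a genuine covering of the punctured solid torus, and that the isotopies supplied by Lemmas~\ref{arc1} and~\ref{arc2} never meet the core circle $\{z_1 = 0\}$, so that they can be lifted---together with the mod-$2\pi$ bookkeeping that converts the solutions of the two linear systems into the exact closed forms with the correct range of $j$. Neither is conceptually deep, but both are where sign or indexing errors would creep in.
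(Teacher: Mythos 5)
Your proposal is correct, and at its computational core it coincides with the paper's argument: your covering map $G$ is precisely the paper's change of variables $\nu = (d-1)\theta - \psi$, $\omega = \theta + (d-2)\psi$ with the radial coordinate carried along, the reduction to $r^{d-1}e^{i\nu} + re^{i\omega} = -1$ and the appeal to Lemmas~\ref{arc1} and~\ref{arc2} are the same, and your mod-$2\pi$ inversion of the linear map $(\theta,\psi)\mapsto((d-1)\theta-\psi,\,\theta+(d-2)\psi)$ reproduces the paper's branch formulas $\theta_j = \frac{(d-2)\nu+\omega+2\pi j}{d^2-3d+3}$, $\psi_j = (d-1)\theta_j - \nu$ and hence the stated endpoints. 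Where you genuinely differ is in how the $(d^2-3d+3)$-fold multivaluedness is organized. The paper writes the branches down explicitly, builds parametrizations $p^j_t$ of the $d^2-3d+3$ candidate arcs, and then verifies by hand that these embeddings are pairwise disjoint and that the isotopies of Lemmas~\ref{arc1} and~\ref{arc2} can be performed simultaneously (via the map $P_t$ on $\bigsqcup I_j$ and the observation that the projected segments are parallel in $(\theta,\psi)$-coordinates). You instead note that $G$ is a $(d^2-3d+3)$-sheeted self-covering of the punctured solid torus, so that $\V_d \cap H_1 = G^{-1}(A)$ is automatically a disjoint union of arcs each mapped homeomorphically to $A$, and the isotopy of $A$ lifts uniquely and simultaneously to all components by the homotopy lifting property; the only additional check is the one you flag, namely that the isotopies keep the radial coordinate bounded away from $0$ (which they do, as you note, by the bounds $r \geq 1/2$ in Lemma~\ref{arc1} and $r \geq \min_t r_0(t) > 0$ in Lemma~\ref{arc2}). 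Your packaging buys disjointness and simultaneity for free at the cost of justifying the covering claim, while the paper's version is more elementary but requires the explicit injectivity argument; the endpoint pairing you obtain from the lift direction $M^{-1}(1,-1)^{\top}$ agrees with the paper's evaluation of $\theta_j,\psi_j$ at $(\nu^{\mp},\omega^{\mp})$, and your separate treatment of $d=1$ (where $t = d-1 = 0$ falls outside the hypothesis of Lemma~\ref{arc2}) is if anything slightly more careful than the paper's.
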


\begin{proof}
Let $z_1 = re^{i\theta}$, and $z_2 = e^{i\psi}$.  Then $\V_d \cap H_1 = \{(re^{i\theta},e^{i\psi}) : re^{i(\theta + (d-1)\psi)} + e^{i\psi} + r^{d-1}e^{i(d-1)\theta} = 0\}$.  Dividing the equation by $e^{i\psi}$ and setting $\nu = (d-1)\theta - \psi$ and $\omega = \theta + (d-2)\psi$, we can rewrite
\[ \V_d \cap H_1 = \{(re^{i\theta},se^{i\psi}) : r^{d-1}e^{i\nu} + re^{i\omega}  = -1\}.\]
Observe that $\psi = (d-1)\theta - \nu$, and by substitution $\omega = \theta + (d-2)((d-1)\theta - \nu)$, or equivalently, $(d^2-3d+3)\theta = (d-2)\nu + \omega$.  For each $j$ with $1 \leq j \leq d^2-3d+3$ define $\theta_j$ and $\psi_j$ as
\begin{eqnarray*}
\theta_j &=& \frac{(d-2)\nu + \omega + 2\pi j}{d^2-3d+3} \\
\psi_j = (d-1)\theta_j - \nu &=& \frac{(d-1)\omega - \nu + 2\pi(d-1) j}{d^2-3d+3}
\end{eqnarray*}
By Lemmas~\ref{arc1} and \ref{arc2}, we have that for each $j$, there is an arc of solutions $\{(re^{i\theta_j},e^{i\psi_j}):r^{d-1}e^{i(d-1)\theta_j} + re^{i(\theta_j + (d-1)\psi_j)} + e^{i\psi_j} = 0\}$, which is isotopic rel endpoints to the straight line arc from the point satisfying $(\nu,\omega) = (\frac{2\pi}{3},\frac{4\pi}{3})$ to the point satisfying $(\nu,\omega) = (\frac{4\pi}{3},\frac{2\pi}{3})$.  

We wish to show that these isotopies can be carried out simultaneously.  To this end, for each $j$ with $1 \leq j \leq d^2-3d+3$, let $I_j$ be a copy of a closed interval and let $p_t^j:I_j \rightarrow D^2 \X S^1$ be given by $p_t^j(s) = (re^{i\theta_j},e^{i\psi_j})$, where $r$, $\nu$, and $\omega$ are determined by the function $p_t$ given in Lemma~\ref{arc2}.  We claim that these embeddings never intersect.  In other words, if $p_t^j(s_1) = p_t^k(s_2)$, then $j = k$ and $s_1 = s_2$, implying that $P_t: \bigsqcup I_j \rightarrow D^2 \X S^1$ is continuous family of embeddings.  If $p_t^j(s_1) = p_t^k(s_2)$, then $\theta_j(s_1) =\theta_k(s_2)$ and $\psi_j(s_1) = \psi_k(s_2)$, but then $\nu(s_1) = \nu(s_2)$ and $\omega(s_1) = \omega(s_2)$.  Using the definition of $\theta_j$, it follows that $j = k$, and since $p_t^j$ is injective, we have $s_1 = s_2$.  

The function $P_t$ yields an isotopy of the arcs $p_0^j(I_j)$ onto the corresponding arcs $\{(re^{i \nu},e^{i\omega}) : re^{i\nu} + re^{i\omega} = -1\}$, and the isotopy given by Lemma~\ref{arc1} is the resulting of projecting onto the torus with $(\nu,\omega)$ coordinates.  Using the definitions of $\theta_j$ and $\psi_j$ above, we can instead project onto the torus $\Sigma$ with $(\theta,\psi)$ coordinates, in which case the $d^2 -3d+ 3$ segments are parallel, and so they project to disjoint (parallel) segments in $\Sigma$ via Lemma~\ref{arc1}.  The endpoints of $p_0^j(I_j)$ are calculated by plugging the endpoints $(\nu^-,\omega^-) = (e^{4\pi i/3},e^{2\pi i/3})$ and $(\nu^+,\omega^+) = (e^{2\pi i/3},e^{4\pi i /3})$ into the formulas for $\theta_j$ and $\psi_j$ above.
\end{proof}

From this point forward, we let $\aaa$ denote the straight-line arcs in $\Sigma$ produced by Lemma~\ref{shadow}.  By a symmetric argument, we have that $\V_d \cap H_2$ is isotopic rel boundary to a collection $\bbb$ of straight-line arcs in $\Sigma$, and $\V_d \cap H_3$ is isotopic rel boundary to a collection $\ccc$ of straight-line arcs in $\Sigma$.  We let $\mathbf{x}$ be the collection of all of the endpoints of these arcs, $\mathbf{x} =  \{[\theta_j^{\pm}:\psi_j^{\pm}:1] \in \CP^2\}$, and we let $\mathbf{x}^+ = \{\theta_j^+,\psi_j^+\}$ and $\mathbf{x}^- = \{\theta_j^-,\psi_j^-\}$.  Recall that $\ab$, $\bc$, and $\ca$ denote the curves $\aaa \cup \overline{\bbb}$, $\bbb \cup \overline{\ccc}$, and $\ccc \cup \overline{\aaa}$, respectively, although a priori, in this setting these are potentially immersed curves, since we have not yet established that the arcs in $\aaa$, $\bbb$, and $\ccc$ avoid each other in their interiors.  In addition, recall that $\A = \{[e^{i\theta}:1:1]\}$, $\n = \{[1:e^{i\psi}:1]\}$, and $\gamma = \{[e^{-i\theta}:e^{-i\theta}:0]\}$.

We also observe that by the formulas in Proposition~\ref{shadow}, any translation that sends one point in $\mathbf{x}^+$ to another preserves the set $\mathbf{x}^+$.  Moreover, any pair of arcs in $\aaa$, $\bbb$, or $\ccc$ are parallel, and thus any translation of $\Sigma$ sending one point in $\mathbf{x}^+$ to another preserves all three sets of arcs $a_{ij}$.  Additionally, the natural map on $\CP^2$ that sends $[z_0:z_1:z_2]$ to $[z_1:z_2:z_0]$ induces a $2\pi/3$ rotation of the hexagonal picture of $\Sigma$ and cyclic permutation of $\aaa$, $\bbb$, and $\ccc$.

\begin{lemma}\label{homol}
In $H_1(\Sigma)$, we have $\ab = [\A] + (d-1)[\n]$.
\end{lemma}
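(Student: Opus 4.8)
The plan is to read the homology class of $\ab$ off the explicit parametrization in Proposition~\ref{shadow}. Since $[\A]$ and $[\n]$ are the classes of the $\theta$-circle $\{[e^{i\theta}:1:1]\}$ and the $\psi$-circle $\{[1:e^{i\psi}:1]\}$ in $\Sigma$, a $1$-cycle winding $p$ times in the $\theta$-direction and $q$ times in the $\psi$-direction represents $p[\A]+q[\n]$; so, working in the universal cover $\R^2$ with coordinates $(\theta,\psi)$, I want to sum the coordinate displacements along the constituent arcs of $\ab=\aaa\cup\overline{\bbb}$. First one checks that $\ab$ is an honest $1$-cycle: the endpoints of the arcs of $\aaa$, and likewise of $\bbb$, together constitute the full bridge set $\V_d\cap\Sigma$, and the complex orientation of $\V_d$ assigns the same signs to those bridge points for all three tangles, so the $1$-chain $\sum(\text{arcs of }\aaa)-\sum(\text{arcs of }\bbb)$ has zero boundary.

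For the arcs of $\aaa$: by Proposition~\ref{shadow} each such arc is, in $\R^2$, the straight segment from $(\theta^-_j,\psi^-_j)$ to $(\theta^+_j,\psi^+_j)$, and subtracting the formulas there yields the displacement vector
\[
v \;=\; \bigl(\theta^+_j-\theta^-_j,\;\psi^+_j-\psi^-_j\bigr)\;=\;\frac{2\pi}{3(d^2-3d+3)}\,(3-d,\;d),
\]
independent of $j$ (as it must be, the arcs of $\aaa$ being mutually parallel). For the arcs of $\bbb$ I would invoke the symmetry $\phi\colon\CP^2\to\CP^2$, $\phi([z_1:z_2:z_3])=[z_2:z_3:z_1]$, which is a holomorphic automorphism fixing $\V_d$ setwise and preserving the standard trisection with $\phi(H_j)=H_{j-1}$; hence $\phi(\aaa)=\ccc$, $\phi(\ccc)=\bbb$, so $\bbb=\phi^2(\aaa)$. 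On $\Sigma$ the map $\phi$ acts by $(\theta,\psi)\mapsto(\psi-\theta,-\theta)$, so $\phi^2$ acts by the linear map with matrix $\begin{pmatrix}0&-1\\1&-1\end{pmatrix}$; and since $\phi$ is orientation-preserving on $\CP^2$ and on $\V_d$, it carries the orientation of the arcs of $\aaa$ induced by $\V_d$ to that of the arcs of $\bbb$. Thus each arc of $\bbb$ has displacement
\[
v' \;=\; \begin{pmatrix}0&-1\\1&-1\end{pmatrix}v \;=\; \frac{2\pi}{3(d^2-3d+3)}\,(-d,\;3-2d).
\]
There are $b=d^2-3d+3$ arcs in each of $\aaa$ and $\bbb$, and $\ab$ runs along the arcs of $\aaa$ forward and those of $\bbb$ backward, so the total displacement around $\ab$ is
\[
b\,v-b\,v'\;=\;\tfrac{2\pi}{3}\bigl((3-d)+d,\;d-(3-2d)\bigr)\;=\;\bigl(2\pi,\;2\pi(d-1)\bigr),
\]
i.e.\ $\ab$ winds once in $\theta$ and $d-1$ times in $\psi$, giving $\ab=[\A]+(d-1)[\n]$.

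The step I expect to require the most care is the orientation bookkeeping: the computation assumes that the complex orientation of $\V_d$ orients each arc of $\aaa$ from its $\mathbf{x}^-$-endpoint to its $\mathbf{x}^+$-endpoint, the reverse convention negating the answer. I expect this to drop out of tracing the orientation through the proof of Proposition~\ref{shadow}; alternatively the sign is forced because $\V_d$ has degree $+d$, so once $(\aaa,\bbb,\ccc)$ is known to be a hexagonal lattice diagram the relation $p_1+q_1=d$ (from $p_1=p_2$ via the $\phi$-symmetry and $p_2=d-q_1$, Proposition~2.3 of~\cite{PLCthom}) rules out $-[\A]-(d-1)[\n]$. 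The $\phi$-symmetry is also what keeps the $\aaa$- and $\bbb$-orientations coherent, and applying $\phi_*$ repeatedly will immediately yield the companion identities $\bc=[\n]+(d-1)[\g]$ and $\ca=[\g]+(d-1)[\A]$.
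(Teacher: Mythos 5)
Your proposal is correct and follows essentially the same route as the paper: both read the winding numbers of $\ab$ off the explicit endpoint formulas of Proposition~\ref{shadow}, transport the $\aaa$-arc data to $\bbb$ via the cyclic symmetry $[z_1:z_2:z_3]\mapsto[z_2:z_3:z_1]$ (your matrix for $\phi^2$ is exactly the paper's change of affine chart expressing the $\overline{\bbb}$ arcs in the $z_3=1$ coordinates), and sum the $(\theta,\psi)$-displacements to obtain $(2\pi,\,2\pi(d-1))$. The orientation bookkeeping you flag is handled in the same implicit way in the paper's proof.
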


\begin{proof}
Using Equations~\ref{eq1} and~\ref{eq2} from Proposition~\ref{shadow}, we define
\begin{equation}
\Delta \theta_j := \theta_j^+ - \theta_j^- = \frac{2\pi - \frac{2\pi}{3}d}{d^2-3d+3} \text{ and } \Delta \psi_j := \psi_j^+ - \psi_j^- = \frac{\frac{2\pi}{3}d}{d^2-3d+3}.
\end{equation}
In addition, by Proposition~\ref{shadow} and the symmetry described above, the arcs $\overline{\bbb}$ are straight-line arcs from $[1:e^{i\theta_j^+}:e^{i\psi_j^+}]$ to $[1:e^{i\theta_j^-}:e^{i\psi_j^-}]$, or equivalently, from $[e^{i(-\psi_j^+)}:e^{i(\theta_j^+ - \psi_j^+)}:1]$ to $[e^{i(-\psi_j^-)}:e^{i(\theta_j^- - \psi_j^-)}:1]$.  Therefore, the total change in the argument of $z_1$ in the curve $\ab$ is
\[ \int_{\ab} d \theta = \sum_j \left(\Delta \theta_j -(-\Delta \psi_j) \right)= \frac{(d^2-3d+3)(2\pi - \frac{2\pi}{3}d + \frac{2\pi}{3}d)}{d^2-3d+3} = 2\pi.\]
Similarly, the total change in the argument of $z_2$ in the curve $\ab$ is
\[ \int_{\ab} d\psi = \sum_j\left(\Delta \psi_j - (\Delta \theta_j - \Delta \psi_j)\right) = \frac{(d^2-3d+3)(\frac{2\pi}{3}d - (2\pi - \frac{2\pi}{3}d) + \frac{2\pi}{3}d)}{d^2-3d+3} = 2\pi(d-1).\]
The desired statement follows immediately.
\end{proof}

The previous lemma and the following lemma will show that $\ab$ is, in fact, an embedded $(1,d-1)$-curve in the torus $\Sigma$, viewed as a Heegaard surface for $\pd X_1 = H_1 \cup H_2$.  In the proof below, we consider closed intervals of the form $[\theta_1,\theta_2]$, where $\theta_1,\theta_2 \in \R/2\pi \Z$.  To avoid ambiguity from the fact that $\theta_2  = \theta_2 + 2\pi$ in $\R/2\pi \Z$, we let $[\theta_1,\theta_2] \subset \R/2\pi \Z$ be the smallest possible such interval in $\R/2\pi \Z$.  

\begin{lemma}\label{disjoint}
The arcs $\aaa$ and $\bbb$ do not intersect in their interiors.
\end{lemma}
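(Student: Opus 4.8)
The plan is to introduce linear coordinates on $\Sigma$ in which all of the bridge points produced by Proposition~\ref{shadow} become lattice points, and then to use a determinant coincidence that pins down exactly how the arcs of $\aaa$ and $\bbb$ sit relative to that lattice. Write $n = d^2-3d+3$ and $\lambda = \gcd(3,d) \in \{1,3\}$, and rescale the coordinates $(\theta,\psi)$ of the chart $z_3 = 1$ by the factor $\tfrac{3n}{2\pi\lambda}$, so that $\Sigma$ is identified with $\R^2/(\tfrac{3n}{\lambda}\Z)^2$. Using the explicit formulas~\ref{eq1} and~\ref{eq2}, I would first verify that in these coordinates every endpoint of every arc of $\aaa$ lies in $\Z^2$, that all arcs of $\aaa$ are parallel with common displacement the primitive integer vector $v_a = \tfrac1\lambda(3-d,\,d)$, and that the arc of $\aaa$ with index $j$ lies on the line $du + (d-3)w = \tfrac{3n}{\lambda}(j+1)$. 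The parametrization of $\overline{\bbb}$ recorded in the proof of Lemma~\ref{homol} then gives, in exactly the same way, that the arcs of $\bbb$ are parallel with primitive displacement $v_b = \tfrac1\lambda(d,\,2d-3)$, that their endpoints also lie in $\Z^2$, and that the arc of $\bbb$ with index $k$ lies on the line $(2d-3)u - dw = -\tfrac{3n}{\lambda}(k+1)$.

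The crucial observation is then that any lift to $\R^2$ of the line through an arc of $\aaa$ and any lift of the line through an arc of $\bbb$ meet in a single point of $\Z^2$. Indeed, the system relating the two lines has coefficient determinant $-d^2 - (d-3)(2d-3) = -3n$, so Cramer's rule expresses the intersection point as $\tfrac1\lambda$ times an integer vector (and translating a lift by a period changes that vector by an element of $\Z^2$); explicitly, the lines through the arcs of index $i$ and $k$ meet at $\tfrac1\lambda\bigl(d(i+1)-(d-3)(k+1),\ (2d-3)(i+1)+d(k+1)\bigr)$, which lies in $\Z^2$ since $\lambda$ divides both entries (trivially when $\lambda = 1$, and when $\lambda = 3$ because then $3 \mid d$).

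Granting this, the lemma follows quickly. Fix an arc $a$ of $\aaa$ and an arc $b$ of $\bbb$. The arc $a$ is a segment joining two points of $\Z^2$ with primitive displacement $v_a$, and each entry of $v_a$ has absolute value less than the lattice period $\tfrac{3n}{\lambda}$, so $a$ is embedded, does not wrap around $\Sigma$, and meets $\Z^2$ only in its two endpoints. If some interior point $x$ of $a$ lay on $b$, then a lift of $x$ would lie simultaneously on a lift of the line through $a$ and on a lift of the line through $b$; since $v_a$ and $v_b$ are not parallel these two lines are distinct, so by the previous paragraph their intersection---hence the chosen lift of $x$---would be a point of $\Z^2$ in the interior of a lift of $a$, a contradiction. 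Hence the interior of every arc of $\aaa$ is disjoint from every arc of $\bbb$, which is exactly the assertion.

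The heart of the matter---and the only real subtlety---is the bookkeeping in the first two paragraphs: confirming that the right-hand sides of the line equations emerge as the clean multiples $\tfrac{3n}{\lambda}(j+1)$, that the relevant $2\times2$ determinant equals precisely $-3n$, and that the rescaling factor $\tfrac{3n}{\lambda}$ (rather than $3n$) is what keeps the displacement vectors primitive when $3 \mid d$. Once those arithmetic facts are in place, the geometric conclusion requires no further estimates, and the same argument applied to the pairs $(\bbb,\ccc)$ and $(\ccc,\aaa)$ (which are cyclic relabelings) gives the analogous statements needed to conclude that $(\aaa,\bbb,\ccc)$ is a genuine hexagonal lattice diagram.
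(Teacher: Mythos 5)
Your proof is correct, but it takes a genuinely different route from the paper's. The paper argues by contradiction using monotonicity and winding: the concatenated curve $\ab$ is monotone increasing in $\psi$, so if some arc $\overline{b}_k$ met the interior of $a_1$, the sub-path $\delta_*$ of $\ab$ between the offending endpoints would satisfy $\int_{\delta_*} d\psi > 2\pi$ while $\int_{\delta_*} d\theta < \tfrac{2\pi d/3}{d^2-3d+3}$; concatenating $d-1$ translated copies of $\delta_*$ then forces it to traverse all of $\ab$ by the $\psi$-count but forbids this by the $\theta$-count. Your argument replaces these estimates with exact lattice arithmetic: after rescaling by $\tfrac{3n}{2\pi\lambda}$ the bridge points land in $\Z^2$, the arcs of $\aaa$ and $\bbb$ become segments with primitive displacements $\tfrac1\lambda(3-d,d)$ and $\tfrac1\lambda(d,2d-3)$ lying on lines whose constants are multiples of $\tfrac{3n}{\lambda}$, and the coefficient determinant $-d^2-(d-3)(2d-3)=-3n$ cancels against that period so that any two such lines meet only in $\Z^2$; primitivity then keeps the lattice out of arc interiors. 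I have checked the arithmetic identities you flag as the crux (the endpoint integrality, the values of $v_a$ and $v_b$ and their primitivity via $\gcd(3-d,d)=\gcd(d,2d-3)=\gcd(3,d)=\lambda$, the right-hand sides $\tfrac{3n}{\lambda}(j+1)$ and $-\tfrac{3n}{\lambda}(k+1)$, the determinant, and the divisibility by $\lambda$ when $3\mid d$), and they all hold; your approach also cleanly handles the case where an endpoint of a $\bbb$-arc might land in the interior of an $\aaa$-arc, which is subsumed by the statement that arc interiors contain no lattice points. What each approach buys: the paper's argument uses only the homology computation of Lemma~\ref{homol} plus qualitative monotonicity and so is robust to perturbing the arcs within their isotopy class, at the cost of a delicate case analysis; yours exploits the exact linear structure of the straight-line representatives fixed after Proposition~\ref{shadow} and, once the bookkeeping is done, gives a sharper conclusion (the two line families meet only at bridge points) with no estimates, and it transfers verbatim to the pairs $(\bbb,\ccc)$ and $(\ccc,\aaa)$ by the cyclic symmetry, exactly as needed for Corollary~\ref{cor:hexclassify}.
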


\begin{proof}
By the translational symmetry of the arcs $\aaa$ and $\bbb$, it suffices to show that the single arc $a_1 \in \aaa$ corresponding to the value $j=1$ from Proposition~\ref{shadow} has interior disjoint from the arcs in $\bbb$.  Reindex the points in $\mathbf{x}^+$ and $\mathbf{x}^-$ so that they occur in order along the oriented curve $\ab$, where $a_1$ connects $x_1^-$ to $x_1^+$ and each arc in $\overline{b}_k \in \overline{\bbb}$ connects $x_{k-1}^+$ to $x_{k}^-$ (taken modulo $d^2-3d+3$).

Note that for each $k$, we have $x_k^{\pm} = [e^{i\theta_j^{\pm}}:e^{i\psi_j^{\pm}}:1]$ for some $j$ (with possibly different values for $j$ in the $\pm$ cases).  To simplify notation, for $j$ and $k$ satisfying this equation we let $\theta(x_k^{\pm}) = \theta_j^{\pm}$ and $\psi(x_k^{\pm}) = \psi_j^{\pm}$. In the proof of Lemma~\ref{homol}, we showed that the arcs $\overline{\bbb}$ are straight-line arcs from $[e^{i(-\psi_j^+)}:e^{i(\theta_j^+ - \psi_j^+)}:1]$ to $[e^{i(-\psi_j^-)}:e^{i(\theta_j^- - \psi_j^-)}:1]$, and thus we have
\begin{center}
\begin{tabular}{ccccccccc}
$\theta(x_k^{-}) - \theta(x_{k-1}^{+})$ & $=$ & $\dfrac{\frac{2\pi}{3}d}{d^2-3d+3}$ & & & & $\psi(x_k^{-}) - \psi(x_{k-1}^{+})$ & $=$ & $\dfrac{\frac{4\pi}{3}d-2\pi}{d^2-3d+3}$\vspace{.2cm}\\
$\theta(x_k^+) - \theta(x_k^-)$ & $=$ & $\Delta \theta_j = \dfrac{2\pi -\frac{2\pi}{3}d}{d^2-3d+3}$ & & & & $\psi(x_k^+) - \psi(x_{k-1}^-)$ & $=$ & $\Delta \psi_j = \dfrac{\frac{2\pi}{3}d}{d^2-3d+3}$
\end{tabular}
\end{center}
Suppose by way of contradiction that some arc $\overline{b}_k \in \overline{\bbb}$ intersects the interior of $a_1$.  Observe that $\ab$ is monotonically increasing in the $\psi$ direction, and thus if $k=1$, we have that $a_1 \cup b_1$ advances more than $2\pi$ in the direction of $\psi$, so that
\[ 2\pi < \int_{a_1 \cup \overline{b}_1} \, d\psi = \psi(x_1^+) - \psi(x_0^+) = \frac{2\pi(d-1)}{d^2-3d+3},\]
a contradiction.  Similarly, if $k=2$, then once again
\[ 2\pi < \int_{a_1 \cup \overline{b}_2} \, d\psi = \psi(x_2^-) - \psi(x_1^-) = \frac{2\pi(d-1)}{d^2-3d+3}.\]
Henceforth, we assume $k\neq 1,2$, and so the lemma holds in the cases $d = 1$ or $d=2$, in which each of $\aaa$ and $\bbb$ contains a single arc.  Thus, assume that $d \geq 3$, so that $\theta(x_k^+) - \theta(x_k^-) = \Delta \theta_j \leq 0$.  Note that the set of coordinates $\{\theta(x_k^+)\}$ are spaced evenly and are in increasing order, with consecutive values $\frac{2\pi}{d^3-3d+3}$ apart.  It follows that $\theta(x_k^+) \notin [\theta(x_0^+),\theta(x_1^+)]$.  A similar argument shows that $\theta(x_k^-) \notin [\theta(x_1^-),\theta(x_2^-)]$.  Since $a_1$ and $\overline{b}_k$ intersect, the intervals $[\theta(x_1^+),\theta(x_1^-)]$ and $[\theta(x_{k-1}^+),\theta(x_k^-)]$ must overlap.  In addition, $[\theta(x_{k-1}^+),\theta(x_k^-)]$ cannot contain both intervals $[\theta(x_0^+),\theta(x_1^+)]$ and $[\theta(x_1^-),\theta(x_2^-)]$; otherwise, $[\theta(x_{k-1}^+),\theta(x_k^-)]$ would contain the entire interval $[\theta(x_0^+),\theta(x_2^-)]$.  In this case, we compute
\begin{eqnarray*}
\dfrac{\frac{2\pi}{3}d}{d^2-3d+3} &=& \theta(x_k^{-}) - \theta(x_{k-1}^{+}) \\
&>& \theta(x_2^-) - \theta(x_0^+) \\
&=& (\theta(x_2^-) - \theta(x_1^-)) + (\theta(x_1^-) - \theta(x_0^+)) \\
&=& \dfrac{2\pi}{d^2-3d+3} +\dfrac{\frac{2\pi}{3}d}{d^2-3d+3},
\end{eqnarray*}
which is not possible.  It follows that either $\theta(x_{k-1}^+) \in [\theta(x_1^+),\theta(x_1^-)]$, or $\theta(x_k^-) \in [\theta(x_1^+),\theta(x_1^-)]$.

In the first case, let $\delta_*$ denote the union of arcs in $\ab$ from $x_1^-$ to $x_k^-$,
\[\delta_* = a_1 \cup \overline{b}_2 \cup a_2 \cup \dots \cup \overline{b}_k.\]
We will show that $\delta_*$ advances ``too much" in the $\psi$ direction and ``too little" in the $\theta$ direction.  Since $a_1$ and $\overline{b}_k$ intersect, and each arc of $\aaa$ and $\overline{\bbb}$ travels in the positive direction with respect to $\psi$, we have $\int_{\delta_*} d\psi > 2\pi$.  Additionally, since $\delta_*$ is not the entirety of $\ab$, we have $0 < \int_{\delta_*} d\theta < 2\pi$.  Using $\theta(x_{k-1}^+) \in [\theta(x_1^+),\theta(x_1^-)]$ and choosing a representative for $\theta(x^-_k)$ such that $\theta(x^-_1) < \theta(x^-_k) < \theta(x^-_1) + 2\pi$, we have
\[ \int_{\delta_*} d\theta = \theta(x^-_{k}) - \theta(x_1^-) < \theta(x^-_{k}) - \theta(x^+_{k-1}) = \frac{\frac{2\pi}{3}d}{d^2-3d+3}.\]
Let $(d-1)\delta_*$ denote the union of arcs in $\ab$ obtained by translating $\delta_*$ end-to-end a total of $d-1$ times.  Then we have
\[ \int_{(d-1)\delta_*} d \psi = (d-1) \int_{\delta_*} d\psi > 2\pi(d-1).\]
Since $\int_{K_1} d \psi = 2\pi(d-1)$, this implies $(d-1){\delta}_*$ traverses each arc of $\aaa$ and $\overline{\bbb}$ (and then some).  On the other hand,
\[ \int_{(d-1)\delta_*} d \theta = (d-1) \int_{\delta_*} d\theta < \frac{\frac{2\pi}{3}d(d-1)}{d^2-3d+3} < 2\pi = \int_{\ab} d\theta.\]
For the second inequality, we note that the polynomial $2d^2-8d+9 = 3(d^2-3d+3) - d(d-1)$ is always positive, so that $\frac{d(d-1)}{3(d^2-3d+3)} < 1$.  The second integral calculation implies that $(d-1){\delta}_*$ does not traverse all arcs of $\aaa$ and $\overline{\bbb}$, contradicting the first integral calculation.  In the case that $\theta(x_1^+) < \theta(x_{k_0}^-) < \theta(x_1^-)$, we let $\delta_*$ traverse $\ab$ from $x_0^+$ to $x_k^+$, make similar calculations, and arrive at a similar contradiction.  We conclude that the arcs of $\aaa$ and $\bbb$ intersect only at their endpoints.
\end{proof}

\begin{corollary}\label{cor:hexclassify}
The triple of arcs $(\aaa,\bbb,\ccc)$ agrees with the hexagonal lattice diagram $(A)_d$.
\end{corollary}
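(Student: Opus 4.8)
The plan is to combine the homology computation of Lemma~\ref{homol}, the disjointness statement of Lemma~\ref{disjoint}, the cyclic symmetry of the configuration recorded above, and the rigidity principle of Lemma~\ref{lem:create}. First I would symmetrize: the automorphism of $\CP^2$ cyclically permuting the homogeneous coordinates induces a $2\pi/3$ rotation of $\Sigma$ sending $\aaa\to\bbb\to\ccc\to\aaa$ and cyclically permuting $\A,\n,\g$, so running Lemma~\ref{homol} through this symmetry gives, in $H_1(\Sigma)$,
\[
\ab=[\A]+(d-1)[\n],\qquad \bc=[\n]+(d-1)[\g],\qquad \ca=[\g]+(d-1)[\A],
\]
the last equality also being forced by $\ab+\bc+\ca=0$. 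In the same way, Lemma~\ref{disjoint} and the symmetry show that each of $(\aaa,\bbb)$, $(\bbb,\ccc)$, $(\ccc,\aaa)$ meets only in the common endpoint set $\mathbf{x}$; together with the observation from Proposition~\ref{shadow} that the $d^2-3d+3$ arcs in a single family are mutually parallel and disjoint, this shows $\aaa\cup\bbb\cup\ccc$ is an embedded graph on $\Sigma$ with vertex set $\mathbf{x}$, and that $\ab,\bc,\ca$ are embedded $(1,d-1)$-torus knots in the Heegaard tori $\pd X_1,\pd X_2,\pd X_3$, hence unknots. In particular $(\aaa,\bbb,\ccc)$ is already a genuine shadow diagram whose arcs meet only at bridge points.

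Next I would verify that $(\aaa,\bbb,\ccc)$ is a hexagonal lattice diagram. Using the explicit formulas for $\mathbf{x}^{\pm}$ in Proposition~\ref{shadow} one checks that $\mathbf{x}^+\cap\mathbf{x}^-=\emp$ and that exactly one arc of $\aaa$, one of $\bbb$, and one of $\ccc$ is incident to each point of $\mathbf{x}$, so the graph $\aaa\cup\bbb\cup\ccc$ is trivalent with $V=2(d^2-3d+3)$ vertices and $E=3(d^2-3d+3)$ edges and the three colours meet at every vertex. The translation of $\Sigma$ carrying $(\theta_j^{\pm},\psi_j^{\pm})$ to $(\theta_{j+1}^{\pm},\psi_{j+1}^{\pm})$ preserves $\mathbf{x}$ and each of $\aaa,\bbb,\ccc$ and generates a free $\Z/(d^2-3d+3)$ action on $\Sigma$. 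Since the arcs are straight-line segments on the flat torus, the complement $\Sigma\setminus(\aaa\cup\bbb\cup\ccc)$ is a union of disks, so there are $F=E-V=d^2-3d+3$ faces; the free translation action has a single face-orbit, so all faces are congruent, and then the edge count forces each to be a hexagon containing two opposite edges from each of $\aaa,\bbb,\ccc$. Equivalently, one can match the explicit diagram against the output of the resolution construction in the proof of Lemma~\ref{lem:create} applied to the classes $\ac$ and $\bc$.

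Finally, by definition $(A)_d$ is the hexagonal lattice diagram with $\ab=\A+(d-1)\n$ and $\bc=\n+(d-1)\g$, and by Lemma~\ref{lem:create} a hexagonal lattice diagram is determined by the homology classes $\ac$ and $\bc$. Since $(\aaa,\bbb,\ccc)$ realizes exactly these classes, it coincides with $(A)_d$, which is the assertion of the corollary.

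I expect the main obstacle to be the middle step: confirming that the bridge points are genuinely trivalent with all three colours present (so that, for instance, no bridge point is the shared endpoint of two parallel $\aaa$-arcs) and that the complementary regions are honest hexagons rather than triangles or larger polygons. The disjointness of the arcs away from $\mathbf{x}$ is already supplied by Lemma~\ref{disjoint}; what remains is extracting trivalence and the hexagonal face structure from the arithmetic of the angle formulas in Proposition~\ref{shadow}, aided by the translational symmetry. The homology bookkeeping and the appeal to Lemma~\ref{lem:create} are then immediate.
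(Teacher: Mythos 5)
Your proposal is correct and follows essentially the same route as the paper: Lemma~\ref{disjoint} for embeddedness, Lemma~\ref{homol} plus the cyclic symmetry for the homology classes, and the parallelism/trivalence of the arcs at the bridge points to conclude the tiling is by hexagons matching $(A)_d$. The Euler-characteristic face count and the explicit appeal to Lemma~\ref{lem:create} are extra detail the paper leaves implicit, but they do not change the argument.
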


\begin{proof}
By Lemma~\ref{disjoint}, none of the arcs $(\aaa,\bbb,\ccc)$ intersect in their interiors.  Since arcs in each collection are mutually parallel and since each of the bridge points $\mathbf{x}$ is adjacent to a single arc from each of the three collections, the triple $(\aaa,\bbb,\ccc)$ forms a hexagonal lattice diagram.  By Lemma~\ref{homol}, we have $\ab = \A + (d-1)\n$, and the other two pairings match those of $(A)_d$ by the symmetry of the construction.
\end{proof}

\subsection{Determining $\V_d \cap X_i$}\label{sub:disks}

In Subsection~\ref{sub:arcs}, we characterized the arcs of intersection $\V_d \cap H_i$.  What now remains is to show that $\V_d \cap X_i$ is a single disk for each $i$.  First, we show that $\V_d \cap X_i$ is a smooth surface.

\begin{lemma}\label{lem:smooth}
The intersection $\V_d \cap X_i$ is a smooth surface.
\end{lemma}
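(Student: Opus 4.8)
The plan is to prove two things: that $\V_d$ is a nonsingular complex curve, and that $\V_d$ meets the manifold‑with‑corners $X_i$ transversally along every stratum of its boundary; the conclusion that $\V_d \cap X_i$ is a smooth surface (with corners along $\V_d \cap \Sigma$) then follows at once. For smoothness I would apply the Jacobian criterion. With $F = z_1z_2^{d-1} + z_2z_3^{d-1} + z_3z_1^{d-1}$, so that $F_{z_1} = z_2^{d-1} + (d-1)z_3z_1^{d-2}$ and cyclically, a common zero of the $F_{z_j}$ can have no vanishing coordinate (if $z_1 = 0$ then $F_{z_1} = 0$ forces $z_2 = 0$, and then $F_{z_2} = 0$ forces $z_3 = 0$), and with all coordinates nonzero, multiplying the three equations $F_{z_j} = 0$ yields $(z_1z_2z_3)^{d-1} = -(d-1)^3(z_1z_2z_3)^{d-1}$, i.e.\ $(d-1)^3 = -1$, impossible for $d \geq 1$. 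Hence $\V_d$ is smooth; in the affine chart $z_3 = 1$ we have $X_1 = D^2 \X D^2$ and $\V_d \cap X_1 = \mathcal W \cap (D^2 \X D^2)$, where $\mathcal W = \{(z_1,z_2) : z_1z_2^{d-1} + z_2 + z_1^{d-1} = 0\}$ is a smooth affine curve.

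Next I would verify transversality of $\mathcal W$ to $\partial X_1$; by the cyclic symmetry $[z_1:z_2:z_3] \mapsto [z_2:z_3:z_1]$, which carries $\V_d$ to itself and permutes the $X_j$, this settles every $X_i$. The closed bidisk has strata the open walls $W_1 = \{|z_1| = 1, |z_2| < 1\}$ and $W_2 = \{|z_1| < 1, |z_2| = 1\}$, together with the corner torus $\Sigma = \{|z_1| = |z_2| = 1\}$. Since $T_p\mathcal W = \Cc \cdot (F_{z_2}, -F_{z_1})$ is a complex line and the only complex line contained in $T_p\{|z_1| = 1\}$ (resp.\ $T_p\{|z_2| = 1\}$) is the $z_2$‑axis (resp.\ the $z_1$‑axis), $\mathcal W$ is tangent to $\{|z_1| = 1\}$ at $p$ exactly when $F_{z_2}(p) = 0$, and tangent to $\{|z_2| = 1\}$ at $p$ exactly when $F_{z_1}(p) = 0$. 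I expect the wall cases to be routine: substituting $F_{z_2} = 0$ into $F = 0$ and taking absolute values with $|z_1| = 1$ forces $|z_2| = \tfrac{d-1}{d-2} > 1$, so no tangency occurs on $W_1$; substituting $F_{z_1} = 0$ into $F = 0$ and comparing moduli with $|z_2| = 1$ reduces to the impossible identity $(d-2)^{d-2} = (d-1)^{d-1}$, so no tangency occurs on $W_2$ (the small cases $d = 1, 2$ being checked by hand).

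The main obstacle will be transversality at the corner $\Sigma$, since a complex line and a real $2$‑plane in $\Cc^2$ need not be transverse. The plan is to show first that at $p \in \mathcal W \cap \Sigma$ the tangent spaces fail to be transverse exactly when the ratio $R(p) := \frac{z_2F_{z_2}(p)}{z_1F_{z_1}(p)}$ lies in $\mathbb R \cup \{\infty\}$. Writing $u = z_1z_2^{d-1}$ and $v = z_1^{d-1}$, the equation $F = 0$ on $\Sigma$ reads $u + z_2 + v = 0$ with $|u| = |v| = |z_2| = 1$, which forces $u/v$ to be a primitive cube root of unity $\zeta$; meanwhile $z_1F_{z_1} = u + (d-1)v$ and $z_2F_{z_2} = (d-1)u + z_2 = (d-2)u - v$, so $R(p) = \frac{(d-2)\zeta - 1}{\zeta + (d-1)}$. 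A short computation using $\zeta^2 + \zeta + 1 = 0$ and $|\zeta + (d-1)|^2 = d^2 - 3d + 3$ collapses this to $R(p) = \zeta$, which is finite and non‑real, so $\mathcal W$ is transverse to $\Sigma$. Assembling the three transversality statements shows that $\mathcal W \cap (D^2 \X D^2) = \V_d \cap X_1$ is a smooth surface with corners, and the cyclic symmetry transfers the conclusion to each $X_i$.
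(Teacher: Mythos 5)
Your proof is correct, and its first half coincides with the paper's entire argument: the paper also applies the regular-value/Jacobian criterion, working in the affine chart $z_3=1$ with $f(z_1,z_2)=z_1z_2^{d-1}+z_2+z_1^{d-1}$ and showing that a common zero of $f$, $f_{z_1}$, $f_{z_2}$ forces $(d^2-3d+3)z_1^{d-1}=0$ and hence a contradiction; note that the same quantity $d^2-3d+3$ resurfaces in your computation of $|\zeta+(d-1)|^2$. Where you genuinely diverge is in the second half: the paper stops at ``$f^{-1}(0)$ is a smooth complex curve, so its intersection with $D^2\X D^2$ is a smooth surface,'' and does not verify transversality of the curve to the walls $\{|z_j|=1\}$ or to the corner torus $\Sigma$; the structure of $\V_d\cap\pd X_1$ is instead extracted later by explicit parametrization (Lemmas~\ref{arc1} and~\ref{arc2}, Proposition~\ref{shadow}) and the cone description of Proposition~\ref{prop:cone}. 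Your stratumwise transversality check --- in particular the corner computation showing that $z_2F_{z_2}/(z_1F_{z_1})$ equals a primitive cube root of unity $\zeta$ and is therefore non-real, which I have verified via $\zeta(\zeta+(d-1))=(d-2)\zeta-1$ --- buys a self-contained proof that $\V_d\cap X_i$ is a smooth surface \emph{with corners along} $\V_d\cap\Sigma$, which is strictly more than the lemma as stated and partially duplicates information the paper obtains downstream by other means. The only caveat is the one you already flag: the wall computations degenerate for $d=1,2$ (e.g.\ $(d-2)^{d-2}=(d-1)^{d-1}$ holds when $d=2$), so those cases do require the separate hand check you promise.
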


\begin{proof}
Using the affine chart that sends $z_3$ to 1, we have
\[ \V_d \cap X_1 = \{(z_1,z_2) \in D^2 \X D^2 : z_1z_2^{d-1} + z_2 + z_1^{d-1} = 0\}.\]
Consider the map $f: \mathbb C^2 \rightarrow \mathbb C$ given by $f(z_1,z_2) = z_1z_2^{d-1} + z_2 + z_1^{d-1}$, where the derivative of $f$ is given by the matrix
\[ df_{(z_1,z_2)} = \begin{bmatrix}
z_2^{d-1} + (d-1)z_1^{d-2} & (d-1)z_1z_2^{d-2} + 1 \end{bmatrix}\]
We claim that $0$ is a regular value of $f$.  Suppose by way of contradiction that there exists $(z_1,z_2)$ such that $f(z_1,z_2) = 0$ and $df_{(z_1,z_2)}$ is the zero map.  Then $z_2^{d-1} + (d-1)z_1^{d-2} = 0$ implies $z_2^{d-1} = (1-d)z_1^{d-2}$.  In addition, $(d-1)z_1z_2^{d-2} + 1 = 0$ implies that $(d-1)z_1z_2^{d-1} + z_2 = 0$, so that
\[ z_2 = (1-d)z_1z_2^{d-1} = (1-d)^2z_1^{d-1}.\]
Putting these equations together and using $f(z_1,z_2) = 0$, we get
\[ 0 = z_1z_2^{d-1} + z_2 + z_1^{d-1} = (1-d)z_1^{d-1} + (1-d)^2z_1^{d-1} + z_1^{d-1} = (3-3d+d^2)z_1^{d-1}.\]
Since $3-3d+d^2 \neq 0$, we have $z_1 = 0$, which implies that $z_2 = 0$ as well.  But in this case, $(d-1)z_1z_2^{d-2} + 1 \neq 0$, a contradiction.  We conclude that 0 is a regular value of $f$, and by the complex version of the Preimage Theorem, $f^{-1}(0)$ is a complex curve in $\mathbb C^2$.  It follows that $f^{-1}(0) \cap (D^2 \X D^2)$ is a smooth surface.
\end{proof}

Next, for each $d \geq 2$, define
\[ R_d = \{(r,s) \in I \X I : r^{d-1} - rs^{d-1} \leq s \leq r^{d-1} + rs^{d-1}\}.\]
Eventually, we will show that $\V_d \cap X_1$ can be constructed by embedding $d^2-3d+2$ copies of $R_d$ in $X_1$, where these copies meet along arcs in their boundaries.  First, we prove that $R_d$ is a disk.

\begin{lemma}\label{disko}
$R_d$ is a topological disk.
\end{lemma}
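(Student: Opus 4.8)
\noindent The plan is to realize $R_d$ as a region lying between two continuous graphs over the $s$-axis that pinches to a single point at $s=0$, and then to write down an explicit homeomorphism from $R_d$ onto the standard triangle $T=\{(x,y)\in I\X I:x\le y\}$ (with $I=[0,1]$), which is a topological disk.

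First I would rewrite the defining condition: since $r,s\in I$, the inequality $r^{d-1}-rs^{d-1}\le s\le r^{d-1}+rs^{d-1}$ is the same as $|s-r^{d-1}|\le rs^{d-1}$, and (both sides being nonnegative) as $(s-r^{d-1})^2\le r^2s^{2d-2}$. Now fix $s\in I$ and analyze the horizontal slice $R_d\cap(I\X\{s\})$. For the upper constraint, set $\alpha_s(r)=r^{d-1}+rs^{d-1}$: this is continuous in $r$, has $\alpha_s(0)=0\le s$ and $\alpha_s(1)=1+s^{d-1}\ge s$, and has positive $r$-derivative $(d-1)r^{d-2}+s^{d-1}$ whenever $(r,s)\ne(0,0)$, hence is strictly increasing on $I$; so there is a unique $\rho_-(s)\in I$ with $\alpha_s(\rho_-(s))=s$, and $s\le\alpha_s(r)\iff r\ge\rho_-(s)$. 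For the lower constraint, set $\beta_s(r)=r^{d-1}-rs^{d-1}$: its $r$-derivative $(d-1)r^{d-2}-s^{d-1}$ is nondecreasing in $r$ and positive at $r=1$, so $\beta_s$ attains its minimum on $I$ at some $r_m\in[0,1)$, is increasing on $[r_m,1]$, and satisfies $\beta_s\le\beta_s(0)=0\le s$ on $[0,r_m]$. It follows that $\{r\in I:\beta_s(r)\le s\}=[0,r^*(s)]$, where $r^*(s)=1$ if $\beta_s(1)=1-s^{d-1}\le s$ and otherwise $r^*(s)$ is the unique root of $\beta_s(r)=s$ on $[r_m,1]$. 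Hence the slice equals $[\rho_-(s),r^*(s)]$; it is nonempty because $\alpha_s-\beta_s=2rs^{d-1}$ gives $\beta_s(\rho_-(s))=s-2\rho_-(s)s^{d-1}\le s$, so $\rho_-(s)\le r^*(s)$, and for $s>0$ we have $\rho_-(s)>0$ (as $\alpha_s(0)=0<s$), so the inequality is strict and the slice is a nondegenerate interval, whereas at $s=0$ the slice is the single point $(0,0)$.

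Next I would check that $\rho_-$ and $r^*$ are continuous on $I$: on $(0,1]$ this is the implicit function theorem applied to $\alpha_s$ and, on its increasing branch, to $\beta_s$, whose $r$-derivatives are positive there; at the unique $s_0\in(0,1)$ with $s_0+s_0^{d-1}=1$ the two cases defining $r^*$ agree (both give $r^*(s_0)=1$), so $r^*$ glues continuously across $s_0$; and as $s\to0^+$ we have $\rho_-(s)^{d-1}\le\alpha_s(\rho_-(s))=s\to0$, while $s<s_0$ forces $r^*(s)<1$ and hence $r^*(s)^{d-1}=\beta_s(r^*(s))+r^*(s)s^{d-1}=s+r^*(s)s^{d-1}\le s+s^{d-1}\to0$, so both $\rho_-(s),r^*(s)\to0=\rho_-(0)=r^*(0)$. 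Then I would define $h:R_d\to T$ by $h(r,s)=\bigl(s\,\tfrac{r-\rho_-(s)}{r^*(s)-\rho_-(s)},\,s\bigr)$ for $s>0$ and $h(0,0)=(0,0)$. This is well defined (the denominator is positive for $s>0$), its first coordinate lies in $[0,s]$, so $h$ maps into $T$ and is continuous at the pinch point $(0,0)$, while away from it continuity follows from that of $\rho_-,r^*$. On each slice $h$ restricts to the order-preserving affine bijection $[\rho_-(s),r^*(s)]\to[0,s]$, so $h$ is a bijection onto $T=\{(0,0)\}\cup\bigcup_{s\in(0,1]}([0,s]\X\{s\})$. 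A continuous bijection from the compact space $R_d$ onto the Hausdorff space $T$ is a homeomorphism, and $T$ is a topological disk, so $R_d$ is too. (Alternatively, the same computations identify $\partial R_d$ with the concatenation of the arc $s\mapsto(\rho_-(s),s)$, the segment $[\rho_-(1),1]\X\{1\}$, and the reversed arc $s\mapsto(r^*(s),s)$; these three pieces are simple and meet only at $(0,0)$, $(\rho_-(1),1)$, and $(1,1)$, so $\partial R_d$ is a Jordan curve and the Jordan--Schoenflies theorem applies.)

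The main obstacle is the behavior at the two ends of the $s$-interval: verifying that the two branches in the definition of $r^*$ glue continuously at $s_0$, and that $\rho_-(s),r^*(s)\to0$ so that $h$ extends continuously over the pinch point $(0,0)$ (equivalently, that the two side-boundary arcs of $R_d$ close up into a single Jordan curve there). Everything else reduces to elementary one-variable calculus on $\alpha_s$ and $\beta_s$, and the argument is uniform in $d\ge2$, the case $d=2$ being the degenerate one in which $\beta_s$ is linear and $r_m=0$.
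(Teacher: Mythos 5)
Your proof is correct and follows essentially the same route as the paper's: both fix $s$ and show the horizontal slice of $R_d$ is the interval between the unique solution of $r^{d-1}+rs^{d-1}=s$ and the endpoint determined by $r^{d-1}-rs^{d-1}\le s$ on the increasing branch. You additionally verify continuity of the two endpoint functions, handle the pinch at $(0,0)$, and write down an explicit homeomorphism onto a triangle --- details the paper leaves implicit after establishing the slice structure.
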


\begin{proof}
Define functions $F,G:I \X I \rightarrow \R$ by $F(r,s) = r^{d-1} - rs^{d-1} - s$ and $G(r,s) = r^{d-1} + rs^{d-1} - s$, so that $R_d = \{(r,s) \in I \X I: F(r,s) \leq 0 \leq G(r,s)\}$.  Then $\frac{\pd F}{\pd s} = -(d-1)rs^{d-2} - 1 < 0$, and so by the Preimage Theorem, the zero set of $F(r,s)$ is locally a smooth curve.  Similarly, $\frac{\pd G}{\pd r} = (d-1)r^{d-2} + s^{d-1} > 0$ if $(r,s) \neq (0,0)$, so that the zero set of $G(r,s)$ is locally a smooth curve away from $(0,0)$, and $\frac{\pd G}{\pd s}(0,0) = -1$, so $G(r,s) = 0$ is also locally a smooth curve at $(0,0)$.

Note that if $(r,s) \in R_d$, then $r = 0$ if and only if $s = 0$.  We show that for any $s_0 \in (0,1]$, the intersection of the line $s = s_0$ with $R_d$ is a line segment, from which the lemma will follow.  To this end, fix $s_0 \in (0,1]$, and consider the functions $f(r) = r^{d-1} + rs_0^{d-1}$ and $g(r) = r^{d-1} - rs_0^{d-1}$, so that the intersection in question is the set
\[ R_d(s_0) = \{(r,s_0) : g(r) \leq s_0 \leq f(r)\}.\]
Since $f(0) = 0$, $f(1) = 1+s_0^{d-1} > 1 \geq s_0$, and $f'(r) > 0$ for all $r \in (0,1]$, it follows from the Intermediate Value Theorem that there exists a unique $r_0 \in (0,1)$ such that $f(r_0) = s_0$, and we have that $s_0 \leq f(r)$ if and only if $r_0 \leq r$.  The function $g(r)$ is only slightly more complicated, in that $g$ has two real zeros at $r = 0$ and $r = s_0^{(d-1)/(d-2)}$.  By inspection, $g(r) < 0$ if $0 < r < s_0^{(d-1)/(d-2)}$ and $g'(r) > 0$ for all $r \geq s_0^{(d-1)/(d-2)}$.  Thus, if $g(1) \leq s_0$, then $g(r) \leq s_0$ for all $r \in [0,1]$, and we let $r_0' = 1$.  Otherwise, $g(1) > s_0$, and by the Intermediate Value Theorem, there exists a unique $r_0' \in (0,1)$ such that $g(r_0') = s_0$.  In either case, for $r \in [0,1]$ we have that $g(r) \leq s_0$ if and only if $r \leq r_0'$.  Finally, note that if $r_0' = 1$, then $r_0 < r_0'$, and alternatively, if $g(r_0') = s_0$, then $g(r_0) = r_0^{d-1} - r_0s_0^{d-1} < r_0^{d-1} + r_0s_0^{d-1} = s_0 = g(r_0')$, so that $r_0 < r_0'$.  We conclude that $R_d(s_0) = [r_0,r_0'] \X \{s_0\}$, and thus $R_d$ is a disk, as desired.
\end{proof}

As noted above, we consider the image of $\V_d \cap X_1$ under the standard affine chart which sets $z_3 = 1$, so that
\[ \V_d \cap X_1 = \{(z_1,z_2) \in D^2 \X D^2: z_1z_2^{d-1} + z_2 + z_1^{d-1} = 0\}.\]
Writing $z_1 = re^{i\theta}$ and $z_2 = se^{i\psi}$, we have a natural projection map $\pi:X_1 \rightarrow I \X I$ given by $\pi(z_1,z_2) = (r,s)$.  We note that $\pi(\pd X_1) = (I \X \{1\}) \cup (\{1\} \X I)$ and $(0,0) \in X_1$ is the unique point mapping to $(0,0) \in I \X I$.  We say that a (PL) singular foliation $f:I \X I \rightarrow I$ is \emph{radial} if $f((I \X \{1\}) \cup (\{1\} \X I)) = 1$ and $f$ has exactly one critical point $p = (0,0)$, where $f(p) = 0$.  It follows that for all $t$ with $0 < t < 1$, $f^{-1}(t)$ is an arc connecting $\{0\} \X I$ to $I \X \{0\}$.

\begin{lemma}\label{induce}
For any radial singular foliation $f: I \X I \rightarrow I$, the map $f \circ \pi:X_1 \rightarrow I$ is a singular foliation with level sets embedded copies of $S^3$ and one critical point at $(0,0)$.
\end{lemma}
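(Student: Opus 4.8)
The plan is to treat the projection $\pi\colon X_1\to I\X I$ as a singular torus fibration and to read off the topology of $(f\circ\pi)^{-1}(t)=\pi^{-1}(f^{-1}(t))$ for each $t$ from the shape of the leaf $f^{-1}(t)$ together with the way the torus fibers of $\pi$ degenerate over $\pd(I\X I)$. Recall, using the affine chart $z_3=1$ and writing $z_1=re^{i\theta}$, $z_2=se^{i\psi}$, that $\pi(z_1,z_2)=(r,s)$ has fiber $T^2=S^1_\theta\X S^1_\psi$ over each point of $(0,1]\X(0,1]$, a single circle $S^1_\psi$ over each point of $\{0\}\X(0,1]$, a single circle $S^1_\theta$ over each point of $(0,1]\X\{0\}$, and the single point $(0,0)\in X_1$ over $(0,0)$; moreover $\pi$ restricted over $(0,1)\X(0,1)$ is the trivial bundle $(0,1)^2\X T^2$.

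First I would dispose of the two extreme values. For $t=0$ we have $f^{-1}(0)=\{(0,0)\}$ by definition of a radial foliation, so $(f\circ\pi)^{-1}(0)=\pi^{-1}(0,0)$ is the single point $(0,0)$, which will be the lone critical point of $f\circ\pi$. For $t=1$ the set $f^{-1}(1)$ is the outer boundary $(I\X\{1\})\cup(\{1\}\X I)$, so $(f\circ\pi)^{-1}(1)=\pi^{-1}(I\X\{1\})\cup\pi^{-1}(\{1\}\X I)=(D^2\X S^1)\cup_{T^2}(S^1\X D^2)=\pd(D^2\X D^2)\cong S^3$. The heart of the argument is $0<t<1$: here $\alpha:=f^{-1}(t)$ is an embedded arc with one endpoint on $\{0\}\X(0,1)$, one endpoint on $(0,1)\X\{0\}$, and interior contained in $(0,1)\X(0,1)$ (it misses the outer boundary because $f\equiv1$ there, and misses the two coordinate axes in its interior because the leaves of a radial foliation cross each inner edge exactly once). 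I would then show $\pi^{-1}(\alpha)\cong S^3$ by a genus-one Heegaard splitting argument: over the interior of $\alpha$, $\pi^{-1}(\alpha)$ is a product $(0,1)\X T^2$; over a half-open neighborhood of the endpoint on $\{0\}\X(0,1)$ the coordinate $r$ tends to $0$, so the $S^1_\theta$ factor collapses and this piece is a solid torus $V_0$ whose meridian is $S^1_\theta$; symmetrically, near the other endpoint the $S^1_\psi$ factor collapses, giving a solid torus $V_1$ whose meridian is $S^1_\psi$. After absorbing the product region into $V_0$ and $V_1$, we have $\pi^{-1}(\alpha)=V_0\cup_{T^2}V_1$, where on a generic fiber of $\pi$ over $\alpha$ the meridian of $V_0$ is $S^1_\theta$ and the meridian of $V_1$ is $S^1_\psi$; since these curves intersect once, the glued $3$-manifold is $L(1,q)\cong S^3$, and it is manifestly embedded in $X_1$.

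Finally I would assemble the family: as $t$ ranges over $(0,1]$, consecutive leaves $f^{-1}(t)$ are ambiently PL-isotopic in $I\X I$ through arcs of the same type, and such an isotopy lifts through $\pi$ (compatibly with the collapsing circle factors at the two endpoints) to an ambient isotopy of the corresponding $3$-spheres; hence $f\circ\pi$ restricts to a (PL) locally trivial $S^3$-bundle over $(0,1]$, and adjoining the cone point $\pi^{-1}(0,0)=(0,0)$ shows that $f\circ\pi$ is a singular foliation of $X_1$ with embedded $S^3$ leaves and unique critical point $(0,0)$. I expect the main obstacle to be the identification $\pi^{-1}(\alpha)\cong S^3$ — specifically, pinning down that $\alpha$ meets each degenerate axis in exactly one endpoint, at which a \emph{different} $S^1$-factor of the torus fiber collapses, and that the resulting pair of solid tori, glued along meridians meeting once, gives $S^3$ rather than some other lens space; the family/local-triviality step should then be routine PL topology.
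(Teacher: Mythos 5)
Your proof is correct and follows essentially the same route as the paper: both analyze the fibers of $\pi$ over the open square, the two axes, and the origin, and then identify each level set $\pi^{-1}(f^{-1}(t))$ for $t\in(0,1]$ as an embedded $S^3$, with the cone point $(0,0)$ as the unique critical value. The only point of departure is that where you verify $\pi^{-1}(f^{-1}(t))\cong S^3$ directly via a genus-one Heegaard splitting (two solid tori whose meridians $S^1_\theta$ and $S^1_\psi$ meet once in a torus fiber), the paper instead isotopes the arc $f^{-1}(t)$ through the leaves $f^{-1}(t')$ to the boundary arc $f^{-1}(1)$, whose preimage is $\pd X_1\cong \pd(D^2\X D^2)$; your version is more explicit but establishes the same fact by the same underlying structure of $\pi$.
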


\begin{proof}
Note that for any $r_0,s_0 \in (0,1)$, we have that $\pi^{-1}(r_0,0) = \{(r_0e^{i\theta},0) \in X_1\}$, an embedded copy of $S^1$, $\pi^{-1}(0,s_0) = \{(0,s_0e^{i\psi}) \in X_1\}$, another embedded copy of $S^1$, and $\pi^{-1}(r_0,s_0) = \{ r_0e^{i\theta},s_0e^{i \psi}\}$ an embedded torus.  Thus, for any $t_0 \in (0,1]$, we can see that $f \circ \pi:\pi^{-1}(f^{-1}(t_0)) \rightarrow I$ is the standard sweepout of $S^3$, since the arc $f^{-1}(t_0)$ is isotopic in $I \X I$ through arcs $f^{-1}(t)$ to the arc $f^{-1}(1)$.  It follows that these embedded copies of $S^3$ determine a singular foliation of $X_1 - \{(0,0)\}$ with a single critical point at $(0,0)$.
\end{proof}

\begin{proposition}\label{prop:cone}
The intersection $\V_d \cap X_i$ isotopic to the cone on the knot $\V_d \cap \pd X_i$.
\end{proposition}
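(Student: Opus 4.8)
The plan is to exhibit $(X_i,\V_d\cap X_i)$ as a cone by sweeping $X_i$ out with the $3$-spheres of Lemma~\ref{induce} attached to a carefully chosen radial foliation of $I\X I$, arranged so that $\V_d\cap X_i$ is transverse to every sphere, and then to read off the cone structure by a standard Ehresmann/Morse argument. By the cyclic symmetry $[z_1:z_2:z_3]\mapsto[z_2:z_3:z_1]$ (which preserves $\V_d$ and permutes the $X_i$) it suffices to treat $i=1$; I also assume $d\ge 2$ (for $d=1$, $\V_1\cap X_1=\{z_1+z_2=-1\}$ is a linear disk and the statement is elementary). In the chart $z_3=1$ we have $X_1=D^2\X D^2$, $\pi(z_1,z_2)=(|z_1|,|z_2|)$, and $\V_d\cap X_1=\{(z_1,z_2)\in D^2\X D^2:\ z_1z_2^{d-1}+z_2+z_1^{d-1}=0\}$, which by Lemma~\ref{lem:smooth} is a smooth surface containing the center $(0,0)$ of $X_1$; note $(0,0)$ is a smooth point of $\V_d$.

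First I would check $\pi(\V_d\cap X_1)=R_d$. For $\subseteq$: on $\V_d$ the numbers $z_1z_2^{d-1}$, $z_2$, $z_1^{d-1}$ sum to $0$, so applying the triangle inequalities to the moduli $rs^{d-1},\ s,\ r^{d-1}$ (with $r=|z_1|$, $s=|z_2|$) gives $r^{d-1}-rs^{d-1}\le s\le r^{d-1}+rs^{d-1}$, the remaining inequality $rs^{d-1}\le s+r^{d-1}$ holding automatically since $rs^{d-1}\le s$. The reverse inclusion is obtained by solving for the arguments as in Lemmas~\ref{arc1} and~\ref{arc2}. By Lemma~\ref{disko}, $R_d$ is a disk; its boundary is the union of two \emph{side} arcs $\{s=r^{d-1}+rs^{d-1}\}$ and $\{s=r^{d-1}-rs^{d-1}\}$, each running from the corner $(0,0)$ to a point of $(I\X\{1\})\cup(\{1\}\X I)$, together with two \emph{outer} arcs contained in $(I\X\{1\})\cup(\{1\}\X I)$. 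I would then choose a radial singular foliation $f\colon I\X I\to I$ whose leaves $f^{-1}(t)$ meet $R_d$, for each $t\in(0,1)$, in a single arc joining the two side arcs of $\pd R_d$; this is possible because the two side arcs are disjoint embedded arcs from $(0,0)$ to the far boundary of $I\X I$, so a radial foliation can be built crossing $R_d$ from one side arc to the other. Put $h=f\circ\pi\colon X_1\to I$; by Lemma~\ref{induce} the level sets $S_t=h^{-1}(t)$ are embedded $3$-spheres for $t\in(0,1]$, with $S_1=\pd X_1$, and $h^{-1}(0)=\{(0,0)\}$.

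The crux is to show that $h|_{\V_d\cap X_1}$ has no critical point on $(\V_d\cap X_1)\setminus\{(0,0)\}$, i.e., that $\V_d\cap X_1$ is transverse to every $S_t$. Since $f$ is regular away from $(0,0)$, a point $p\ne(0,0)$ can be critical for $h|_{\V_d}$ only where $\pi|_{\V_d\cap X_1}$ fails to be a submersion, i.e., where $T_p(\V_d\cap X_1)$ contains a nonzero vector tangent to the torus $\{|z_1|=r,\ |z_2|=s\}$. Writing $F(z_1,z_2)=z_1z_2^{d-1}+z_2+z_1^{d-1}$, this says $a\,z_1\pd_{z_1}F+b\,z_2\pd_{z_2}F=0$ for some real $(a,b)\ne(0,0)$, i.e., $\operatorname{Im}\!\big(\overline{z_1\pd_{z_1}F}\cdot z_2\pd_{z_2}F\big)=0$. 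Using $F=0$ to eliminate $z_2$, a direct computation reduces this to $(d^2-3d+3)\operatorname{Im}\!\big(\overline{z_1^{d-1}}\cdot z_1z_2^{d-1}\big)=0$; since $d^2-3d+3>0$, the critical (fold) locus of $\pi|_{\V_d\cap X_1}$ is exactly the set where $z_1^{d-1}$ and $z_1z_2^{d-1}$ are real multiples of one another. On this set all three of $z_1z_2^{d-1}$, $z_2$, $z_1^{d-1}$ are collinear, the triangle above degenerates, and hence $\pi(p)$ lies on one of the two side arcs of $\pd R_d$ (or at $(0,0)$). At such a $p$ the image $d\pi_p\big(T_p(\V_d\cap X_1)\big)$ is one-dimensional and is the tangent line to that side arc, which by our choice of $f$ is transverse to the leaf through $\pi(p)$; so $dh_p\ne 0$ on $T_p(\V_d\cap X_1)$ and $p$ is not critical. (One also checks---from Subsection~\ref{sub:arcs}, or after a small perturbation---that $\V_d\cap X_1$ meets $\pd X_1$ transversely, so the argument extends to $t=1$.)

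It then follows that $h|_{\V_d\cap X_1}$ is proper with $\pd(\V_d\cap X_1)=h^{-1}(1)=\V_d\cap\pd X_1=:K$ and with no critical point other than the minimum at the smooth point $(0,0)$. Since $(0,0)$ is a smooth point of $\V_d$, the level set $h^{-1}(t)\cap\V_d$ is a single circle for small $t>0$, hence---there being no critical values in $(0,1]$---for all $t\in(0,1]$; in particular $K$ is a knot, and Ehresmann's theorem gives a diffeomorphism $(\V_d\cap X_1)\setminus\{(0,0)\}\cong K\X(0,1]$ carrying $h$ to the second projection. Adjoining the smooth point $(0,0)$ identifies $\V_d\cap X_1$ with $\operatorname{cone}(K)=K\X[0,1]/(K\X\{0\})$, compatibly with the cone structure on $(X_1,\pd X_1)$ furnished by $\{S_t\}$; hence $\V_d\cap X_1$ is isotopic to the cone on the knot $\V_d\cap\pd X_1$. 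The main obstacle in this plan is the transversality assertion of the third paragraph---concretely, controlling the singular locus of $\pi|_{\V_d\cap X_1}$ and verifying that $\pi$ carries it into the side boundary of $R_d$, which is precisely where the special form of the defining polynomial (the nonvanishing of $d^2-3d+3$) is used.
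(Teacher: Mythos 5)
Your proposal is correct in outline and, for its central step, takes a genuinely different route from the paper. Both arguments share the same frame: the chart $z_3=1$, the projection $\pi(z_1,z_2)=(|z_1|,|z_2|)$ onto $I\X I$, the triangle-inequality identification of $\pi(\V_d\cap X_1)$ with the disk $R_d$ of Lemma~\ref{disko}, and the final sweepout of $X_1$ by the level $3$-spheres of Lemma~\ref{induce}. Where they diverge is in how one sees that each level sphere meets $\V_d\cap X_1$ in a single circle. The paper does this constructively: using the Law of Cosines it produces explicit continuous angle functions $\nu^{\pm},\omega^{\pm}$ on $R_d$, assembles $\V_d\cap X_1$ as $2(d^2-3d+3)$ copies of $R_d$ pasted along the side arcs $\delta_j^{\pm},\eps_j^{\pm}$, and verifies by hand that the resulting map of a disk is injective; the cone structure is then read off leaf by leaf. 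You instead prove a transversality statement, computing the critical locus of $\pi|_{\V_d\cap X_1}$ and reducing it to $(d^2-3d+3)\operatorname{Im}\bigl(\overline{z_1^{d-1}}\,z_1z_2^{d-1}\bigr)=0$ --- a computation that checks out: writing $u=z_1z_2^{d-1}$, $w=z_1^{d-1}$, $v=-(u+w)$, the cross terms combine to $\bigl(1+(d-1)(d-2)\bigr)\operatorname{Im}(\overline{w}u)$ --- so that the fold locus is exactly where the triangle degenerates and hence maps into the side arcs of $\pd R_d$; Ehresmann then finishes the job. Your route is shorter on bookkeeping and more conceptual, and it isolates cleanly the role of $d^2-3d+3\neq 0$. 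The one place needing more care is your assertion that at a fold point $d\pi_p\bigl(T_p(\V_d\cap X_1)\bigr)$ equals the tangent line to the side arc: since the image of $d\pi_p$ is one-dimensional there, this requires knowing that the fold locus maps immersively onto the side arcs, which you assert but do not verify; it is true, but the easiest verification is precisely the Law-of-Cosines parametrization over the side arcs that the paper writes down, so at that point the two arguments lean on the same computation. What the paper's explicit construction buys in exchange for its length is the $2(d^2-3d+3)$-sheeted branched structure of the disk over $R_d$, which dovetails with the arc count in Proposition~\ref{shadow}.
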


\begin{proof}
We prove the statement for $i = 1$; the other cases follow symmetrically.  As above, we use the affine chart in which $z_3 = 1$, and we let $z_1 = re^{i\theta}$ and $z_2 = se^{i\psi}$.  Then $\V_d \cap X_1 = \{(re^{i\theta},se^{i\psi}) : rs^{d-1}e^{i(\theta + (d-1)\psi)} + se^{i\psi} + r^{d-1}e^{i(d-1)\theta}= 0\}$.  Dividing the equation by $e^{i\psi}$ and setting $\nu = (d-1)\theta - \psi$ and $\omega = \theta + (d-2)\psi$, we can rewrite
\[ \V_d \cap X_1 = \{(re^{i\theta},se^{i\psi}) : rs^{d-1}e^{i\omega} + s + r^{d-1}e^{i\nu}= 0\}.\]
It follows that the three complex numbers $rs^{d-1}e^{i\omega}$, $s$, and $r^{d-1}e^{i\nu}$ form a triangle.  If $\nu \in [0,\pi]$, then $\omega \in [\pi,2\pi]$, and the angles of the triangle are $\pi - \nu$ and $\omega - \pi$.  Note that such a triangle exists if and only if the side lengths satisfy $s \leq r^{d-1} + rs^{d-1}$, $r^{d-1} \leq s + rs^{d-1}$, and $rs^{d-1} \leq s + r^{d-1}$, where the third inequality always holds since $r,s\in [0,1]$, so that $rs^{d-1} \leq s$.  Thus such a triangle exists if and only if $(r,s) \in R_d$, where $R_d$ is the topological disk defined above.  Using the Law of Cosines, each $(r,s) \in R_d$ determines a value $\nu = \nu^+(r,s)$ and $\omega = \omega^+(r,s)$, where $\nu^+$ and $\omega^+$ are continuous functions from $R_d$ into $[0,\pi]$ and $[\pi,2\pi]$, respectively.  Note that $s = r^{d-1} + rs^{d-1}$ if and only if $\nu^+ = \pi$ and $\omega^+ = \pi$, and $r^{d-1} = s + rs^{d-1}$ if and only if $\nu^+ = \pi$ and $\omega^+ = 2\pi$.

On the other hand, if $\nu \in [\pi,2\pi]$, then $\omega \in [0,\pi]$, and the angles in the associated triangle are instead $\nu - \pi$ and $\pi - \omega$.  Similar to the work above, there are continuous functions $\nu^-:R_d \rightarrow [\pi,2\pi]$ and $\omega^-:\rightarrow [0,\pi]$ taking $(r,s)$ to the associated angles $\nu$ and $\omega$ using the Law of Cosines.  Note that $s = r^{d-1} + rs^{d-1}$ if and only if $\nu^- = \pi$ and $\omega^- = \pi$, and $r^{d-1} + rs^{d-1}$ if and only if $\nu^- = \pi$ and $\omega^- = 0$.  Recall that $\psi = (d-1)\theta - \nu^{\pm}$, and then by substitution, $\omega^{\pm} = \theta + (d-2)((d-1)\theta - \nu^{\pm})$, or equivalently,
\[ (d^2-3d+3)\theta = (d-2)\nu^{\pm} - \omega^{\pm}.\]
For each $j$ with $1 \leq j \leq d^2-3d+3$ define $\theta_j^{\pm}$ and $\psi_j^{\pm}$ as the functions on $R_d$ given by
\begin{eqnarray*}
\theta^{\pm}_j &=& \frac{(d-2)\nu^{\pm} - \omega^{\pm}}{d^2-3d+3} + \frac{2\pi j}{d^2 - 3d + 3}\\
\psi_j^{\pm} &=& (d-1)\theta_j^{\pm} - \nu^{\pm}.
\end{eqnarray*}
For each $j$, we let $(R_d)_j^+$ and $(R_d)_j^-$ be disjoint copies of $R_d$ (to be pasted together to form a disk later in the proof).  For each $(R_d)_j^{\pm}$, let $\delta_j^{\pm}$ be the boundary arc $\delta_j^{\pm} = \{(r,s) \in I \X I: s = r^{d-1} + rs^{d-1}$, and let $\eps_j^{\pm}$ be the boundary arc $\eps_j^{\pm} = \{(r,s) \in I \X I : r^{d-1} = s + rs^{d-1}\}$ (these are the boundary arcs along which we will paste copies of $R_d$).

By the work above, each function $\iota_j^{\pm}: (R_d)_j^{\pm} \rightarrow \V_d \cap X_1$ by $\iota_j^{\pm}(r,s) = (re^{i\theta^{\pm}_j},se^{i\psi^{\pm}_j})$ is an embedding of $R_d$ into $\V_d \cap X_1$, and in addition, every point in $\V_d \cap X_1$ is contained in the image of at least one of these embeddings.  We wish to show that these $2(d^2-3d+3)$ embeddings overlap only on their boundaries and can be pasted together to give an embedding of a disk onto $\V_d \cap X_1$.  It follows from remarks above that for any $(r,s) \in \delta_j^{\pm}$, $\nu_j^{\pm} = \pi$ and $\omega_j^{\pm} = \pi$, while for $(r,s) \in \eps_j^{\pm}$, $\nu_j^{\pm} = \pi$, $\omega^+_j = 2\pi$, and $\omega^-_j = 0$.  Thus, whenever $s = r^{d-1} + rs^{d-1}$, we have $\theta^+_j = \theta^-_j$ and $\omega^+_j = \omega^-_j$.  Similarly, whenever $r^{d-1} = s + rs^{d-1}$, we have $\theta^+_j = \theta^-_{j-1}$ and thus $\omega^+_j = \omega^-_{j-1}$.  This implies that we can paste $(R_d)^+_1$ to $(R_d)^-_1$ along $\delta_1^{\pm}$, paste $(R_d)_2^-$ to $(R_d)_1^+$ along $\eps_2^-$ and $\eps_1^+$, and so on, finally pasting $(R_d)_{d^2-3d+3}^+$ to $(R_d)_1^-$ along $\eps_{d^2-3d+3}^+$ and $\eps_1^-$ to obtain a disk $D$ and a continuous, surjective map $\iota: D \rightarrow \V_d \cap X_1$ which is an embedding on each component $(R_d)_j^{\pm}$.  Note that this pasting identifies all $2(d^2-3d+3)$ copies of the origin $(0,0)$ as the ``center" of the disk $D$.

To prove that $\iota$ is an embedding, we need only show that $\iota$ is injective.  Dropping sub- and superscripts for ease of notation, suppose that $(r,s)$ and $(r',s')$ are in distinct copies of $R_d$ and $\iota(r,s) = \iota(r',s') = (re^{i\theta},se^{i\psi})$.  Then $r = r'$ and $s = s'$.  In addition, $\theta(r,s) = \theta(r',s')$ and $\psi(r,s) = \psi(r',s')$, which implies that $\nu(r,s) = \nu(r',s')$ and $\omega(r,s) - \omega(r',s')$ is a multiple of $2\pi$, so either $\omega(r,s) = \omega(r',s')$, or (without loss of generality) $\omega(r,s) = 0$ and $\omega(r',s') = 2\pi$.  In the first case, we have that there exists $j$ such that $(r,s) \in (R_d)_j^+$ and $(r',s') \in (R_d)_j^-$.  Then $\omega(r,s) \in [\pi,2\pi]$ and $\omega(r',s') \in [0,\pi]$, which implies that $\omega(r,s) = \omega(r',s') = \pi$ and thus $(r,s)$ and $(r',s')$ are identified via the pasting used to construct $D$.  In the second case, we have $(r,s) \in (R_d)_j^+$ and $(r',s') \in (R_d)_{j-1}^-$ are also two points identified via the pasting used to construct $D$.  In any case, $(r,s)$ and $(r',s')$ are the same point in $D$, and $\iota$ is (globally) injective.

Finally, by the proof of Lemma~\ref{disko}, projection of $I \X I$ onto the second factor induces a singular foliation of $R_d$ by arcs with one critical point at $(0,0)$.  We can use this singular foliation to construct a radial singular foliation $f$ of $I \X I$ that induces a singular foliation of $R_d$ by arcs with one critical point at $(0,0)$.  By Lemma~\ref{induce}, the foliation $f$ induces a singular foliation of $X_1$ by copies of $S^3$, where the intersection of each $S^3$ with $\iota(D)$ is a collection of arcs glued along their boundary; that is, a knot which is isotopic to $\iota(D) \cap \pd X_1$.  It follows that $f$ induces a singular foliation of $D$ by copies of $S^1$ with a single critical point at the center $\iota(0,0) = (0,0)$, and thus $\iota(D)$ is isotopic to the cone on $\iota(D) \cap \pd X_1$.
\end{proof}

\begin{proof}[Proof of Theorem~\ref{thm:main2}]
First, we consider the intersections $\V_d \cap X_i$.  By Lemma~\ref{lem:smooth}, we know that $\V_d \cap X_i$ is a smooth surface, and by Proposition~\ref{prop:cone}, we also have that $\V_d \cap X_i$ is the cone on the intersection $\V_d \cap \pd X_i$.  Since $\V_d \cap \pd X_i$ is a single unknotted curve by Lemmas~\ref{homol} and~\ref{disjoint}, we have that $\V_d \cap X_i$ is a single boundary parallel disk.

Next, we consider the intersections $\V_d \cap H_i$.  By Proposition~\ref{shadow} and using the three-fold symmetry of the construction, $\V_d \cap H_i$ is a collection of boundary parallel arcs.  It follows that $\V_d$ is in bridge trisected position with respect to the standard trisection of $\CP^2$.  Finally, Corollary~\ref{cor:hexclassify} asserts that the corresponding shadow diagram $(\aaa,\bbb,\ccc)$ coincides with the family $(A)_d$.

The careful reader will note that the arguments in this section also apply, with only slight modification, to the variety
\[ \V'_d = \{[z_1:z_2:z_3] \in \CP^2 : z_1^{d-1}z_2 + z_2^{d-1}z_3 + z_3^{d-1}z_1= 0\}.\]
In particular, in Lemma~\ref{homol}, we get $[K_1] = (d-1)[\A] + [\n]$, and in Corollary~\ref{cor:hexclassify}, we have that the associated shadow diagram agrees instead with $(E)_d$.  We conclude that $\V'_d$ is in efficient bridge position, as desired.
\end{proof}

\section{Questions}\label{sec:questions}

In this section, we discuss several related open questions that may be of interest.  In pursuit of understanding the space of bridge trisections and the relationships between various families, we note that every bridge trisection of $\K$ in $\CP^2$ has infinitely many distinct shadow diagrams.  For example, let $\abc$ be a shadow diagram, with $a$ an arc in $\aaa$.  If $a'$ is another arc in $\Sigma$ with the same endpoints as $a$, and $a \cup a'$ is a simple closed curve disjoint from $\aaa -\{a\}$ and homotopic to $\A$, then $(\aaa',\bbb,\ccc)$ is also a shadow diagram for the same bridge trisection, where the arc $a'$ replaces $a$ in $\aaa'$.  This operation is sometimes called a \emph{shadow slide}.

\begin{question}
Does the diagram $(A)_d$ determine the same bridge trisection as the diagram $(E)_d$?  What about the pairs $(B)_d$ and $(F)_d$, $(C)_d$ and $(G)_d$, or $(D)_d$ and $(H)_d$?
\end{question}

In~\cite{JMMZ}, the authors found surfaces in $S^4$ with two distinct bridge trisections that have the same bridge number, although the obstruction is group-theoretic and is not applicable to this case.  On the other hand, it is possible that each corresponding pair does in fact determine the same bridge trisection.  We leave it as an exercise to the reader to show that $(D)_1$ and $(H)_1$ determine isotopic bridge trisections, as do $(D)_2$ and $(H)_2$ (this is a more difficult exercise), and as do $(B)_2$ and $(F)_2$.  In Figure~\ref{fig:slide1}, we see diagram $(A)_3$ at right and the result of three shadow slides at center, and we can verify that the center diagram is isotopic to $(E)_3$, shown at left.  Similarly, in Figure~\ref{fig:slide2}, we see diagram $(C)_2$ at left, the result of four shadow slides at center, and we can verify (by checking the homology classes of $\ab$ and $\bc$, for example) that the center diagram is isotopic to $(G)_2$, shown at right.  We do not know whether this correspondence persists for higher degrees, but we suspect it might be the case.

\begin{figure}[h!]
\begin{subfigure}{.32\textwidth}
  \centering
  \includegraphics[width=.9\linewidth]{fig/verA1.eps}
  \label{fig:s1A}
  \caption{$(A)_3$}
\end{subfigure}
\begin{subfigure}{.32\textwidth}
  \centering
  \includegraphics[width=.9\linewidth]{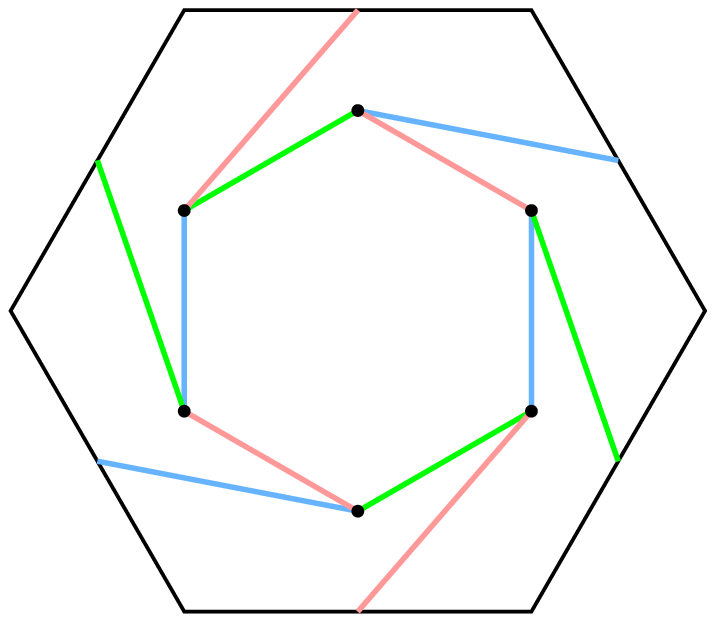}
  \label{fig:s1B}
  \caption{Slides on $(A)_3$}
\end{subfigure}
\begin{subfigure}{.32\textwidth}
  \centering
  \includegraphics[width=.9\linewidth]{fig/verE1.eps}
  \label{fig:s1C}
  \caption{$(E)_3$}
\end{subfigure}
	\caption{}
	\label{fig:slide1}
\end{figure}

\begin{figure}[h!]
\begin{subfigure}{.32\textwidth}
  \centering
  \includegraphics[width=.9\linewidth]{fig/verC1.eps}
  \label{fig:s2A}
  \caption{$(C)_2$}
\end{subfigure}
\begin{subfigure}{.32\textwidth}
  \centering
  \includegraphics[width=.9\linewidth]{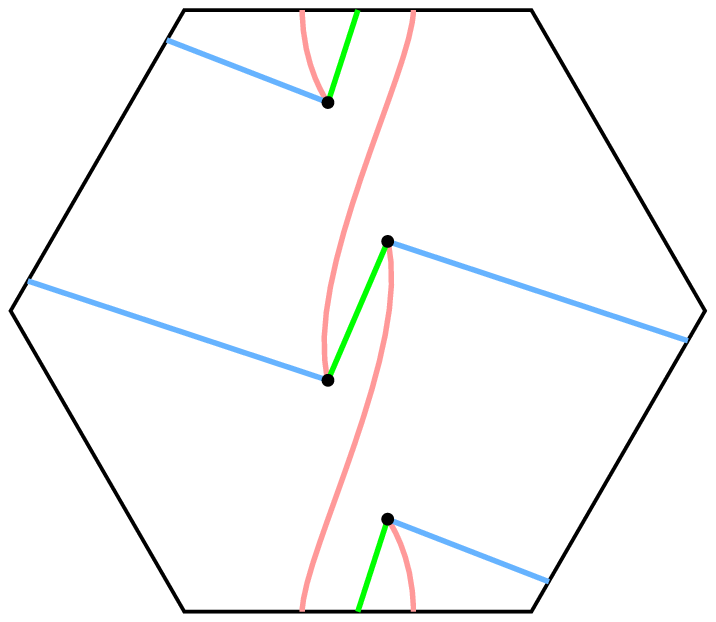}
  \label{fig:s2B}
  \caption{Slides on $(C)_2$}
\end{subfigure}
\begin{subfigure}{.32\textwidth}
  \centering
  \includegraphics[width=.9\linewidth]{fig/verG1.eps}
  \label{fig:s2C}
  \caption{$(G)_2$}
\end{subfigure}
	\caption{}
	\label{fig:slide2}
\end{figure}

For diagrams $(A)_d$ equivalent to $(E)_d$, we can consider the complex-geometric version of their equivalence.

\begin{question}
Suppose that $(A)_d$ is equivalent to $(E)_d$.  In this case, does the straight-line isotopy given by
\[ t(z_1z_2^{d-1} + z_2z_3^{d-1} + z_3z_1^{d-1}) + (1-t)(z_1^{d-1}z_2 + z_2^{d-1}z_3 + z_3^{d-1}z_1) = 0\]
take $\V_d$ to $\V_d'$ via complex surfaces which meet the standard trisection of $\CP^2$ in an efficient bridge position?
\end{question}

It was proved in~\cite{HKM} that any two bridge trisections for isotopic surfaces are related by a sequence of perturbation and deperturbation moves.  See~\cite{MZB1,MZB2} for further details about perturbations.

\begin{question}
Which families of hexagonal lattice diagrams are related only by deperturbations?
\end{question}

For instance, we might conjecture that the bridge trisections determined by $(D)_d$ can be deperturbed to the one determined by $(C)_d$, which in turn can be deperturbed further to that of $(B)_d$, which can finally be deperturbed further still to the bridge trisection determined by $(A)_d$.

We can also consider hexagonal lattice diagrams that do not minimize genus in their homology classes.  If $T^2$ is the unknotted torus in $S^4$, and if $\K \subset \CP^2$ and $(\CP^2,\K') = (\CP^2,\K) \# (S^4,T^2)$, we say that $\K'$ is related to $\K$ by a \emph{trivial 1-handle addition}.  In light of the prevailing theme that combinatorial and topological simplicity are intertwined, we ask

\begin{question}\label{nonmin}
Suppose $\K$ admits a hexagonal lattice diagram but $\K$ is not genus-minimizing in its homology class.  Is $\K$ obtained from some complex curve $\C_d$ by trivial 1-handle additions?
\end{question}

Numerous non-genus-minimizing hexagonal lattice diagrams exist, but there would be many more cases to consider in order to answer Question~\ref{nonmin}.  We might also consider triples $\abc$ that yield hexagonal lattices but such that the paired curves $\ab$, $\bc$, and $\ca$ need not be unknots or unlinks.  In general, these curves are torus links that can be capped off with singular surfaces.  Bridge trisections of singular surfaces in $S^4$ have been studied in~\cite{CK} and~\cite{HKMsing}.

\begin{question}
Does a similar correspondence exist between singular hexagonal lattice diagrams and singular complex surfaces?
\end{question}

Finally, we may ask to what extent these sorts of structures appear in other complex manifolds.  For example, Islambouli, Karimi, Lambert-Cole, and Meier have proved that complex curves in $S^2 \X S^2$ admit (rectangular) lattice diagrams with respect to a minimal 4-section~\cite{IKLCM}.

\begin{question}
Is there a combinatorial characterization of shadow diagrams (with respect to the minimal genus trisection) of complex curves in $S^2 \X S^2$?  What about curves in other complex projective surfaces?
\end{question}

\section{Appendix: Case-by-case analysis}\label{sec:appendix}

In this section, we carry out the case-by-case analysis necessary to prove Theorem~\ref{thm:family}.  We use that notation $(p,q)$ to represent a curve whose homology is $p[\A] + q[\n]$ in $H_1(\Sigma)$.  In addition, recall from Lemma~\ref{lem:dist} that $\eps b = \la \ac, \bc \ra$, so that $b = |\la \ac,\bc \ra|$.  The next lemma describes six possible cases for each of $\ab$, $\ac$, and $\bc$, yielding 216 cases in total.

\begin{lemma}\label{lem:cases}
For any hexagonal lattice diagram $(\aaa,\bbb,\ccc)$, we have that $\ac$ must be one of the following:
\be
\item $(n,1)$ such that $n \in \Z$,
\item $(n,-1)$ such that $n \in \Z$,
\item $(n,0)$ such that $n \in \Z$, $n\neq 0$,
\item $(n+1,n)$ such that $n \in \Z$,
\item $(n-1,n)$ such that $n \in \Z$,
\item $(n,n)$ such that $n \in \Z$, $n \neq 0$.
\ee
Similarly, $\bc$ is one of the following:
\be
\item[(a)] $(1,m)$ such that $m \in \Z$,
\item[(b)] $(-1,m)$ such that $m \in \Z$,
\item[(c)] $(0,m)$ such that $m \in \Z$, $m\neq 0$,
\item[(d)] $(m,m+1)$ such that $m \in \Z$,
\item[(e)] $(m,m-1)$ such that $m \in \Z$,
\item[(f)] $(m,m)$ such that $m \in \Z$, $m \neq 0$.
\ee
Finally, $\ab$ is one of the following:
\be
\item[(i)] $(1,\ell)$ such that $\ell \in \Z$
\item[(ii)] $(-1,\ell)$ such that $\ell \in \Z$,
\item[(iii)] $(\ell,0)$ such that $\ell \in \Z$, $\ell \neq 0$.
\item[(iv)] $(\ell,1)$ such that $\ell \in \Z$,
\item[(v)] $(\ell,-1)$ such that $\ell \in \Z$,
\item[(vi)] $(0,\ell)$ such that $\ell \in \Z$, $\ell \neq 0$.
\ee

\end{lemma}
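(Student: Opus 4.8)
The plan is to deduce the lemma directly from Lemma~\ref{lem:possible} by a change of basis, the only extra input being that none of $\ab$, $\bc$, $\ca$ is null-homologous in $H_1(\Sigma)$.

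First I would record this nonvanishing. Lemma~\ref{lem:dist} asserts that $\ac$ and $\bc$ are distinct; since $\ac - \bc = [\aaa] - [\bbb] = \ab$ as elements of $H_1(\Sigma)$, this is exactly the statement $\ab \neq 0$. Applying Lemma~\ref{lem:dist} to the two cyclic relabelings of the diagram $(\aaa,\bbb,\ccc)$ gives in the same way $\bc \neq 0$ and $\ca \neq 0$ (equivalently $\ac \neq 0$).

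Next I would translate Lemma~\ref{lem:possible}, which says that each of $\ab$, $\bc$, $\ca$ is an unlink, hence — being a torus link in the relevant copy of $S^3 = \partial X_j$ with respect to the meridian bases indicated by the conventions $\langle\A,\n\rangle = \langle\n,\g\rangle = \langle\g,\A\rangle = 1$ — that one of its two torus‑link coefficients lies in $\{0,\pm 1\}$. For $\ab = p_1[\A] + q_1[\n]$ this is immediate: the condition $p_1 \in \{0,\pm 1\}$ or $q_1 \in \{0,\pm 1\}$, together with $\ab \neq 0$, is precisely the list (i)--(vi), the restriction $\ell\neq 0$ in (iii) and (vi) being forced by $\ab\neq 0$. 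For $\bc = p_2[\n] + q_2[\g]$ one substitutes $[\g] = -[\A] - [\n]$, obtaining $\bc = -q_2[\A] + (p_2 - q_2)[\n]$; the six cases $p_2\in\{0,\pm1\}$ or $q_2\in\{0,\pm1\}$ then become, after this substitution, exactly the six families (a)--(f), with $m\neq 0$ in (c) and (f) coming from $\bc\neq 0$. For $\ca = p_3[\g] + q_3[\A]$ the same substitution applied to $\ac = -\ca = (p_3 - q_3)[\A] + p_3[\n]$ turns $p_3\in\{0,\pm1\}$ or $q_3\in\{0,\pm1\}$ into the six families (1)--(6), with $n\neq 0$ in (3) and (6) coming from $\ac\neq 0$.

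There is no genuine obstacle here: the mathematical content is entirely contained in Lemma~\ref{lem:possible} (and in Lemma~\ref{lem:dist} for the nonvanishing), and what remains is bookkeeping. The only point that requires care is keeping the orientation conventions and the relation $[\g] = -[\A] - [\n]$ straight, so that "a torus‑link coefficient is $0$ or $\pm 1$" is converted correctly into the $[\A],[\n]$‑coordinate descriptions stated in the lemma; once that dictionary is fixed, each of the three six‑case lists is read off mechanically.
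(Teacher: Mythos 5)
Your proof is correct and follows essentially the same route as the paper's: both deduce the three six-case lists from Lemma~\ref{lem:possible} by converting the torus-link/intersection-number conditions into $[\A],[\n]$-coordinates via $[\g]=-[\A]-[\n]$. The only difference is that you explicitly justify the exclusions $\ell, m, n \neq 0$ in cases (iii), (vi), (c), (f), (3), (6) by appealing to Lemma~\ref{lem:dist}, a point the paper's proof leaves implicit; this is a welcome bit of extra care but not a different argument.
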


\begin{proof}
By Lemma~\ref{lem:possible}, we know that one of the following is true, where each possibility is designated by its corresponding case for $\ab$ above:
\[ \ab = \ell\A, \, \, (iii); \quad \ab = \ell\n, \, \,(vi); \quad \la \ab,\A \ra = \pm 1, \, \, (v) \text{ or } (iv); \,\, \,\, \text{or}\,\, \,\, \la \ab,\n \ra = \pm 1, \, \, (i) \text{ or } (ii).\]
These six possibilities coincide precisely with the six options given by the lemma for $\ab$.  Similarly, each of $\ac$ and $\bc$ must satisfy one of the following, with cases designated as above:
\[ \ac = n\A, \, \, (3); \quad \ac = n\g, \, \,(6); \quad \la \ac,\A \ra = \pm 1, \, \, (2) \text{ or } (1); \,\, \,\, \text{or}\,\, \,\, \la \ac,\n \ra = \pm 1, \, \, (4) \text{ or } (5).\]
\[ \bc = m\n, \, \, (c); \quad \bc = m\g, \, \,(f); \quad \la \bc,\n \ra = \pm 1, \, \, (a) \text{ or } (b); \,\, \,\, \text{or}\,\, \,\, \la \bc,\n \ra = \pm 1, \, \, (e) \text{ or } (d).\]
It is straightforward to verify that these possibilities give rise to the six cases listed in the lemma.
\end{proof}

\begin{proposition}\label{prop:work}
Suppose $(\aaa,\bbb,\ccc)$ is a hexagonal lattice diagram for a surface $\K$ that minimizes genus in its homology class.  Then either the degree of $\K$ is $0$, $\pm 1$, or $\pm 2$, or $(\aaa,\bbb,\ccc)$ is a member of one of the 32 types of diagrams contained in Table~\ref{table1}.  The diagrams in turn fall into one of eight equivalence classes.
\end{proposition}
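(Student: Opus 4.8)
The idea is to turn the statement into a finite Diophantine problem governed by three inputs: the six-fold case split of Lemma~\ref{lem:cases}, the linear relations $p_2 = d-q_1$, $q_2 = d-p_3$, $q_3 = d-p_1$ from Proposition~2.3 of~\cite{PLCthom} (already used inside the proof of Lemma~\ref{int}), and the genus condition obtained by equating Lemma~\ref{lem:genus} with the value from Theorem~\ref{thm:thom}. First I would record the bookkeeping dictionary. Writing $\ab = p_1[\A]+q_1[\n]$, $\bc = p_2[\n]+q_2[\g]$, and $\ca = p_3[\g]+q_3[\A]$, the number of disks is $c_j = \gcd(p_j,q_j)$, since the pairing in $\pd X_j$ is a torus unlink whose number of components is exactly this $\gcd$. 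Moreover, using $\ac = \ab+\bc$ together with Lemma~\ref{lem:dist} we get $\eps b = \la\ac,\bc\ra = \la\ab,\bc\ra$, and substituting the linear relations yields the closed form $\eps b = p_1p_3 - q_1(p_1+p_3-d)$, so $b = |p_1p_3 - q_1(p_1+p_3-d)|$; combining $[\K]^2 = d^2$ with Lemma~\ref{int} (after the same substitution) recovers this same identity, so Lemma~\ref{int} imposes no further constraint.

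With this in hand, the only genuine numerical condition, beyond the case choices of Lemma~\ref{lem:cases} and the positivity of the $c_j$, is the genus equation $b - c_1 - c_2 - c_3 = d(d-3)$, i.e.
\[ \bigl|p_1p_3 - q_1(p_1+p_3-d)\bigr| - \gcd(p_1,q_1) - \gcd(d-q_1,\,d-p_3) - \gcd(p_3,\,d-p_1) = d(d-3). \]
I would also record that this framework is self-consistent: for the class $(A)$ data $p_1=p_2=p_3=1$, $q_1=q_2=q_3=d-1$ one gets $b = d^2-3d+3$ and $c_1=c_2=c_3=1$, in agreement with Proposition~\ref{shadow}.

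The core of the proof is then to run through all $6\times 6\times 6 = 216$ combinations of cases for $\ab$, $\bc$, and $\ca$ from Lemma~\ref{lem:cases}. In each combination one coordinate of each of $\ab$, $\bc$, $\ca$ is pinned to $\{0,\pm 1\}$; together with the linear relations this reduces the displayed equation to a low-degree condition in one or two integer parameters and $d$, which can be solved by hand. I would tabulate the outcome of every case: in the large majority there are no solutions, or every solution forces $d\in\{0,\pm1,\pm2\}$, and the combinations admitting a solution with $|d|\geq 3$ are precisely the $32$ rows of Table~\ref{table1}. I expect this sweep to be the only real obstacle — not for any conceptual reason but because of its length and the care needed in two places: computing each $c_j=\gcd(p_j,q_j)$ correctly (the value differs according to whether the pinned coordinate is $0$ or $\pm1$), and handling overlaps among the six cases of Lemma~\ref{lem:cases} (a curve with both coordinates $\pm1$ satisfies several cases at once) so that the $32$ surviving types are counted with the correct multiplicity.

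Finally I would verify the grouping into eight equivalence classes. The relevant symmetry group is generated by the order-three cyclic relabeling $(\aaa,\bbb,\ccc)\mapsto(\bbb,\ccc,\aaa)$, which cyclically permutes $\A,\n,\g$ and the three pairings, and the order-two orientation reversal $(\ab,\bc,\ca,d)\mapsto(-\ab,-\bc,-\ca,-d)$; together these generate a group of order six. Examining its action on the $32$ rows of Table~\ref{table1}, the $24$ diagrams in the families $(B)$, $(C)$, $(F)$, $(G)$ fall into four orbits of size six (these families are not preserved by the cyclic relabeling), while the $8$ diagrams in $(A)$, $(D)$, $(E)$, $(H)$ fall into four orbits of size two (each such family is invariant under the cyclic relabeling, so only orientation reversal acts nontrivially). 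This accounts for all $24+8 = 32$ types and exhibits the eight equivalence classes, completing the proof.
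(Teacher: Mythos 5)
Your proposal is correct and follows essentially the same route as the paper: an exhaustive sweep of the $216$ combinations from Lemma~\ref{lem:cases}, using Lemma~\ref{int} together with the genus formula of Lemma~\ref{lem:genus} and Theorem~\ref{thm:thom} to eliminate all but $32$ cases, which are then grouped into eight orbits (four of size six, four of size two) under cyclic relabeling and orientation reversal, exactly as in the appendix. Your packaging of the surviving constraint as the single Diophantine equation $b-\sum c_j=d(d-3)$ with $c_j=\gcd(p_j,q_j)$ is a tidy and accurate summary of what the paper's case-by-case computation verifies, though of course the actual verification of the $216$ cases — which is the body of the paper's proof — is only sketched, not executed, in your plan.
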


\begin{proof}
Suppose $(\aaa,\bbb,\ccc)$ is a hexagonal lattice diagram for a surface $\K$ that minimizes genus in its homology class.  Let $b$ be the bridge number, $(c_1,c_2,c_3)$ the corresponding patch numbers, and $\eps$ the sign of the bridge points.  We complete a case-by-case analysis of the 216 possible cases given by Lemma~\ref{lem:cases}.  As in Section~\ref{sec:prelim}, we set the convention that
\[ \ab  = p_1[\A] + q_1[\n], \quad \bc = p_2[\n] + q_2[\g], \quad \text{ and } \ca = p_3[\g] + q_3[\A].\]
Additionally, recall the formulas from Lemmas~\ref{int} and~\ref{lem:genus},
\[ [\K]^2 = p_1q_1 + p_2q_2 + p_3q_3 + \eps b.\]
\[ g(\K) = \frac{b - c_1 - c_2 - c_3}{2} + 1.\] 
Note that we terminate each case if it happens that $0$, $\pm 1$, or $\pm 2$ or if we can show that $\K$ is not genus minimizing in its homology class.  Cases are numbered and lettered as in Lemma~\ref{lem:cases}.  Note that some cases terminate before reaching the (i)-(vi) level of examination.  Any cases which satisfy $|d| >2$ and in which $\K$ is genus-minimizing in its homology class are marked with an $*$ to be collected in Table~\ref{table1}.

\be

\item Suppose $\ac = (n,1)$:  Then $\ca = (-n,-1)$, so that $p_3 = 1$, $q_3 = 1-n$, and $p_3q_3 = 1-n$.
{\be
\item $\bc = (1,m)$:  In this case, $p_2 = m-1$, $q_2 = -1$, and $p_2q_2 = 1-m$.  We compute $b = |nm-1|$, $\eps b = nm-1$, and $\ab = (n-1,1-m)$.  Thus
\begin{eqnarray*}
p_1p_2 &=& n - nm -1 +m \\
\text{$[\K]^2$} &=& (n-nm-1+m) + (1-m) + (1-n) + (nm-1) = 0.
 \end{eqnarray*}

\item $\bc = (-1,m)$:  In this case, $p_2 = m+1$, $q_2 = 1$, and $p_2q_2 = m+1$.  We compute $b = |nm+1|$, $\eps b = nm+1$, and $\ab = (n+1,1-m)$.  Thus
\begin{eqnarray*}
p_1p_2 &=& n - nm + 1 -m \\
\text{$[\K]^2$} &=& (n-nm+1-m) + (m+1) + (1-n) + (nm+1) = 4.
 \end{eqnarray*}

\item $\bc = (0,m)$:  In this case, $p_2 = m$, $q_2 = 0$, and $p_2q_2 = 0$.   We compute $b = |nm|$, $\eps b = nm$, and $\ab = (n,1-m)$.  Thus
\begin{eqnarray*}
p_1p_2 &=& n - nm \\
\text{$[\K]^2$} &=& (n-nm) + 0 + (1-n) + nm = 1.
 \end{eqnarray*}
 
 \item $\bc = (m,m+1)$:  In this case, $p_2 = 1$, $q_2 = -m$, and $p_2q_2 = -m$.   We compute $b = |nm+n-m|$, $\eps b = nm+n-m$, and $\ab = (n-m,-m)$.  Thus
\begin{eqnarray*}
p_1p_2 &=& -nm + m^2 \\
\text{$[\K]^2$} &=& (-nm+m^2) + -m + (1-n) + (nm+n-m) = (m-1)^2.
 \end{eqnarray*}
 Next, we compute
 \[ g(\K) = \frac{|nm+n-m| - 2 - c_1}{2} + 1 = \frac{|nm+n-m| - c_1}{2}.\]
 To find possible values of $c_1$, we must proceed further still.  Note that if $\K$ is genus-minimizing, then $g(\K) = \frac{(m-2)(m-3)}{2}$ if $m > 1$ and $g(\K) = \frac{m^2+m}{2}$ if $m < 1$.  We examine six sub-cases:
{\be
\item *$\ab = (1,\ell)$:  In this case, $n = m+1$ and $c_1 = 1$, so that $g(\K) = \frac{|m^2+m+m+1-m|-1}{2} = \frac{m^2+m}{2}$.  Here $\K$ is not genus-minimizing for $m > 1$ but $\K$ is genus-minimizing for $m < 1$.
\item $\ab = (-1,\ell)$:  In this case, $n = m-1$ and $c_1=1$, so that (for $m \neq 0)$, we have $g(\K) = \frac{|m^2-m+m-1-m|-1}{2} = \frac{m^2-m-2}{2} = \frac{(m-2)(m+1)}{2}$.  It follows that $\K$ is not genus-minimizing for any $m \neq -1,0,1,2$.
\item $\ab = (\ell,0)$:  In this case, $m=0$, so $[\K]^2 = 1$.
\item $\ab=(\ell,1)$:  In this case, $m=-1$, so $[\K]^2 = 4$.
\item $\ab = (\ell,-1)$:  In this case, $m=1$, so $[\K]^2 = 0$.
\item *$\ab =(0,\ell)$:  In this case, $n=m$ and $c_1 = |m|$, so that $g(\K) = \frac{m^2 - |m|}{2}$.  Here $\K$ is not genus-minimizing for $m > 1$ but $\K$ is genus-minimizing for $m < 1$.

\ee}

 \item $\bc = (m,m-1)$:  In this case, $p_2 = -1$, $q_2 = -m$, and $p_2q_2 = m$.   We compute $b = |nm-n-m|$, $\eps b = nm-n-m$, and $\ab = (n-m,2-m)$.  Thus
\begin{eqnarray*}
p_1p_2 &=& 2n-nm-2m+m^2 \\
\text{$[\K]^2$} &=& (2n-nm-2m+m^2) + m + (1-n) + (nm-n-m)  = (m-1)^2.
 \end{eqnarray*}
 Next, we compute
 \[ g(\K) = \frac{|nm-n-m| - 2 - c_1}{2} + 1 = \frac{|nm-n-m| - c_1}{2}.\]
 To find possible values of $c_1$, we must proceed further still.  Note that if $\K$ is genus-minimizing, then $g(\K) = \frac{(m-2)(m-3)}{2}$ if $m > 1$ and $g(\K) = \frac{m^2+m}{2}$ if $m < 1$.  We examine six sub-cases:
{\be
\item $\ab = (1,\ell)$.  In this case, $n = m+1$ and $c_1 = 1$, so that (for $m \neq 0$) $g(\K) = \frac{|m^2+m-(m+1)-m| -1}{2} = \frac{m^2-m-2}{2} = \frac{(m-2)(m+1)}{2}$.  It follows that $\K$ is not genus-minimizing for any $m \neq -1,0,1,2$.
\item $\ab = (-1,\ell)$.  In this case, $n = m-1$ and $c_1 = 1$, so that (for $|m| > 3$) $g(\K) = \frac{|m^2-m-(m-1)-m| - 1}{2} = \frac{m(m-3)}{2}$.  It follows that $\K$ is not genus-minimizing for $|m| > 3$.
\item $\ab = (\ell,0)$:  In this case, $m=2$, so $[\K]^2 = 1$.
\item $\ab = (\ell,1)$.  In this case, $m=1$, so $[\K]^2 = 0$.
\item $\ab = (\ell,-1)$.  In this case, $m = 3$, so $[\K]^2 = 4$.
\item $\ab = (0,\ell)$:  In this case, $n=m$ and $c_1 = |m-2|$, so that $g(\K) = \frac{|m^2-m-m| - |m-2|}{2}$.  If $m > 1$, then $g(\K) = \frac{m^2-3m+2}{2} = \frac{(m-1)(m-2)}{2}$.  If $m < 1$, then $g(\K) = \frac{m^2-m-2}{2} = \frac{(m-2)(m+1)}{2}$.  In either case, $\K$ is not genus-minimizing.
\ee}

\item $\bc = (m,m)$:  In this case, $p_2 = 0$, $q_2 = -m$, and $p_2q_2 = 0$.   We compute $b = |nm-m|$, $\eps b = nm-m$, and $\ab = (n-m,1-m)$.  We compute
\begin{eqnarray*}
p_1p_2 &=& n - nm - m + m^2 \\
\text{$[\K]^2$} &=& (n-nm-m+m^2) + 0 + (1-n) + (nm-m) = (m-1)^2.
 \end{eqnarray*}
Next, we compute
 \[ g(\K) = \frac{|nm-m| - 1 - |m| - c_1}{2} + 1 = \frac{|nm-m| - |m| - c_1 + 1}{2}.\]
 To find possible values of $c_1$, we must proceed further still.  Note that if $\K$ is genus-minimizing, then $g(\K) = \frac{(m-2)(m-3)}{2}$ if $m > 1$ and $g(\K) = \frac{m^2+m}{2}$ if $m < 1$.  We examine six sub-cases:
{\be
\item *$\ab = (1,\ell)$.  In this case, $n=m+1$ and $c_1 =1$, so that $g(\K) = \frac{|m^2+m-m| - |m| -1+1}{2} = \frac{m^2 - |m|}{2}$.  Here $\K$ is not genus-minimizing for $m > 1$ but $\K$ is genus-minimizing for $m < 1$.
\item $\ab = (-1,\ell)$.  In this case, $n = m-1$ and $c_1 = 1$, so that $g(\K) = \frac{|m^2-m-m| - |m| - 1 + 1}{2} = \frac{|m^2-2m|-|m|}{2}$.  If $m>1$, then $g(\K) = \frac{m(m-3)}{2}$, and if $m < 1$, then $g(\K) = \frac{m(m-1)}{2}$.  Thus, if $|m| > 3$, we have that $\K$ is not genus-minimizing.
\item $\ab = (\ell,0)$:  In this case, $m=1$, so $[\K]^2 = 0$.
\item $\ab = (\ell,1)$.  In this case, $m = 0$, so $[\K]^2 = 1$.
\item $\ab = (\ell,-1)$.  In this case, $m=2$, so $[\K]^2 = 1$.
\item *$\ab = (0,\ell)$:  In this case, $n = m$ and $c_1 = |1-m|$, so that $g(\K) = \frac{|m^2 - m| - |m| - |1-m| + 1}{2}$.  If $m > 1$, we have $g(\K) = \frac{m^2-m-m-(m-1)+1}{2} = \frac{(m-1)(m-2)}{2}$, so $\K$ is not genus-minimizing.  If $m <1$, we have $g(\K) = \frac{m^2-m+m-(1-m)+1}{2} = \frac{m^2+m}{2}$, so $\K$ is genus-minimizing.
\ee}

\ee}

\item Suppose $\ac = (n,-1)$:  Then $\ca = (-n,1)$, so that $p_3 = -1$, $q_3 = -1-n$, and $p_3q_3 = n+1$.

{\be
\item $\bc = (1,m)$:  In this case, $p_2 = m-1$, $q_2 = -1$, and $p_2q_2 = 1-m$.  We compute $b = |nm+1|$, $\eps b = nm+1$, and $\ab = (n-1,-1-m)$.  Thus
\begin{eqnarray*}
p_1p_2 &=& -n - nm +1 +m \\
\text{$[\K]^2$} &=& (-n-nm+1+m) + (1-m) + (n+1) + (nm+1) = 4.
 \end{eqnarray*}

\item $\bc = (-1,m)$:  In this case, $p_2 = m+1$, $q_2 = 1$, and $p_2q_2 = m+1$.  We compute $b = |nm-1|$, $\eps b = nm-1$, and $\ab = (n+1,-1-m)$.  Thus
\begin{eqnarray*}
p_1p_2 &=& -n - nm - 1 -m \\
\text{$[\K]^2$} &=& (-n-nm-1-m) + (m+1) + (n+1) + (nm-1) = 0.
 \end{eqnarray*}

\item $\bc = (0,m)$:  In this case, $p_2 = m$, $q_2 = 0$, and $p_2q_2 = 0$.   We compute $b = |nm|$, $\eps b = nm$, and $\ab = (n,-1-m)$.  Thus
\begin{eqnarray*}
p_1p_2 &=& -n - nm \\
\text{$[\K]^2$} &=& (-n-nm) + 0 + (n+1) + nm = 1.
 \end{eqnarray*}

\item $\bc = (m,m+1)$:  In this case, $p_2 = 1$, $q_2 = -m$, and $p_2q_2 = -m$.   We compute $b = |nm+n+m|$, $\eps b = nm+n+m$, and $\ab = (n-m,-2-m)$.  Thus
\begin{eqnarray*}
p_1p_2 &=& -2n -nm + 2m + m^2 \\
\text{$[\K]^2$} &=& (-2n-nm + 2m+m^2) + -m + (n+1) + (nm+n+m) = (m+1)^2.
 \end{eqnarray*}
 Next, we compute
 \[ g(\K) = \frac{|nm+n+m| - 2 - c_1}{2} + 1 = \frac{|nm+n+m| - c_1}{2}.\]
 To find possible values of $c_1$, we must proceed further still.  Note that if $\K$ is genus-minimizing, then $g(\K) = \frac{m(m-1)}{2}$ if $m > -1$ and $g(\K) = \frac{(m+3)(m+2)}{2}$ if $m < -1$.  We examine six sub-cases:
 
 {\be
\item $\ab= ( 1,\ell)$:  In this case, $n = m+1$ and $c_1 = 1$, so that (for $|m| > 3$), $g(\K) = \frac{|m^2+m+(m+1)+m| - 1}{2} = \frac{m(m+3)}{2}$, and $\K$ is not genus-minimizing.
\item $\ab = (-1,\ell)$:  In this case, $n = m-1$ and $c_1 = 1$, so that (for $|m| > 1$), $g(\K) = \frac{|m^2-m+(m-1)+m| - 1}{2} = \frac{m^2+m-2}{2} = \frac{(m+2)(m-1)}{2}$, and $\K$ is not genus-minimizing.
 \item $\ab = (\ell,0)$:  In this case, $m=-2$, so $[\K]^2 = 1$.
  \item $\ab = (\ell,1)$:  In this case, $m = -3$, so $[\K]^2 = 4$.
 \item $\ab=(\ell,-1)$:  In this case, $m=-1$, so $[\K]^2 = 0$.
 \item $\ab = (0,\ell)$:  In this case, $n=m$ and $c_1 = |m+2|$, so that $g(\K) = \frac{|m^2 + m +m| - |m+2|}{2}$.  If $m > -1$, then $g(\K) = \frac{m^2+m-2}{2} = \frac{(m+2)(m-1)}{2}$, so $\K$ is not genus-minimizing.  If $m < -1$, then $g(\K) = \frac{m^2+2m-(-2-m)}{2} = \frac{(m+1)(m+2)}{2}$, so again $\K$ is not genus-minimizing.
 \ee}

\item $\bc = (m,m-1)$:  In this case, $p_2 = -1$, $q_2 = -m$, and $p_2q_2 = m$.   We compute $b = |nm-n+m|$, $\eps b = nm-n+m$, and $\ab = (n-m,-m)$.  Thus
\begin{eqnarray*}
p_1p_2 &=& -nm + m^2 \\
\text{$[\K]^2$} &=& (-nm+m^2) + m + (n+1) + (nm-n+m)  = (m+1)^2.
 \end{eqnarray*}
 Next, we compute
 \[ g(\K) = \frac{|nm-n+m| - 2 - c_1}{2} + 1 = \frac{|nm-n+m| - c_1}{2}.\]
 To find possible values of $c_1$, we must proceed further still.  Note that if $\K$ is genus-minimizing, then $g(\K) = \frac{m(m-1)}{2}$ if $m > -1$ and $g(\K) = \frac{(m+3)(m+2)}{2}$ if $m <-1$.  We examine six sub-cases:
 
{\be
\item $\ab =(1,\ell)$:  In this case, $n = m+1$ and $c_1 = 1$, so that (for $|m| > 1$) $g(\K) = \frac{|m^2+m-(m+1)+m|-1}{2} = \frac{m^2+m-2}{2} = \frac{(m+2)(m-1)}{2}$, and $\K$ is not genus-minimizing.
\item *$\ab = (-1,\ell)$:  In this case, $n = m-1$ and $c_1 = 1$, so that $g(K) = \frac{m^2-m-(m-1)+m|-1}{2} = \frac{m^2-m}{2}$ and $\K$ is genus-minimizing for $m > -1$.
\item $\ab =(\ell,0)$:  In this case, $m=0$, so $[\K]^2 = 1$.
\item $\ab =(\ell,1)$:  In this case, $m = -1$, so $[\K]^2 = 0$.
\item $\ab =(\ell,-1)$:  In this case, $m=1$, so $[\K]^2 = 4$.
\item *$\ab=(0,\ell)$:  In this case, $n=m$ and $c_1=|m|$, so that $g(\K) = \frac{|m^2-m+m| - |m|}{2} = \frac{m^2 - |m|}{2}$.  Thus, $\K$ is not genus-minimizing if $m < - 1$ but $\K$ is genus-minimizing if $m > -1$.
\ee}

\item $\bc = (m,m)$:  In this case, $p_2 = 0$, $q_2 = -m$, and $p_2q_2 = 0$.   We compute $b = |nm+m|$, $\eps b = nm+m$, and $\ab = (n-m,-1-m)$.  Thus
\begin{eqnarray*}
p_1p_2 &=& -n - nm + m + m^2 \\
\text{$[\K]^2$} &=& (-n-nm+m+m^2) + 0 + (n+1) + (nm+m) = (m+1)^2.
 \end{eqnarray*}
Next, we compute
 \[ g(\K) = \frac{|nm+m| - 1 - |m| - c_1}{2} + 1 = \frac{|nm+m| - |m| - c_1 + 1}{2}.\]
 To find possible values of $c_1$, we must proceed further still.  Note that if $\K$ is genus-minimizing, then $g(\K) = \frac{m(m-1)}{2}$ if $m > -1$ and $g(\K) = \frac{(m+3)(m+2)}{2}$ if $m < -1$.  We examine six sub-cases:

{\be
\item $\ab = (1,\ell)$:  In this case, $n = m+1$ and $c_1 = 1$, so that $g(\K) = \frac{|m^2+m+m|-|m|}{2}$.  If $m > -1$, then $g(\K) = \frac{m^2+m}{2}$.  If $m < -1$, then $g(\K) = \frac{m^2 + 3m}{2}$.  In either case, $\K$ is not genus-minimizing.
\item *$\ab = (-1,\ell)$:  In this case, $n = m-1$ and $c_1 =1$, so that $g(\K) = \frac{|m^2-m+m|-|m|}{2} = \frac{m^2 - |m|}{2}$.  Thus, $\K$ is not genus-minimizing if $m < -1$ but $\K$ is genus-minimizing if $m > -1$.
\item $\ab = (\ell,0)$:  In this case, $m=-1$, so $[\K]^2 = 0$.
\item $\ab = (\ell,1)$:  In this case, $m = -2$, so $[\K]^2 = 1$.
\item $\ab = (\ell,-1)$:  In this case, $m=0$, so $[\K]^2 = 1$.
\item *$\ab = (0,\ell)$:  In this case, $n=m$ and $c_1 = |m+1|$, so that $g(\K) = \frac{|m^2+m|-|m|-|m+1|+1}{2}$.  If $m > -1$, then $g(\K) = \frac{m(m-1)}{2}$, and $\K$ is genus-minimizing.  If $m < -1$, then $g(\K) = \frac{m^2 + m - (-m) - (-m-1) + 1}{2} = \frac{(m+2)(m+1)}{2}$, and so if, in addition, $m < -3$, $\K$ is not genus-minimizing.
\ee}
 
\ee}

\item Suppose $\ac = (n,0)$:  Then $\ca = (-n,0)$, so that $p_3 = 0$, $q_3 = -n$, and $p_3q_3 = 0$.

{\be
\item $\bc = (1,m)$:  In this case, $p_2 = m-1$, $q_2 = -1$, and $p_2q_2 = 1-m$.  We compute $b = |nm|$, $\eps b = nm$, and $\ab = (n-1,-m)$.  Thus
\begin{eqnarray*}
p_1p_2 &=& -nm+m \\
\text{$[\K]^2$} &=& (-nm+m) + (1-m) + 0 + nm = 1.
 \end{eqnarray*}

\item $\bc = (-1,m)$:  In this case, $p_2 = m+1$, $q_2 = 1$, and $p_2q_2 = m+1$.  We compute $b = |nm|$, $\eps b = nm$, and $\ab = (n+1,-m)$.  Thus
\begin{eqnarray*}
p_1p_2 &=& -nm-m \\
\text{$[\K]^2$} &=& (-nm-m) + (m+1) + 0 + nm = 1.
 \end{eqnarray*}

\item $\bc = (0,m)$:  In this case, $p_2 = m$, $q_2 = 0$, and $p_2q_2 = 0$.   We compute $b = |nm|$, $\eps b = nm$, and $\ab = (n,-m)$.  Thus
\begin{eqnarray*}
p_1p_2 &=& - nm \\
\text{$[\K]^2$} &=& -nm + 0 + 0 + nm = 0.
 \end{eqnarray*}

\item $\bc = (m,m+1)$:  In this case, $p_2 = 1$, $q_2 = -m$, and $p_2q_2 = -m$.   We compute $b = |nm+n|$, $\eps b = nm+n$, and $\ab = (n-m,-m-1)$.  Thus
\begin{eqnarray*}
p_1p_2 &=& -nm -n + m^2 + m \\
\text{$[\K]^2$} &=& (-nm -n + m^2 + m) + -m + 0 + (nm+n) = m^2.
 \end{eqnarray*}
 Next, we compute
 \[ g(\K) = \frac{|nm+n| - |n| - 1 - c_1}{2} + 1 = \frac{|nm+n| - |n| - c_1+1}{2}.\]
 To find possible values of $c_1$, we must proceed further still.  Note that if $\K$ is genus-minimizing, then $g(\K) = \frac{
 (m-1)(m-2)}{2}$ if $m > 0$ and $g(\K) = \frac{(m+2)(m+1)}{2}$ if $m < 0$.  We examine six sub-cases:
 
 {\be
 \item *$\ab =(1,\ell)$:  In this case, $n = m+1$ and $c_1 = 1$, so that $g(\K) = \frac{|m^2+m+m+1| - |m+1| -1+1}{2}$.  If $m > 0$, then $g(\K) = \frac{m^2+m}{2}$, so $\K$ is not genus-minimizing.  If $m < 0$, then $g(\K) = \frac{m^2+m+m+1-(-m-1)-1+1}{2} = \frac{(m+2)(m+1)}{2}$, and $\K$ is genus-minimizing.
\item $\ab = (-1,\ell)$:  In this case, $n = m-1$ and $c_1= 1$, so that $g(\K) = \frac{|m^2-m+m-1| - |m-1|-1+1}{2} = \frac{m^2-1-|m-1|}{2}$.  If $m > 0$, then $g(\K) = \frac{m^2-m}{2}$.  If $m < 0$, then $g(\K) = \frac{m^2+m-2}{2} = \frac{(m+2)(m-1)}{2}$.  In either case, $\K$ is not genus-minimizing.
 \item $\ab = (\ell,0)$.  In this case, $m=-1$, so $[\K]^2 = 1$.
 \item $\ab =(\ell,1)$:  In this case, $m = -2$, so $[\K]^2 = 4$.
\item $\ab = (\ell,-1)$:  In this case, $m=0$, so $[\K]^2 = 0$.
 \item *$\ab = (0,\ell)$.  In this case, $n=m$ and $c_1 = |m+1|$, so that $g(\K) = \frac{|m^2 +m|-|m|-|m+1|+1}{2}$.  If $m > 0$, then $g(\K) = \frac{m(m-1)}{2}$, so $\K$ is not genus-minimizing.  If $m < 0$, then $g(\K) = \frac{m^2+m-(-m)-(-m-1)+1}{2} = \frac{(m+2)(m+1)}{2}$, and $\K$ is genus-minimizing.
\ee}

\item $\bc = (m,m-1)$:  In this case, $p_2 = -1$, $q_2 = -m$, and $p_2q_2 = m$.   We compute $b = |nm-n|$, $\eps b = nm-n$, and $\ab = (n-m,-m+1)$.  Thus
\begin{eqnarray*}
p_1p_2 &=& -nm +n + m^2 -m \\
\text{$[\K]^2$} &=& (-nm +n + m^2 -m) + m + 0 + (nm-n)  = m^2.
 \end{eqnarray*}
 Next, we compute
 \[ g(\K) = \frac{|nm-n| - |n| - 1 - c_1}{2} + 1 = \frac{|nm-n| - |n| - c_1 + 1}{2}.\]
 To find possible values of $c_1$, we must proceed further still.  Note that if $\K$ is genus-minimizing, then $g(\K) = \frac{
 (m-1)(m-2)}{2}$ if $m > 0$ and $g(\K) = \frac{(m+2)(m+1)}{2}$ if $m < 0$.  We examine six sub-cases:

{\be
\item $\ab = (1,\ell)$:  In this case, $n =m +1$ and $c_1 = 1$, so that $g(\K) = \frac{|m^2+m-(m+1)| - |m+1| - 1 +1}{2} = \frac{m^2-1 - |m+1|}{2}$.  If $m > 0$, then $g(\K) = \frac{m^2-m-2}{2} = \frac{(m+1)(m-2)}{2}$, so $\K$ is not genus-minimizing for $m >2$.  If $m < 0$, then $g(\K) = \frac{m^2-1-(-m-1)}{2} = \frac{m^2+m}{2}$, and again $\K$ is not genus-minimizing (if $m \neq -1)$.
\item *$\ab = (-1,\ell)$:  In this case, $n = m-1$ and $c_1=1$, so that (for $|m| \geq 2$) $g(\K) = \frac{|m^2-m-(m-1)|-|m-1|-1+1}{2} = \frac{m^2-2m+1-|m-1|}{2}$.  When $m > 0$, $g(\K) = \frac{(m-1)(m-2)}{2}$, so $\K$ is genus-minimizing.  When $m < 0$, $g(\K) = \frac{m^2-2m+1-(-m+1)}{2} = \frac{m^2-m}{2}$, so $\K$ is not genus-minimizing.
\item $\ab = (\ell,0)$:  In this case, $m=1$, so $[\K]^2 = 1$.
\item $\ab =(\ell,1)$:  In this case, $m=0$, so $[\K]^2 = 0$.
\item $\ab = (\ell,-1)$:  In this case, $m = 2$, so $[\K]^2 = 4$.
\item *$\ab = (0,\ell)$:  In this case, $n=m$ and $c_1 = |m-1|$, so that $g(\K) = \frac{|m^2-m|-|m|-|m-1|+1}{2}$.  If $m > 0$, then $g(\K) = \frac{m^2-3m+2}{2} = \frac{(m-1)(m-2)}{2}$, and $\K$ is genus-minimizing.  If $m < 0$, then $g(\K) = \frac{m^2-m-(-m)-(-m+1)+1}{2} = \frac{m^2+m}{2}$, and $\K$ is not genus-minimizing when $m \neq -1$.
\ee}

\item $\bc = (m,m)$:  In this case, $p_2 = 0$, $q_2 = -m$, and $p_2q_2 = 0$.   We compute $b = |nm|$, $\eps b = nm$, and $\ab = (n-m,-m)$.  Thus
\begin{eqnarray*}
p_1p_2 &=& -nm+m^2 \\
\text{$[\K]^2$} &=& (-nm+m^2) + 0 + 0 + nm = m^2
 \end{eqnarray*}
Next, we compute
 \[ g(\K) = \frac{|nm| - |n| - |m| - c_1}{2} + 1 = \frac{|nm| - |n| - |m| - c_1 + 2}{2}.\]
 To find possible values of $c_1$, we must proceed further still.  Note that if $\K$ is genus-minimizing, then $g(\K) = \frac{
 (m-1)(m-2)}{2}$ if $m > 0$ and $g(\K) = \frac{(m+2)(m+1)}{2}$ if $m < 0$.  We examine six sub-cases:

{\be
\item *$\ab = (1,\ell)$:  In this case, $n = m+1$ and $c_1 = 1$, so that $g(\K) = \frac{|m^2+m|-|m+1|-|m|-1+2}{2}$.  If $m > 0$, then $g(\K) = \frac{(m^2-m)^2}{2}$, so that $\K$ is not genus-minimizing.  If $m < 0$, then $g(\K) = \frac{m^2+m-(-m-1)-(-m)+1}{2} = \frac{m^2+3m+2}{2}$, and thus $\K$ is genus-minimizing.
\item *$\ab  = (-1,\ell)$:  In this case, $n = m -1$ and $c_1 = 1$, so that $g(\K) = \frac{|m^2-m| - |m-1| - |m| -1 + 2}{2}$.  If $m > 0$, then $g(\K) = \frac{m^2-3m+2}{2}$, and $\K$ is genus-minimizing.  If $m < 0$, then $g(\K) = \frac{m^2-m-(-m+1)-(-m)+1}{2} = \frac{m^2-m}{2}$, so that $\K$ is not genus-minimizing.
\item $\ab = (\ell,0)$:  In this case, $m=0$, so $[\K]^2 = 0$.
\item $\ab = (\ell,1)$:  In this case, $m = -1$, so $[\K]^2 = 1$.
\item $\ab = (\ell,-1)$:  In this case, $m=1$, so $[\K]^2 = 1$.
\item *$\ab = (0,\ell)$:  In this case, $n=m$ and $c_1 = |m|$, so that $g(\K) = \frac{|m^2| - 3|m|+2}{2}$, and thus $\K$ is genus-minimizing for all $m \neq 0$.
\ee}

\ee}

\item Suppose $\ac = (n+1,n)$:  Then $\ca = (-n-1,-n)$, so that $p_3 = n$, $q_3 = -1$, and $p_3q_3 = -n$.

{\be
\item $\bc = (1,m)$:  In this case, $p_2 = m-1$, $q_2 = -1$, and $p_2q_2 = 1-m$.  We compute $b = |nm+m-n|$, $\eps b = nm+m-n$, and $\ab = (n,n-m)$.  Thus
\begin{eqnarray*}
p_1p_2 &=& n^2-nm \\
\text{$[\K]^2$} &=& (n^2-nm) + (1-m) + -n + (nm+m-n) = (n-1)^2.
 \end{eqnarray*}
Next, we compute
 \[ g(\K) = \frac{|nm+m-n| - 1 - 1 - c_1}{2} + 1 = \frac{|nm+m-n| - c_1}{2}.\]
 To find possible values of $c_1$, we must proceed further still.  Note that if $\K$ is genus-minimizing, then $g(\K) = \frac{(n-2)(n-3)}{2}$ if $n > 1$ and $g(\K) = \frac{n^2+n}{2}$ if $n < 1$.  We examine six sub-cases:

{\be
\item $\ab =(1,\ell)$:  In this case, $n=1$, so $[\K]^2 = 0$
\item $\ab = (-1,\ell)$:  In this case, $n=-1$, so $[\K^2] = 4$.
\item *$\ab = (\ell,0)$:  In this case, $m=n$ and $c_1 = |n|$, so that $g(\K) = \frac{|n^2+n-n|-|n|}{2}$.  If $n > 1$, then $g(\K) = \frac{n^2-n}{2}$, so $\K$ is not genus-minimizing.  If $n<1$, then $g(\K) = \frac{n^2+n}{2}$, and $\K$ is genus-minimizing.
\item $\ab =(\ell,1)$:  In this case, $m = n-1$ and $c_1 = 1$, so that (for $n \neq 1$) $g(\K) = \frac{|n^2-n+(n-1)-n|-1}{2} = \frac{n^2-n-2}{2} = \frac{(n-2)(n+1)}{2}$, so $\K$ is not genus-minimizing.
\item *$\ab = (\ell,-1)$:  In this case, $m=n+1$ and $c_1 = 1$, so that $g(\K) = \frac{|n^2+n+(n+1)-n|-1}{2} = \frac{n^2+n}{2}$.  Thus, $\K$ is genus-minimizing if $n < 1$.
\item $\ab = (0,\ell)$:  In this case, $n=0$, so $[\K]^2 = 1$.
\ee}

\item $\bc = (-1,m)$:  In this case, $p_2 = m+1$, $q_2 = 1$, and $p_2q_2 = m+1$.  We compute $b = |nm+m+n|$, $\eps b = nm+m+n$, and $\ab = (n+2,n-m)$.  Thus
\begin{eqnarray*}
p_1p_2 &=& n^2+2n-nm-2m \\
\text{$[\K]^2$} &=& (n^2+2n-nm-2m) + (m+1) + (-n) + (nm+m+n) = (n+1)^2.
 \end{eqnarray*}
Next, we compute
 \[ g(\K) = \frac{|nm+m+n| - 1 - 1 - c_1}{2} + 1 = \frac{|nm+m+n| - c_1}{2}.\]
 To find possible values of $c_1$, we must proceed further still.  Note that if $\K$ is genus-minimizing, then $g(\K) = \frac{n(n-1)}{2}$ if $n > -1$ and $g(\K) = \frac{(n+3)(n+2)}{2}$ if $n < -1$.  We examine six sub-cases:
 
{\be
\item $\ab = (1,\ell)$:  In this case, $n=-1$, so $[\K]^2 = 0$.
\item $\ab = (-1,\ell)$:  In this case, $n=-3$, so $[\K]^2 = 4$.
\item $\ab =(\ell,0)$:  In this case, $m=n$ and $c_1 = |n+2|$, so that $g(\K) = \frac{|n^2+n+n|-|n+2|}{2}$.  If $n > -1$, then $g(\K) = \frac{n^2+n-2}{2} = \frac{(n-1)(n+2)}{2}$, and if $n<-1$, then $g(\K) = \frac{n^2+2n - (-n-2)}{2} = \frac{n^2+3n+2}{2} = \frac{(n+1)(n+2)}{2}$.  In both cases, $\K$ is not genus-minimizing.
\item $\ab =(\ell,1)$:  In this case, $m=n-1$ and $c_1=1$, so that (for $|n+1|>1$) $g(\K) = \frac{|n^2-n+(n-1)+n|-1}{2} = \frac{n^2+n-2}{2} = \frac{(n-1)(n+2)}{2}$, and $\K$ is not genus-minimizing.
\item $\ab=(\ell,-1)$:  In this case, $m=n+1$ and $c_1 =1$, so that (for $|n+1|>1$) $g(\K) = \frac{|n^2+n+(n+1)+n|-1}{2} = \frac{n^2+3n}{2}$, so $\K$ is not genus-minimizing.
\item $\ab =(0,\ell)$:  In this case, $n=-2$, so $[\K]^2 = 1$.
\ee}
 
\item $\bc = (0,m)$:  In this case, $p_2 = m$, $q_2 = 0$, and $p_2q_2 = 0$.   We compute $b = |nm+m|$, $\eps b = nm+m$, and $\ab = (n+1,n-m)$.  Thus
\begin{eqnarray*}
p_1p_2 &=& n^2-nm+n-m \\
\text{$[\K]^2$} &=& (n^2-nm+n-m) + 0 + -n + (nm+m) = n^2.
 \end{eqnarray*}
Next, we compute
\[ g(\K) = \frac{|nm+m| - |m| - 1 - c_1}{2} + 1 = \frac{|nm+m| - |m| - c_1+1}{2}.\]
To find possible values of $c_1$, we must proceed further still.  Note that if $\K$ is genus-minimizing, then $g(\K) = \frac{(n-1)(n-2)}{2}$ if $n > 0$ and $g(\K) = \frac{(n+2)(n+1)}{2}$ if $n < 0$.  We examine six sub-cases:

{\be
\item $\ab= (1,\ell)$:  In this case, $n = 0$, so $[\K]^2 = 0$.
\item $\ab = (-1,\ell)$:  In this case, $n=-2$, so $[\K^2] = 4$.
\item *$\ab = (\ell,0)$:  In this case, $m=n$ and $c_1 = |n+1|$, so that $g(\K) = \frac{|n^2+n|-|n|-|n+1|+1}{2}$.  If $n > 0$, then $g(\K) = \frac{n^2-n}{2}$, so $\K$ is not genus-minimizing.  If $n < 0$, then $g(\K) = \frac{n^2+n-(-n)-(-n-1)+1}{2} = \frac{(n+2)(n+1)}{2}$, and $\K$ is genus-minimizing.
\item $\ab = (\ell,1)$:  In this case, $m = n-1$ and $c_1 = 1$, so that $g(\K) = \frac{|n^2-n+(n-1)|-|n-1|-1+1}{2}$.  If $n>0$, then $g(\K) = \frac{n^2-n}{2}$.  If $n < 0$, then $g(\K) = \frac{n^2+n-2}{2} = \frac{(n+1)(n-2)}{2}$.  In either case, $\K$ is not genus-minimizing.
\item *$\ab = (\ell,-1)$:  In this case, $m=n+1$ and $c_1 = 1$, so that $g(\K) = \frac{|n^2+n+n+1|-|n+1|-1+1}{2}$.  If $n > 0$, then $g(\K) = \frac{n^2+n}{2}$, so $\K$ is not genus-minimizing.  If $n < 0$, then $g(\K) = \frac{n^2+3n+2}{2}$ and $\K$ is genus-minimizing.
\item $\ab =(0,\ell)$:  In this case, $n=-1$, so $[\K]^2 = 1$.
\ee}

\item $\bc = (m,m+1)$:  In this case, $p_2 = 1$, $q_2 = -m$, and $p_2q_2 = -m$.   We compute $b = |(n+1)(m+1)-nm| = |n+m+1|$, $\eps b = n+m+1$, and $\ab = (n-m+1,n-m-1)$.  Thus
\begin{eqnarray*}
p_1p_2 &=& (n-m)^2-1 = n^2-2nm+m^2-1 \\
\text{$[\K]^2$} &=& (n^2-2nm+m^2-1) + -m + -n + (n+m+1) = (n-m)^2.
 \end{eqnarray*} 
We proceed further, examining six sub-cases:

{\be
\item $\ab = (1,\ell)$:  In this case, $n-m=0$, so $[\K]^2 = 0$.
\item $\ab = (-1,\ell)$:  In this case, $n-m=-2$, so $[\K]^2 = 4$.
\item $\ab = (\ell,0)$:  In this case, $n-m = 1$, so $[\K]^2 = 1$.
\item $\ab = (\ell,1)$:  In this case, $n-m=2$, so $[\K]^2 = 4$.
\item $\ab = (\ell,-1)$:  In this case, $n-m=0$, so $[\K]^2 = 0$.
\item $\ab = (0,\ell)$:  In this case, $n-m=-1$, so $[\K]^2 = 1$.
\ee}

\item $\bc = (m,m-1)$:  In this case, $p_2 = -1$, $q_2 = -m$, and $p_2q_2 = m$.   We compute $b = |(n+1)(m-1)-nm|= |-n+m-1|$, $\eps b = -n+m-1$, and $\ab = (n-m+1,n-m+1) = (n-m+1)\g$.  Thus
\begin{eqnarray*}
p_1p_2 &=& (n-m+1)^2 = (n-m)^2+2n-2m+1 \\
\text{$[\K]^2$} &=& ((n-m)^2+2n-2m+1) + m + -n + (-n+m-1)  = (n-m)^2.
\end{eqnarray*}
Since $\ab = (c_1,c_1)$, where $c_1 = n-m+1$, this is a bridge trisection if and only if $n-m=0$ or $n-m=-2$.

\item $\bc = (m,m)$:  In this case, $p_2 = 0$, $q_2 = -m$, and $p_2q_2 = 0$.   We compute $b = |m|$, $\eps b = m$, and $\ab = (n-m+1,n-m)$.  Thus
\begin{eqnarray*}
p_1p_2 &=& (n-m)^2+n-m \\
\text{$[\K]^2$} &=& ((n-m)^2+n-m) + 0 + -n + m = (n-m)^2.
 \end{eqnarray*}
 
 We proceed further, examining six sub-cases:

{\be
\item $\ab = (1,\ell)$:  In this case, $n-m = 0$, so $[\K]^2 = 0$.
\item $\ab  = (-1,\ell)$:  In this case, $n-m = -2$, so $[\K]^2 = 4$.
\item $\ab = (\ell,0)$:  In this case, $n-m = 0$, so $[\K]^2 = 0$.
\item $\ab = (\ell,1)$:  In this case, $n-m = 1$, so $[\K]^2 = 1$.
\item $\ab = (\ell,-1)$:  In this case, $n-m = -1$, so $[\K]^2 = 1$.
\item $\ab =(0,\ell)$:  In this case, $n-m = -1$, so $[\K]^2 = 1$.
\ee}

\ee}

\item Suppose $\ac = (n-1,n)$:  Then $\ca = (-n+1,-n)$, so that $p_3 = n$, $q_3 = 1$, and $p_3q_3 = n$.

{\be

\item $\bc = (1,m)$:  In this case, $p_2 = m-1$, $q_2 = -1$, and $p_2q_2 = 1-m$.  We compute $b = |nm-m-n|$, $\eps b = nm-m-n$, and $\ab = (n-2,n-m)$.  Thus
\begin{eqnarray*}
p_1p_2 &=& n^2-nm-2n+2m \\
\text{$[\K]^2$} &=& (n^2-nm-2n+2m) + (1-m) + n + (nm-m-n) = (n-1)^2.
 \end{eqnarray*}
Next, we compute
 \[ g(\K) = \frac{|nm-m-n| - 1 - 1 - c_1}{2} + 1 = \frac{|nm-m-n| - c_1}{2}.\]
 To find possible values of $c_1$, we must proceed further still.  Note that if $\K$ is genus-minimizing, then $g(\K) = \frac{(n-2)(n-3)}{2}$ if $n > 1$ and $g(\K) = \frac{n^2+n}{2}$ if $n < 1$.  We examine six sub-cases:
 
 {\be
  \item $\ab = (1,\ell)$:  In this case, $n=3$, so $[\K]^2 = 4$. 
 \item $\ab = (-1,\ell)$:  In this case, $n=1$, so $[\K]^2 = 0$.
 \item $\ab = (\ell,0)$:  In this case, $m=n$ and $c_1 = |n-2|$, so that $g(\K) = \frac{|n^2-n-n|-|n-2|}{2}$.  If $n > 1$, then $g(\K) = \frac{n^2-3n+2}{2} = \frac{(n-2)(n-1)}{2}$.  If $n < -1$, then $g(\K) = \frac{n^2-2n-(-n+2)}{2} = \frac{(n-2)(n+1)}{2}$.  In both cases, $\K$ is not genus-minimizing.
  \item $\ab = (\ell,1)$:  In this case, $m = n-1$ and $c_1 = 1$, so that (for $n\neq 0,1,2$) $g(\K) = \frac{|n^2-n-(n-1)-n|-1}{2} = \frac{n^2-3n}{2}$.  Thus, $\K$ is not genus-minimizing. 
 \item $\ab = (\ell,-1)$:  In this case, $m=n+1$ and $c_1 = 1$, so that (for $n \neq 0,1$) $g(\K) = \frac{|n^2+n-(n+1)-n| - 1}{2} = \frac{n^2-n-2}{2} = \frac{(n-2)(n+1)}{2}$.  Thus, $\K$ is not genus-minimizing.
 \item $\ab = (0,\ell)$:  In this case, $n=2$, so $[\K]^2 = 1$.
\ee}

\item $\bc = (-1,m)$:  In this case, $p_2 = m+1$, $q_2 = 1$, and $p_2q_2 = m+1$.  We compute $b = |nm-m+n|$, $\eps b = nm-m+n$, and $\ab = (n,n-m)$.  Thus
\begin{eqnarray*}
p_1p_2 &=& n^2-nm \\
\text{$[\K]^2$} &=& (n^2-nm) + (m+1) + n + (nm-m+n) = (n+1)^2.
 \end{eqnarray*}
Next, we compute
 \[ g(\K) = \frac{|nm-m+n| - 1 - 1 - c_1}{2} + 1 = \frac{|nm-m+n| - c_1}{2}.\]
 To find possible values of $c_1$, we must proceed further still.  Note that if $\K$ is genus-minimizing, then $g(\K) = \frac{n(n-1)}{2}$ if $n > -1$ and $g(\K) = \frac{(n+3)(n+2)}{2}$ if $n < -1$.  We examine six sub-cases:

{\be
\item $\ab =(1,\ell)$:  In this case, $n=1$, so $[\K]^2 = 4$.
\item $\ab = (-1,\ell)$:  In this case, $n=-1$, so $[\K]^2 = 0$.
\item *$\ab = (\ell,0)$:  In this case, $n=m$ and $c_1 = |n|$, so that $g(\K) = \frac{|n^2-n+n| - |n|}{2}$.  If $n > -1$, then $g(\K) = \frac{n^2-n}{2}$ and $\K$ is genus-minimizing.  If $n < -1$, then $g(\K) = \frac{n^2+n}{2}$, so that $\K$ is not genus-minimizing.
\item *$\ab =(\ell,1)$:  In this case, $m = n-1$ and $c_1 = 1$, so that $g(\K) = \frac{|n^2-n-(n-1)+n|-1}{2} = \frac{n^2-n}{2}$.  It follows that if $n > -1$, then $\K$ is genus-minimizing.
\item $\ab = (\ell,-1)$:  In this case, $m = n+1$ and $c_1 = 1$, so that (for $n \neq -1, 0$) $g(\K) = \frac{|n^2+n-(n+1)+n|-1}{2} = \frac{n^2+n-2}{2} = \frac{(n-1)(n+2)}{2}$.  Thus, $\K$ is not genus-minimizing.
\item $\ab =(0,\ell)$:  In this case, $n=0$, so $[\K]^2 = 1$.
\ee}

\item $\bc = (0,m)$:  In this case, $p_2 = m$, $q_2 = 0$, and $p_2q_2 = 0$.   We compute $b = |nm-m|$, $\eps b = nm-m$, and $\ab = (n-1,n-m)$.  Thus
\begin{eqnarray*}
p_1p_2 &=& n^2-nm-n+m \\
\text{$[\K]^2$} &=& (n^2-nm-n+m) + 0 + n + (nm-m) = n^2.
 \end{eqnarray*}
Next, we compute
\[ g(\K) = \frac{|nm-m| - |m| - 1 - c_1}{2} + 1 = \frac{|nm-m| - |m| - c_1+1}{2}.\]
To find possible values of $c_1$, we must proceed further still.  Note that if $\K$ is genus-minimizing, then $g(\K) = \frac{(n-1)(n-2)}{2}$ if $n > 0$ and $g(\K) = \frac{(n+2)(n+1)}{2}$ if $n < 0$.  We examine six sub-cases:

{\be
\item $\ab =(1,\ell)$:  In this case, $n=2$, so $[\K]^2 = 4$.
\item $\ab = (-1,\ell)$:  In this case, $n=0$, so $[\K]^2 = 0$.
\item *$\ab = (\ell,0)$:  In this case, $m=n$ and $c_1 = |n-1|$, so that $g(\K) = \frac{|n^2-n|-|n|-|n-1|+1}{2}$.  If $n > 0$, then $g(\K) = \frac{n^2-3n+2}{2}$, and $\K$ is genus minimizing.  If $n < 0$, then $g(\K) = \frac{n^2+n}{2}$, so that $\K$ is not genus-minimizing.
\item *$\ab = (\ell,1)$:  In this case, $m=n-1$ and $c_1 = 1$, so that $g(\K) = \frac{|n^2-n-(n-1)|-|n-1|-1+1}{2}$.  If $n>0$, then $g(\K) = \frac{n^2-3n+2}{2}$, and $\K$ is genus-minimizing.  If $n < 0$, then $g(\K) = \frac{n^2-n}{2}$, so that $\K$ is not genus-minimizing.
\item $\ab = (\ell,-1)$:  In this case, $m=n+1$ and $c_1=1$, so that $g(\K) = \frac{|n^2+n-(n+1)| - |n+1| - 1+1}{2}$.  If $n > 0$, then $g(\K) = \frac{n^2-n-2}{2} = \frac{(n-2)(n+1}{2}$.  If $n < 0$, then $g(\K) = \frac{n^2+n}{2}$.  In either case, $\K$ is not genus-minimizing.
\item $\ab =(0,\ell)$:  In this case, $n=1$, so $[\K]^2 = 1$.
\ee}

\item $\bc = (m,m+1)$:  In this case, $p_2 = 1$, $q_2 = -m$, and $p_2q_2 = -m$.   We compute $b = |(n-1)(m+1)-nm| = |n-m-1|$, $\eps b = n-m-1$, and $\ab = (n-m-1,n-m-1)$.  Thus
\begin{eqnarray*}
p_1p_2 &=& (n-m-1)^2 \\
\text{$[\K]^2$} &=& (n-m-1)^2 + -m + n + (n-m-1) = (n-m)^2.
 \end{eqnarray*} 
Since $\ab = (c_1,c_1)$, where $c_1 = n-m-1$, this is a bridge trisection if and only if $n-m=2$ or $n-m=0$, so $[\K]^2 = 4$ or 0.

\item $\bc = (m,m-1)$:  In this case, $p_2 = -1$, $q_2 = -m$, and $p_2q_2 = m$.   We compute $b = |(n-1)(m-1)-nm|= |-n-m+1|$, $\eps b = -n-m+1$, and $\ab = (n-m-1,n-m+1)$.  Thus
\begin{eqnarray*}
p_1p_2 &=& (n-m)^2 - 1\\
\text{$[\K]^2$} &=& ((n-m)^2-1) + m + n + (-n-m+1)  = (n-m)^2.
\end{eqnarray*}
We proceed further, examining six sub-cases:

{\be
\item $\ab = (1,\ell)$:  In this case, $n-m=2$, so $[\K]^2 = 4$.
\item $\ab = (-1,\ell)$:  In this case, $n-m=0$, so $[\K]^2 = 0$.
\item $\ab =(\ell,0)$:  In this case, $n-m=-1$, so $[\K]^2 = 1$.
\item $\ab = (\ell,1)$:  In this case, $n-m=0$, so $[\K]^2 = 0$.
\item $\ab = (-\ell,1)$:  In this case, $n-m=-2$, so $[\K]^2 = 4$.
\item $\ab = (0,\ell)$:  In this case, $n-m=1$, so $[\K]^2 = 1$.
\ee}

\item $\bc = (m,m)$:  In this case, $p_2 = 0$, $q_2 = -m$, and $p_2q_2 = 0$.   We compute $b = |-m|$, $\eps b = -m$, and $\ab = (n-m-1,n-m)$.  Thus
\begin{eqnarray*}
p_1p_2 &=& (n-m)^2-n+m \\
\text{$[\K]^2$} &=& ((n-m)^2-n+m) + 0 + n - m = (n-m)^2.
 \end{eqnarray*}
We proceed further, examining six sub-cases:

{\be
\item $\ab = (1,\ell)$:  In this case, $n-m=2$, so $[\K]^2 =4$.
\item $\ab  = (-1,\ell)$:  In this case, $n-m=0$, so $[\K]^2 = 0$.
\item $\ab = (\ell,0)$:  In this case, $n-m=0$, so $[\K]^2 = 0$.
\item $\ab  = (\ell,1)$:  In this case, $n-m=1$, so $[\K]^2 = 1$.
\item $\ab = (\ell,-1)$:  In this case, $n-m=-1$, so $[\K]^2 = 1$.
\item $\ab =(0,\ell)$:  In this case, $n-m=1$, so $[\K]^2 = 1$.
\ee}

\ee}

\item Suppose $\ac = (n,n)$:  Then $\ca = (-n,-n)$, so that $p_3 = n$, $q_3 = 0$, and $p_3q_3 = 0$.

{\be

\item $\bc = (1,m)$:  In this case, $p_2 = m-1$, $q_2 = -1$, and $p_2q_2 = 1-m$.  We compute $b = |nm-n|$, $\eps b = nm-n$, and $\ab = (n-1,n-m)$.  Thus
\begin{eqnarray*}
p_1p_2 &=& n^2-nm-n+m \\
\text{$[\K]^2$} &=& (n^2-nm-n+m) + (1-m) + 0 + (nm-n) = (n-1)^2.
 \end{eqnarray*}
Next, we compute
 \[ g(\K) = \frac{|nm-n| - |n| - 1 - c_1}{2} + 1 = \frac{|nm-n| -|n| - c_1+1}{2}.\]
 To find possible values of $c_1$, we must proceed further still.  Note that if $\K$ is genus-minimizing, then $g(\K) = \frac{(n-2)(n-3)}{2}$ if $n > 1$ and $g(\K) = \frac{n^2+n}{2}$ if $n < 1$.  We examine six sub-cases:
 
 {\be
 \item $\ab = (1,\ell)$:  In this case, $n=2$, so $[\K]^2 = 1$.
 \item $\ab  = (-1,\ell)$:  In this case, $n=0$, so $[\K]^2 = 1$.
 \item *$\ab = (\ell,0)$:  In this case, $m=n$ and $c_1 = |n-1|$, so that $g(\K) = \frac{|n^2-n| - |n|-|n-1| +1}{2}$.  If $n > 1$, then $g(\K) = \frac{n^2-3n+2}{2} = \frac{(n-1)(n-2)}{2}$, so that $\K$ is not genus-minimizing.  If $n < 1$, then $g(\K) = \frac{n^2+n}{2}$, and $\K$ is genus-minimizing.
  \item $\ab = (\ell,1)$:  In this case, $m = n-1$ and $c_1= 1$, so that $g(\K) = \frac{n^2-n-n|-|n|-1+1}{2}$.  If $n>1$, then $g(\K) = \frac{n^2-3n}{2}$.  If $n < 1$, then $\g(\K) = \frac{n^2-n}{2}$.  In either case, $\K$ is not genus-minimizing.
 \item *$\ab  = (\ell,-1)$:  In this case, $m=n+1$ and $c_1=1$, so that $g(\K) = \frac{|n^2+n-n|-|n|-1+1}{2} = \frac{n^2-|n|}{2}$.  If $n>1$, then $\K$ is not genus-minimizing, but if $n < 1$, then $\K$ is genus-minimizing.
 \item $\ab = (0,\ell)$:  In this case, $n=1$, so $[\K]^2 = 0$.
 \ee}
 
\item $\bc = (-1,m)$:  In this case, $p_2 = m+1$, $q_2 = 1$, and $p_2q_2 = m+1$.  We compute $b = |nm+n|$, $\eps b = nm+n$, and $\ab = (n+1,n-m)$.  Thus
\begin{eqnarray*}
p_1p_2 &=& n^2-nm+n-m \\
\text{$[\K]^2$} &=& (n^2-nm+n-m) + (m+1) + 0 + (nm+n) = (n+1)^2.
 \end{eqnarray*}
Next, we compute
 \[ g(\K) = \frac{|nm+n| - |n| - 1 - c_1}{2} + 1 = \frac{|nm+n| - |n| - c_1+1}{2}.\]
 To find possible values of $c_1$, we must proceed further still.  Note that if $\K$ is genus-minimizing, then $g(\K) = \frac{n(n-1)}{2}$ if $n > -1$ and $g(\K) = \frac{(n+3)(n+2)}{2}$ if $n < -1$.  We examine six sub-cases:

 {\be
  \item $\ab = (1,\ell)$:  In this case, $n=0$, so $[\K]^2 = 1$.
 \item $\ab = (-1,\ell)$:  In this case, $n=-2$, so $[\K]^2 = 1$.
 \item *$\ab = (\ell,0)$:  In this case, $m=n$ and $c_1 = |n+1|$, so that $g(\K) = \frac{|n^2+n|-|n|-|n+1|+1}{2}$.  If $n > -1$, then $g(\K) = \frac{n^2-n}{2}$, and $\K$ is genus-minimizing.  If $n < -1$, then $g(\K) = \frac{n^2+3n+2}{2} = \frac{(n+1)(n+2)}{2}$, so that $\K$ is not genus-minimizing.
  \item *$\ab = (\ell,1)$:  In this case, $m = n-1$ and $c_1=1$, so that $g(\K) = \frac{|n^2-n+n|-|n|-1+1}{2}$.  If $n>-1$, then $g(\K) = \frac{n^2-n}{2}$ and $\K$ is genus-minimizing.  If $n < -1$, then $g(\K) = \frac{n^2+n}{2}$, so that $\K$ is not genus-minimizing.
 \item $\ab = (\ell,-1)$:  In this case, $m=n+1$ and $c_1 = 1$, so that $g(\K) = \frac{|n^2+n+n|-|n|-1+1}{2}$.  If $ n > -1$, then $g(\K) = \frac{n^2+n}{2}$.  If $n < -1$, then $g(\K) = \frac{n^2+3n}{2}$.  In either case, $\K$ is not genus-minimizing.
 \item $\ab = (0,\ell)$:  In this case, $n=-1$, so $[\K]^2 = 0$.
 \ee}
 
 \item $\bc = (0,m)$:  In this case, $p_2 = m$, $q_2 = 0$, and $p_2q_2 = 0$.   We compute $b = |nm|$, $\eps b = nm$, and $\ab = (n,n-m)$.  Thus
\begin{eqnarray*}
p_1p_2 &=& n^2-nm \\
\text{$[\K]^2$} &=& (n^2-nm) + 0 + 0 + nm = n^2.
 \end{eqnarray*}
Next, we compute
\[ g(\K) = \frac{|nm| - |n| - |m| - c_1}{2} + 1 = \frac{|nm| - |n| - |m| - c_1+2}{2}.\]
To find possible values of $c_1$, we must proceed further still.  Note that if $\K$ is genus-minimizing, then $g(\K) = \frac{(n-1)(n-2)}{2}$ if $n > 0$ and $g(\K) = \frac{(n+2)(n+1)}{2}$ if $n < 0$.  We examine six sub-cases:
 
 {\be
  \item $\ab  = (1,\ell)$:  In this case, $n=1$, so $[\K]^2 = 1$.
   \item $\ab = (-1,\ell)$:  In this case, $n=-1$, so $[\K]^2 = 1$.
 \item *$\ab = (\ell,0)$:  In this case, $m=n$ and $c_1=|n|$, so that $g(\K) = \frac{|n^2|-|n|-|n|-|n|+2}{2} = \frac{n^2-3|n|+2}{2}$.  Thus, $\K$ is genus-minimizing for all $n\neq 0$.
  \item *$\ab = (\ell,1)$:  In this case, $m = n-1$ and $c_1=1$, so that $g(\K) = \frac{|n^2-n|-|n|-|n-1|-1+2}{2}$.  If $n>0$, then $g(\K) = \frac{n^2-3n+2}{2}$, and $\K$ is genus-minimizing.  If $n<0$, then $g(\K) = \frac{n^2+n}{2}$, so that $\K$ is not genus-minimizing.
 \item *$\ab  = (-\ell,-1)$:  In this case, $m=n+1$ and $c_1=1$, so that $g(\K) = \frac{|n^2+n|-|n|-|n+1|-1+2}{2}$.  If $n>0$, then $g(\K) = \frac{n^2-n}{2} = \frac{n(n-1)}{2}$.  If $n < 0$, then $g(\K) = \frac{n^2+3n+2}{2}$, and $\K$ is genus-minimizing.
 \item $\ab =(0,\ell)$:  In this case, $n=0$, so $[\K]^2 = 0$.
\ee}

\item $\bc = (m,m+1)$:  In this case, $p_2 = 1$, $q_2 = -m$, and $p_2q_2 = -m$.   We compute $b = |n|$, $\eps b = n$, and $\ab = (n-m,n-m-1)$.  Thus
\begin{eqnarray*}
p_1p_2 &=& (n-m)^2-n+m \\
\text{$[\K]^2$} &=& ((n-m)^2-n+m) + -m + 0 + n = (n-m)^2.
 \end{eqnarray*} 
We proceed further, examining six sub-cases:

{\be
\item $\ab = (1,\ell)$:  In this case, $n-m=1$, so $[\K]^2 = 1$.
\item $\ab  = (-1,\ell)$:  In this case, $n-m=-1$, so $[\K]^2 = 1$.
\item $\ab = (\ell,0)$:  In this case, $n-m=1$, so $[\K]^2 =1$.
\item $\ab =(\ell,1)$:  In this case, $n-m=2$, so $[\K]^2 = 4$.
\item $\ab = (\ell,-1)$:  In this case, $n-m=0$, so $[\K]^2 = 0$.
\item $\ab = (0,\ell)$:  In this case, $n-m=0$, so $[\K]^2 = 0$.
\ee}

\item $\bc = (m,m-1)$:  In this case, $p_2 = -1$, $q_2 = -m$, and $p_2q_2 = m$.   We compute $b = |-n|$, $\eps b = -n$, and $\ab = (n-m,n-m+1)$.  Thus
\begin{eqnarray*}
p_1p_2 &=& (n-m)^2 +n-m\\
\text{$[\K]^2$} &=& ((n-m)^2+n-m) + m + 0 + -n  = (n-m)^2.
\end{eqnarray*}
We proceed further, examining six sub-cases:

{\be
\item $\ab = (1,\ell)$:  In this case, $n-m=1$, so $[\K]^2 = 1$.
\item $\ab = (-1,\ell)$:  In this case, $n-m=-1$, so $[\K]^2 = 1$.
\item $\ab =(\ell,0)$:  In this case, $n-m=-1$, so $[\K]^2 = 1$.
\item $\ab = (\ell,1)$:  In this case, $n-m=0$, so $[\K]^2 = 0$.
\item $\ab = (\ell,-1)$:  In this case, $n-m=-2$, so $[\K]^2 = 4$.
\item $\ab = (0,\ell)$:  In this case, $n-m=0$, so $[\K]^2 = 0$.
\ee}

\item $\bc = (m,m)$:  In this case, $\ac$ and $\bc$ do not intersect essentially.
\ee}

\ee

In Table~\ref{table1}, we summarize those cases in which $\K$ minimizes genus in its homology class.  In order to compare various cases, we will normalize by letting $d = \sqrt{[\K]^2}$ and writing $m$ and $n$ in terms of $d$ for those values of $m$ and $n$ that produce genus-minimizing $\K$.  For example, in case (1di), we have $[\K]^2 = (m-1)^2$, and $\K$ is genus-minimizing when $m < 1$.  Thus, for $m<1$, we have $d = 1-m$, so that $d$ is positive.

This convention splits two of our cases, cases (3fvi) and (6ciii), since $[\K]^2 = m^2$ and $\K$ is genus-minimizing for all $m \neq 0$ in case (3fvi), and $[\K]^2 = n^2$ and $\K$ is genus-minimizing for all $n \neq 0$ in case (6ciii).  Thus, in case (3fvi+), we assume $m > 0$ and $d=m$, and in case (3fvi-), we assume $m < 0$ and $d = -m$.  Similarly, in case (6ciii+), we assume $n > 0$ and $d = n$, while in case (6ciii-), we assume $n < -$ and $d = -n$.

Note that $d$ does not necessarily coincide with the degree of the surface $\K$ here, since it is possible that the degree of $\K$ is negative, while $d$ is always considered to be positive in Table~\ref{table1}.  Examining the cases in the table, we form equivalence classes of those cases that are related by a cyclic permutation of $\aaa$, $\bbb$, and $\ccc$ or by reversing the orientation of such a permutation.  When we do this, the cases fall into eight different equivalence classes:

\be
\item[(A)] 1di, 2eii
\item[(B)] 1dvi, 1fi, 2evi, 2fii, 3di, 3eii
\item[(C)] 1fvi, 2fvi, 3dvi, 3evi, 3fi, 3fii
\item[(D)] 3fvi+, 3fvi-
\item[(E)] 4av, 5biv
\item[(F)] 4aiii, 4cv, 5biii, 5civ, 6av, 6biv
\item[(G)] 4ciii, 5ciii, 6aiii, 6biii, 6civ, 6cv
\item[(H)] 6ciii+, 6ciii-
\ee

This completes the proof of the proposition.

\begin{table}[ht]
\centering
\caption{Cases yielding genus-minimizing surfaces}
\label{table1}
\begin{tabular}{ccccccc}
\hline
case & $m$ & $ n $ & $\ab $ & $\bc $ & $\ca $  \\
\hline \hline
(1di) & $-d+1$ & $-d+2$ & $\A + (d-1)\n$ & $\n + (d-1)\g$ & $\g + (d-1)\A$ \\
\hline
(1dvi) & $-d+1$  & $-d+1$ & $0\A + (d-1)\n$ & $\n + (d-1)\g$ & $ \g + d\A$  \\
\hline
(1fi) & $-d+1 $ & $-d+2$ & $\A + d\n$ & $0\n + (d-1)\g$ & $\g + (d-1)\A$ \\
\hline
(1fvi) & $-d+1$ & $-d+1$ & $0\A + d\n$ & $0\n + (d-1)\g$ & $\g + d\A$ \\
\hline
(2eii) & $d-1$ & $d-2$ & $-\A + (-d+1)\n$ & $-\n +(-d+1)\g$ & $-\g + (-d+1)\A$ \\
\hline
(2evi) & $d-1$ & $d-1$ & $0\A +(-d+1)\n$ & $-\n + (-d+1)\g$ & $-\g + (-d)\A$ \\
\hline
(2fii) & $d-1$ & $d-2$ & $-\A - d\n$ & $0\n + (-d+1)\g$ & $-\g + (-d+1)\A$ \\
\hline
(2fvi) & $d-1$ & $d-1$ & $0\A - d\n$ & $0\n + (-d+1)\g$ & $-\g + (-d)\A$ \\
\hline
(3di) & $-d$ & $-d+1$ & $\A + (d-1)\n$ & $\n+d\g$ & $0\g+(d-1)\A$ \\
\hline
(3dvi) & $-d$ & $-d$ & $0\A +(d-1)\n$ & $\n + d\g$ & $0\g + d\A$ \\
\hline
(3eii) & $d$ & $d-1$ & $\A + (-d+1)\n$ & $-\n-d\g$ & $0\g + (-d+1)\A$ \\
\hline
(3evi) & $d$ & $d$ & $0\A + (-d+1)\n$ & $ -\n - d\g$ & $0\g - d\A$ \\
\hline
(3fi) & $-d$ & $-d+1$ & $\A + d\n$ & $0\n+d\g$ & $0\g + (d-1)\A$ \\
\hline
(3fii) & $d$ & $d-1$ & $-\A - d\n$ & $0\n -d\g$ & $ 0\g + (-d+1)\A$\\
\hline
(3fvi+) & $d$ & $d$ & $0\A - d\n$ & $0\n -d\g$ & $0\g -d\A$ \\
\hline
(3fvi-) & $-d$ & $-d$ & $0\A + d\n$ & $0\n + d\g$ & $0\g + d\A$ \\
\hline
(4aiii) & $-d+1$ & $-d+1$ & $(-d+1)\A + 0\n$ & $-d\n - \g$ & $(-d+1)\g-\A$\\
\hline
(4av) & $-d+2 $ & $-d+1 $ & $(-d+1)\A -\n$ & $(-d+1)\n - \g$ & $(-d+1)\g-\A$ \\
\hline
(4ciii) & $-d$ & $-d$ &  $(-d+1)\A + 0\n$ & $-d\n + 0\g$ & $-d\g - \A$ \\
\hline
(4cv) & $-d+1$ & $-d$ & $(-d+1)\A - \n$ & $(-d+1)\n + 0\g$ & $-d\g - \A$ \\
\hline
(5biii) & $d-1$ & $d-1$ & $(d-1)\A + 0\n$ & $d\n + \g$ & $(d-1)\g + \A$ \\
\hline
(5biv) & $d-2$ & $d-1$ & $(d-1)\A + \n$ & $(d-1)\n + \g$ & $(d-1)\g + \A$ \\
\hline
(5ciii) & $d$ & $d$ & $(d-1)\A + 0\n$ & $d\n + 0\g$ & $d\g + \A$ \\
\hline
(5civ) & $d-1$ & $d$ & $(d-1)\A + \n$ & $(d-1)\n + 0\g$ & $ d\g + \A$ \\
\hline
(6aiii) & $-d+1$ & $-d+1$ & $-d\A + 0\n$ & $-d\n -\g$ & $(-d+1)\g + 0\A$ \\
\hline
(6av) & $-d+2$ & $-d+1$ & $-d\A - \n$ & $(-d+1)\n - \g$ & $(-d+1)\g + 0\A$\\
\hline
(6biii) & $d-1$ & $d-1$ & $d\A + 0\n$ & $d\n+\g$ & $(d-1)\g + 0\g$ \\
\hline
(6biv) & $d-2$ & $d-1$ & $d\A + \n$ & $(d-1)\n + \g$ & $(d-1)\g + 0\A$\\
\hline
(6ciii+) & $d$ & $d$ & $d\A + 0\n$ & $d\n + 0\g$ & $d\g + 0\n$\\
\hline
(6ciii-) & $-d$ & $-d$ & $-d\A + 0\n$ & $-d\n + 0\g$ & $-d\g + 0\n$\\
\hline
(6civ) & $d-1$ & $d$ & $d\A + \n$ & $(d-1)\n + 0\g$ & $d\g + 0\n$ \\
\hline
(6cv) & $-d+1$ & $-d$ & $-d\A - \n$ & $(-d+1)\n + 0\g$ & $-d\g + 0\n$\\
\hline
\end{tabular}
\end{table}

\end{proof}

\bibliographystyle{amsalpha}
\bibliography{complexbib}

\end{document}